\documentclass[reqno,11pt]{amsart}
\usepackage{amssymb}
\usepackage{amsmath,amsthm,amscd}

\newtheorem{proposition}{Proposition}[section]
\newtheorem{definition}[proposition]{Definition}
\newtheorem{lemma}[proposition]{Lemma}
\newtheorem{theorem}[proposition]{Theorem}
\newtheorem{corollary}[proposition]{Corollary}
\newtheorem{conjecture}[proposition]{Conjecture}

\newtheorem{exercise}[proposition]{Exercise}
\newtheorem{problem}[proposition]{Problem}
\newtheorem{fact}[proposition]{Fact}

\newcommand \clos{\operatorname{clos}}

\newcommand \fourier{\widehat}
\newcommand \la{\langle}
\newcommand \ra{\rangle}
\newcommand \valg{\Val^G}
\newcommand \valun{\Val^{U(n)}}

\newcommand \valson{\Val^{SO(n)}}

\newcommand\V{\mathcal V}
\newcommand \tildiota{\widetilde\iota}

\newcommand \tildk{\widetilde k}
\newcommand\CPn{\C P^n}

\newcommand \kson{k_{SO(n)}}
\newcommand \ason{a_{SO(n)}}

\newcommand \valsm{\Val^{sm}}

\newcommand \sltwo{\mathfrak{sl}(2)}
\newcommand\Hom{\mathbf{Hom}}
\newcommand\length{\operatorname{length}}
\newcommand\graph{\operatorname{graph}}
\newcommand\curv{\operatorname{Curv}}
\newcommand\bdry{\partial}
\newcommand \lcur{[\![} 
\newcommand\rcur{]\!]}

\newcommand \restrict{\llcorner} 

\newcommand\overpsi{\overline\Psi}


\DeclareMathOperator{\vol}{vol}
\DeclareMathOperator{\dist}{dist}

\newcommand{\re}{\operatorname{Re}}
\newcommand{\norm}[1]{\left\| #1 \right\|}
\newcommand{\R}{\mathbb{R}}
\newcommand{\Rn}{\mathbb{R}^n}
\newcommand{\HH}{\mathbb{H}}

\newcommand{\C}{\mathbb{C}}
\newcommand{\Cn}{\mathbb{C}^n}
\newcommand{\K}{\mathcal{K}}

\newcommand{\Ksm}{\mathcal{K}^{sm}}

\newcommand{\CP}{\mathbb{C} P}

\newcommand{\barson}{\overline{SO(n)}}
\newcommand{\barun}{\overline{U(n)}}
\newcommand{\barg}{\overline{G}}
\newcommand{\barGr}{\overline{\Gr}}

\newcommand{\vgm}{\V^{G}(M)}
\newcommand{\vgmstar}{\V^{G *}(M)}

\newcommand\VLun{\V^{\C,\lambda,n}}
\newcommand\Voneun{\V^{\C,1,n}}

\newcommand\two {I\! I}

\newcommand\piinv{\pi^{-1}}

\newcommand{\eps}{\epsilon}

\DeclareMathOperator{\kl}{Kl}
\DeclareMathOperator{\Val}{Val}
\DeclareMathOperator\Pf{Pf}
\DeclareMathOperator{\Gr}{Gr}
\DeclareMathOperator{\AGr}{\overline{ Gr}}

\DeclareMathOperator{\Id}{\operatorname{Id}}

 \title{Algebraic integral geometry}
 \author{Joseph H.G. Fu}
  \email{fu@math.uga.edu}
  \date{\today}
 \address{ Department of Mathematics, 
University of Georgia, 
Athens, GA 30602, USA}

\thanks{Partially supported
  by NSF grant DMS-1007580}
\begin{document}

\maketitle

\tableofcontents
\setcounter{tocdepth}{200}

\section{Introduction} Recent work of S. Alesker has catalyzed a flurry of progress in Blaschkean integral geometry, and opened the prospect of further advances. 
By this term we understand the circle of ideas surrounding the {\it kinematic formulas} \eqref{bkfs} below, which express certain fundamental integrals relating to the intersections of  two subspaces $K,L \subset \R^n$ in general position in terms of certain ``total curvatures" of $K$ and $L$ separately. The purest form of this study concerns the integral geometry of {\it isotropic spaces} $(M,G)$, i.e. Riemannian manifolds $M$ equipped with a subgroup $G$ of the full isometry group that acts transitively on the tangent sphere bundle $SM$. 
That kinematic formulas exist in this generality was first established in \cite{fu90}. However it was not until the discovery by Alesker of the rich algebraic structure on the space of convex valuations, and his extension of these notions to arbitrary smooth manifolds, that the determination of the actual formulas has become feasible. The key fact is the Fundamental Theorem of Algebraic Integral Geometry, or {\it ftaig}  (Thms. \ref{ftaig} and \ref {compact ftaig} below), which relates the kinematic formulas for an isotropic space $(M,G)$ to the finite-dimensional algebra of $G$-invariant valuations on $M$.

The classical approach to integral geometry relies on the explicit calculation of multiple integrals (the {\it template method} described in Section \ref{classical kfs section} below)  and intricate numerical identities. Even apart from the unsatisfyingly ad hoc nature of these arguments, they are also notoriously difficult in practice: for example, even such an eminently skilled calculator as S.-S. Chern \cite{chern66} published fallacious values for some basic integral geometric constants (cf. \cite{nijenhuis}). While Alesker theory has not entirely expunged such considerations from integral geometry, it has provided an array of ``soft" tools that give a structural rationale and thereby a practical method of checking. As a result many old mysteries have been resolved and more new ones uncovered. In a general way this development was foreseen by G.-C. Rota \cite{klain-rota}, who famously characterized integral geometry as ``continuous combinatorics".

\subsection{About these notes} These notes represent  a revision of the notes for my Advanced Course on Integral Geometry and Valuation Theory at the Centre de Recerca Matem\`atica, Barcelona in September 2010. 
  My general aim was to convey an intuitive working knowledge of the state of the art in integral geometry in the wake of the Alesker revolution. As such I have tried to include exact formulas wherever possible, without getting bogged down in technical details in the proofs and formal statements. As a result some of the proofs and statements are incomplete or informal, e.g. sometimes concluding the discussion once I feel the main points have been made or  when I don't see how to bring it to an end in a quick and decisive fashion. Some of the unproved assertions here have been treated more fully in the recent paper \cite{bfs}.
   I intend to continue to develop this account to the point where it is  reliable and complete, so comments on any aspect (especially corrections) either in person or by email will be welcome.

 Of course the topics included reflect my personal understanding and limitations. For example, there is nothing here about Bernig's determination of the integral geometry of
 $$(\C^n,SU(n)), (\R^8, Spin(7)), (\R^7,G_2).$$
 Bernig's excellent account \cite{bernigsurvey} discusses these topics and offers a different viewpoint on the theory as a whole. 
 
%
%

There are (at least) two other senses covered by the term ``integral geometry" that we will not discuss here. First is the integral geometry initiated by Gel'fand, having to do with the Radon transform and its cousins. Remarkably, recent work of Alesker suggests that the concept of valuation may prove to be the correct formal setting also for this ostensibly separate study. Second is a raft of analytic questions which may be summarized as: which spaces and subspaces are subject to the kinematic formulas? They are known to include disparate classes coming from algebraic geometry, hard analysis and convexity, and seem all to share certain definite aspects of a riemannian space \cite{fu00, fu04}.

The Exercises include assertions that I have not completely thought through, so they may be much harder or much easier than I think, and also may be false. Some statements I have labeled as Problems, which means that I believe they represent serious research projects.

\subsection{Acknowledgments} I am extremely grateful to the Centre de Recerca Matem\`atica, and to the organizers E. Gallego, X. Gual, G. Solanes and E. Teufel, for their invitation to lecture on this subject in the Advanced Course on Integral Geometry and Valuation Theory, which provided the occasion for these notes. In addition I would like to thank A. Bernig and S. Alesker for extremely fruitful collaborations and discussions over the past few years as this material was worked out.

%

%

\subsection{Notation and conventions}
Classic texts on the subject of integral geometry include 
\cite{santalo}, \cite{schneider}, \cite{klain-rota}, \cite{gray}, \cite{bonnesen-fenchel}. 

The volumes the of the $k$-dimensional unit ball and the $k$-dimensional unit sphere are denoted respectively by
\begin{align*}
\omega_k&:= \frac{\pi^{\frac k 2}}{\Gamma(1 +\frac k 2)} \\
\alpha_k&:= (k+1)\, \omega_{k+1} .
\end{align*}

$\Gr_k(V)$ denotes the Grassmannian of all $k$-dimensional vector subspaces of the real vector space $V$. $\barGr_k(V)$ denotes the corresponding space of {\it affine} subspaces.

Given a group $G$ acting linearly on a vector space $V$, we denote by $\barg:= G\ltimes V$ the associated group of affine transformations. 

If $G$ acts transitively by isometries on a riemannian manifold $M$ we will usually normalize the Haar measure $dg$ on $G$ so that 
\begin{equation}\label{eq:measure convention}
dg(\{g: go \in S\}) = \vol(S)
\end{equation}
for $S\subset M$, where $o \in M$ is an arbitrarily chosen base point.

If $S$ is a subset of a vector space then $\la S\ra$ denotes its linear (or sometimes affine) span.

Sometimes $|S|$ will denote the volume of $S$, with the dimension understood from the context.
Sometimes $\pi_E$ will denote orthogonal projection onto $E$, and others it will be projection onto the factor $E$ of a cartesian product.

For simplicity I will generally endow all vector spaces with euclidean structures and all manifolds with riemannian metrics. This device carries the advantage that the dual spaces and cotangent vectors can then be identified with the original spaces and tangent vectors. Unfortunately it also obscures the natural generality of some of the constructions, and can lead to outright misconceptions (e.g. in applications to Finsler geometry) if applied with insufficient care. 

\section{Classical integral geometry} We consider $\Rn$ with its usual euclidean structure, together with the group $\barson:= SO(n)\ltimes \Rn$ of orientation-preserving isometries. Let $\K= \K(\Rn)$ denote the metric space of all compact convex subsets $A\subset \Rn$, endowed with the Hausdorff metric
\begin{equation}
d(A,B):= \inf\{r\ge 0: A\subset B_r, \ B\subset A_r\}
\end{equation}
where $B_r$ is the ball of radius $r$ and
$$
A_r:= \{x \in \Rn: \dist (x,A) \le r\}.
$$
We denote by $\Ksm= \Ksm(\Rn)$ the dense subspace of convex subsets $A$ with nonempty interior and smooth boundary, and such that all principal curvatures $k_1,\dots k_{n-1} >0$ at every point of $\bdry A$.

\subsection{Intrinsic volumes and Federer curvature measures} Our starting point is 

\begin{theorem}[Steiner's formula]\label{steiner's formula}
If $A\in \K$ then
\begin{equation}\label{steiner}
\vol(A_r) = \sum_{i=0}^n \omega_{n-i}\, \mu_{i}(A) \,r^{n-i}, \quad r\ge 0.
\end{equation}
\end{theorem}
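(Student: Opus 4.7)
The plan is to establish the formula first on the dense subclass $\Ksm$ by a direct tube-volume computation, and then extend to all of $\K$ by Hausdorff-continuous approximation.

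For $A \in \Ksm$ I would parametrize $A_r \setminus \interior A$ by the normal exponential map $\Phi \colon \bdry A \times [0,r] \to \Rn$, $\Phi(x,t) := x + t\nu(x)$, where $\nu$ is the outward unit normal. Strict positivity of the principal curvatures together with convexity guarantee that $\Phi$ is a diffeomorphism onto its image (the inward focal distance is $+\infty$). Its Jacobian is $\det(I + tW_x)$, where $W_x$ is the Weingarten map on $T_x \bdry A$ with eigenvalues $k_1(x),\dots,k_{n-1}(x)$. Expanding in elementary symmetric functions,
\begin{equation*}
\det(I + tW_x) \;=\; \sum_{j=0}^{n-1} t^j\, e_j\bigl(k_1(x),\dots,k_{n-1}(x)\bigr),
\end{equation*}
integrating $t$ from $0$ to $r$ and then over $\bdry A$ yields
\begin{equation*}
\vol(A_r) \;=\; \vol(A) \;+\; \sum_{j=0}^{n-1} \frac{r^{j+1}}{j+1} \int_{\bdry A} e_j\, d\mathcal{H}^{n-1}.
\end{equation*}
This is already a polynomial of degree $n$ in $r$; re-indexing by $i = n-1-j$ matches the stated form \eqref{steiner} and produces each $\mu_i(A)$ as an explicit normalized curvature integral (with $\mu_n(A) = \vol(A)$ as the constant term).

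For general $A \in \K$ I would approximate $A$ in the Hausdorff metric by a sequence $A^{(m)} \in \Ksm$ (standard procedure: pass to $A + \epsilon B$, smooth the boundary, and send $\epsilon \to 0$) and use continuity of $\vol((\cdot)_r)$ on bounded Hausdorff-convergent families. Since each $r \mapsto \vol(A^{(m)}_r)$ is a polynomial of degree $\le n$ in $r$, and since the diameters of the $A^{(m)}$ are uniformly bounded (so their $r$-tubes are uniformly bounded on compact $r$-intervals), the limit $r \mapsto \vol(A_r)$ is again polynomial of degree $\le n$, and its coefficients $\omega_{n-i}\mu_i(A)$ arise as limits of the corresponding coefficients from the $A^{(m)}$.

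The main obstacle is precisely this passage to the limit in the \emph{coefficients} rather than merely in the values. It is handled by the elementary observation that polynomials of degree $\le n$ form an $(n+1)$-dimensional space on which pointwise evaluation at any $n+1$ distinct points $r_0,\dots,r_n$ is a linear isomorphism onto $\R^{n+1}$ (Vandermonde); hence pointwise convergence of a uniformly bounded sequence of such polynomials forces convergence of their coefficients. Alternatively one may bypass the smoothness hypothesis entirely and prove Steiner's formula first for polytopes, by slicing $P_r \setminus P$ into products $F \times (\text{cap of the outer normal cone at } F)$ of volume homogeneous of degree exactly $n - \dim F$ in $r$, and then passing to general $A$ by Hausdorff approximation by polytopes with the same Vandermonde argument.
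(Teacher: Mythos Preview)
Your argument for the smooth case is essentially identical to the paper's: both parametrize the shell $A_r\setminus A$ by the normal exponential map, compute the Jacobian $\det(I+tW_x)$, expand in elementary symmetric functions of the principal curvatures, and integrate.

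For general $A\in\K$, however, the paper does \emph{not} pass to the limit by Hausdorff approximation. Instead it observes that for any compact convex $A$ the distance function $\delta_A$ is $C^{1,1}$ on $\Rn\setminus A$, builds the normal cycle $N(A)\subset S\Rn$ directly as the image of the Lipschitz map $x\mapsto(\pi_A(x),\nabla\delta_A(x))$ on $\partial A_r$, and then computes $\vol(A_r)$ by pulling back $d\vol$ under $\exp:N(A)\times(0,r]\to\Rn$. The polynomial dependence on $r$ comes from expanding the invariant form $\exp^*(d\vol)$ at a point, yielding the invariant $(n-1)$-forms $\kappa_i$ on $S\Rn$ and the expression $\mu_i(A)=\frac{1}{(n-i)\omega_{n-i}}\int_{N(A)}\kappa_i$.

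Your approximation-plus-Vandermonde argument is correct and is the classical route to Steiner's formula; it is arguably more elementary if one only wants the polynomial structure of $r\mapsto\vol(A_r)$. What the paper's approach buys is an explicit integral representation of each $\mu_i(A)$ valid for every $A\in\K$ (not just smooth ones), and it introduces the normal cycle $N(A)$ and the forms $\kappa_i$ that are used throughout the rest of the paper. Your route produces the $\mu_i(A)$ only as limits of curvature integrals, with no intrinsic formula for nonsmooth $A$.
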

The functionals $\mu_i$ (thus defined) are called the {\bf intrinsic volumes}, and are equal up to scale to the {\it Quermassintegrale} introduced by Minkowski (cf. \cite{bonnesen-fenchel}, p. 49). A simple geometric argument shows that 
\begin{equation}\label{eq:vol is vol}
A\in \K, \dim A = j \implies \mu_j(A) = \vol_j(A).
\end{equation}

It is easy to prove \eqref{steiner} if $A\in \Ksm$. In this case 
\begin{equation}
A_r = A\cup \exp_A (\bdry A \times [0,r])
\end{equation}
where
\begin{equation}
\exp_A(x,t): = x+ tn_A(x)
\end{equation}
and $n_A: \bdry A\to S^{n-1}$ is the Gauss map. It is clear that $\exp_A$ gives a diffeomorphism $\bdry A \times (0,\infty) \to \Rn-A$: in fact the inverse map may be written in terms of
$$
x = \pi_A(\exp_A(x,t)), \quad t = \delta_A (\exp_A(x,t))
$$
where $\pi_A:\Rn\to A$ is the nearest point projection and $\delta_A$ is the distance from $A$. Clearly
\begin{equation}
D_{x,t_0}\exp_A(v + c \partial_t) = v + t_0L_x(v) + cn_A(x)
\end{equation}
for $v \in T_x \bdry A$, where $L_x:T_x\bdry A \to T_x\bdry A= T_{n_A(x)}S^{n-1} \subset \Rn $ is the Weingarten map. Thus the area formula gives
\begin{align*}
\vol(A_r) &= \vol(A) + \int _{\bdry A} \, d\vol_{n-1}x \int _0^r \, dt \det (\Id_{T_x\bdry A} + tL_x) \\
& = \vol(A) + \int _{\bdry A} \, d\vol_{n-1}x \int _0^r \, dt \prod_{j=1}^{n-1} (1 + tk_j)\\
& = \vol(A) + \sum_{j=0}^{n-1} \frac {r^{j+1}} {j+1}\int _{\bdry A}\sigma_{j}(k_1,\dots,k_{n-1}) \, d\vol_{n-1}x 
\end{align*}
where the $k_j$ are the principal curvatures and  $\sigma_j$ is the $j$th elementary symmetric polynomial. In particular
\begin{equation}\label{smooth mu}
\mu_i(A) = \frac 1 {(n-i) \omega_{n-i}} \int_{\bdry A} \sigma_{n-i-1}(k_1,\dots, k_{n-1})
\end{equation}
in this case. Note that the $\mu_i$ are independent of the ambient dimension, i.e. if $j:\Rn\to \R^N$ is a linear isometry then $\mu_i(j(A)) = \mu_i(A)$. Observe that
\begin{align*}
\mu_n(A)&=|A| \\
\mu_0(A)& =1 \quad\text{(by Gauss-Bonnet)} \\
\mu_i(t A)&=t^i\mu_i(A) \quad \text{ for } 0\ne t \in \R \\
\mu_i(x+A) &= \mu_i(A) \quad \text{ for } x \in \Rn .\\
\end{align*}

A modification of this approach establishes \eqref{steiner} for general $A \in \K$.  Note that the function $\delta_A:= \dist(\cdot, A)$ is $C^1$ when restricted to the complement of $A$, with 
\begin{equation}
\nabla \delta_A (x) =\frac{x-\pi_A(x)}{|x-\pi_A(x)|}.
\end{equation}
Since $\pi_A$ is clearly Lipschitz (with constant 1), in fact $\delta_A \in C^{1,1}_{loc}(\Rn - A)$, and the map $\Pi_A: \Rn-A \to S \Rn := \Rn\times S^{n-1}$ given by
\begin{equation}
\Pi_A(x) := (\pi_A(x), \nabla \delta_A (x))
\end{equation}
is locally Lipschitz. In fact, for fixed $r >0$ the restriction of $\Pi_A$ to $\bdry A_r$ is a biLipschitz homeomorphism to its image. Furthermore this image is 
independent of $r$, and these maps commute with the obvious projections between the various $\bdry A_r$. Precisely, the image is the {\bf normal cycle} of $A$,
\begin{equation}\label{eq:def N}
N(A):= \{(x,v) \in S\Rn: \langle v, x-y \rangle \ge 0 \text{ for all } y \in A\},
\end{equation}
which thus may be regarded as an oriented Lipschitz submanifold of $S\Rn$ of dimension $n-1$. Now the exponential map $\exp:S\Rn\times \R\to \Rn, \ \exp(x,v;t):= x+tv$ yields a locally biLipschitz homeomorphism $N(A) \times (0,\infty) \to \Rn-A$, with $A_r-A = \exp(N(A) \times (0,r])$. Thus the volume of $A_r$ may be expressed as
\begin{equation}\label{tube}
\vol(A_r) = \vol(A) + \int_{N(A) \times (0,r]} \exp^*d\vol
\end{equation}
where 
\begin{equation}
\exp^*(d\vol) = d(x_1 + t v_1)\wedge\dots\wedge d(x_n+ tv_n) 
\end{equation}
is a differential form of degree $n$ on $S\R^n$. Since $d\vol$ is invariant and $\exp$ is covariant under the action of $\barson$, this form is again $\barson$-invariant.

In order to understand this form it is therefore enough to evaluate it at the special point $(0,e_n) \in S\Rn$, where 
$$
v_n = 1, \quad v_1=\dots = v_{n-1} = 0, \quad dv_n = 0, \quad dx_n = \alpha
$$
where $\alpha := \sum_{i=1}^n v_i dx_i$ is the invariant {\bf canonical 1-form} of the sphere bundle $S\Rn$. Thus
\begin{align*}
\exp^*(d\vol)_{(0,e_n)}& = d(x_1 + t v_1)\wedge\dots\wedge  d(x_{n-1} + t v_{n-1})\wedge (\alpha + dt) \\
&= \left(\sum_{i=0}^{n-1} t^i \kappa_{n-i-1}\right)\wedge (\alpha + dt)
\end{align*}
for some invariant forms $\kappa_i\in \Omega ^{n-1,\barson}(S\Rn)$ of degree $n-1$, and \eqref{tube} may be expressed as
\begin{equation}
\vol(A_r) = \vol(A) + \sum_{i=0}^{n-1} \frac{r^{i+1}}{i+1}\int_{N(A)} \kappa_{n-i-1}.
\end{equation}
Thus for general $A \in \K$
\begin{equation}\label{general mu}
\mu_i(A) = \frac 1 {(n-i) \omega_{n-i}} \int_{N(A)} \kappa_{i}.
\end{equation}
The formula \eqref{general mu} is of course a direct generalization of  \eqref{smooth mu}, and the latter may be computed directly from the former by pulling back via the diffeomorphism $\bar n_A:\bdry A \to N(A)$, $\bar n_A(x) := (x, n_A(x))$.

It is natural to express the intrinsic volumes as the ``complete integrals" of the {\bf Federer curvature measures}
\begin{align}
\Phi_i^A (E)&:= \frac 1 {(n-i) \omega_{n-i}}\int_{N(A) \cap \pi_{\Rn}^{-1}(E)} \kappa_i , \quad i = 0,\dots, n-1,\\
\Phi_n^A(E) &:= \vol (A\cap E).
\end{align}
These measures satisfy
\begin{equation}
\vol (A_r \cap \pi_A^{-1}(E) ) = \sum_{i=0}^n \omega_{n-i} \Phi_i^A(E) r^{n-i}.
\end{equation}

The discussion above goes through also in case the compact set $A$ is only {\bf semiconvex}, i.e. there is $r_0>0$ such that if $\delta_A(x) < r_0$ then there exists a unique $\pi_A(x):= p\in A$ such that $\delta_A(x) = |x-p|$ (it is necessary of course to restrict to $r < r_0$ throughout). The supremum of such $r_0$ is called the {\bf reach} of $A$ \cite{federer59}. Note that $\operatorname{reach }A = \infty$ iff $A$ is convex. In this more general setting, the Federer curvature measures $\Phi^A_i$ will generally be signed.

\subsection{Other incarnations of the normal cycle} Consider also
\begin{align}
\vec N(A)&:= \{(x,v)\in T\R^n: x \in A, \langle v,x-y \rangle \ge 0 \text{ for all } y \in A\} \\
N_1(A)&:= \vec N(A) \cap (\R^n \times B_1)\\
\vec N^*(A)&:= \{(x,\xi)\in T^* \R^n: x \in A, \langle \xi,x-y \rangle \ge 0 \text{ for all } y \in A\}\\
N^*(A)&:= \{(x,[\xi])\in S^*\R^n:(x ,\xi)\in\vec N( A)\}
\end{align}
where $S^*\R^n$ is the cosphere bundle. The last two are in some sense the correct objects to think about, since they behave naturally under linear changes of variable.


\subsection{Crofton formulas} The intrinsic volumes occur naturally in certain questions of geometric probability. The starting point is {\bf Crofton's formula}. In the convex case this may be stated
\begin{equation}
\label{crofton}
\mu_{n-1}(A) = \int_{\Gr_{n-1}} |\pi_E(A)| \, dE.
\end{equation}
Here the codimension one intrinsic volume is half of  the perimeter and $dE$ is a Haar measure chosen below. This may be proved via the area formula. Fixing $ E \in \Gr_{n-1}$ with unit normal vector $v=v_E$, and $x \in \partial A$, the Jacobian determinant of the restriction of $ D \pi_E$ to $T_x\partial A$ is 
$$
\left. \det D \pi_E\right|_{T_x\partial A} = |v\cdot n_A(x)|.
$$
Hence
\begin{align}
\int_{\Gr_{n-1}} |\pi_E(A)| \, dE & =\frac 1 2 \int_{\Gr_{n-1}} dE \int_{\partial A}  \, |v\cdot n_A(x)|\, dx\\
&=\int_{\partial A}  dx\int_{S^{n-1}}  \, |v\cdot u| \, dv \\
&= \mu_{n-1}(A)
\end{align}
where $u\in S^{n-1}$ is an arbitrarily chosen unit vector, provided the Haar measures $dE, dv$ are normalized appropriately.

Observe that
$$
\pi_E(A_r) = [\pi_E(A)]_r.
$$
Hence by Steiner's formula
\begin{align*}
 \mu_{n-2}(A_r) &=c\frac d{dr} \mu_{n-1}(A_r) \\
&=c\frac d{dr} \int_{\Gr_{n-1}} \left|[\pi_E(A)]_r\right| \, dE\\
&=c \int_{\Gr_{n-1}}\frac d{dr} \left|[\pi_E(A)]_r\right| \, dE\\
&=c \int_{\Gr_{n-1}} \mu_{n-2}( \pi_E(A_r)) \, dE\\
&=c \int_{\Gr_{n-1}}\, dE \int_{\Gr_{n-2}(E)}  \vol_{n-2} ( \pi_F(A_r))\,dF \\ 
&= \int_{\Gr_{n-2}(\Rn)} \vol_{n-2} ( \pi_F(A_r))\,dF 
\end{align*}
with conveniently normalized Haar measure $dF$ on $\Gr_{n-2}$, since $\pi_F\circ \pi_E = \pi_F$ when $F$ is a subspace of $E$. Setting $r=0$ and continuing in this way,
\begin{proposition} \label{general crofton}
If the Haar measure $dG$ is appropriately normalized then
\begin{equation}
\mu_k = \int_{\Gr_k} \vol_k(\pi_G(\cdot)) \, dG , \quad k= 0,\dots,n-1.
\end{equation}
\end{proposition}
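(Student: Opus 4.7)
The plan is downward induction on $k$, with base case $k = n-1$ furnished by Crofton's formula \eqref{crofton}. The induction step $k \mapsto k-1$ follows exactly the template used in the excerpt to pass from $\mu_{n-1}$ to $\mu_{n-2}$, and depends on two elementary facts. First, since orthogonal projection $\pi_E$ is $1$-Lipschitz and fixes $E$ pointwise, one checks easily that $\pi_E(A_r) = [\pi_E(A)]_r$. Second, because the intrinsic volumes are independent of ambient dimension, Steiner's formula applied inside a subspace $G \in \Gr_k$ reads
\begin{equation*}
\vol_k\bigl([\pi_G(A)]_r\bigr) \;=\; \sum_{j=0}^{k}\omega_{k-j}\,\mu_j(\pi_G(A))\,r^{k-j}.
\end{equation*}

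Assuming the formula holds for index $k$, I apply it to the body $A_r$ and use the two facts above to obtain
\begin{equation*}
\mu_k(A_r) \;=\; \int_{\Gr_k}\vol_k\bigl([\pi_G(A)]_r\bigr)\,dG \;=\; \sum_{j=0}^k \omega_{k-j}\, r^{k-j}\int_{\Gr_k}\mu_j(\pi_G(A))\,dG.
\end{equation*}
On the other hand, Steiner's formula applied to $A$ itself gives $\mu_k(A_r)$ directly as a polynomial in $r$ (obtained from the identity $\vol(A_{r+s}) = \vol((A_r)_s)$ by matching powers of $s$), whose linear coefficient in $r$ is an explicit positive multiple of $\mu_{k-1}(A)$. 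Equating linear coefficients in $r$ yields
\begin{equation*}
\mu_{k-1}(A) \;=\; c_k\int_{\Gr_k}\mu_{k-1}(\pi_G(A))\,dG
\end{equation*}
for an explicit constant $c_k > 0$.

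To convert the inner $\mu_{k-1}$ into a projection, I apply the base case (Crofton's formula in dimension $k$) to the convex body $\pi_G(A) \subset G$ and use $\pi_F \circ \pi_G = \pi_F$ for $F \subset G$, obtaining $\mu_{k-1}(\pi_G(A)) = \int_{\Gr_{k-1}(G)}\vol_{k-1}(\pi_F(A))\,dF$. Substituting and using Fubini over the flag manifold $\{(G,F) : F \subset G\}$, the resulting iterated integral pushes down to $\Gr_{k-1}(\Rn)$; by uniqueness of the $SO(n)$-invariant measure on $\Gr_{k-1}(\Rn)$, this push-forward is a constant multiple of the standard Haar measure, giving the desired identity. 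The main obstacle is the bookkeeping of universal constants and the coherent normalization of the Haar measures on $\Gr_j(\Rn)$ for all $j$; as in the excerpt, one can sidestep explicit computation by \emph{defining} $dG$ so that the identity becomes an equality, but to verify the existence and consistency of such a single global normalization one tests against bodies of the form $A \subset G_0 \in \Gr_k$, where \eqref{eq:vol is vol} forces the integrand to collapse in a controlled way.
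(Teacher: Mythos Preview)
Your argument is correct and follows essentially the same route as the paper: downward induction from Crofton's formula, using $\pi_E(A_r)=[\pi_E(A)]_r$ together with Steiner's formula to drop the index by one, then Crofton's codimension-one formula inside the subspace and Fubini over the flag manifold to land on $\Gr_{k-1}(\Rn)$. The only cosmetic difference is that the paper phrases the index-lowering step as differentiation in $r$ whereas you equate linear coefficients of the Steiner polynomial, which is the same computation.
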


This can also be written in the following equivalent form. Let $\AGr_{n-k}$ denote the space of {\it affine} planes of dimension $n-k$ in $\Rn$.  Then
\begin{equation}\label{affine general crofton}
\mu_k (A)= \int_{\AGr_{n-k}} \chi(A\cap \bar H) \, d\bar H, \quad k= 0,\dots,n-1
\end{equation}
where $\chi $ is the Euler characteristic, and $d\bar H$ is the Haar measure obtained by viewing $\AGr_{n-k}$ as the total space of the tautological bundle over $\Gr_k$ and taking the product of $dG$ from Prop. \ref{general crofton} with the Lebesgue measure on the fibers.

\begin{exercise} Deduce that
\begin{align*}
\mu_k (A)&=c^m_k \int_{\Gr_m} \mu_k(\pi_E(A) ) \, dE, \quad m\ge k \\
&= d^j_k \int_{\AGr_{n-j}}\mu_{k-j}(A \cap \bar H) \, d\bar H, \quad n \ge k\ge j
\end{align*}
Is it possible to choose the normalizations of the Haar measures so that $c^m_k, d^j_k\equiv 1$?
\end{exercise}

\subsection{The classical kinematic formulas}\label{classical kfs section}
It follows immediately from \eqref{general crofton} that the $\mu_k$ are {\it continuous, translation-invariant, $SO(n)$-invariant valuations}, i.e.
\begin{enumerate}
\item \label{def val}$A,B, A\cup B \in \K \implies \mu_k(A\cup B) +\mu_k(A\cap B) =\mu_k (A) + \mu_k (B)$
\item \label{trans inv}$\mu_k(x + A) = \mu_k(A) $ for all $x \in \Rn$
\item \label{cts val}$A_i \to A$ in the Hausdorff metric $\implies \mu_k(A_i) \to \mu_k(A)$
\item\label{son inv} $\mu_k(gA) = \mu_k(A) $ for all $g \in SO(n)$
\end{enumerate}

\begin{exercise} Prove these statements.
\end{exercise}

The space of functionals satisfying \eqref{def val}, \eqref{trans inv}, \eqref{cts val} is denoted by $\Val= \Val(\Rn)$. The subspace satisfying in addition \eqref{son inv} is denoted $\valson$. 
\begin{theorem}[Hadwiger \cite{hadwiger}]\label{hadwiger thm}
$$
\valson = \langle \mu_0,\dots, \mu_n\rangle
$$
\end{theorem}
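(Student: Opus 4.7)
I would prove this by induction on $n$. The case $n=0$ is trivial; for $n = 1$ any $\phi \in \Val(\R)$ is determined by $f(t) := \phi([0,t])$. The valuation property applied to $[0,s]$ and $[s,s+t]$ yields $f(s+t) = f(s) + f(t) - f(0)$, which is Cauchy's equation shifted by a constant, so continuity forces $f$ to be affine, and $\phi = f(0)\mu_0 + f'(0)\mu_1$.

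For $n \ge 2$ my plan rests on two imported results. The first is McMullen's decomposition: every $\phi \in \Val(\Rn)$ splits canonically as $\phi = \sum_{k=0}^{n} \phi_k$ with $\phi_k$ homogeneous of degree $k$, and each $\phi_k$ automatically vanishes on convex bodies of dimension strictly less than $k$. The decomposition is $SO(n)$-equivariant, so it suffices to show every homogeneous $\phi_k \in \valson$ is a scalar multiple of $\mu_k$. The second ingredient is Hadwiger's characterization of simple valuations: every $\phi \in \valson$ that vanishes on all convex bodies of dimension less than $n$ equals $c\,\vol$ for some $c \in \R$.

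Granting both, the inductive step is short. For $k = n$, McMullen's vanishing already makes $\phi_n$ simple, so Hadwiger's lemma gives $\phi_n = c\mu_n$. For $k < n$, fix a hyperplane $H \subset \Rn$ through the origin. The restriction $\phi_k|_H$ (the functional $K \mapsto \phi_k(K)$ on convex subsets of $H$) is continuous, translation-invariant, $SO(H)$-invariant, and $k$-homogeneous; the inductive hypothesis applied to $H \cong \R^{n-1}$, together with the observation that the $\mu_k$ are independent of the ambient dimension, then gives $\phi_k|_H = c_H\, \mu_k|_H$ for some constant $c_H$. Transitivity of $SO(n)$ on $\Gr_{n-1}(\Rn)$ forces $c_H$ to be independent of $H$, call it $c$, and translation invariance promotes this to arbitrary affine hyperplanes. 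Hence $\psi := \phi_k - c\mu_k$ vanishes on every convex body of dimension $< n$, so Hadwiger's simple-valuation lemma yields $\psi = c'\mu_n$; but $\psi$ is $k$-homogeneous with $k < n$ while $\mu_n$ is $n$-homogeneous, so $c' = 0$ and $\phi_k = c\mu_k$.

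The main obstacle is the simple-valuation lemma itself. I would tackle it by dissecting the unit cube into $n!$ congruent simplices via barycentric subdivision and using the valuation and rotation-invariance relations to show that any simple $\phi \in \valson$ assigns a value proportional to volume on each such simplex; a further cut-and-paste argument extends the equality $\phi(S) = c\,\vol(S)$ to arbitrary simplices, and density of simplicial approximations in $\K$ together with continuity upgrades this to $\phi = c\,\vol$ on all convex bodies. The delicate part is managing the combinatorics of the dissections — it is qualitatively reminiscent of Dehn-invariant arguments for Hilbert's third problem, but the availability of continuous limits (rather than strict finite dissections) is what makes the rotation-invariant case actually go through.
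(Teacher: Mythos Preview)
Your argument is correct and is essentially Klain's short proof of Hadwiger's theorem: McMullen decomposition, induction on dimension via restriction to hyperplanes, and reduction to the characterization of simple $SO(n)$-invariant valuations. The sketch of the simple-valuation lemma is the only place where real work remains, but your outline (dissection plus continuity) points in the right direction; Klain's version of this lemma (for even valuations, without rotation invariance) is the standard reference if you want a clean model.

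This is a genuinely different route from what the paper does. The paper does not supply a self-contained proof at the point of statement---it cites Hadwiger---but later (section on the classical integral geometry of $\Rn$) observes that Hadwiger's theorem falls out of Alesker's machinery: by Theorem~\ref{alesker package} every element of $\valson$ is smooth, hence representable by an $\barson$-invariant form on $S\Rn$ modulo $(\alpha,d\alpha)$, and one checks directly that this space of invariant forms is spanned by $\kappa_0,\dots,\kappa_{n-1}$. Your approach is far more elementary and self-contained, requiring only McMullen's decomposition and the simple-valuation lemma rather than the Irreducibility Theorem and the Casselman--Wallach theorem. The paper's route, by contrast, buys generality: the same argument shows $\dim\valg<\infty$ for \emph{any} compact $G$ acting transitively on the sphere, which is the launching point for the hermitian and quaternionic cases, whereas the Klain-style induction is specific to $SO(n)$.
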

Since $\mu_k(B_r) = c_k r^k$, it is clear that the $\mu_i$ are linearly independent---
in fact  given any $n+1$ distinct radii $0\le r_0<\dots < r_n$ 
 \begin{equation}\label{independent}
\det \, [\mu_i(B_{r_j})]_{ij} \ne 0.
\end{equation}
Indeed, \eqref{independent} is (up to a constant) the determinant of a Vandermonde matrix.

Hadwiger's theorem yields immediately the following key fact. Let $\barson:= SO(n) \ltimes \Rn$ denote the group of orientation-preserving euclidean motions, with Haar measure $d\bar g$. The natural normalization is the product of Lebesgue measure $\vol$ on $\Rn$ with the probability measure on $SO(n)$, i.e. we stipulate that
\begin{equation} \label{dg normalization}
d\bar g\left(\{\bar g: \bar g (0) \in E\}\right) = \vol(E)
\end{equation}
for every measurable $E \subset \Rn$.

\begin{theorem}[Blaschke kinematic formulas] \label{bkfs}There exist constants $c^{k}_{ij}$ such that
\begin{equation}\label{blaschke kf}
\int_{\barson} \mu_k(A\cap \bar g B) \, d\bar g = \sum_{i+j = n+k} c^k_{ij} \mu_i(A) \mu_j(B)
\end{equation}
for all $A,B \in \K$.
\end{theorem}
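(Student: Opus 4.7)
The plan is to apply Hadwiger's theorem (Theorem \ref{hadwiger thm}) twice—first in $B$, then in $A$—and to extract the diagonal degree constraint $i+j=n+k$ from a scaling argument. The whole thing views the kinematic integral as a bilinear form on $\K\times\K$ whose restrictions to each slot are continuous, translation-invariant, $SO(n)$-invariant valuations.

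Fix $A\in \K$ and set
\begin{equation*}
I_A(B) := \int_{\barson} \mu_k(A\cap \bar g B)\, d\bar g.
\end{equation*}
The integrand is supported on the compact set of $\bar g$ for which $\bar g B\cap A\neq\emptyset$, so the integral is finite. First I would check that $B\mapsto I_A(B)$ lies in $\valson$. The valuation property follows from the pointwise identity
\begin{equation*}
\mu_k(A\cap \bar g(B\cup B')) + \mu_k(A\cap \bar g(B\cap B')) = \mu_k(A\cap \bar g B) + \mu_k(A\cap \bar g B'),
\end{equation*}
valid since $\bar g$ distributes over unions and intersections and $\mu_k$ is itself a valuation. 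Translation and rotation invariance in $B$ follow by absorbing the group element into $\bar g$ and using invariance of the Haar measure on $\barson$. Continuity in the Hausdorff metric is the subtle point: if $B_i\to B$ then $A\cap \bar g B_i \to A\cap \bar g B$ for every $\bar g$ for which $\bar g B$ meets $A$ non-tangentially (the ``bad'' set has measure zero in $\barson$), and continuity of $\mu_k$ together with the monotonicity bound $\mu_k(A\cap \bar g B_i)\le \mu_k(A)$ lets me pass to the limit by dominated convergence, since the supports of the integrands are uniformly contained in a fixed compact subset of $\barson$. Hadwiger then yields functionals $\phi_j^k$ on $\K$ with
\begin{equation*}
I_A(B) = \sum_{j=0}^n \phi_j^k(A)\,\mu_j(B).
\end{equation*}

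Next I would show that each coefficient $\phi_j^k$ is itself a member of $\valson$, now as a function of $A$. Evaluating the previous identity at $n+1$ balls $B_{r_0},\dots,B_{r_n}$ of distinct radii and inverting the Vandermonde-type matrix underlying \eqref{independent} expresses each $\phi_j^k(A)$ as a fixed linear combination of the numbers $I_A(B_{r_\ell})$. By the same arguments as before, now applied to the left-hand slot of $A\cap \bar g B$, each $A\mapsto I_A(B_{r_\ell})$ is a continuous, translation- and $SO(n)$-invariant valuation in $A$, so the same holds for each $\phi_j^k$. A second application of Hadwiger produces constants $c^k_{ij}$ with $\phi_j^k(A) = \sum_i c^k_{ij}\mu_i(A)$, proving \eqref{blaschke kf} modulo the degree constraint.

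Finally I would pin down which $(i,j)$ can actually occur by homogeneity. Replacing $A$ by $sA$ and $B$ by $sB$ and substituting $\bar g = (R,sv')$, the translational integration contributes a factor $s^n$ and the $k$-homogeneity of $\mu_k$ contributes $s^k$, giving $I_{sA}(sB) = s^{n+k}I_A(B)$; meanwhile the bilinear expansion scales term by term like $s^{i+j}$, and the linear independence of the monomials $\mu_i(A)\mu_j(B)$ (which again follows from evaluation on balls) forces $c^k_{ij}=0$ unless $i+j=n+k$. The main technical obstacle is the continuity step—one must handle the measure-zero set of tangential $\bar g$'s and exhibit an integrable majorant on the bounded support—but this is standard and should not require more than dominated convergence.
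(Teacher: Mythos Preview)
Your proof is correct and follows essentially the same argument as the paper's: apply Hadwiger's theorem in one variable, use the Vandermonde-type independence \eqref{independent} to extract the coefficient functionals as finite linear combinations of kinematic integrals, then apply Hadwiger again in the other variable, with the degree constraint coming from scaling. The only cosmetic difference is that the paper fixes $B$ and first expands in $A$, whereas you fix $A$ and first expand in $B$; the continuity/dominated-convergence step and the monotonicity bound are handled the same way.
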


\begin{theorem}[Additive kinematic formulas]\label{akfs}
There exist constants $d^{k}_{ij}, i+j =k$, such that
\begin{equation}\label{additive kf}
\int_{SO(n)} \mu_k(A + g B) \, d g = \sum_{i+j = k} d^k_{ij} \mu_i(A) \mu_j(B)
\end{equation}
for all $A,B \in \K$.
\end{theorem}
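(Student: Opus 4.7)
The strategy closely mirrors the template method described earlier: fix one argument, verify the integral defines a Hadwiger-type valuation in the other, invoke Theorem~\ref{hadwiger thm}, then repeat on the remaining argument and pin down the degree constraint by homogeneity.

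First, I would fix $B\in\K$ and define $\varphi_B(A) := \int_{SO(n)} \mu_k(A + gB)\, dg$. I claim $\varphi_B \in \valson$. The key point is that Minkowski addition against a fixed convex body preserves the valuation relation, i.e.\ $\mu_k((A_1 \cup A_2) + B) + \mu_k((A_1 \cap A_2) + B) = \mu_k(A_1 + B) + \mu_k(A_2 + B)$ whenever $A_1, A_2, A_1\cup A_2 \in \K$. This rests on the identities $(A_1\cup A_2)+B=(A_1+B)\cup(A_2+B)$ (immediate) and $(A_1\cap A_2)+B=(A_1+B)\cap(A_2+B)$: the latter follows because if $A_1\cup A_2$ is convex and $x\in(A_1+B)\cap(A_2+B)$, then the sets $C_i := A_i\cap(x-B)$ are closed, convex, nonempty, and $C_1\cup C_2$ is convex, which forces $C_1\cap C_2\neq\emptyset$ (a convex set that is the union of two disjoint nonempty closed subsets would be disconnected). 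Since $\mu_k$ is itself a valuation, $\varphi_B$ inherits this. Continuity of $\varphi_B$ follows from the joint continuity of Minkowski sum and $\mu_k$ together with dominated convergence on the compact group $SO(n)$. Translation invariance is trivial: $(x+A)+gB = x+(A+gB)$. Finally, for $h\in SO(n)$, the change of variables $g\mapsto h^{-1}g$ in the $SO(n)$-integral shows $\varphi_B(hA) = \varphi_B(A)$.

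Hadwiger's theorem then gives unique constants $c_i(B)$ with $\varphi_B(A) = \sum_{i=0}^n c_i(B)\,\mu_i(A)$. Evaluating on $n+1$ balls of distinct radii $r_0,\dots,r_n$ and inverting the Vandermonde system \eqref{independent} expresses each $c_i(B)$ as a fixed linear combination of the values $\varphi_B(B_{r_j})= \int_{SO(n)}\mu_k(B_{r_j}+gB)\,dg$. Running the argument of the previous paragraph with the roles of $A$ and $B$ exchanged, each $B\mapsto \varphi_{B_{r_j}}(B)$ lies in $\valson$, hence so does each $c_i$. Applying Hadwiger a second time yields constants $d^k_{ij}$ with $c_i(B) = \sum_{j=0}^n d^k_{ij}\mu_j(B)$, so $\int_{SO(n)} \mu_k(A+gB)\,dg = \sum_{i,j} d^k_{ij}\mu_i(A)\mu_j(B)$.

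It remains to show $d^k_{ij}=0$ unless $i+j=k$. Since $\mu_k$ is $k$-homogeneous and continuous on $\K$, McMullen's polynomial expansion (which follows from iterating Steiner's formula, or directly from the polynomiality of $\mu_k(s_1A_1+\cdots+s_rA_r)$) gives $\mu_k(sA+tgB) = \sum_{p+q=k} s^p t^q V_{pq}(A,gB)$. Integrating over $SO(n)$ yields a polynomial in $(s,t)$ of total degree exactly $k$. On the other hand, substituting $sA$ for $A$ and $tB$ for $B$ in the formula established above gives $\sum_{i,j} d^k_{ij} s^i t^j \mu_i(A)\mu_j(B)$. Matching coefficients of $s^i t^j$ for $i+j\neq k$ and choosing, say, $A=B=B_1$ (so that $\mu_i(A)\mu_j(B)\neq 0$ for all relevant $i,j$) forces the corresponding $d^k_{ij}$ to vanish.

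The one step I consider nontrivial is verifying the valuation property of $A\mapsto \mu_k(A+B)$, since Minkowski sum does not in general distribute over intersections; everything else is bookkeeping on top of Hadwiger's theorem and homogeneity.
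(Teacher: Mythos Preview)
Your proof is correct and follows essentially the same approach as the paper: fix $B$, verify membership in $\valson$, apply Hadwiger, invert via the Vandermonde system on balls, repeat in $B$, then use homogeneity to isolate the $i+j=k$ terms. Your explicit verification that $(A_1\cap A_2)+B=(A_1+B)\cap(A_2+B)$ when $A_1\cup A_2$ is convex is a welcome detail that the paper simply asserts as ``clear.''
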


\begin{proof}[Proof of Theorems \ref{bkfs}, \ref{akfs}] Consider first \eqref{blaschke kf}. Fixing $B$, it is clear that the left hand sides of \eqref{blaschke kf} and \eqref{additive kf} are $\barson$-invariant valuations in the variable $A\in \K$. Furthermore, if $A_i \to A$ then  $A_i \cap \bar g B \to A\cap \bar g B$ for fixed $\bar g \in \barson$ provided either $A, \bar g B$ are disjoint or the interiors of $A , \bar g B$ intersect. Thus $\mu_k(A_i \cap \bar g B) \to\mu_k( A \cap \bar g B)$ for such $\bar g$, which clearly constitute a subset of $\barson$ of full measure. Finally, all $\mu_k(A_i \cap \bar g B) \le \mu_k(B)$. Thus the dominated convergence theorem implies that the left-hand side of \eqref {blaschke kf} $\in \valson$, and therefore 
\begin{equation}\label{prototype blaschke kf}
\int_{\barson} \mu_k(A\cap \bar g B) \, d\bar g = \sum_{i=0}^n c_i(B) \mu_i(A) 
\end{equation}
for some constants $c_i(B)$.

In view of \eqref{independent}, there are $A_0,\dots, A_n \in \K$ and constants $\alpha^i_j$ such that
\begin{equation}
\sum_{j=0}^n \alpha^i_j \int_{\barson} \mu_k(A_j\cap \bar g B) \, d\bar g = c_i(B) , \quad i =0,\dots,n.
\end{equation}
Repeating the argument of the first paragraph, this time with the $A_i$ fixed and $B$ as the variable, it follows that the $c_i\in \valson$, and Hadwiger's theorem implies that the right hand side can be expanded as indicated--- simple considerations of scaling ensure that the coefficients vanish unless $i+j=n+k$.

The proof of Theorem \ref{akfs} is completely similar.
\end{proof}

These facts may be encoded by defining the {\bf kinematic} and {\bf additive kinematic operators} $\kson,\ason: \valson\to \valson \otimes \valson$ by
\begin{align*}
\kson(\mu_k) & = \sum_{i+j = n+k} c^k_{ij}\, \mu_i\otimes \mu_j\\
\ason(\mu_k) & = \sum_{i+j = k} d^k_{ij}\,  \mu_i\otimes \mu_j
\end{align*}
where the $c^k_{ij},d^k_{ij}$ are taken from \eqref{blaschke kf}, \eqref{additive kf}.
Note that if we use the invariant probability measure on $SO(n)$ then 
$$
\kson(\chi) = \ason(\vol).
$$

Observe that although the kinematic formulas involve curvature measures in general, they give rise to the {\bf first order formulas}
\begin{equation}\label{1st order}
\int_{\barson} \vol_{n-k-l}(M^k\cap \bar g N^l) \, d\bar g = c^{n-k-l}_{kl} \vol_{k}(M^k) \vol_l(N^l)
\end{equation}
for compact $C^1$ submanifolds $M^k,N^l$ (or more generally for suitably rectifiable sets of these dimensions). These are often called {\it Poincar\'e-Crofton formulas}, but we will use this different term because of the frequent appearance of these names in other parts of the theory.

The classical approach to determining the structure constants $c^k_{ij},d^k_{ij}$ is the {\bf template method}: find enough bodies $A,B$ for which the kinematic integral can be computed directly, and solve the resulting system of linear equations. This is a bit tricky in the case of Theorem \ref{bkfs}, but for Theorem \ref{akfs} it is easy. Let us rescale the $\mu_i$ by
$$
\psi_i = \mu_i(B_1)^{-1} \mu_i
$$
so that $\psi_i(B_r) = r^i$.
 Let $A=B_r, B= B_s$ be balls of radii $r,s$, and take $dg $ to be the probability Haar measure on $SO(n)$. Then
 \begin{align*}
(r+s)^k = \int_{SO(n)} \psi_k(B_r + g B_s) \, d g &= \sum_{i+j = k} d^k_{ij} \psi_i(B_r) \psi_j(B_s) \\
& = \sum_{i+j = k} d^k_{ij} r^i s^j
\end{align*}
so
$$
\ason(\psi_k) = \sum_{i+j = k} \binom k i  \psi_i\otimes \psi_j.
$$

\begin{exercise} The operators $\kson,\ason$ are coassociative, cocommutative coproducts on $\valson$. 
\end{exercise}

(Recall that the product on an algebra $A$ is a map $A \otimes A\to A$, and that the associative and commutative properties may be stated in terms of commutative diagrams involving the product map. Thus a coproduct is a map $A\to A \otimes A$, and the coassociative and cocommutative properties are those obtained by reversing all the arrows in these diagrams.)

\begin{theorem}[Nijenhuis \cite{nijenhuis}]\label{nijenhuis}
There is a basis $\theta_0,\dots, \theta_n$ for $\valson$ such that
$$
\kson (\theta_k)= \sum_{i+j = n+k}  \theta_i\otimes \theta_j, \quad\ason(\theta_k) = \sum_{i+j = k} \theta_i\otimes \theta_j,
$$
i.e. the structure constants for both coproducts are identically equal to unity.
\end{theorem}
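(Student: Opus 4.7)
My plan is to realize $\theta_k$ as a diagonal rescaling $\theta_k=\alpha_k\mu_k$ of the intrinsic volumes. Since both $\ason$ and $\kson$ respect the grading on $\valson$ by degree of homogeneity, any basis putting them into the claimed form is necessarily of this kind. In the $\theta$-basis the structure constants pick up a factor $\alpha_k/(\alpha_i\alpha_j)$, so the theorem reduces to producing positive scalars $\alpha_0,\dots,\alpha_n$ with
\[
d^{i+j}_{ij}=\alpha_i\alpha_j/\alpha_{i+j}\quad(i+j\le n),\qquad c^{i+j-n}_{ij}=\alpha_i\alpha_j/\alpha_{i+j-n}\quad(i+j\ge n).
\]

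The additive constraints are handled by the calculation done immediately before the theorem: the template of concentric balls in the $\psi$-basis gives $d^{i+j}_{ij}=\binom{i+j}{i}\mu_{i+j}(B_1)/(\mu_i(B_1)\mu_j(B_1))$ in the $\mu$-basis, which already has the product form with $\alpha_k=c^k/(k!\,\mu_k(B_1))$ for any $c>0$. This leaves one scaling parameter $c$, reflecting the rescaling freedom $\theta_k\mapsto c^k\theta_k$ that preserves the shape of $\ason$.

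For the kinematic constraints, I would first obtain $c^0_{ij}$ on concentric balls: expanding $\int\chi(B_r\cap\bar g B_s)\,d\bar g=\vol(B_{r+s})=(r+s)^n\omega_n$ yields $c^0_{ij}=\binom{n}{i}\omega_n/(\mu_i(B_1)\mu_j(B_1))$ for $i+j=n$. Crucially this equals $d^n_{ij}$, so the product $\alpha_i\alpha_{n-i}$ prescribed by the $k=0$ kinematic and the $k=n$ additive relations is consistent; combined with $c^k_{nk}=1$ (read off from large-ball asymptotics, using the normalization \eqref{dg normalization}), this forces $\alpha_0=\alpha_n=1$. Solving the consistent recursive system for the remaining $\alpha_k$ gives
\[
\alpha_k^2=\frac{\omega_k\omega_{n-k}}{\binom{n}{k}\omega_n}=\frac{\omega_k}{\mu_k(B_1)},\qquad\text{i.e.}\qquad \theta_k=\sqrt{\omega_k/\mu_k(B_1)}\,\mu_k.
\]
One then verifies the remaining kinematic identities $c^{i+j-n}_{ij}=\alpha_i\alpha_j/\alpha_{i+j-n}$ for general $(i,j)$. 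A clean route is a dimensional-reduction template: take $A$ inside an $i$-dimensional and $B$ inside a $j$-dimensional affine subspace, so that $\mu_p(A)=0$ for $p>i$ and only the single term $c^{i+j-n}_{ij}\mu_i(A)\mu_j(B)$ survives on the right-hand side of \eqref{blaschke kf}, while the left-hand integral reduces to the Poincar\'e-Crofton constant of \eqref{1st order}.

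The main obstacle is this last verification. While the ball templates immediately fix $c^0_{ij}$ and $c^k_{nk}$, matching the Poincar\'e-Crofton constant from \eqref{1st order} to the predicted value $\alpha_i\alpha_j/\alpha_{i+j-n}$ for arbitrary admissible $(i,j,k)$ requires a somewhat delicate bookkeeping with binomial coefficients, with the identities $\omega_k=\pi^{k/2}/\Gamma(1+k/2)$ and $\mu_k(B_1)=\binom{n}{k}\omega_n/\omega_{n-k}$, and with the various Haar-measure normalizations on the Grassmannians invoked in the reduction. Once this verification is carried out, both coproducts simultaneously take the pure form $\Delta(\theta_k)=\sum\theta_i\otimes\theta_j$ as claimed.
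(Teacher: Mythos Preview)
Your overall strategy---diagonal rescaling $\theta_k=\alpha_k\mu_k$, with $\alpha_k$ fixed by the structure constants---is correct and matches the paper's setup; your treatment of the additive side via balls is exactly what the paper does, yielding $\alpha_k=c^k/(k!\,\mu_k(B_1))$ with one free parameter. But there is an error in your displayed formula: what the $k=0$ kinematic relation actually gives you is the \emph{product} $\alpha_k\alpha_{n-k}=\omega_k\omega_{n-k}/\bigl(\binom{n}{k}\omega_n\bigr)$, not $\alpha_k^2$. These agree only when $2k=n$; for instance with $n=3$, $k=1$ your $\alpha_k$ fails the additive constraint you already imposed. The correct value (from $\alpha_n=1$) is $\alpha_k=(n!\,\omega_n)^{k/n}/(k!\,\mu_k(B_1))$, which is not symmetric under $k\leftrightarrow n-k$.

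More importantly, the step you flag as the ``main obstacle''---checking all the remaining $c^{i+j-n}_{ij}$ via Poincar\'e--Crofton templates---is the whole content of the theorem, and you have not carried it out. This is Nijenhuis's original route and it can be made to work, but the paper's argument is designed precisely to sidestep it. At this point the paper only handles the additive formula (in the basis $\theta'_k=\psi_k/k!$) and observes that once one knows $\kson(\theta'_k)=c\sum_{i+j=n+k}\theta'_i\otimes\theta'_j$ for a \emph{single} constant $c$ independent of $i,j,k$, the rescaling $\theta_k=c^{k/n}\theta'_k$ finishes both formulas simultaneously. That single-constant fact is established only later, in Section~\ref{ig of rn}, by a structural argument: the transfer principle with spheres together with the ftaig identifies $\valson\simeq\R[t]/(t^{n+1})$ and yields $\kson(t^k)=\sum_{a+b=n+k}t^a\otimes t^b$ directly (up to a Haar-measure normalization), with no computation of the individual $c^k_{ij}$. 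So the paper trades your template verification for the algebraic machinery of Theorems~\ref{transfer} and~\ref{ftaig}.
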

\begin{proof}[Start of proof] Put
$\theta'_k:= \frac {\psi_k}{k!}$. Clearly the second relation holds with $\theta_i=\theta_i'$, but we will see below that the first relation does not--- there is a positive constant $c\ne 1$ in front of the right hand side. But it is easy to check that $\theta_i:= c^{\frac i n} \theta_i'$ works.
\end{proof}

Nijenhuis speculated that some algebraic interpretation of the kinematic formulas should explain Thm. \ref{nijenhuis}. We will see that this is indeed the case.

\subsection{The Weyl principle}\label{weyl section} H. Weyl discovered that the Federer curvature measures of a smoothly embedded submanifold of euclidean space are integrals of Riemannian invariants now commonly known as the {\bf Lipschitz-Killing curvatures}. 
In particular the Federer curvature measures of a smooth submanifold depend only on its intrinsic metric structure and not on the choice of embedding in euclidean space.
This is even true of manifolds with boundary. In fact the Weyl principle applies much more broadly, but we have no systematic understanding of this phenomenon.

Let $M^k\subset \Rn$ be a smooth compact submanifold, and let $e_1,\dots,e_k$ be a local orthonormal frame, with associated coframe $\theta_i$ and curvature 2-forms $\Omega_{ij}$.

\begin{theorem}[\cite{weyl}] \label{weyl principle}
 \begin{equation}
\label{weyl interior}\mu_i(M^k)= 
\begin{cases}
(\frac2 \pi)^{\frac{k-i}2}\int_M \frac 1{k!}\sum \sigma_a\Omega_{a_1a_2}\dots \Omega_{a_{2i-1}a_{2i}}\theta_{a_{2i+1}}\dots \theta_{a_k},  \\
 \quad \quad 0\le k-i \ \text{even}\\
 0 \quad \quad \text{otherwise}
\end{cases}
\end{equation}
where the sum is over all permutations $a$ of $1,\dots,k$ and $\sigma_a$ is the sign.
\end{theorem}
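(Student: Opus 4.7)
The plan is to prove Theorem \ref{weyl principle} by a tube-formula computation on the normal bundle of $M^k$ in $\R^n$, matching coefficients against Steiner's formula \eqref{steiner}, and then converting the resulting extrinsic integral over the unit normal sphere into the stated intrinsic expression via the Gauss equation.

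First I would parametrize the open tube $M_r\setminus M$ by $(x,v)\in \nu M$, $0<|v|\le r$, via $\exp(x,v)=x+v$, writing $v = t\xi$ with $\xi\in S^{n-k-1}\subset \nu_xM$. Exactly as in the derivation of \eqref{smooth mu}, the Weingarten relation yields $D\exp|_{(x,t\xi)}(w+a\partial_t) = (I+tL_\xi)w + a\xi$, so the Jacobian equals $\det(I+tL_\xi) = \sum_{j=0}^k t^j\sigma_j(L_\xi)$. Integrating in $t$ and matching the coefficient of $r^{n-i}$ against \eqref{steiner} yields
\[
\mu_i(M) = \frac{1}{(n-i)\,\omega_{n-i}}\int_M dx\int_{S^{n-k-1}}\sigma_{k-i}(L_\xi)\,d\xi
\]
for $0\le i\le k$ (and $\mu_i(M) = 0$ for $i>k$). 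Since $L_{-\xi} = -L_\xi$ gives $\sigma_m(L_{-\xi}) = (-1)^m\sigma_m(L_\xi)$, the integral vanishes by antipodal symmetry whenever $k-i$ is odd, proving the second clause.

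The heart of the matter is the case $k-i=2l$ even, where the extrinsic integral must equal a universal constant times the intrinsic Lipschitz--Killing integrand. The plan is: expand $\sigma_{2l}(L_\xi)$ as a fully antisymmetric multilinear expression in the matrix entries $\langle L_\xi e_a, e_b\rangle = \langle\two(e_a,e_b),\xi\rangle$; evaluate the sphere integrals $\int_{S^{n-k-1}}\xi^{\alpha_1}\cdots\xi^{\alpha_{2l}}\,d\xi$ by the classical identity expressing them as a sum over perfect matchings of Kronecker deltas (times a factor $\alpha_{n-k-1}/((n-k)(n-k+2)\cdots(n-k+2l-2))$); and pair up the resulting $\two$-factors into scalar products $\langle\two(\cdot,\cdot),\two(\cdot,\cdot)\rangle$. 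The Gauss equation $R(u,v,w,z) = \langle\two(u,w),\two(v,z)\rangle - \langle\two(u,z),\two(v,w)\rangle$ then converts each such scalar product, in combination with the antisymmetrization already carried by $\sigma_{2l}$, into a component of the intrinsic Riemann tensor; in the chosen orthonormal coframe, $l$ such factors assemble into the wedge product $\Omega_{a_1a_2}\wedge\cdots\wedge\Omega_{a_{2l-1}a_{2l}}$, while the remaining $k-2l = i$ indices contribute the $\theta$-factors.

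The main obstacle is the combinatorial bookkeeping: one must verify that after the antisymmetrization of $\sigma_{2l}$, the many symmetric pairings coming from the sphere integral, and the two-term Gauss substitution, what survives is \emph{precisely} the Pfaffian-style integrand $\sum_a\sigma_a\,\Omega_{a_1a_2}\cdots\Omega_{a_{2l-1}a_{2l}}\theta_{a_{2l+1}}\cdots\theta_{a_k}$ of the statement, with no other $O(k)$-invariant Riemann-tensor contraction appearing. Structurally this is guaranteed by Weyl's first fundamental theorem of invariant theory, which leaves only the Pfaffian-type expression available. The remaining scalar constant can then be pinned down either by tracking all combinatorial factors directly or, more cheaply, by testing both sides against a round sphere $M = S^k\subset \R^{k+1}$, where $L_\xi = \pm\Id$, the Riemann tensor is that of constant sectional curvature $1$, and both sides reduce to elementary $\Gamma$-function identities.
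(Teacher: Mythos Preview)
Your overall plan coincides with the paper's: compute the tube volume via the normal exponential map, match against Steiner, and convert the extrinsic expression into an intrinsic one using the Gauss equation. The paper carries this out in Cartesian coordinates $y\in B(0,r)\subset\nu_xM$ using the Cartan connection forms $\omega_{ij}$ and the structure equation $\Omega_{ij}=-\sum_{t>k}\omega_{it}\omega_{jt}$, whereas you pass to polar coordinates and the shape operator $L_\xi$; these are equivalent parametrizations, and your derivation of $\mu_i(M)=\frac1{(n-i)\omega_{n-i}}\int_M\int_{S^{n-k-1}}\sigma_{k-i}(L_\xi)$ together with the antipodal argument for odd $k-i$ is fine.

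The genuine gap is in your appeal to invariant theory. Weyl's first fundamental theorem does \emph{not} leave only the Pfaffian-type expression: for $l\ge 2$ there are several independent $O(k)$-invariant scalars of degree $l$ in the Riemann tensor (e.g.\ for $l=2$ one has $\mathrm{scal}^2$, $|\mathrm{Ric}|^2$, $|R|^2$ in addition to the Pfaffian integrand). So the test on a round sphere would fix a single constant only \emph{after} you already know the invariant is the Pfaffian one; it cannot by itself rule out the other contractions. What actually forces the Pfaffian is the specific antisymmetrization built into $\sigma_{2l}$: each $2l\times 2l$ principal minor of $L_\xi$ is a signed sum, and when you integrate the monomials $\xi^{\alpha_1}\cdots\xi^{\alpha_{2l}}$ over the sphere the resulting sum over perfect matchings pairs the $\two$-factors in exactly the antisymmetric combination $\langle\two_{ab},\two_{cd}\rangle-\langle\two_{ad},\two_{cb}\rangle=R_{acbd}$, so that only curvature $2$-forms survive. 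This is a combinatorial identity, not a consequence of invariant theory, and it has to be checked. The paper does precisely this count: the ball moments $\int_{B(0,r)}y_1^{e_1}\cdots y_l^{e_l}$ produce the coefficients $(e_1-1)!!\cdots(e_l-1)!!$, and these are shown to equal the number of pair-partitions of $\{1,\dots,e\}$ subordinate to the block partition $(e_1,\dots,e_l)$, which is exactly what appears when one expands $\Pf(\Omega_{ab})$ via $\Omega_{ij}=-\sum_t\omega_{it}\omega_{jt}$. If you replace your invariant-theory sentence with this matching (in your sphere language it is the same identity, since the sphere moments differ from the ball moments only by a radial factor), your argument becomes complete and is then essentially the paper's proof in a different coordinate system.
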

\begin{proof} Put $l:= n-k$.
Let $e_{k+1},\dots,e_n$ be a local orthonormal frame for the norrmal bundle of $M$. We extend the total frame  $e_1,\dots,e_n$ to a small tubular neighborhood of $M$ by taking $e_i:= e_i \circ \pi_M$. Thus $e_{k+1},\dots, e_n$ are parallel to the fibers of $\pi_M$. Then the tube of radius $r$ around $M$ may be expressed locally as  the image of $M\times B_r$ under the map
$$
\phi: M\times \R^l \to \Rn,\quad \phi(p,y) := p + \sum_{i=1}^l y_i e_{k+i}(p).
$$
Put $\theta_i$ for the coframe dual to $e_i$, with corresponding connection forms $\omega_{ij}= e_j\cdot de_i$. The structure equations are
\begin{align} 
d\theta_i &= \sum_j \omega_{ij} \theta_j\\
d\omega_{ij}&= \sum_r \omega_{ir}\omega_{rj}= - \sum_r \omega_{ir}\omega_{jr}
\end{align}
and if $i,j\le k$
\begin{equation}
d\omega_{ij} =\Omega_{ij} - \sum_{s\le k} \omega_{is}\omega_{js}
\end{equation}
where $\Omega_{ij}$ are the curvature forms. Thus
\begin{equation}\label{cartan}
\Omega_{ij}= -\sum_{t> k} \omega_{it}\omega_{jt}.
\end{equation}

To compute the volume of the tube we note
\begin{equation}
\phi^* \theta_i = \begin{cases}\theta_i + \sum_j y_j \omega_{k+j,i}, & i \le k \\
					dy_{i-k}, & i >k
			\end{cases}
\end{equation}
Hence
\begin{align}
\notag \phi^* d\vol &= \phi^*(\theta_1 \dots \theta_n)\\
\label{integrand}&= (\theta_1 + \sum_j y_j \omega_{k+j,1})\dots(\theta_k+ \sum_j y_j \omega_{k+j,k})\, dy_1\dots dy_l.
\end{align}

We wish to integrate this over sets $U\times B(0,r)$ where $U \subset M$ is open. By symmetry it is clear that any term of odd degree in any of the $y_i$ will integrate to $0$. Furthermore, the even terms are given by certain products of the $\theta_i, \omega_{ij}$ with coefficients $ c(e_1,\dots,e_l)$, where
\begin{equation}\label{weyl integral}
c(e_1,\dots,e_l):=\int_{B(0,r)} y_1^{e_1}\dots y_l^{e_l} \, dy_1\dots dy_l  = \omega_lr^{l+e}\frac{(e_1-1)!!\dots(e_l-1)!!}{(l+2)(l+4)\dots( l+e)} 
\end{equation}
where the $e_i$ are even and $e:= \sum e_i$, using the standard trick of multiplying by the Gaussian $exp({-\sum y_j^2})$ and integrating over the space of all $y$ by iteration (cf. \cite{weyl}).

Consider the terms of degree $e$ in $\vec y$. These contribute to the coefficient of $r^{l + e}$ in the volume of the tube. For definiteness we consider the terms that are multiples of $\theta_{e+1}\dots \theta_k dy_1\dots dy_l$. The remaining factor is 
\begin{equation}\label{tube expansion}
\sum_\pi(-1)^\pi c(e_1,\dots,e_l) \left(\Pi_{i\in  \pi_1}\omega_{k+1, i}\right)\dots\left(\Pi_{i\in  \pi_l}\omega_{k+l, i}\right)
\end{equation}
over ordered partitions $\pi$  of $\{1,\dots,e\}$ into subsets of cardinality $e_1,\dots,e_l$.

 We claim that this sum is precisely $\frac{\omega_lr^{l+e}}{(l+2)(l+4)\dots(l+e)}$ times the Pfaffian 
\begin{equation}
\Pf([\Omega_{ab}]_{1\le a.b\le e})= \sum_\pi (-1)^\pi \Omega_{\pi_1,\pi_2}\dots \Omega_{\pi_{e-1},\pi_e}
\end{equation}
of the antisymmetric matrix of 2-forms $[\Omega_{ab}]_{1\le a,b\le e}$. Here $\pi$ ranges over all unordered partitions of $\{1,\dots,e\}$ into  pairs. Observe that there are precisely $(e-1)!!= \frac{e!}{2^{\frac e 2}(\frac e 2)!}$ terms in this sum.

To see this we expand each term in the Pfaffian in terms of the $\omega_{ij}$ using \eqref{cartan}.
For example,
\begin{align*}
\Omega_{12}\Omega_{34}\dots \Omega_{e-1,e} &=(-1)^{\frac e 2} \left(\sum_{t>k}\omega_{1t}\omega_{2t}\right)\dots \left(\sum_{t>k}\omega_{e-1,t}\omega_{et}\right)\\
&= (-1)^{\frac e 2} \sum_\pi \left(\Pi_{i\in  \pi_1}\omega_{k+1, i}\right)\dots\left(\Pi_{i\in  \pi_l}\omega_{k+l, i}\right)
\end{align*}
where now the sum is over  ordered partitions of $\{1,...,e\}$ into $l$ subsets that group each pair $2i-1, 2i$ together. Thus the expansion of the Pfaffian yields a sum involving precisely the same terms as in \eqref{tube expansion}, where the coefficients $c$ are replaced by $d(\pi):=$ the number of partitions, subordinate to $\pi$, into pairs. As above this is precisely $d(\pi) = (e_1-1)!! \dots (e_l-1)!!$, which establishes our claim.

The coefficient of the integral in \eqref{weyl interior} arises from the identity
$$\frac {\omega_{n-k}}{\omega_{n-i}(n-k+2)(n-k+4)\dots(n-i)} = \left(\frac 2 \pi\right)^{\frac {k-i}2}$$
for $i\le k, k-i $ even.
\end{proof}

There are also intrinsic formulas \cite{cms2} for $\mu_i(D)$ for smooth compact domains $D\subset M$, involving integrals of invariants of the second fundamental form of the boundary of $D$ relative to $M$. Thus

\begin{theorem}\label{proto LK}
If $\phi: \Rn\supset M^k \to \R^N$ is a smooth isometric embedding and $D\subset M$ is a smooth compact domain then $\mu_i(D) = \mu_i(\phi(D))$.
\end{theorem}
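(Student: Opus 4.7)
The plan is to extend the tube-expansion argument of Theorem \ref{weyl principle} to compact domains with boundary, obtaining an intrinsic formula for $\mu_i(D)$ of the form
$$\mu_i(D) = \int_D P_i(\Omega,\theta) + \int_{\bdry D} Q_i(\Omega,\theta,\two),$$
where $P_i$ is the interior Pfaffian polynomial in the curvature forms and coframe of $M$, $\two$ denotes the second fundamental form of $\bdry D$ in $M$, and $Q_i$ is a universal polynomial. Since every quantity on the right-hand side is determined by the Riemannian metric of $M$ and the inclusion $\bdry D \hookrightarrow M$---data preserved by an isometric re-embedding---the equality $\mu_i(D) = \mu_i(\phi(D))$ will follow at once.

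First I would apply Steiner's formula directly to $D \subset \R^n$ and split the tube $D_r \setminus D$ by the nearest-point projection $\pi_D$. Points whose projection lies in $\interior D$ fill out the restriction to $D$ of the normal disk bundle of $M$ in $\R^n$, and integrating $d\vol$ over this piece reproduces the calculation of Theorem \ref{weyl principle} verbatim, contributing $\int_D P_i(\Omega,\theta)$. Points projecting to $\bdry D$ are parameterized, using a local orthonormal frame $e_1,\dots,e_{k-1}$ tangent to $\bdry D$, $e_k = \nu$ the outward unit normal of $\bdry D$ in $M$, and $e_{k+1},\dots,e_n$ framing the ambient normal bundle of $M$, by
$$(p,t,y) \longmapsto p + t\,\nu(p) + \sum_{j=1}^{n-k} y_j\, e_{k+j}(p), \qquad t \ge 0,\ t^2 + |y|^2 \le r^2.$$
Pulling back $d\vol$ via this map and expanding exactly as in \eqref{integrand} produces a polynomial in the connection forms $\omega_{k+a,b}$ whose fiber variables $(t,y)$ are now to be integrated over a half-ball rather than a full ball.

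The main obstacle will be the boundary-analogue of Weyl's cancellation. Because the fiber is only a half-ball, odd powers of $t$ survive integration, so ambient connection forms $\omega_{k+a,j}$ with $a \ge 1$, $j < k$ threaten to appear explicitly in $Q_i$. To close the argument one must verify that, after performing the half-ball integration (via the Gaussian trick of \eqref{weyl integral}) and applying \eqref{cartan} to reassemble products $\sum_{s > k} \omega_{is}\,\omega_{js}$ into $-\Omega_{ij}$, the only surviving combinations of connection forms are either the intrinsic curvature 2-forms $\Omega_{ij}$ of $M$, or the restrictions to $T\bdry D$ of the forms $\omega_{k,j}$ for $j < k$, which encode precisely the second fundamental form of $\bdry D$ in $M$. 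This boundary-counterpart of Weyl's observation is the crux; once it is in hand, $\mu_i(D)$ has the intrinsic form displayed above, and the theorem follows immediately by evaluating the same formula for the isometrically embedded domain $\phi(D) \subset \R^N$.
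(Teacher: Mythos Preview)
The paper does not actually prove this theorem: it simply cites \cite{cms2} for the existence of intrinsic formulas for $\mu_i(D)$ on domains with boundary---formulas involving the curvature of $M$ and the second fundamental form of $\bdry D$ in $M$---and states the theorem as an immediate consequence. Your proposal is an outline of how one would \emph{establish} those intrinsic formulas, so in spirit you are supplying what the paper only cites.

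Your outline is the correct one and follows the classical route (Allendoerfer--Weil, Chern, and later \cite{cms2}): split the tube over $\interior D$ and over $\bdry D$, parametrize the boundary piece over a half-ball in the normal directions, and then verify that the resulting integrand is intrinsic. You are right that the crux is the boundary analogue of Weyl's cancellation. The mechanism is exactly the one you describe: after the half-ball integration, products $\omega_{k+a,i}\,\omega_{k+a,j}$ with $i,j<k$ reassemble via \eqref{cartan} into the intrinsic $\Omega_{ij}$, while the surviving odd-in-$t$ terms carry only the forms $\omega_{k,j}$ restricted to $T\bdry D$, i.e.\ the second fundamental form of $\bdry D$ in $M$. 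One small point worth flagging: the nearest-point projection $\pi_D$ does not split the tube quite as cleanly as you suggest---near $\bdry D$ the normal bundle of $M$ and the half-ball over $\bdry D$ overlap along the normal bundle of $\bdry D$ in $\R^n$---so in practice one either works with the normal cycle $N(D)$ directly (which separates cleanly into pieces over $\interior D$ and $\bdry D$) or handles the overlap as a lower-order correction. This is a bookkeeping issue rather than a conceptual gap, but it should be addressed if you write out the argument in full.
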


\section{Curvature measures and the normal cycle}
\subsection{Properties of the normal cycle} The definition \eqref{eq:def N} of the normal cycle $N(A)$ extends to any $A \in \K$, regardless of smoothness. As we have seen, it is a naturally oriented Lipschitz submanifold of dimension $n-1$, without boundary, in the sphere bundle $S\Rn$. It will sometimes be useful to think of $N(A)$ as acting directly on differential forms by integration, i.e. as a current. In these terms, Stokes' theorem implies that $N(A)$ annihilates all exact forms. 

The normal cycle is also {\bf Legendrian}, i.e. annihilates all multiples of the contact form $\alpha= \sum_{i=1}^n v_i dx_i$. This is clear for $N(A_r)$, since $\bdry A_r$ is a $C^1$ hypersurface and the normal vector annihilates the tangent spaces. It now follows for $N(A)$, since $N(A_r) \to N(A)$ as $r\downarrow 0$. Here the convergence is in the sense of the flat metric: the difference $N(A_r) -N(A)= \partial T_r$, where $T_r$ is the  $n$-dimensional Lipschitz manifold
$$
T_r:= \{(x,\nabla_x \delta_A): 0< \delta_A(x) <r\}.
$$
Clearly $\vol_n(T_r) \to 0$, which is the substance of the flat convergence. By Stokes' theorem this entails weak convergence.
 
In fact, the operator $N$ is itself a continuous current-valued valuation.

\begin{theorem} If $\K \owns A_i \to A$ in the Hausdorff metric then $N(A_i) \to N(A)$ in the flat metric.

 If $A,B,A\cup B \in \K$ then
\begin{equation}\label{N is val}
N(A\cup B) + N(A\cap B) = N(A )+N(B).
\end{equation}
\end{theorem}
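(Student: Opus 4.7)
I treat continuity and the valuation identity separately.

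\emph{Continuity in the flat norm.} The excerpt has already established $N(A_r)-N(A)=\partial T_r$ with $\vol_n(T_r)\to 0$, hence $\|N(A_r)-N(A)\|_\flat\le \vol_n(T_r)\to 0$; this estimate is moreover locally uniform in $A\in\K$, since $A\mapsto \vol_n(T_r^A)$ is continuous. It therefore suffices to show that for each fixed $r>0$, the map $A\mapsto N(A_r)$ from $(\K,d_H)$ to Legendrian $(n-1)$-currents in $S\R^n$ (with the flat norm) is continuous. Since $A_r$ has $C^{1,1}$ boundary and $N(A_r)=\{(x,\nabla\delta_A(x)):\delta_A(x)=r\}$, and since $A_i\to A$ in Hausdorff metric gives uniform convergence $\delta_{A_i}\to \delta_A$ together with a.e.\ convergence of the gradients off $A$ (via convexity and uniform Lipschitz bounds), one can interpolate between $N((A_i)_r)$ and $N(A_r)$ by an explicit $n$-chain of mass $O(\|\delta_{A_i}-\delta_A\|_\infty)$. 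This gives $N((A_i)_r)\to N(A_r)$ in the flat norm, and a standard three-$\varepsilon$ argument then yields $N(A_i)\to N(A)$.

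\emph{Valuation identity.} I argue fiberwise over $\xi\in\R^n$, using that $N(C)$ is supported on $\pi^{-1}(\partial C)$. The cases where $\xi$ lies in $\partial A\setminus B$, $\partial B\setminus A$, $\partial A\cap\mathrm{int}(B)$, or $\partial B\cap\mathrm{int}(A)$ are immediate: in each, exactly two of the four currents are trivial near $\pi^{-1}(\xi)$ and the other two coincide locally. The essential case is $\xi\in\partial A\cap\partial B$. There $N_{A\cap B}(\xi)=N_A(\xi)+N_B(\xi)$ always, and because $A\cup B\in\K$, a short convexity argument shows $T_{A\cup B}(\xi)=T_A(\xi)\cup T_B(\xi)$, whose polar is $N_A\cap N_B$, so $N_{A\cup B}(\xi)=N_A(\xi)\cap N_B(\xi)$. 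Convexity of $T_A\cup T_B$ also forces $N_A\cup N_B$ to be convex (by a dual-cone argument), whence $N_A+N_B=N_A\cup N_B$. Intersecting with $S^{n-1}$ and carrying through the outer-normal orientations, the required fiber identity reduces to the spherical inclusion--exclusion
\[
[(N_A\cap N_B)\cap S^{n-1}]+[(N_A\cup N_B)\cap S^{n-1}]=[N_A\cap S^{n-1}]+[N_B\cap S^{n-1}]
\]
of oriented $(n-1)$-currents, which follows by direct bookkeeping and integrates up to the claimed equality.

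The main obstacle is the last step of the valuation case: one must verify the spherical inclusion--exclusion at the level of oriented currents, not merely sets, keeping orientations consistent at singular $\xi$ where the cones have less than maximal dimension. The supporting polarity claim ``$T_A\cup T_B$ convex $\Rightarrow$ $N_A\cup N_B$ convex'' is itself a short but nontrivial fact about closed convex cones whose verification underlies the whole argument.
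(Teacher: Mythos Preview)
Your continuity argument matches the paper's essentially verbatim: reduce to showing $N((A_i)_r)\to N(A_r)$ for fixed $r>0$ and then invoke the $T_r$ estimate.

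For the valuation identity your approach is genuinely different from the paper's, and the gap you yourself flag is real. The paper does \emph{not} argue fiberwise. Instead it invokes a uniqueness theorem for compactly supported Legendrian cycles: such a cycle $C$ is determined by the intersection numbers $(H_{v,c}\times\{-v\})\cdot C$ for generic $v\in S^{n-1}$ and all $c$. For $C=N(A)$ this number is $\chi(A\cap H_{v,c})$. One then computes
\[
(H_{v,c}\times\{-v\})\cdot\bigl(N(A)+N(B)-N(A\cap B)\bigr)=\chi(A\cap H_{v,c})+\chi(B\cap H_{v,c})-\chi(A\cap B\cap H_{v,c})=\chi((A\cup B)\cap H_{v,c})
\]
by additivity of $\chi$, and uniqueness forces the left side to equal $N(A\cup B)$. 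This completely bypasses any orientation bookkeeping.

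Your cone-level claims are in fact correct (in particular, $T_A(\xi)\cup T_B(\xi)=T_{A\cup B}(\xi)$ and the consequent convexity of $N_A(\xi)\cup N_B(\xi)$ can be proved by a short separation argument). But the passage from the resulting identity of \emph{indicator functions} of normal cones to an identity of \emph{oriented $(n-1)$-currents} is not accomplished by ``direct bookkeeping'' of spherical caps: the normal cycle is not a fiber integral of these caps, since the dimension of $N_C(\xi)\cap S^{n-1}$ varies with $\xi$. A clean way to salvage your line of attack is to work one dimension up: your fiberwise identity yields $\mathbf 1_{N_1(A\cup B)}+\mathbf 1_{N_1(A\cap B)}=\mathbf 1_{N_1(A)}+\mathbf 1_{N_1(B)}$, hence the corresponding $n$-currents agree (once one checks the natural orientations coincide on overlaps), and then one takes boundaries using $\partial N_1(C)=N(C)$. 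Without this repackaging, the ``spherical inclusion--exclusion of $(n-1)$-currents'' you invoke is not a well-posed statement.
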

\begin{proof} The first assertion follows at once from the argument above, together with the observation that for fixed $r>0$ the normal cycle $N(A_r)$ is a small (with respect to the flat metric) perturbation of $N((A_i)_r)$ for large $i$.

The second assertion is very plausible pictorially, and may admit a nice simple proof. However we will give a different sort of proof based on a larger principle. To begin, note that for generic $v\in S^{n-1}$ the halfspace $H_{v,c}:= \{x: x\cdot v \le c\}\subset\Rn $ meets $A$ iff there is a unique point $x \in A\cap H_{v,c}$ such that $(x,-v) \in N(A)$, and in fact the intersection multiplicity $(H_{v,c} \times\{-v\})\cdot N(A)$ is exactly $+ 1$. In other words
\begin{equation}
(H_{v,c} \times\{-v\})\cdot N(A) = \chi(A\cap H_{v,c}).
\end{equation}
for generic $v$ and all $c$.
This condition is enough to determine the compactly supported Legendrian cycle $N(A)$ uniquely \cite{fu90}. Thus
\begin{align*}
(H_{v,c} \times\{-v\})\cdot (N(A)+N(B)-N(A\cap B)) &= \chi(A\cap H_{v,c})+\chi(B\cap H_{v,c})\\
& \quad \quad -\chi(A\cap B\cap H_{v,c})\\
&= \chi((A\cup B)\cap H_{v,c})
\end{align*}
by the additivity of the Euler characteristic. Thus the uniqueness statement above ensures \eqref{N is val}.
\end{proof}
This argument also shows that the normal cycle of a finite union of compact convex sets set is well-defined, i.e. the inclusion-exclusion principle yields the same answer regardless of the decomposition into convex sets is chosen.
The theorem of \cite{fu90} also implies the existence and uniqueness of normal cycles for much more exotic sets, but this has to do with the analytic side of the subject, which we don't discuss here.

\label{gen curv meas}\subsection{General curvature measures} It follows from the valuation property \eqref{N is val} that 
any smooth differential form $\varphi \in \Omega^{n-1}(S\Rn)$ gives rise to a continuous valuation $\Psi_\varphi$ on $\K$. If $\varphi$ is translation-invariant then so is $\Psi_\varphi$. 

The map from differential forms to valuations factors through the space of {\bf curvature measures}, as follows. Given $A \in \K$ put $\Phi_\varphi^A$ for the signed measure
$$
\Phi_\varphi^A (E) := \int_{N(A) \cap \piinv_{\Rn} E} \varphi.
$$
Denote the space of all such assignments by $\curv(\Rn)$.
 As in the case of the Federer curvature measures (where $\varphi =\kappa_i, i =0,\dots,n-1$), if $A\in \Ksm$ then 
\begin{equation}\label{general curvature integral}
\Psi_\varphi(A) = \int_{\bdry A} P^\varphi_{x,n(x)}(\two_x^A) \, dx
\end{equation}
where for each $(x,v) \in S\Rn$ the integrand $P^\varphi_{x,v}$ is a certain type of polynomial in symmetric bilinear forms on $v^\perp$ and $\two_x^A$ is the second fundamental form of $\bdry A$ at $x$.

To be more precise about the polynomial $P$, fix a point $(x,v) \in S\Rn$ and consider the tangent space
$$
T_{x,v}S\Rn \simeq\Rn\oplus v^\perp \simeq v^\perp \oplus \langle v \rangle \oplus v^\perp =:Q\oplus \langle v \rangle
$$
where $Q= \alpha^\perp$ is the contact plane at $(x,v)$. Note that the restriction of $d\alpha$ is a symplectic form on $Q$. Now the polynomial $P$ can be characterized as follows. 
 If $A\in \Ksm$ and $(x,v)\in N(A)$ (i.e. $v=n_A(x)$), put $L: v^\perp \to v^\perp$ for the Weingarten map of $\partial A$ at $x$. Then $\graph L\subset Q$ equals the tangent space to $N(A)$ at $(x,v)$ and is a Lagrangian subspace of $Q$. This is equivalent to the well-known fact that the Weingarten map is self-adjoint.
 Put $\bar L:v^\perp \to Q$ for the graphing map $\bar L(z):= (z, Lz)$. Then the integrand of \eqref{general curvature integral} is $\bar L^* \varphi_{x,v}$ as a differential form on $\partial A$.
 

\begin{lemma}\label{curv meas ker} Let $V$ be a euclidean space of dimension $m$ and $\varphi \in \Lambda^m(V\oplus V)$. Then $\bar L^* \varphi=0$ for all self-adjoint linear maps $L:V\to V$ iff $\varphi$ is a multiple of the natural symplectic form on $V\oplus V$.
\end{lemma}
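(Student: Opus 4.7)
The plan is to reformulate the hypothesis as a vanishing-on-Lagrangians condition for the symplectic space $(V \oplus V, \sigma)$ and then invoke a standard algebraic fact about the ideal generated by $\sigma$ in the exterior algebra. First observe that for the natural symplectic form $\sigma$ on $V \oplus V$ (the one inherited from the contact plane $Q = v^\perp \oplus v^\perp$ discussed just above) the pullback $\bar L^*\sigma$ vanishes identically iff $L$ is self-adjoint. Thus the graphs $\bar L(V) \subset V \oplus V$ for self-adjoint $L$ are precisely the Lagrangian subspaces transverse to the second factor $0 \oplus V$, and they constitute a Zariski-open dense subset of the Lagrangian Grassmannian of $V \oplus V$. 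Since $\bar L^*\varphi$ depends polynomially on the entries of $L$, while the restriction of $\varphi$ through any continuously-varying parametrization of a Lagrangian subspace is likewise a continuous function of that subspace, the hypothesis is equivalent to the assertion that $\varphi$ restricts to zero on every Lagrangian subspace of $V \oplus V$.

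The converse direction is immediate: if $\varphi = \sigma \wedge \psi$ then $\bar L^*\varphi = \bar L^*\sigma \wedge \bar L^*\psi = 0$ whenever $L$ is self-adjoint. For the forward direction I would invoke the Lefschetz-type algebraic fact that the kernel of the linear map sending $\varphi \in \Lambda^m(V \oplus V)$ to the family of restrictions $\{\varphi|_W\}$, as $W$ ranges over Lagrangian subspaces, equals the image of the operator $L_\sigma := \sigma \wedge (\cdot) : \Lambda^{m-2}(V \oplus V) \to \Lambda^m(V \oplus V)$. Granting this, the lemma is immediate.

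The remaining, and main, task is the proof of this algebraic fact. One approach exploits the $\sltwo$-action on $\Lambda^*(V \oplus V)$ induced by $\sigma$: the Lefschetz decomposition yields $\Lambda^m(V \oplus V) = \sigma \wedge \Lambda^{m-2}(V \oplus V) \oplus P^m$, where $P^m$ denotes the primitive part, so the task reduces to showing that no nonzero primitive $m$-form vanishes identically on the Lagrangian Grassmannian. Concretely, in a symplectic basis $e_i, f_i$ with dual $\xi_i, \eta_i$ and $\sigma = \sum_i \xi_i \wedge \eta_i$, the coordinate Lagrangians $W_S := \la \{e_i\}_{i \in S} \cup \{f_j\}_{j \notin S} \ra$ detect the pure basis monomials $\xi_A \wedge \eta_B$ with $A \sqcup B = \{1, \dots, m\}$; applying symplectic automorphisms, for instance rotations in each $(e_i, f_i)$-plane, then detects the remaining monomials with $A \cap B \ne \emptyset$. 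The dimension match $\binom{2m}{m} - \binom{2m}{m-2} = \dim P^m$ provides a consistency check. The combinatorial organization that pins the kernel down to be exactly $\sigma \wedge \Lambda^{m-2}(V \oplus V)$, rather than merely containing it, is the technical heart of the matter and will be the main obstacle.
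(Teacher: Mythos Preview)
Your reduction matches the paper's: both observe that graphs of self-adjoint $L$ are exactly the Lagrangian subspaces (transverse to the vertical factor), so the statement becomes ``$\varphi$ vanishes on all Lagrangians iff $\varphi \in (\sigma)$.'' The paper's proof in fact stops at precisely that point---it records only this reformulation and treats the algebraic fact as known.

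You go further by outlining a proof via the Lefschetz decomposition $\Lambda^m = \sigma \wedge \Lambda^{m-2} \oplus P^m$, reducing to the claim that no nonzero primitive $m$-form annihilates every Lagrangian. Your coordinate-Lagrangian idea is on the right track, but as you yourself flag, the bookkeeping for monomials with $A \cap B \ne \emptyset$ is not finished, and ``applying symplectic rotations'' is not yet an argument. A cleaner way to close the gap: $P^m$ is an irreducible $Sp(2m,\R)$-module (the fundamental representation attached to the last simple root), and restriction to a single fixed Lagrangian $W_0$ is a nonzero linear functional on $P^m$---for instance $\xi_1 \wedge \cdots \wedge \xi_m$ is primitive (it is annihilated by contraction with the dual bivector since it contains no $\eta_i$) and restricts to the volume form on $\langle e_1,\dots,e_m\rangle$. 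By irreducibility the $Sp$-translates of this functional span $(P^m)^*$, so a primitive form vanishing on every Lagrangian must be zero. This replaces the combinatorics you identified as the obstacle and completes the argument.
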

\begin{proof} The self-adjoint condition on $L$ is equivalent to the condition that the graph of $L$ be Lagrangian, so it is only necessary to prove that $\bar L^* \varphi=0\implies \varphi \in (\omega)$, where
$\omega\in \Lambda^2(V\oplus V)^*$ is the symplectic form. 
\end{proof}

\begin{proposition}\label{zero curv meas}
The curvature measure $\Phi_\varphi = 0$ iff $\varphi \in (\alpha,d\alpha)$.
\end{proposition}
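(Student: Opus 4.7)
The plan is to prove the easy direction from the Legendrian property of $N(A)$, then establish the converse by reducing $\Phi_\varphi \equiv 0$ to a pointwise identity on each fiber and appealing to Lemma~\ref{curv meas ker}.

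For the easy direction, I would observe that the normal cycle is Legendrian and of dimension $n-1$: at every regular point $TN(A) \subset \ker\alpha$, so $\alpha|_{TN(A)} = 0$; moreover any $(n-1)$-dimensional submanifold of $\ker\alpha$ is automatically $d\alpha$-isotropic, since $d\alpha|_{TN} = d(\alpha|_{TN}) = 0$. Thus every element of the ideal $(\alpha, d\alpha)$ vanishes on $TN(A)$ and $\Phi_\varphi^A \equiv 0$.

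For the converse, assuming $\Phi_\varphi \equiv 0$, the first step is localization. Plugging $A \in \Ksm$ and arbitrary Borel $E$ into the representation \eqref{general curvature integral} forces $P^\varphi_{x, n_A(x)}(\two^A_x) \equiv 0$ on $\partial A$. Given any $(x,v) \in S\Rn$ and any positive definite self-adjoint $L: v^\perp \to v^\perp$, I would realize $L = \two^A_x$ and $v = n_A(x)$ by a suitable small ellipsoid tangent to $x + v^\perp$. Since $P^\varphi_{x,v}(L)$ depends polynomially on $L$ and vanishes on the open positive definite cone, it vanishes for every symmetric $L$; equivalently, $\bar L^*\varphi_{(x,v)} = 0$ for every self-adjoint $L: v^\perp \to v^\perp$ at every point $(x,v)$.

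To bring Lemma~\ref{curv meas ker} to bear, I would split $T S\Rn = Q \oplus \langle R \rangle$ using the Reeb vector field $R$ of $\alpha$ (characterized by $\alpha(R)=1$ and $\iota_R d\alpha = 0$), and dually $T^* = \langle \alpha \rangle \oplus Q^*$. The Reeb identity places $d\alpha$ entirely in $\Lambda^2 Q^*$, where it coincides with the natural symplectic form $\omega$ on $Q \cong v^\perp \oplus v^\perp$. Decomposing $\varphi_{(x,v)} = \alpha\wedge\beta + \gamma$ with $\gamma \in \Lambda^{n-1}Q^*$, and noting that $\graph L \subset Q$ annihilates $\alpha$, the hypothesis collapses to $\bar L^*\gamma = 0$ for every self-adjoint $L$; Lemma~\ref{curv meas ker} applied with $V = v^\perp$ and $m = n-1$ then yields $\gamma = \omega\wedge\tilde\eta = d\alpha\wedge\tilde\eta$, so $\varphi_{(x,v)} \in (\alpha, d\alpha)_{(x,v)}$ at every point. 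The remaining issue, which I expect to be the most delicate, is to promote this pointwise conclusion to a smooth global decomposition: the pointwise ideal has constant codimension by translation invariance of the contact linear algebra (so that $\omega\wedge\cdot: \Lambda^{n-3}Q^* \to \Lambda^{n-1}Q^*$ has constant rank, by symplectic Lefschetz), hence defines a smooth subbundle of $\Lambda^{n-1} T^* S\Rn$; a smooth form vanishing pointwise in the quotient then lifts smoothly, producing global $\beta, \eta$ with $\varphi = \alpha\wedge\beta + d\alpha\wedge\eta$.
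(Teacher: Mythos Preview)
Your proof is correct and follows essentially the same route as the paper, which simply records that the statement ``follows at once from Lemma~\ref{curv meas ker} and the preceding discussion.'' You have made explicit the details the paper leaves implicit---the localization via arbitrary Borel sets, the polynomial extension from positive-definite to all self-adjoint $L$, the Reeb decomposition $\varphi = \alpha\wedge\beta + \gamma$, and the promotion of the pointwise conclusion to a smooth global decomposition via the constant-rank subbundle argument---none of which the paper spells out.
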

\begin{proof} This follows at once from Lemma \ref{curv meas ker} and the preceding discussion.
\end{proof}

In Thm. \ref{ker thm} below we will characterize the kernel of the full map $\Psi: \Omega^{n-1}(S^*W)^W  \to \Val(W)$, due to 
Bernig and Br\"ocker \cite{bebr07}. 

\subsection{Kinematic formulas for invariant curvature measures} Let $M$ be a connected Riemannian manifold of dimension $n$ and $G$ a Lie group acting effectively and {\bf isotropically} on $M$, i.e. acting by isometries and such that the induced action on the tangent sphere bundle $SM$ is transitive. Under these conditions the group $G$ may be identified with a sub-bundle of the bundle of orthonormal frames on $M$.
The main examples are the (real) space forms $M$ with their groups $G$ of orientation-preserving isometries; the complex space forms $\C P^n$ and $\C H^n$ with their groups of holomorphic isometries, and $M=\C^n$ with $G = \barun$; and the quaternionic space forms.

Let $K\subset H\subset G$ be the subgroups fixing points $\bar o\in SM, o= \pi(\bar o)\in M$. Put $\curv^G(M)\simeq \Lambda^{n-1}(T_{\bar o} SM)^K/(\alpha,d\alpha)$ for the space of $G$-invariant curvature measures on $M$.

\begin{theorem}[\cite{fu90}]\label{kf for cm} There is a linear map
 $$\tilde k = \tilde k_G: \curv^G(M) \to \curv^G(M) \otimes \curv^G(M) 
 $$ 
 such that for any open sets $U,V \subset M$ and any sufficiently nice compact sets $A,B \subset M$
\begin{equation} 
\tilde k(\varphi) (A,U;B,V) = \int_{G} \varphi(A\cap gB, U \cap gV) \, dg.
\end{equation}
\end{theorem}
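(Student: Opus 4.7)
\medskip

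My plan is to reduce the statement to a direct computation on normal cycles and then exploit isotropy to recognize the result as a tensor of invariant curvature measures. Write $\varphi \in \Omega^{n-1}(SM)^G$ for a representative of the given element of $\curv^G(M)$, and recall that $\Phi_\varphi^A(E)= \int_{N(A)\cap \pi^{-1}(E)}\varphi$. The first step is to establish a \emph{normal cycle formula for intersections}: for $g \in G$ outside a measure-zero set (the bad set where $A$ and $gB$ fail to meet transversely in a generalized sense), one has a decomposition of Legendrian cycles
\begin{equation*}
N(A\cap gB) = N(A)\restrict \pi^{-1}(gB) \;+\; (-1)^{?}\,N(gB)\restrict \pi^{-1}(A) \;+\; J(A,gB),
\end{equation*}
where the ``join term'' $J(A,gB)$ is a Legendrian $(n-1)$-cycle supported over $\partial A\cap g\partial B$ and obtained by joining fibers of $N(A)$ and $N(gB)$ in the cosphere over each common boundary point (this is the curvature-measure refinement of the inclusion–exclusion identity \eqref{N is val}, carried out at the level of currents).

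Next, insert this decomposition into $\int_G \Phi_\varphi^{A\cap gB}(U\cap gV)\,dg$ and treat the three summands separately. The first two are transparent: by Fubini and by isotropy of $G$ on $SM$, the integral
$$\int_G \int_{N(A)\cap\pi^{-1}(U\cap gV)}\varphi \, dg$$
is, for each fixed $(x,v)\in N(A)\cap \pi^{-1}(U)$, the integral over $\{g:x\in gV\}\subset G$; and since $G$ acts isotropically, this inner integral becomes a \emph{local} $G$-invariant functional of $(B,V)$. Unwinding gives an expression of the shape $(\Phi_\varphi^A \otimes \vol_B)(U,V)$ (up to boundary terms); the second summand is symmetric. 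The genuinely new contribution comes from the join term.

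For the join term, the key is to realize $J(A,gB)$ as the image under a smooth map $\Sigma_G:N(A)\times_M N(gB)\times[0,1]\to SM$ (the fiberwise spherical join) of the fiber product of normal cycles. Then
$$\int_G \int_{J(A,gB)\cap \pi^{-1}(U\cap gV)}\varphi \, dg = \int_{N(A)\cap \pi^{-1}U}\int_{N(B)\cap \pi^{-1}V}\int_G \Sigma^*\varphi \, dg,$$
after a change of variable using the transitivity of $G$ on frames. The innermost integral $\int_G \Sigma^*\varphi\,dg$, at a pair of base points, reduces to an integral of a $K$-invariant $(n-1)$-form on a finite-dimensional bundle; by the standard Hadwiger-type finiteness (the space of $K$-invariant forms is finite-dimensional and splits as a tensor product up to $(\alpha,d\alpha)$), it can be written as a finite sum $\sum_i \varphi_i\otimes\psi_i$ with $\varphi_i,\psi_i$ representing $G$-invariant curvature measures. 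Collecting the three pieces defines $\tilde k(\varphi)$.

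The main obstacle I expect is the rigorous handling of the join term: one must justify the slicing/Fubini steps for the flat currents $N(A),N(B)$ (which are only Lipschitz, not smooth), verify transversality for almost every $g$, and check that $\Sigma^*\varphi$ is smooth enough on $N(A)\times_M N(gB)\times[0,1]$ that the innermost integration makes sense. Once this is in place, invariance of the construction under $G$ and independence from the representative $\varphi$ modulo $(\alpha,d\alpha)$ follow from Proposition \ref{zero curv meas} applied in the product setting, so the output genuinely lies in $\curv^G(M)\otimes \curv^G(M)$.
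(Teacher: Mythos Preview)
Your approach is essentially the same as the paper's: both rest on the three-term normal cycle decomposition
\[
N(A\cap gB)=N(A)\restrict\pi^{-1}(gB)+g_*N(B)\restrict\pi^{-1}(A)+\text{(join term)},
\]
with the first two pieces contributing $\Phi_\varphi\otimes\vol$ and $\vol\otimes\Phi_\varphi$, and the join term handled by a fiber integration that lands in $G\times G$-invariant forms on $SM\times SM$.

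The main difference is organizational. The paper packages the join construction into a cartesian square
\[
\begin{CD}
E @>>> G\times SM\\
@VVV @VVV\\
SM\times SM @>>> M\times M
\end{CD}
\]
and a specific family of currents $C_{(\eta,\zeta)}\subset E$ over $(\eta,\zeta)\in SM\times SM$, so that the entire kinematic integral becomes a single fiber integral $\pi_{C*}(dg\wedge\varphi)$ over $C$. This has two advantages over your more informal description via a map $\Sigma_G$: it makes the $G\times G$-equivariance (hence the landing in $\curv^G\otimes\curv^G$) automatic from the equivariance of the diagram, and it cleanly separates the a.e.\ transversality argument (dimension count on the antipodal locus) from the integration. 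Your displayed change-of-variables formula for the join term is not literally correct as written---the fiber-product constraint and the $g$-dependence of $J(A,gB)$ are entangled in a way your triple integral does not capture---but the diagrammatic setup in the paper is exactly what makes that step rigorous. Your instinct about where the difficulty lies is accurate.
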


{\bf Remark.}  Sets with positive reach or subanalytic sets are all ``sufficiently nice" for this theorem to hold. Unfortunately there is no known simple characterization of the precise properties needed. The best formal result along these lines so far is Corollary 2.2.2 of \cite{fu94}.

\begin{proof} Consider the cartesian square of $G\times G$ spaces
\begin{equation}\label{main diagram}
\begin{CD}
E&@>>> &G \times SM \\
@VVV & & @VVV\\
SM \times SM &@>>>& M \times M
\end{CD}
\end{equation}
where the vertical bundles have fiber $H \times S_oM$.
Here the map on the right is $(g,\xi) \mapsto (g\pi\xi, \pi\xi )$ and the action of $G\times G$ on $G\times SM$ is $(h,k)\cdot (g,\xi):= (h gk^{-1}, k\xi)$.

The fiber of the map on the left over a point $(\eta,\zeta)$ is 
$$F_{(\eta,\zeta)}:=\{(g,\xi): g^{-1}\pi \eta = \pi \zeta = \pi \xi\}\subset G\times SM.$$
Put
\begin{equation}\label{def C}
C_{(\eta,\zeta)}:= \clos \{(g, \xi): \xi = a g^{-1}\eta + b \zeta \text{ for  some } a,b >0\}\subset F_{(\eta,\zeta)}.
\end{equation}
In other words $C_{(\eta,\zeta)}$ is a stratified space of dimension $\dim H + 1$ and consisting of all pairs $(g,\xi)$ such that $g^{-1}\eta,\zeta$ lie in a common tangent space, and $\xi$ lies on some geodesic joining these points in the sphere of this tangent space. One may check directly that $(h,k)\cdot C_{(\eta,\zeta)} = C_{(h\eta,k\zeta)}$ for $(h,k)\in G\times G$. Each $C_{(\eta,\zeta)}$ carries a natural orientation such that
$$
\partial C_{(\eta,\zeta)} = (H_{(\eta,\zeta)} \times \{\zeta\}) - \{(g,g^{-1}\eta): g \in H_{(\eta,\zeta)} \} + (K_{(\eta,\zeta)} \times S_{\pi\zeta}M)
$$
where $H_{(\eta,\zeta)}:= \{g\in G: \pi g^{-1}\eta =\pi \zeta\} $ and $H_{(\eta,\zeta)} \supset K_{(\eta,\zeta)} :=\{g: g^{-1}\eta = -\zeta\}$. For $\beta \in \Omega^*(SM)^G$ we have 
$$
\pi_{C*} (dg \wedge \beta) \in (\Omega^*(SM)\times \Omega^*(SM))^{G\times G}
$$
where $\pi_{C*}$ is fiber integration over $C$.

Now consider $C(A,B) := N(A) \times N(B) \times_E C \subset E$. One checks that the image of $C(A,B)$ in $G\times SM$ consists of all $(g,\xi)$ such that $g^{-1}\pi \eta = \pi \zeta = \pi \xi$ for some $\eta \in N(A) ,\zeta \in N(B)$ and $\xi $ lying on a geodesic between $g^{-1}\eta, \zeta$. Furthermore the set of those $g$ for which $g^{-1}\eta = -\zeta$ for some $\eta\in N(A),\zeta\in N(B)$ has positive codimension 1: it is the image in $G$ of $N(A) \times N(B) \times_E K $, which has dimension $2n-2 + \dim K$, whereas $\dim G = \dim G/K + \dim K = \dim SM + \dim K = 2n-1 + \dim K$. 

Thus $A,gB$ meet transversely for a.e. $g \in G$, and for such $g$
$$
N(A \cap gB) = N(A) \restrict \pi^{-1}(gB) + g_*N(B) \restrict \pi^{-1}A + \pi_{SM*}(C(A,B)\cap \pi_G^{-1}(g)).
$$
Now we may compute the kinematic integral for a given $\beta \in \Omega^{n-1}(SM)^G$ in either of two ways: either by pushing $N(A) \times N(B) \times_E C$ into the top right corner $G\times SM$ of \eqref{main diagram}) and integrating $dg \wedge \beta$; or else by pulling back $dg\wedge \beta$ to $E$, pushing it down to $SM \times SM$ via $\pi_{C*}$, and integrating the result over $N(A) \times N(B)$. Thus the conclusion of the theorem is fulfilled for the curvature measure $\varphi =\Phi_\beta$ with
$$
\tilde k(\Phi_\beta) = \Phi_\beta \otimes \vol + \vol \otimes \Phi_\beta + (\Phi\otimes\Phi)_{\pi_{C*} (dg \wedge \beta) }.
$$
where $\vol^A(U):= \vol(A\cap U)$.
\end{proof}

\subsection{The transfer principle} \label{transfer section}
The following is an instance of the general transfer principle of Howard \cite{howard93}. 

Let $M_\pm$ be two Riemannian manifolds of dimension $n$, and $G_\pm $ be Lie groups acting isotropically on $M_\pm$. Assume further that the subgroups $H_\pm$ fixing chosen points $o_\pm \in M_\pm$ are isomorphic, and that there is an isometry
\begin{equation}\label{transfer hypothesis}
\iota:T_{o_+} M_+ \to T_{o_-} M_- 
\end{equation}
intertwining the actions of $H_\pm$.
We identify $H_\pm$ with a common model $H$. Let $K\subset H$ be the subgroup of points fixing a chosen point $\bar o_\pm\in SM_\pm$ with $\pi_\pm(\bar o_\pm) = o_\pm$, where $\pi_\pm: SM_\pm \to M_\pm$ is the projection. Since the actions of $G_\pm$ are isotropic, we may assume that $\iota (\bar o_+) = \bar o_-$.
To simplify notation we denote the points $o_\pm,\bar o_\pm$ by $o,\bar o$. 

The tangent spaces to the sphere bundles may be decomposed
$$
T_{\bar o} SM_\pm = P_\pm\oplus V_\pm
 $$ 
into the horizontal (with respect to the Riemannian connection) and vertical subspaces. Thus there are canonical isomorphisms
$$
P_\pm \simeq  T_oM_\pm ,\quad V_\pm \simeq \bar o ^\perp \subset T_o M_\pm
$$
and therefore $\iota$ induces a $K$-equivariant isomorpism $\bar \iota: T_{o} SM_+ \to T_{o} SM_- $.

This gives an isomorphism of exterior algebras
$$
\bar \iota^*: \Omega^*(SM_-)^{G_-} \simeq \Lambda^*(T_{\bar o}SM_-)^K \to \Lambda^*(T_{\bar o}SM_+)^K\simeq \Omega^*(SM_+)^{G_+}.
$$
Since the contact form $\alpha$ restricts to the inner product with $\bar o$ on the $P$ factor, and to $0$ on $V$, it follows that $\bar \iota^* \alpha = \alpha$. Although $\bar \iota^*$ is {\it not} an isomorphism of {\it differential} algebras (i.e. does not intertwine $d$), nevertheless $\bar \iota^* d\alpha  =d\alpha$: this can be seen directly since in each case $d\alpha = \sum_{i=1}^{n-1} \theta_i \wedge \tilde\theta_i$, where $\theta_i,\tilde \theta_i$ are orthonormal coframes for $\bar o^\perp \subset P$ and for $V$ that correspond under the natural isomorphism--- in other words, $d\alpha$ corresponds to the natural symplectic form on the cotangent bundles. Therefore $\bar\iota^*$ induces a natural isomorphism
\begin{equation}
\tildiota: \curv^{G_-}( M_-) \to \curv^{G_+}(M_+)
\end{equation}
via the identifications
\begin{equation*}
\curv^{G_\pm}( M_\pm)\simeq \Omega^{n-1}(SM_\pm)^{G_\pm}/(\alpha, d\alpha).
\end{equation*}

\begin{theorem} \label{transfer} If there exists an isometry \eqref{transfer hypothesis} as above then the following diagram commutes:
\begin{equation}
\begin{CD} \curv^{G_-}(M_-)& @>{\tildiota}>> &  \curv^{G_+}(M_+)\\
@V{\tildk_{G_-}}VV & &  @V{\tildk_{G_+}}VV \\
\curv^{G_-}(M_-)\otimes \curv^{G_-}(M_-)& @>{\tildiota\otimes \tildiota}>>  & \curv^{G_+}(M_+)\otimes\curv^{G_+}(M_+)\\
\end{CD}
\end{equation}
\end{theorem}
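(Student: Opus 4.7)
The plan is to reduce the commutativity to a statement purely about tangent-space data at the chosen base point, where the isometry $\iota$ furnishes a direct comparison.

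First, I would invoke the construction of $\tildk_G$ from the proof of Theorem \ref{kf for cm}: represent a curvature measure by $\beta\in\Omega^{n-1}(SM)^G$, form $dg\wedge\beta$ on $G\times SM$, pull back to $E$, and fiber-integrate over $C$ to obtain a $G\times G$-invariant form on $SM\times SM$ representing the kinematic kernel. Since the output is $G\times G$-invariant, it is determined by its value at a single point $(\bar o,\bar o')\in SM\times SM$, which by isotropy I may take to satisfy $\pi\bar o=\pi\bar o'=o$. At such a point the fiber $C_{(\bar o,\bar o')}$ lies inside $H\times S_oM$, since $g^{-1}\pi\bar o=\pi\bar o'=o$ forces $g\in H$, and $\xi$ then ranges over the geodesic arc in $S_oM$ joining $g^{-1}\bar o$ to $\bar o'$.

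Next I would observe that the local fiber integration at this point depends only on: (i) the $K$-invariant exterior form $\beta\in\Lambda^{n-1}(T_{\bar o}SM)^K$; (ii) the Haar measure on $H$, to which $dg$ restricts on the $H$-coset appearing in the fiber; (iii) the oriented stratified subset $C_{(\bar o,\bar o')}\subset H\times S_oM$. All three are intertwined by the isomorphism $\bar\iota$ between $T_{\bar o}SM_+$ and $T_{\bar o}SM_-$: item (i) by definition of $\bar\iota^*$; item (ii) because $H$ is a single abstract group on both sides; and item (iii) because $\iota$ is an $H$-equivariant isometry, hence sends geodesic arcs in $S_oM_+$ to geodesic arcs in $S_oM_-$ while intertwining the $H$-action. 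Together with the already noted identities $\bar\iota^*\alpha=\alpha$ and $\bar\iota^*d\alpha=d\alpha$, which ensure that the quotients $\Omega^{n-1}(SM_\pm)^{G_\pm}/(\alpha,d\alpha)$ are respected, this shows that the fiber integrations defining $\tildk_{G_+}$ and $\tildk_{G_-}$ agree under the identification $\bar\iota^*\otimes\bar\iota^*$, which is exactly the asserted commutativity.

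The main obstacle, as I see it, is the tension between the fact that fiber integration on $E\to SM\times SM$ is a \emph{global} construction and my claim that it reduces to tangent-space data at a single point. The justification relies on $G\times G$-equivariance of the pushforward together with isotropy, but one must be vigilant that the orientation and measure normalization on $C$ are intrinsic to the tangent-space picture and do not secretly depend on a choice of global section or trivialization of $G\to M$. A related subtlety is that $\bar\iota^*$ is not a differential algebra isomorphism, so one cannot simply transport the entire proof of Theorem \ref{kf for cm} wholesale; instead, one must check that the explicit fiber-integral formula for $\tildk$ only involves wedge products together with $\alpha$ and $d\alpha$, and takes no further derivatives of $\beta$ itself, which is indeed the case.
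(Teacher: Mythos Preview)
Your strategy is the same as the paper's: reduce, by $G\times G$-equivariance, to the tangent-space picture over $(\bar o,\bar o)$, and then observe that all ingredients of the fiber integral over $C$ are intertwined by $\iota$. The paper carries this out by first constructing the reductive decomposition $\mathfrak g_\pm=\mathfrak h\oplus\mathfrak m_\pm$ (with $\mathfrak m_\pm\simeq T_oM_\pm$ $H$-equivariantly) and then writing the derivative of the entire diagram \eqref{main diagram} over $(\bar o,\bar o)$ in Lie-algebraic terms; this is exactly what resolves your self-identified ``obstacle'' about whether the fiber integral is genuinely determined by tangent data.

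One point where your phrasing should be sharpened: in item (ii) you say $dg$ ``restricts'' to Haar measure on $H$ along the fiber. Literally that restriction is zero, since $dg$ is a top form on $G$ and $H$ has smaller dimension. What is actually used is that, under $\mathfrak g=\mathfrak h\oplus\mathfrak m$, the invariant volume form $dg$ at points of $H$ factors as the probability Haar measure on $H$ times the volume form on $\mathfrak m$ coming from the identification with $T_oM$; it is this factorization, not a restriction, that transfers under $\iota$. With that correction your outline matches the paper's proof.
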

\begin{proof} Note that each $G_\pm$ may be identified with a subbundle $\mathcal G_\pm$ of the orthonormal frame bundle $\mathcal F_\pm$ of $M_\pm$: select an arbitrary orthonormal frame $f$ at some point $o\in M_\pm$, and take $\mathcal G_\pm$ to be the $G_\pm$ orbit of $f$. Since $G_\pm$ acts effectively, the induced map is a diffeomorphism.

Recall \cite{bish-crit} that the Riemannian metric on $M_\pm$ induces a canonical invariant horizontal distribution $\mathcal D$ on $\mathcal F_\pm$, with each plane $\mathcal D_f\subset T_f \mathcal F_\pm$  linearly isomorphic to $T_{\pi f} M_\pm$, where $\pi: \mathcal F_\pm \to M_\pm$ is the projection. A modification of the Sasaki metric endows $\mathcal F_\pm$ with a Riemannian structure $g$, where a) each $\mathcal D_f \perp \pi^{-1}\pi f$, b) the restriction of $g$ to each fiber $\pi^{-1}x$ is induced by the standard invariant metric on $SO(n)\simeq \pi^{-1}x \subset \mathcal F_\pm$, and c) the restriction of $\pi_*$ to $\mathcal D_f$ is an isometry to $T_{\pi f} M$.

For $f \in \mathcal G_\pm$ consider the orthogonal projection, with respect to this Sasaki metric, of $\mathcal D_f $ to $T_f \mathcal G_\pm \subset T_f \mathcal F_\pm$. This yields a distribution $\mathcal M_\pm$ on $\mathcal G_\pm$, complementary to the tangent spaces of the fibers of $\mathcal G_\pm$, and clearly invariant under the action of $G_\pm$ (since this action is by isometries). Putting $\frak m_\pm:= \mathcal M_{\pm, o} \subset T_o \mathcal G_\pm \simeq T_e G_\pm = \frak g_\pm$, it follows
 that at the level of Lie algebras there is a natural decomposition
\begin{equation}
\frak g_\pm = \frak h \oplus \frak m_\pm
\end{equation}
where $\frak m_\pm$ is naturally identified with $T_oM_\pm$ under the projection map, and is invariant under the adjoint action of $H$ (i.e. the homogeneous space $G/H$ is {\it reductive}). Furthermore the isometry $\iota$ induces an $H$-equivariant isomorphism $\frak m_+\to \frak m_-$.

We claim that the maps $\tilde k_{G_\pm}$ depend only on the data above. To see this, for simplicity of notation we drop the $\pm$, and consider the diagram of derivatives of  \eqref{main diagram} over the point $(\bar o ,\bar o) \in SM\times SM$ for the pairs $(M_\pm,G_\pm)$:
\begin{equation}\label{derived diagram}
\begin{CD}
\left.TE\right|_{\bar F} &@>>> &\left.T\left(G\times SM \right)\right|_{F} \\
@VVV & & @VVV\\
T_{\bar o}SM \times T_{\bar o}SM&@>>>& T_oM \times T_oM
\end{CD}
\end{equation}
where $ F= H\times H/K,\bar F$ are the fibers over $( o,o)$, $(\bar o,\bar o)$. The vertical bundles have fiber $TH \times T(S_oM)= TH \times T(H/K)$, and the diagram is again a cartesian square. Writing this in terms of the Lie algebras,
\begin{equation}\label{Lie diagram}
\begin{CD}
\left.TE\right|_{\bar F} &@>>> &TH \oplus \frak m \oplus T(H/K) \oplus \frak m \\
@VVV & & @VVV\\
 \frak h/\frak k \oplus \frak m\oplus \frak h/\frak k \oplus \frak m &@>>>& \frak m\oplus \frak m
\end{CD}
\end{equation}
since $\left.TG\right|_H= TH \oplus \frak m$. The maps on the bottom and on the right are the obvious projections.

These diagrams are $(H\times H)$-equivariant, and the $H\times H$ actions depend only on the structure of $\frak m$ as an $H$-module. In particular we may re-insert the $\pm$, and in the diagram obtained from the obvious maps between the two diagrams is commutative and $(H\times H)$-equivariant. The restrictions of the $G$-invariant forms on $SM$ are precisely the $H$-invariant sections of the exterior algebra bundles of the various bundles occurring here. Our convention on volume forms dictates that the distinguished volume form of $G$ restricts to the product of the probability volume form on $H$ with the natural volume form on $\frak m$ arising from the identification with $T_oM$. In other words, the natural maps between the $\pm$ diagrams respect the mapping $\bar \iota$.

Finally, it is clear that the spaces (or currents) $C$ from \eqref{def C} also correspond under these maps, hence the maps between the $\pm$ spaces also intertwine the fiber integrals over $C$, which is the assertion of the theorem.
\end{proof}

Thm. \ref{transfer} implies, for example, that in some sense the kinematic formulas of all three space forms $\Rn, S^n, H^n$ are the same, in the sense that there is a canonical identification of the spaces of invariant curvature measures on $S^n$ (or $H^n$) and $\Rn$ which moreover intertwines the respective kinematic operators $\tilde k$: these spaces are the quotients $G_\lambda/SO(n)$ for $\lambda= 0, 1,-1$ respectively, where $G_0 = SO(n)\times \Rn, G_1 = SO(n+1), G_{-1} = SO(n,1)$. We take $H= SO(n) \supset K:= SO(n-1)$. Their Lie algebras are the subalgebras
\begin{equation}\label{lie algebras}
\frak g_\lambda =\left\{\left[
\begin{matrix}
0 & a_{01} & \dots & a_{0n}\\
a_{10} & &  & \\
\dots & & h& \\
a_{n0} & & & 
\end{matrix}
\right]: h \in \frak{so}(n), \ a_{0i} + \lambda a_{i0} =0
\right\}
\end{equation}
of $\frak{gl}(n+1)$.
The subspace $\frak m = \{h=0\}$.

The complex space forms $\Cn$ (with the restricted isometry group $\barun$), $\CP^n, \C H^n$ admit a similar description, so again it follows that the integral geometry of these spaces at the level of curvature measures is independent of the ambient curvature. However, the case of the real space forms is uniquely simple, due to the fact that the map from $\barson$-invariant curvature measures to valuations is an isomorphism.

\section{Integral geometry of euclidean spaces via Alesker theory}
\subsection{Survey of valuations on finite dimensional real vector spaces}\label{survey}

The recent work of S. Alesker revolves around a deepened understanding of convex valuations.
 Given a finite dimensional real vector space $W$ of dimension $n$, consider the space $\Val = \Val(W)$ of continuous translation-invariant convex valuations on $W$ . For convenience we will assume that $W$ is endowed with a euclidean structure, although this device may be removed by inserting the dual space $W^*$ appropriately. The valuation $\varphi \in \Val(W)$ is said to have {\bf degree} $k$ and {\bf parity }$\eps = \pm1$ if
\begin{align*}
 \varphi(tK)& = t^k \varphi (K) , \quad t >0,\\
 \varphi (-K) &= \eps \varphi (K)
 \end{align*}
 for all $K\in \K(W)$. Denote the subspace of valuations of degree $k$ and parity $\eps$ on $W$ by $\Val_{k,\eps}(W) \subset \Val(W)$.

Putting
$$
\norm{\varphi} := \sup\{\varphi(K): K\in \K(W), K \subset B_1\},
$$
where $B_1$ is the closed unit ball in $W$, gives $\Val(W)$ the structure of a Banach space. The group $GL(W)$ acts on $\Val(W)$ by $g \cdot \varphi (K):= \varphi (g^{-1}K)$, and this action stabilizes each $\Val_{k,\eps}$. Put $\valsm(W)$ for the subspace of {\bf smooth valuations}, i.e. the space of valuations $\varphi$ such that the map $GL(W) \to \Val, g \mapsto g \varphi$, is smooth. General theory (cf. \cite{ale01}) ensures that $\valsm$ is dense in $\Val$.

The starting point for Alesker's approach is

\begin{theorem}[Irreducibility Theorem \cite{mcmullen77,ale01}]\label{irred thm} As a $GL(W)$-module, the decomposition of $\Val(W)$ into irreducible components is
\begin{equation}\label{mcmalesk decomp}
\Val(W) = \bigoplus_{k=0,\dots,n; \ \eps = \pm 1} \Val_{k,\eps} (W).
\end{equation}
Furthermore $\Val_0, \Val_n$ are both 1-dimensional, spanned by the Euler characteristic $\chi$ and the volume $\vol$ respectively.
\end{theorem}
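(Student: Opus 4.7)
The plan is to establish the statement in three stages of increasing difficulty: the coarse grading of $\Val(W)$ by degree and parity, the one-dimensionality of the extreme pieces $\Val_0$ and $\Val_n$, and finally the irreducibility of each intermediate $\Val_{k,\eps}$ as a $GL(W)$-module. First I would recover McMullen's original decomposition: for every $\varphi \in \Val(W)$ and every $K \in \K(W)$, the function $t \mapsto \varphi(tK)$ is a polynomial in $t \ge 0$ of degree at most $n$, which follows from a double induction on $n$ and the dimension of $K$ using only the valuation property, continuity, and translation invariance (e.g.\ by cutting $tK$ into pieces congruent to $K$ when $t$ is an integer and interpolating). Extracting the coefficient of $t^k$ defines a continuous $GL(W)$-equivariant projection onto $\Val_k$, and the parity involution $\varphi \mapsto \hat\varphi$ with $\hat\varphi(K) := \varphi(-K)$ commutes with dilations and with $GL(W)$, so averaging yields the further splitting $\Val_k = \Val_{k,+} \oplus \Val_{k,-}$.

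For the extreme cases, $\Val_0 = \R\chi$ is easy: if $\varphi \in \Val_0$, then $\varphi(tK) = \varphi(K)$ for all $t>0$, so by continuity $\varphi(K) = \varphi(\{0\})$ for every nonempty convex body $K$, and inclusion-exclusion combined with $\varphi(\emptyset)=0$ then forces $\varphi = \varphi(\{0\})\cdot \chi$. For $\Val_n = \R\vol$ I would invoke Hadwiger's characterization of Lebesgue measure as the unique (up to scale) continuous translation-invariant valuation of maximal degree, proved by showing that $\varphi$ is determined on axis-aligned boxes by its homogeneity and finite additivity (so equals $c\vol$ on these), and then approximating arbitrary convex bodies by finite unions of boxes via the valuation property.

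The main task is the irreducibility of $\Val_{k,\eps}(W)$ for $0 < k < n$. The strategy is to embed these Banach $GL(W)$-modules into spaces of sections of natural line (or vector) bundles on $\Gr_k(W)$ whose $GL(W)$-representation-theoretic structure can be analyzed directly. For the even part one uses the \emph{Klain embedding}
\[
\operatorname{Kl}\colon \Val_{k,+}(W) \hookrightarrow C(\Gr_k(W), \operatorname{Dens}),
\]
sending $\varphi$ to $E \mapsto \varphi|_E$; since $\varphi|_E \in \Val_k(E) = \Val_{\dim E}(E)$ is a multiple of Lebesgue measure on $E$ by the case already handled, this gives a section of the density line bundle on $\Gr_k(W)$, and Klain proved this map is injective. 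For the odd part one uses the analogous Schneider embedding into sections of a related bundle built from signed measures on spherical half-planes. Both ambient section spaces are precisely the smooth globalizations of irreducible principal-series-type representations of $GL(W)$, and the theorem will follow if $\operatorname{Kl}(\Val_{k,+})$ (and its odd analogue) is shown to be all of the irreducible module, i.e.\ that no proper closed $GL(W)$-invariant subspace can contain enough valuations to separate points of $\Gr_k(W)$.

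The principal obstacle is precisely this last step: verifying that the ambient Banach representation on sections over $\Gr_k(W)$ is itself irreducible, and that the image of the embedding is not a proper invariant subspace. This is where genuine representation theory enters — one must appeal to Casselman--Wallach theory for admissible smooth globalizations of real reductive groups, and compute the decomposition of the relevant induced representations into $SO(n)$-types (with $SO(n)\subset GL(W)$ the maximal compact) to rule out invariant subspaces. The density of $\valsm$ in $\Val$, combined with the $GL(W)$-smoothness of Klain's image, supplies enough smooth vectors to apply these tools. Everything else — the degree/parity grading, the extremal cases, the injectivity of Klain/Schneider — is formal or classical; the hard analytic-representation-theoretic content is concentrated entirely in controlling these principal series modules on the Grassmannian.
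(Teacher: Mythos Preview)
The paper does not actually prove this theorem: it is stated with attribution to McMullen and Alesker (references \cite{mcmullen77,ale01}) and then immediately used as input to the subsequent development. There is no ``paper's own proof'' to compare against.

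That said, your outline is an accurate sketch of how the result is established in the cited sources. McMullen's polynomiality argument gives the grading by degree, the parity splitting is trivial, and the identifications $\Val_0 = \R\chi$, $\Val_n = \R\vol$ are exactly as you describe. For the irreducibility of $\Val_{k,\eps}$ with $0<k<n$, Alesker's proof does proceed via the Klain and Schneider embeddings into section spaces over Grassmannians (or partial flag manifolds in the odd case), followed by a representation-theoretic analysis of those induced modules. You have correctly located the genuine difficulty: showing that the relevant $GL(W)$-modules of sections are irreducible and that the valuation embeddings are not proper invariant subspaces. In Alesker's original argument this is handled not quite by Casselman--Wallach machinery directly, but by an explicit $K$-type analysis combined with results on the cosine transform; the Casselman--Wallach theorem enters later in the theory (as the paper itself remarks after Theorem~\ref{alesker package}) to identify smooth vectors. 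So your high-level architecture is right, though the execution of the final step in \cite{ale01} is somewhat more hands-on than an appeal to abstract globalization theory.
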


Irreducibility here means that each $\Val_{k,\eps}(W)$ admits no nontrivial, closed, $GL(W)$-invariant subspace.

For $A\in \K(W)$ we define the valuation $\mu_A\in \Val(W)$ by
\begin{equation}\label{def mu}
\mu_A(K):= \vol(A+K) = \int_W \chi((x-A) \cap K) \, dx.
\end{equation}
The integrand of course takes only the values 0 and 1, depending on whether or not the intersection is empty.

Given an even valuation $\varphi$ of degree $k$ we say that a signed measure $m_\varphi$ on $\Gr_k(W)$ is a {\bf Crofton measure} for $\varphi$ if
$$
\varphi (A) = \int_{\Gr_k} \vol_k(\pi_E(A)) \, dm_\varphi(E).
$$
If $k = 1 $ or $n-1$ then $m_\varphi$ is uniquely determined by $\varphi$, but not in the remaining cases $2\le k\le n-2$. Prop. \ref{general crofton} means that the Haar measure on $\Gr_k$ is a Crofton measure for $\mu_k, k =0,\dots,n$.

Klain \cite{kl00} proved that an even valuation $\varphi \in \Val_{k,+}$ is uniquely determined by its {\bf Klain function}
$$
\kl_\varphi:\Gr_k \to \R,
$$
defined by the condition that the restriction of $\varphi$ to $E\in \Gr_k$ is equal to $\kl_\varphi(E) \left.\vol_k\right|_E$.

Our next statement summarizes the most important implications of Theorem \ref{irred thm} for integral geometry.

\begin{theorem}[Alesker \cite{ale03b, ale04, ale04a, aleicm, ale07}]\label{alesker package}
\begin{enumerate}
\item \label{span} 
\begin{equation}\label{eq:val as sum}
\langle \vol_n\rangle \oplus \langle \Psi_\gamma: \gamma \in \Omega^{n-1}(SW)^W\rangle = \bigoplus_{k=0}^n\valsm_k(W).
\end{equation}
\item \label{crofton measures} Every smooth even valuation of degree $k$ admits a smooth Crofton measure.
\item\label{mcm conj} $ \langle \mu_A: A \in \K(W)\rangle$ is dense in $\Val(W)$.
\item There is a natural continuous commutative graded product on $\valsm(W)$ such that 
\begin{equation}\label{product}
(\mu_A \cdot \varphi)(K)= \int_W \varphi((x-A) \cap K)\, dx.
\end{equation}
In particular, the multiplicative identity is the Euler characteristic $\chi$.
\item \label{af transform} (Alesker Fourier transform) There is a natural linear isomorphism  $\ \widehat{}: \valsm_{k,\eps} \to \valsm_{n-k,\eps}$ such that $\ \widehat {{ \widehat { \varphi}}}= \eps\varphi$. For smooth even valuations, this map is given in terms of Crofton measures by
\begin{equation}
m_{\hat \varphi} = \perp_* m_{\varphi}
\end{equation}
where $\perp:\Gr_k \to \Gr_{n-k}$ is the orthogonal complement map. Equivalently,
\begin{equation}
\kl_{\hat\varphi}= \kl_\varphi \circ \perp.
\end{equation}
 \item \label{poincare duality} The product satisfies Poincar\'e duality, in the sense that the pairing $$(\varphi,\psi):= \text{ degree $n$ part of }\varphi \cdot \psi \in \Val_n \simeq \R$$ is perfect.
The Poincar\'e pairing is invariant under the Fourier transform:
\begin{equation}\label{pd and fourier}
(\hat \varphi,\hat \psi) = (\varphi,\psi).
\end{equation}
\item\label{hard lefschetz} (Hard Lefschetz) The degree $1$ map $L'\varphi:= \mu_1\cdot \varphi$ satisfies the hard Lefschetz property: for $k\le \frac n 2$ the map 
$$
(L')^{n-2k}: \valsm_{k} \to \valsm_{n-k}
$$
is a linear isomorphism.
\item \label{G}  If $G \subset SO(W)$ acts transitively on the sphere of $W$ then the subspace of $G$-invariant valuations has $\dim \valg (W)<\infty$. Furthermore $\valg(W) \subset \valsm(W)$.
\end{enumerate}
\end{theorem}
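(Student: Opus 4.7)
I will prove both assertions simultaneously by showing that $\Val^G(W) = \Val^{sm}(W) \cap \Val^G(W)$ and that the latter space is finite-dimensional. The argument has two parts: (a) constructing all $G$-invariant smooth valuations from a finite-dimensional source of differential forms, and (b) a density-plus-closure argument to promote this to the whole space of $G$-invariant valuations.

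\textbf{Step 1 (finite-dimensionality of $\Val^{sm}(W)\cap\Val^G(W)$).} By item (1) of Theorem \ref{alesker package}, every smooth translation-invariant valuation has the form $c\cdot\vol + \Psi_\gamma$ with $c \in \R$ and $\gamma \in \Omega^{n-1}(SW)^W$. The assignment $\gamma \mapsto \Psi_\gamma$ is $GL(W)$-equivariant and $\vol$ is $G$-invariant, so applying the $G$-averaging operator $P_G\varphi := \int_G g\cdot\varphi\,dg$ to a $G$-invariant smooth valuation $\varphi = c\cdot\vol + \Psi_\gamma$ produces $\varphi = c\cdot\vol + \Psi_{\bar\gamma}$ where $\bar\gamma$ is the $G$-average of $\gamma$, which is now both translation-invariant and $G$-invariant. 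Translation-invariant $(n-1)$-forms on $SW = W \times S(W)$ correspond to smooth sections of a finite-rank vector bundle over $S(W)$, namely the pullback of $\Lambda^{n-1}T^*SW$ to $\{0\}\times S(W)$. Since $G$ acts transitively on $S(W)$, any further $G$-invariant such section is determined by its value at a chosen $\bar o \in S(W)$, and this value must lie in the finite-dimensional space $\Lambda^{n-1}(T_{\bar o}^*SW)^K$, where $K \subset G$ is the stabilizer of $\bar o$. Thus $\Val^{sm}(W) \cap \Val^G(W)$ is finite-dimensional.

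\textbf{Step 2 (every $G$-invariant valuation is smooth).} The operator $P_G$ is continuous on $\Val(W)$, since $G\subset SO(W)$ is compact and acts continuously, and its image equals $\Val^G(W)$. Differentiation under the integral sign on $G$ shows that $P_G$ preserves smoothness: if $h\mapsto h\cdot\varphi$ is smooth as a map $GL(W)\to\Val(W)$, then so is $h\mapsto h\cdot(P_G\varphi) = \int_G hg\cdot\varphi\,dg$. Hence $P_G(\Val^{sm}(W)) = \Val^{sm}(W)\cap\Val^G(W)$. Combined with the density of $\Val^{sm}(W)$ in $\Val(W)$, this image is dense in $\Val^G(W)$. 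A finite-dimensional subspace of a Banach space is automatically closed, so Step 1 forces $\Val^G(W) = \Val^{sm}(W)\cap\Val^G(W)$, which is finite-dimensional. Both assertions follow.

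\textbf{Main obstacle.} The essential input is item (1) of Theorem \ref{alesker package}, itself a serious consequence of the Irreducibility Theorem together with the theory of smooth vectors in Banach representations of $GL(W)$. Granted that, the only non-routine verification is that $G$-averaging preserves smoothness, which follows from differentiation under the integral sign on the compact group $G$. The transitivity hypothesis enters only at the end of Step 1, where it collapses the space of translation-invariant forms over $S(W)$ to the $K$-invariant part of a single finite-dimensional fiber.
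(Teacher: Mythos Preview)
Your argument is correct and essentially expands on the sketch the paper gives in Remark~2 following the theorem: the paper simply asserts that every $\varphi\in\Val^G$ arises (modulo $\vol$) from a $G$-invariant form on $SW$ and cites Alesker, whereas you supply the missing step---namely, the density-plus-closure argument showing $\Val^G\subset\Val^{sm}$---by combining item~(1), the continuity of $P_G$, and the fact that finite-dimensional subspaces are closed. The approaches coincide; yours is just more self-contained.
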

%
%

{\bf Remarks:} 1) Theorem \ref{irred thm} implies that the space on the left hand side of \eqref{eq:val as sum} is dense in $\Val(W)$; that this space consists precisely of the smooth valuations follows from general representation theory (the Casselman-Wallach theorem).

2) The groups $G$ as in \eqref{G} above have been classified (cf. Alesker's lecture notes). In fact it is true that, module the volume valuation, every $\varphi \in \valg$ is given by integration against the normal cycle of some $G$-invariant form on the sphere bundle. This implies immediately that $\dim \valg <\infty$.

Intertwining the Fourier transform and the product we obtain the {\bf convolution} on $\valsm$:
\begin{equation}\label{def conv}
\varphi * \psi:= \widehat{\hat \varphi \cdot \hat \psi}.
\end{equation}
Recall that if $A_1,\dots,A_{n-k}\in \K$ then
$$
V(A_1,\dots,A_{n-k}, B[k]):= \frac{k!}{n!}\left.\frac{\partial^{n-k}}
{\partial t_1\dots \partial t_{n-k}}\right|_{t_1=\dots=t_{n-k} =0} \mu_{\sum t_iA_i}(B)
$$
is the associated {\bf mixed volume}, which is a translation-invariant valuation of degree $k$ in $B$. Here $B[k]$ denotes the $k$-tuple $(B,\dots,B)$.

\label{convolution = addition}\begin{theorem}[\cite{befu06, ale07}]
\begin{align}
\label{conv 1}\mu_A* \varphi &= \varphi (\cdot + A)\\
\label{conv 2} V(A_1,\dots,A_{n-k}, \cdot) * V(B_1,\dots,B_{n-l}, \cdot)&= \frac{k!l!}{n!}V(A_1,\dots,A_{n-k}, B_1,\dots,B_{n-l},\cdot).
\end{align}
\end{theorem}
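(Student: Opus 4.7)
I would prove \eqref{conv 1} first and deduce \eqref{conv 2} from it by polarization. Both sides of \eqref{conv 1} depend continuously on $\varphi\in\valsm(W)$: the convolution is separately continuous in the Banach topology, and $\varphi\mapsto\varphi(\cdot+A)$ is obviously continuous. By Theorem~\ref{alesker package}\eqref{mcm conj} the linear span of $\{\mu_B:B\in\K(W)\}$ is dense in $\Val(W)$, and a standard smoothing (averaging over a small open subset of $GL(W)$ against a smooth bump) produces a dense subspace of $\valsm(W)$. Since $\mu_B(K+A)=\vol(B+K+A)=\mu_{A+B}(K)$, \eqref{conv 1} reduces to
$$\mu_A*\mu_B=\mu_{A+B}.$$

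By the definition $\mu_A*\mu_B=\widehat{\hat\mu_A\cdot\hat\mu_B}$, this is equivalent to computing the Alesker product $\hat\mu_A\cdot\hat\mu_B$ and identifying its Fourier transform with $\mu_{A+B}$. I would proceed degree by degree: write $\mu_A=\sum_k\mu_A^{(k)}$ with $\mu_A^{(k)}(K)=\binom{n}{k}V(A[n-k],K[k])$, and similarly for $B$ and $A+B$. By Theorem~\ref{alesker package}\eqref{af transform}, for the even part of $\mu_A^{(k)}$ the Klain function on $\Gr_k(W)$ is the projection function $E\mapsto\vol_{n-k}(\pi_{E^\perp}A)$, so the Klain function of $\widehat{\mu_A^{(k)}}$ on $\Gr_{n-k}(W)$ is $F\mapsto\vol_{n-k}(\pi_F A)$, which extends to a smooth Crofton representation. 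The main obstacle is then to evaluate the Alesker product of two such Crofton-represented valuations of complementary codimensions using the integral representation \eqref{product}, and to match the result with the Crofton representation of $\widehat{\mu_{A+B}^{(m)}}$. An additional subtlety is that $\mu_A$ is not in general even---in fact $\mu_A(-K)=\mu_{-A}(K)$---so the Klain-function calculus must be complemented by the odd-valuation machinery; alternatively one may bypass the even/odd split by directly applying the Fourier transform to the integral formula $(\mu_A\cdot\psi)(K)=\int_W\psi((x-A)\cap K)\,dx$ on test valuations $\psi$ spanning a dense subspace.

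For \eqref{conv 2}, apply \eqref{conv 1} with $A,B$ replaced by the Minkowski combinations $\sum_i s_iA_i$ and $\sum_j t_jB_j$ for positive parameters $s_i,t_j$:
$$\mu_{\sum_i s_iA_i}*\mu_{\sum_j t_jB_j}=\mu_{\sum_i s_iA_i+\sum_j t_jB_j}.$$
Both sides are polynomials in the $s_i,t_j$. Expanding via multilinearity of the mixed volume and the polarization identity for $V(A_1,\dots,A_{n-k},B[k])$ stated just before the theorem, the coefficient of $s_1\cdots s_{n-k}\,t_1\cdots t_{n-l}$ on the left is a multinomial multiple of $V(A_1,\dots,A_{n-k},\cdot)*V(B_1,\dots,B_{n-l},\cdot)$, while on the right it is a multinomial multiple of $V(A_1,\dots,A_{n-k},B_1,\dots,B_{n-l},\cdot)$; the ratio of the two collapses to the combinatorial constant $k!\,l!/n!$ of \eqref{conv 2} by a direct calculation with the multinomial coefficients.
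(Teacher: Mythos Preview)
The paper does not actually prove this theorem: it cites \cite{befu06, ale07} and then records as an exercise that \eqref{conv 2} follows from \eqref{conv 1}. Your polarization argument for \eqref{conv 2} from \eqref{conv 1} is exactly what that exercise intends, and the constant computation is straightforward, so that part is fine.

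For \eqref{conv 1} your reduction to $\mu_A*\mu_B=\mu_{A+B}$ by density is reasonable, but the verification you sketch has a genuine gap. The Klain-function calculus only sees the even part of a valuation, and $\mu_A$ is typically not even (as you note, $\mu_A(-K)=\mu_{-A}(K)$). You wave at ``the odd-valuation machinery'' and at ``directly applying the Fourier transform to the integral formula \eqref{product}'', but neither is made concrete; the odd Fourier transform in \cite{ale07} is substantially more delicate than the even one, and a direct computation of $\widehat{\mu_A}\cdot\widehat{\mu_B}$ from \eqref{product} is not visibly easier than the original problem. So as written you do not have a proof of \eqref{conv 1}.

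It may help to know that the logical order in the cited sources is the reverse of the paper's presentation. In \cite{befu06} the convolution is \emph{defined} directly, without any reference to a Fourier transform, by an explicit construction on pairs of differential forms representing the valuations; with that definition, $\mu_A*\mu_B=\mu_{A+B}$ and hence \eqref{conv 1}, \eqref{conv 2} are established directly. The paper \cite{ale07} then constructs the Fourier transform and proves that it intertwines the Bernig--Fu convolution with the Alesker product, which is what justifies the paper's definition \eqref{def conv}. So the identity you are trying to verify from scratch is, in the original development, essentially built into the definition; the nontrivial content is Alesker's theorem that $\widehat{\phantom{x}}$ carries $\cdot$ to $*$.
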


\begin{exercise} \eqref{conv 2} follows from \eqref{conv 1}.
\end{exercise}

\begin{corollary} \label{def lambda}
Define the degree $-1$ operator $\Lambda'$ by
$$
\Lambda' \varphi:=\widehat{(L'\hat \varphi)} .
$$
Then
\begin{equation}\label{def lambda'}
\Lambda' \varphi  = \mu_{n-1}*\varphi=\frac 1 2\left.\frac d{dr}\right|_{r=0} \varphi (\cdot + B_r).
\end{equation}
This may in turn be expressed in terms of the underlying differential forms as
$$
 \Lambda' \Psi_\theta = \frac 1 2\Psi_{\mathcal L_T\theta}.
$$
where $T(x,v) := (v,0)$ is the Reeb vector field of $T\R^n$ and $\mathcal L$ is the Lie derivative.
\end{corollary}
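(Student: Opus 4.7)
My plan is to verify the three claimed identities in order, working from the definition outward.

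First I would establish $\Lambda'\varphi = \mu_{n-1}*\varphi$. By definition $\Lambda'\varphi = \widehat{L'\hat\varphi} = \widehat{\mu_1\cdot\hat\varphi}$, while $\mu_{n-1}*\varphi = \widehat{\hat\mu_{n-1}\cdot\hat\varphi}$ by the definition \eqref{def conv} of convolution. So the identity reduces to $\hat\mu_1 = \mu_{n-1}$ (and more generally $\hat\mu_k = \mu_{n-k}$). This follows from the Klain function characterization in Theorem \ref{alesker package}\eqref{af transform}: the restriction of $\mu_k$ to any $k$-plane $E$ equals $\vol_k|_E$ by \eqref{eq:vol is vol}, so $\kl_{\mu_k}\equiv 1$ on $\Gr_k$, and thus $\kl_{\hat\mu_k}(E)=\kl_{\mu_k}(E^\perp)\equiv 1$ on $\Gr_{n-k}$, matching $\kl_{\mu_{n-k}}$. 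Since the Klain function determines an even valuation, $\hat\mu_k=\mu_{n-k}$ exactly.

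Second I would prove the formula $\mu_{n-1}*\varphi = \frac12\frac{d}{dr}\big|_{r=0}\varphi(\cdot+B_r)$. Applying Steiner's formula \eqref{steiner} to $\mu_{B_r}(K)=\vol(B_r+K)$ gives $\mu_{B_r} = \sum_{i=0}^n \omega_{n-i}\,\mu_i\,r^{n-i}$ as an identity in $\Val$, and differentiating in $r$ at $r=0$ yields $\frac{d}{dr}\big|_{r=0}\mu_{B_r} = \omega_1\mu_{n-1} = 2\mu_{n-1}$. Applying the bilinear, continuous convolution and using \eqref{conv 1} (i.e., $\mu_A*\varphi = \varphi(\cdot+A)$), we obtain
\[
\mu_{n-1}*\varphi \;=\; \tfrac12\left.\tfrac{d}{dr}\right|_{r=0}(\mu_{B_r}*\varphi) \;=\; \tfrac12\left.\tfrac{d}{dr}\right|_{r=0}\varphi(\cdot+B_r).
\]

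Third, for the differential-form expression, I would use the geometric interpretation of the Reeb flow $\phi_t(x,v):=(x+tv,v)$ on $S\R^n$. For $K\in\Ksm$, a direct check from the definition \eqref{eq:def N} shows that $\phi_r$ maps $N(K)$ diffeomorphically onto $N(K+B_r)$: if $\langle v,x-y\rangle\ge 0$ for all $y\in K$, then for any $y'=y+b\in K+B_r$ with $|b|\le r$, $\langle v,(x+rv)-y'\rangle\ge r-\langle v,b\rangle\ge 0$. The orientations agree since $\phi_r$ is an isotopy from the identity. Consequently
\[
\Psi_\theta(K+B_r) \;=\; \int_{\phi_r N(K)}\theta \;=\; \int_{N(K)}\phi_r^*\theta,
\]
and differentiating at $r=0$ gives $\frac{d}{dr}\big|_{r=0}\Psi_\theta(K+B_r)=\int_{N(K)}\mathcal L_T\theta=\Psi_{\mathcal L_T\theta}(K)$, so combining with the previous step yields the stated formula on $\Ksm$, and then on all of $\K$ by continuity.

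The only real subtlety is the flow-invariance argument in the third step at non-smooth points of $N(K)$: for general $K\in\K$ the normal cycle is only Lipschitz, so $\phi_r^*\theta$ must be interpreted in the sense of currents and the differentiation under the integral justified via the flat-metric continuity of $N$. I would handle this by establishing the identity first on $\Ksm$ (where everything is classical) and then extending by the density of $\Ksm$ in $\K$ together with the continuity of $\Psi_\theta$ and of its translates in $r$.
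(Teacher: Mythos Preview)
Your proof is correct and is precisely the argument the paper intends: the statement is presented as a corollary of the preceding theorem $\mu_A*\varphi=\varphi(\cdot+A)$ with no separate proof, and your three steps---identifying $\hat\mu_1=\mu_{n-1}$ via Klain functions, differentiating Steiner's formula inside the convolution, and using the Reeb flow $\phi_r(x,v)=(x+rv,v)$ to carry $N(K)$ to $N(K+B_r)$---are exactly the natural deductions from the surrounding material. The one place you could tighten slightly is the bijectivity of $\phi_r\colon N(K)\to N(K+B_r)$: you verify only the forward inclusion, but the reverse follows by the same support-function argument (if $(x',v)\in N(K+B_r)$ then $x'-rv$ maximizes $\langle v,\cdot\rangle$ on $K$), so this is not a real gap.
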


It is convenient to renormalize the operators $L',\Lambda'$ by putting
\begin{align}
L&:= \frac{2\omega_{k}}{\omega_{k+1}} L'\\
\Lambda&:= \frac{2\omega_{n-k}}{\omega_{n-k+1} } \Lambda'
\end{align}
on valuations of degree $k$. Thus
\begin{equation}\label{eq:L lambda}
\Lambda \varphi=\widehat{(L\hat \varphi)} .
\end{equation}
In view of relation \eqref {powers of t} below, these act on the intrinsic volumes as
\begin{align}
L\mu_k&= (k+1)\mu_{k+1}\\
\Lambda\mu_k&= (n-k+1) \mu_{k-1}.
\end{align}

\subsection{Constant coefficient valuations}
The normal cycle of a smooth submanifold is its unit normal bundle, so it makes sense to think of $N(A)$ for general sets $A$ as a generalization of this concept. It is also convenient to consider the analogue $N_1(A)\subset T\R^n\simeq \Rn\times \Rn$ of the bundle of unit normal balls, obtained by summing $A \times\{0\}$ and the image of $N (A) \times [0,1]$ by the homothety map in the second factor. Thus 
$$
\partial N_1(A) = N(A).
$$
 If a differential form $\varphi \in \Omega^{n-1}(S\Rn)$ extends to a smooth form on all of $\Rn\oplus \Rn$ then Stokes' theorem yields
$$
\Psi_\varphi  := \int_{N(\cdot)} \varphi =\int_{N_1(\cdot)} d\varphi .
$$
If $d\varphi$ happens to have constant coefficients (i.e. is invariant under translations of both the base and the fiber) then a number of simplifications ensue.
\begin{definition}\label{ccvs}  $\mu\in \valsm(V)$ is a {\bf constant coefficient valuation} if there exists $\theta \in \Lambda^n(V\oplus V)$ such that
$$
\mu= \overpsi_\theta:=\int_{N_1(\cdot)} \theta.
$$
We denote by $CCV\subset \Val(V)$ the finite-dimensional vector subspace consisting of all constant coefficient valuations.
\end{definition}

\begin{exercise} Every constant coefficient valuation is smooth and even.
\end{exercise}

This concept only makes sense with respect to the given euclidean structure on $V$--- the spaces of constant coefficient valuations associated to different euclidean structures are different. Using Stokes' theorem it is easy to see that $\valson \subset CCV$, i.e. that the intrinsic volumes $\mu_i$ are all constant coefficient valuations. We will see below that if  $V=\Cn$ then this is also true of all valuations invariant under $U(n)$.

When restricted to constant coefficient valuations, the actions of $L, \Lambda$ and the Fourier transform admit the following simple algebraic model. Consider adapted coordinates $$x_1,\dots,x_n,y_1,\dots,y_n$$ for $\Rn\oplus \Rn$, so that $\omega=\sum dx_i \wedge dy_i$ is the usual symplectic form. 
Put $j:\Lambda^*(\Rn\oplus \Rn)\to \Lambda^*(\Rn\oplus \Rn)$ for the algebra isomorphism that interchanges the coordinates:
$$
j(dx_i) = dy_i, \quad j(dy_i) = dx_i, \quad i =1,\dots,n.
$$
There are derivations
$\ell, \lambda : \Lambda^*(\Rn\oplus \Rn)\to \Lambda^*(\Rn\oplus \Rn)$, of degrees $\pm 1$ respectively, determined by
\begin{align}
\notag \ell(dy_i) := dx_i, &\quad \ell(dx_i) := 0\\
\notag \lambda(dx_i) := dy_i, &\quad \lambda(dy_i) := 0,\\
\label{intertwine} j\circ \ell &= \lambda \circ j.
\end{align}
In fact, putting $m_t(x,y):= (x+ty,y)$
\begin{equation}\label{derivative formula}
\lambda\varphi = \left.\frac {d}{dt}\right|_{t=0} m_t^*\varphi
\end{equation}
and a similar formula holds for $\ell$. It is easy to see that for a monomial $\varphi \in \Lambda^*(\Rn\oplus\Rn)$
\begin{equation}\label{bracket1}
[\ell,\lambda] \,\varphi = (r-s) \varphi
\end{equation}
where $r,s$ respectively denote the number of $dx_i$ and $dy_i$ factors in $\varphi$. Putting $H$ for the operator $\varphi \mapsto (r-s)\, \varphi$, clearly
\begin{equation}\label{bracket2}
[H,\ell] = 2\ell, \quad [H,\lambda] =- 2\lambda.
\end{equation}
In other words,
\begin{lemma} \label{model sl2 representation}
Let $X,Y,H$ with $[X,Y]=H, [H,X]=2X, [H,Y]=-2Y$ be generators of
  $\mathfrak{sl}(2,\mathbb{R})$. The map 
\begin{align*}
H & \mapsto r-s\\
X & \mapsto \ell \\
Y & \mapsto \lambda 
\end{align*}
defines a representation of $\mathfrak{sl}(2,\mathbb{R})$ on $\Lambda^*(\Rn\oplus \Rn)$. 
\end{lemma}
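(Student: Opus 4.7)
The plan is to verify the three defining commutation relations of $\mathfrak{sl}(2,\R)$ for the proposed assignments $H\mapsto r-s$, $X\mapsto\ell$, $Y\mapsto\lambda$. Two of these relations, $[H,\ell]=2\ell$ and $[H,\lambda]=-2\lambda$, are already recorded as \eqref{bracket2}, and the third, $[\ell,\lambda]=H$, is \eqref{bracket1}. So in principle nothing remains, but it is worth spelling out the mechanism, since the lemma is really asserting that \eqref{bracket1} and \eqref{bracket2} together amount to a representation of $\mathfrak{sl}(2,\R)$.

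First I would note that $\ell$ and $\lambda$ are, by construction, derivations of degrees $+1$ and $-1$ on the Grassmann algebra $\Lambda^*(\R^n\oplus\R^n)$. Consequently the graded commutator $[\ell,\lambda]=\ell\lambda+\lambda\ell$ is itself a derivation of degree $0$, and thus it is determined by its values on the generators $dx_i,dy_i$. A direct evaluation gives $[\ell,\lambda](dx_i)=\ell(dy_i)=dx_i$ and $[\ell,\lambda](dy_i)=-\lambda(dx_i)=-dy_i$, i.e. $[\ell,\lambda]$ acts by $+1$ on each $dx_i$ and by $-1$ on each $dy_i$. A degree-$0$ derivation with these eigenvalues acts on a monomial with $r$ factors of type $dx$ and $s$ factors of type $dy$ by multiplication by $r-s$, which is precisely $H$. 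This establishes \eqref{bracket1}.

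Next I would dispose of $[H,\ell]$ and $[H,\lambda]$ by observing that both $\ell$ and $\lambda$ send a monomial to a sum of monomials on which the bidegree $(r,s)$ is shifted in a prescribed way: $\ell$ replaces one $dy_j$ by $dx_j$, shifting $(r,s)\to(r+1,s-1)$, and $\lambda$ does the opposite, shifting $(r,s)\to(r-1,s+1)$. Since $H$ multiplies a bihomogeneous monomial by $r-s$, one obtains term-by-term $H\ell\varphi=(r-s+2)\ell\varphi$ and $\ell H\varphi=(r-s)\ell\varphi$, hence $[H,\ell]=2\ell$. The computation for $\lambda$ is identical with the opposite sign.

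No step should present any serious difficulty; the only point that demands care is the passage from values of $[\ell,\lambda]$ on generators to its value on an arbitrary monomial, which requires the observation that the graded commutator of two odd derivations is a genuine (even) derivation. Once these three relations are in hand, they are exactly the bracket relations $[X,Y]=H$, $[H,X]=2X$, $[H,Y]=-2Y$ defining $\mathfrak{sl}(2,\R)$, so the assignment extends uniquely to a Lie algebra homomorphism, giving the desired representation on $\Lambda^*(\R^n\oplus\R^n)$.
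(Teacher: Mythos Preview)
Your proposal is correct and follows exactly the paper's approach: the paper simply states \eqref{bracket1} and \eqref{bracket2} and then introduces the lemma with ``In other words,'' so your write-up is just a fuller version of what the paper leaves implicit.

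One small slip to fix: $\ell$ and $\lambda$ are \emph{even} derivations with respect to the total exterior degree on $\Lambda^*(\R^n\oplus\R^n)$ (each sends a degree-one generator to a degree-one generator), so the relevant bracket is the ordinary commutator $[\ell,\lambda]=\ell\lambda-\lambda\ell$, not the anti-commutator $\ell\lambda+\lambda\ell$ you wrote. Your subsequent computation on $dy_i$ in fact uses the minus sign (you get $-\lambda(dx_i)=-dy_i$), so the argument is right but the displayed formula for the bracket should be corrected. The ``degrees $\pm 1$'' in the paper refer to the bigrading by the number of $dx$-factors, not to the exterior degree.
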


The subspace $\Lambda^n(\Rn \oplus \Rn)$ is naturally graded by the number of factors $dx_i$ that appear.

\begin{proposition}\label{ccv model}
\begin{enumerate}
\item \label{surjective}The surjective map $\overpsi: \Lambda^n(\Rn\oplus \Rn) \to CCV$ is graded. The kernel of $\overpsi$ consists precisely of the subspace of multiples of the symplectic form.
\item \label{intertwine2}  The operators $j,\lambda,\ell$ induce (up to scale) the operators~~$\widehat{}\ ,\Lambda, L$ on $CCV$, via the formulas in degree $k$
\begin{align*}
\label{hat psi}
\widehat{}\,\circ{\overpsi} &= \frac{\omega_{n-k}}{\omega_k} \, \overpsi\circ j \\
\Lambda\circ \overpsi&= \frac{\omega_{n-k}}{\omega_{n-k+1}}\overpsi \circ{\lambda}\\
L\circ \overpsi&= \frac{\omega_{n-k}}{\omega_{n-k-1}}\overpsi\circ{\ell}\\
\end{align*}
and satsifying the relation
$$
\Lambda \circ\, \widehat{} \ = \ \widehat{}\circ L.
$$
\item \label{thm_sl2_representation}The map 
\begin{align*}
H & \mapsto 2k-2n\\
X & \mapsto L\\
Y & \mapsto \Lambda 
\end{align*}
defines a representation of $\mathfrak{sl}(2,\mathbb{R})$ on $CCV$. 
\end{enumerate}
\end{proposition}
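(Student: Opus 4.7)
My plan is to prove (1), (2), (3) in order; (3) is then a formal consequence of (2) combined with Lemma \ref{model sl2 representation}.

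For (1), surjectivity is immediate from Definition \ref{ccvs}. The grading follows by noting that the scaling $T_t(x,y):=(tx,y)$ sends $N_1(K)$ to $N_1(tK)$ and pulls back $dx_i\mapsto t\,dx_i$, $dy_i\mapsto dy_i$, so a form with exactly $r$ factors $dx_i$ gives a degree-$r$ valuation. The inclusion $(\omega)\subset\ker\overpsi$ reflects the Lagrangian character of $N_1(K)$: with $F((x,v),t):=(x,tv)$ parametrizing $N_1(K)$, one computes $F^*\omega = -t\,d\alpha + \alpha\wedge dt$, which vanishes on $N(K)\times[0,1]$ since $\alpha$ and $d\alpha$ both restrict to zero on the Legendrian $N(K)$. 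For the converse, I pass to smooth, strongly convex $K$: the tangent plane to $N_1(K)$ at $(x,tv)$ with $t\in(0,1)$ is the Lagrangian $\{(\xi,\,tL_x\xi + \beta v):\xi\in v^\perp,\,\beta\in\R\}$, and as $K$, $x$, $t$ vary, these planes---parametrized by the outer normal $v$, the self-adjoint Weingarten $L_x$, and $t$---trace out an open dense subset of the Lagrangian Grassmannian of $\Rn\oplus\Rn$. A fiberwise application of Lemma \ref{curv meas ker}, fixing $v$ and varying the self-adjoint $L_x$ on $v^\perp$, then forces $\overpsi_\theta\equiv 0\Rightarrow\theta\in(\omega)$.

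For (2) I treat the three operators in turn. The $\Lambda$-formula rests on the fact that the parallel flow $\phi_r(x,y):=(x+ry,y)$, whose infinitesimal generator is the Reeb field $T=(y,0)$, is a symplectomorphism and agrees on $N(K)$ with the outer parallel map $N(K)\to N(K+B_r)$. Consequently $\phi_r(N_1(K))$ and $N_1(K+B_r)$ are $n$-chains with identical boundary, and since every constant-coefficient $\theta$ is closed, Stokes' theorem gives $\overpsi_\theta(K+B_r)=\int_{N_1(K)}\phi_r^*\theta$. Differentiating at $r=0$ and noting that $\mathcal L_T$ acts on constant-coefficient forms exactly as $\lambda$ ($\mathcal L_T\,dx_i = dy_i$, $\mathcal L_T\,dy_i = 0$), Corollary \ref{def lambda} becomes $\Lambda'\overpsi_\theta = \tfrac12\overpsi_{\lambda\theta}$, and the renormalization $\Lambda=\tfrac{2\omega_{n-k}}{\omega_{n-k+1}}\Lambda'$ yields the stated identity. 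For the Fourier formula I compute the Klain function: restricting $\overpsi_\theta$ to $A\subset E\in\Gr_k$ reduces $\int_{N_1(A)}\theta$ to an integral over the cylinder $A\times B_1^{E^\perp}$, which extracts precisely the coefficient $c_\theta(E)$ of the $(E,E^\perp)$-mixed monomial, giving $\kl_{\overpsi_\theta}(E) = \omega_{n-k}\, c_\theta(E)$. Comparing with the orthogonal-complement duality $\kl_{\hat\varphi}(E)=\kl_\varphi(E^\perp)$ of Theorem \ref{alesker package}(\ref{af transform}) identifies $\widehat{\phantom{x}}$ with $j$ up to the ratio $\omega_{n-k}/\omega_k$ (with sign absorbed into the orientation convention for normal cycles of lower-dimensional bodies). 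The $L$-formula then follows formally from $\widehat{\phantom{x}}\circ L = \Lambda\circ\widehat{\phantom{x}}$ in \eqref{eq:L lambda} combined with $j\circ\ell = \lambda\circ j$ of \eqref{intertwine}; the $\omega$-ratios telescope to produce exactly $\omega_{n-k}/\omega_{n-k-1}$.

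For (3), given (2), the scalar ratios $\omega_{n-k}/\omega_{n-k\pm1}$ telescope when one computes $[L,\Lambda]\overpsi_\theta$, leaving $\overpsi_{[\ell,\lambda]\theta}$; by Lemma \ref{model sl2 representation} this equals a scalar multiple of $\overpsi_\theta$ on each graded component, and the remaining bracket relations $[H,L]=2L$, $[H,\Lambda]=-2\Lambda$ are inherited directly from the grading shifts of $L$ and $\Lambda$. The principal technical obstacle I foresee is the density argument in (1): since the tangent planes to $N_1(K)$ are never transverse to the vertical Lagrangian $\{0\}\oplus\Rn$, extracting the conclusion of Lemma \ref{curv meas ker} requires either a change of reference Lagrangian or a separate analysis of the Reeb direction $\langle(0,v)\rangle$. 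By contrast, the scalar bookkeeping in (2)---particularly reconciling the orientation sign on the Fourier side---is tedious but essentially routine.
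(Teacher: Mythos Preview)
Your proposal is correct and follows essentially the same architecture as the paper's proof: Lemma~\ref{curv meas ker} for the kernel in (1), the Lie derivative/flow computation for $\Lambda$ in (2), deriving $L$ from $\Lambda$ and the Fourier transform via \eqref{intertwine} and \eqref{eq:L lambda}, and reducing (3) to Lemma~\ref{model sl2 representation}.

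There are two small tactical differences worth noting. For the $\Lambda$-formula, the paper chooses a primitive $\theta$ with $d\theta=\varphi$, invokes Corollary~\ref{def lambda} in the form $\Lambda'\Psi_\theta=\tfrac12\Psi_{\mathcal L_T\theta}$, and then passes back to $N_1$ using $d\mathcal L_T=\mathcal L_Td$; you instead work directly on $N_1$ via $\phi_r(N_1(K))\sim N_1(K+B_r)$ and Stokes, which avoids the primitive altogether and is arguably cleaner. For the Fourier formula the paper simply asserts it ``follows at once from the definition,'' whereas you supply the explicit Klain-function computation; your version is more informative but lands in the same place. Your flagged concern about (1)---that tangent planes to $N_1(K)$ are never graphs over the full first factor, so Lemma~\ref{curv meas ker} cannot be applied verbatim with $V=\Rn$---is legitimate, and the paper's one-line appeal to that lemma glosses over exactly this point; your proposed fix (change of reference Lagrangian, or splitting off the Reeb direction $(0,v)$ and applying the lemma on $v^\perp\oplus v^\perp$) is the right way to close it.
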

\begin{proof} \eqref{surjective}:
The first assertion is obvious, and the second follows at once from Lemma \ref{curv meas ker}.

\eqref{intertwine2}:
Noting that $j$ takes the symplectic form to $-1$ times itself, and that $\ell, \lambda$ annihilate it, the first assertion follows at once. The first formula follows at once from the definition of the Alesker Fourier transform. To prove the second formula, let $\varphi \in \Lambda^n(\Rn\oplus \Rn)$ and let $\theta\in \Omega^{n-1}(\Rn \times \Rn)$  be a primitive. Then by \eqref{def lambda'} and \eqref{derivative formula}
\begin{align*}
\Lambda'\overpsi_{\varphi}&=\Lambda' \Psi_\theta\\
&=\Lambda' \int_{N(\cdot)} \theta \\
&= \frac 1 2 \int_{N(\cdot)} \mathcal L_T\theta\\
&= \frac 1 2 \int_{N_1(\cdot)} d \mathcal L_T\theta\\
&= \frac 1 2 \int_{N_1(\cdot)} \mathcal L_Td\theta\\
&=\frac 1 2 \overpsi_{\lambda \varphi}
\end{align*}
from which the second formula follows at once. The fourth formula is \eqref{eq:L lambda}, and the third follows from that relation and  \eqref{intertwine}.

\eqref{thm_sl2_representation}: This follows at once by calculation from conclusion \eqref{intertwine2} and Lemma \ref{model sl2 representation}.
\end{proof}

\begin{problem} What is the maximal subspace of $\valsm$ for which assertion \eqref{thm_sl2_representation} holds? (Alesker has shown that it does not hold for the full algebra $\valsm$.)
\end{problem}

\begin{exercise} If $\mu$ is a constant coefficient valuation  of degree $k$ and $P\subset \Rn$ is a convex polytope then
\begin{equation}\label{face decomp}
\mu(P) = \sum_{F \in P_k} \kl_\mu(\la F\ra) \vol_k(F)\angle (P,F) 
\end{equation}
where $P_k$ is the $k$-skeleton of $P$ and $\angle (P,F) $ is the appropriately normalized exterior angle of $P$ along $F$.
\end{exercise}

\begin{problem} Characterize the $\mu \in \Val_k^+$ satisfying \eqref{face decomp} for every convex polytope $P$. (Note that every even valuation of degree $n-1$ satisfies \eqref{face decomp}, but the space of such valuations is infinite-dimensional.)
\end{problem}
\begin{problem} As we will see below, the algebras $\valson(\Rn)$ of $\barson$-invariant valuations on $\Rn$ and $\valun(\Cn)$ of $\barun$-invariant valuations on $\Cn$ are subspaces of $CCV$. Classify the subalgebras of the constant coefficient valuations. Do they constitute an algebra? If not, what algebra do they generate?
\end{problem}

\subsection{The {\it ftaig} for isotropic structures on euclidean spaces}

We note two important consequences of Thm. \ref{alesker package}. First, assertion \eqref{span} (together with Corollary \ref{cor:alt ker} below) implies that every $\varphi \in \valsm(W)$ can be applied not only to elements of $\K(W)$ but also to any compact set that admits a normal cycle, e.g. smooth submanifolds and submanifolds with corners. Second, recalling how Hadwiger's Theorem \ref{hadwiger thm} implies the existence of the classical kinematic formulas, Thm. \ref{alesker package} \eqref{G} implies the existence of  kinematic formulas for euclidean spaces $V$ under an isotropic group action. For $G\subset O(V)$ put $\barg:= G\ltimes V$ for the semidirect product of $G$ with the translation group. Thus $(V,\barg)$ is isotropic iff $G$ acts transitively on the sphere of $V$.

\begin{proposition}[\cite{fu90, ale03b}] \label{vs kinematic}
In this case there are linear maps
$$
k_G, a_G: \valg \to \valg \otimes \valg
$$
such that for $K,L \in \K(V)$ and $\varphi \in \valg$
\begin{align*}
 k_G(\varphi)(K,L)&=\int_{\barg} \varphi(K\cap \bar g L) \, d\bar g \\
  a_G(\varphi)(K,L)&=\int_{G} \varphi(K+ g L) \, d g
\end{align*}
The {\bf kinematic operator} $k_G$ is related to $\tilde k_G$ via the commutative diagram
\begin{equation}
\begin{CD} \curv^{G}(V)&  @>{\tildk_{G}}>> & \curv^{G}(V)\otimes\curv^{G}(V) \\
@V{\Psi}VV &  & @V{\Psi}\otimes{\Psi}VV \\
\valg(V) & @>{k_{G}}>>  &\valg(V)\otimes \valg(V)
\end{CD}
\end{equation}

\end{proposition}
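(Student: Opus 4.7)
The plan is to mimic the proof of Theorems \ref{bkfs} and \ref{akfs} nearly verbatim, with Hadwiger's classification replaced by Alesker's finite-dimensionality assertion for $\valg$ (the last part of Theorem \ref{alesker package}). Fixing $\varphi \in \valg$ and $L \in \K(V)$, the first step is to show that
\begin{equation*}
I_{\varphi,L}(K) := \int_{\barg} \varphi(K \cap \bar g L) \, d\bar g
\end{equation*}
is a continuous, translation-invariant, $G$-invariant valuation in $K$. Translation- and $G$-invariance come from the invariance of Haar measure $d\bar g$; the valuation identity follows by integrating the pointwise identity $\varphi((A\cup B) \cap \bar g L) + \varphi((A\cap B) \cap \bar g L) = \varphi(A \cap \bar g L) + \varphi(B\cap \bar g L)$, which holds for every $\bar g$ whenever $A, B, A \cup B \in \K$; and continuity is immediate from dominated convergence after bounding $|\varphi(K' \cap \bar g L)|$ uniformly in $\bar g$ over any Hausdorff-bounded family of $K'$.

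Hence $I_{\varphi, L} \in \valg$. By finite-dimensionality of $\valg$, fix a basis $\phi_1, \dots, \phi_N$ and expand $I_{\varphi, L} = \sum_i c_i^\varphi(L)\, \phi_i$. Pick reference bodies $K_1, \dots, K_N$ making the matrix $[\phi_i(K_j)]$ invertible — such bodies exist because point evaluations separate the finite-dimensional space $\valg$ — so that each $c_i^\varphi(L)$ is a fixed linear combination of the numbers $I_{\varphi, L}(K_j)$. Running the same argument with $K_j$ fixed and $L$ varying, after the substitution $\bar g \mapsto \bar g^{-1}$ (which rewrites the integrand as $\varphi(\bar g K_j \cap L)$ via $\barg$-invariance of $\varphi$), shows $c_i^\varphi \in \valg$; expanding $c_i^\varphi = \sum_j a_{ij}^\varphi \phi_j$ and setting $k_G(\varphi) := \sum_{i,j} a_{ij}^\varphi\, \phi_i \otimes \phi_j$ yields the kinematic operator. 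Linearity in $\varphi$ is built in. The additive operator $a_G$ is produced identically, replacing $\bar g \in \barg$ with $g \in G$ and intersection with Minkowski sum; here translations of $L$ are already absorbed by the Minkowski addition.

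For the commutative diagram: in the setting of Theorem \ref{kf for cm} applied to the isotropic manifold $V$ with group $\barg$, the globalization map $\Psi \colon \curv^G(V) \to \valg(V)$ sends a curvature measure $\Phi$ to the valuation $K \mapsto \Phi(K, V)$, i.e. evaluates at the full window. Specializing the curvature-measure kinematic formula by taking both window sets equal to the ambient space $V$ yields
\begin{equation*}
\tilde k_G(\Phi)(K,\, V;\, L,\, V) = \int_{\barg} \Phi(K \cap \bar g L,\, V)\, d\bar g = \int_{\barg} \Psi(\Phi)(K \cap \bar g L)\, d\bar g = k_G(\Psi(\Phi))(K, L),
\end{equation*}
which is exactly the commutativity statement. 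The whole proof carries essentially no substantive obstacle once the finite-dimensionality of $\valg$ is in hand — the rest is the soft template that already handled $\valson$. The one place requiring a moment's care is the $L$-variable step, where the substitution $\bar g \mapsto \bar g^{-1}$ is needed to put the integrand into a form amenable to the same valuation-continuity-invariance arguments a second time.
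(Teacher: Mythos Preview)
Your proof is correct and follows essentially the same approach as the paper: the existence of $k_G$ and $a_G$ is obtained by mimicking the proof of Theorems~\ref{bkfs} and~\ref{akfs} with Hadwiger's theorem replaced by the finite-dimensionality of $\valg$, and the commutative diagram is derived by specializing Theorem~\ref{kf for cm} to full windows. The paper in fact mentions both routes to $k_G$ --- deducing it directly from $\tilde k_G$ via the diagram, and the independent Hadwiger-style argument you give --- so your presentation matches the paper's intent precisely, with the minor caveat that the continuity step (as in the original proof of Theorem~\ref{bkfs}) requires noting that $K_i \cap \bar g L \to K \cap \bar g L$ only for a.e.\ $\bar g$, not everywhere.
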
 
\begin{proof} The statements about $k_G$ are a direct consequence of Thm. \ref{kf for cm}. Invoking \eqref{G} of Thm. \ref{alesker package}, the proof of the existence of $a_G$ (and another proof of the existence of $k_G$) follows precisely the proof of Thms. \ref{bkfs} and \ref{akfs} above.
\end{proof}

The next theorem may rightly be called the {\bf Fundamental Theorem of Algebraic Integral Geometry} for euclidean spaces. Before stating it we need to add some precision to the statement \eqref{poincare duality} of Thm. \ref{alesker package} by specifying the isomorphism $\Val_n(\Rn) \simeq \R$ to take Lebesgue $\vol_n \simeq 1$.

\begin{theorem}\label{ftaig} Let $p:\valg \to {\valg}^*$ denote the Poincar\'e duality map from \eqref{poincare duality} of Thm. \ref{alesker package}, $m_G: \valg \otimes \valg\to \valg$ the restricted multiplication map, and $m_G^*: {\valg}^*\to{\valg}^* \otimes {\valg}^*$ its adjoint. Then the following diagram commutes:
\begin{equation}
\begin{CD} 
\valg(V) & @>{k_G}>>  &\valg(V)\otimes \valg(V)\\
@V{p}VV &  & @V{p}\otimes{p}VV \\
{\valg}^*(V)&  @>{m_G^*}>> & {\valg}^*(V)\otimes{\valg}^*(V) 
\end{CD}
\end{equation}

The same is true if $k_G,m_G$ are replaced by $a_G, c_G$ respectively, where $c_G$ is the convolution product. In particular
\begin{equation}\label{kg ag}
a_G = (\,\hat{}\otimes \hat{}\, )\circ k_G\circ \hat{}.
\end{equation}
\end{theorem}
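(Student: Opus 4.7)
The plan is to establish commutativity by testing with an arbitrary $\psi_1\otimes\psi_2\in \valg\otimes\valg$. Writing $k_G(\varphi)=\sum_i \varphi'_i\otimes\varphi''_i$, this reduces to the identity
$$
\sum_i (\varphi'_i,\psi_1)(\varphi''_i,\psi_2) \;=\; (\varphi,\psi_1\cdot\psi_2) \qquad (\star)
$$
for all $\varphi,\psi_1,\psi_2\in\valg$. Since $\valg$ is finite-dimensional by \eqref{G} of Theorem \ref{alesker package}, both sides depend continuously on $\psi_1,\psi_2$, so it suffices to check $(\star)$ on a dense subset. The density statement \eqref{mcm conj}, combined with averaging over the compact group $G$, shows that the $G$-averaged valuations $\mu_A^G:=\int_G \mu_{gA}\,dg$, $A\in\K(V)$, span a dense subspace of $\valg$. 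Hence I would reduce to the case $\psi_1=\mu_{-A}^G,\ \psi_2=\mu_{-B}^G$.

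The key computation is the identity $(\varphi,\mu_{-A}^G)=\varphi(A)$ for $\varphi\in\valg$. By the product formula \eqref{product} and Fubini,
$$
(\mu_{-A}^G\cdot\varphi)(K) \;=\; \int_G\!\int_V \varphi\bigl(K\cap(x+gA)\bigr)\,dx\,dg \;=\; \int_{\barg}\varphi(K\cap\bar gA)\,d\bar g.
$$
This is a valuation in $K$ whose top-degree component equals $(\varphi,\mu_{-A}^G)\,\vol_n$. To extract it, set $K=tB_1$ and let $t\to\infty$. By the normalization \eqref{dg normalization}, the measure of $\{\bar g:\bar gA\subset tB_1\}$ equals $\omega_n t^n+O(t^{n-1})$, and on this set $\barg$-invariance gives $\varphi(K\cap\bar gA)=\varphi(\bar gA)=\varphi(A)$. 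The remaining ``boundary'' $\bar g$ form a set of measure $O(t^{n-1})$ on which $\varphi$ is uniformly bounded by continuity, contributing only $O(t^{n-1})$. Thus $(\mu_{-A}^G\cdot\varphi)(tB_1)=\varphi(A)\,\omega_n t^n+O(t^{n-1})$, proving the identity.

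With this lemma in hand, the specialization of $(\star)$ reduces both sides to the same kinematic integral. The left side is
$$
\sum_i \varphi'_i(A)\,\varphi''_i(B) \;=\; k_G(\varphi)(A,B) \;=\; \int_{\barg}\varphi(A\cap\bar g B)\,d\bar g.
$$
For the right side, associativity of the Poincar\'e pairing combined with the key identity applied to $\varphi\cdot\mu_{-A}^G\in\valg$ gives
$$
(\varphi,\mu_{-A}^G\cdot\mu_{-B}^G) \;=\; (\varphi\cdot\mu_{-A}^G,\mu_{-B}^G) \;=\; (\varphi\cdot\mu_{-A}^G)(B) \;=\; \int_{\barg}\varphi(B\cap\bar g A)\,d\bar g,
$$
and the two integrals coincide by the change of variables $\bar g\mapsto \bar g^{-1}$, using the unimodularity of $\barg$ and the $\barg$-invariance of $\varphi$. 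The convolution statement and relation \eqref{kg ag} then follow by intertwining with the Alesker Fourier transform, exploiting the Fourier-invariance \eqref{pd and fourier} of the Poincar\'e pairing and the definition \eqref{def conv} of convolution.

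The main technical obstacle lies in the asymptotic analysis behind $(\varphi,\mu_{-A}^G)=\varphi(A)$: one has to control the ``boundary'' contribution of those $\bar g$ for which $\bar gA$ crosses $\partial(tB_1)$. The bulk measure is computed via \eqref{dg normalization} and a Steiner-type estimate for Minkowski erosion, while the boundary contribution is handled by a uniform bound on $\varphi$ over equibounded subfamilies of $\K(V)$; together these yield the required separation of leading and lower-order terms in $t$.
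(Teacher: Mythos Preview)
Your argument is correct. The reduction to the identity $(\star)$, the key lemma $(\varphi,\mu_{-A}^G)=\varphi(A)$ via the asymptotic extraction of the top-degree component, and the final Fubini computation all go through as you describe.

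The paper's route is organized differently and is somewhat more structural. Rather than testing $(\star)$ directly, it first shows that $k_G$ is \emph{multiplicative} in the sense that $k_G(\mu_A^G\cdot\varphi)=(\mu_A^G\otimes\chi)\cdot k_G(\varphi)$ (this is your Fubini step, packaged abstractly), and observes that $p$ and $m_G^*$ are likewise maps of $\valg$-modules. This reduces the whole statement to checking the single equality $(p\otimes p)\,k_G(\chi)=m_G^*\,p(\chi)$. Viewing both sides as graded elements of $\Hom(\valg,\Val^{G*})$, the paper then argues that both are invertible module maps, hence agree up to scale because $\dim\Val_0=1$; the scale is pinned down by evaluating at $\vol$, where $k_G(\vol)=\vol\otimes\vol$ and $p(\vol)=\chi^*$ make the computation trivial. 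Your approach trades this uniqueness argument for an explicit evaluation: the asymptotic lemma $(\varphi,\mu_{-A}^G)=\varphi(A)$ is exactly the statement $p(\mu_{-A}^G)=\mathrm{ev}_A$, which the paper uses only implicitly (it appears explicitly in the compact case, Theorem~\ref{compact ftaig}). Your version is more hands-on and self-contained; the paper's is slightly slicker in that it never needs to estimate boundary terms, at the cost of invoking a module-theoretic rigidity step.
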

\begin{proof} The dual space ${\valg}^*$ is a $\valg$-module by
$$
\la(\alpha\cdot \beta^*),\gamma\ra := \la \beta^*, \alpha \cdot \gamma\ra.
$$
With this definition it is clear that $p$ is map of $\valg$ modules, and it is easy to check that $m_G^*$ is multiplicative in the sense that
$$
m_G^*(\alpha\cdot \beta^*) = (\alpha\otimes \chi) \cdot m_G^*(\beta) = (\chi\otimes\alpha) \cdot m_G^*(\beta).
$$

On the other hand, by \eqref{mcm conj} and \eqref{G} of Thm. \ref{alesker package}, the valuations
\begin{equation}\label{isom}
\mu_A^G:= \int_{\barg} \chi(\, \cdot\, \cap \bar g A) \, d\bar g = k_G(\chi)(\cdot, A)
\end{equation}
span $\valg$. Thus we may check the multiplicativity of $k_G$ by computing
\begin{align*}
k_G(\mu_A^G\cdot \varphi)(B,C) &= \int_{\barg} (\mu_A^G\cdot \varphi) (B\cap \bar g C) \, d \bar g\\
&= \int_{\barg}\int_{\barg}  \varphi (B\cap \bar h A\cap \bar g C)\, d\bar h \, d \bar g\\
&= \int_{\barg} k_G(\varphi) (B\cap \bar h A, C)\, d\bar h \\
&=\left[ (\mu_A^G\otimes \chi)\cdot k_G(\varphi)\right](B,C)
\end{align*}
by Fubini's theorem, for $\varphi \in \valg$.

Thus it remains to show that $(p\otimes p)(k_G(\chi)) = m_G^*(p(\chi))$. We may regard these elements of $\Val^{G*} \otimes \Val^{G*}$ as lying in $\Hom(\valg,\Val^{G*})$ instead. Notice that in these terms both elements are graded maps in a natural way. Furthermore \eqref{isom} and the multiplicativity of $k_G$ implies that $(p\otimes p)(k_G(\chi))$ is an invertible map of $\valg$ modules, and it is clear that the same is true of  $m_G^*(p(\chi))$. Since $\dim(\Val_0)=1$ it follows that the two must be equal up to scale.

To determine the scaling factor we compare $(p\otimes p)(k_G(\vol)) ,\ m_G^*(p(\vol))$. But using the facts
$$
p(\vol) =\chi^*,\quad k_G(\vol)=\vol\otimes \vol
$$
(the latter follows from the conventional normalization \eqref{eq:measure convention} of the Haar measure $dg$) it follows that the scaling factor must be 1. Here $\chi^*$ is the dual element evaluating to $1$ on $\chi$ and annihilating all valuations of positive degree.
\end{proof}

In more practical terms Thm. \ref{ftaig} may be summarized in the following statements:
\begin{enumerate}
\item  Let $\nu_1,\dots,\nu_N$ and $\phi_1,\dots,\phi_N$ be two bases for $\valg$, and consider the $N\times N$ matrix
$$
M_{ij}:= \la p(\nu_i),\phi_j\ra.
$$
Then
\begin{equation}
k_G(\chi) = \sum_{ij}(M^{-1})_{ij} \nu_i\otimes \phi_j.
\end{equation}
In other words $k_G(\chi)=p^{-1}\in \Hom(\Val^{G*},\Val^G)$. Hence, $k_G$ also determines the restricted product $m_G$. 
\item $k_G$ is multiplicative, in the sense that $k_G(\varphi \cdot \mu) = (\varphi \otimes \chi)\cdot k_G(\mu)$.
\end{enumerate}

Bernig has observed  
\begin{proposition}\label{valg subset val+} If $G$ acts isotropically on $\Rn$ then
$\valg(\Rn) \subset \Val_+(\Rn)$.
\end{proposition}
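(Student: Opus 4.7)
The strategy is to reduce to showing $\Val^G_{k,-}=0$ for each $1\le k\le n-1$. Since parity and degree define a $GL(\Rn)$-equivariant grading on $\Val$ (Theorem \ref{irred thm}), we have $\valg = \bigoplus_{k,\epsilon} \Val^G_{k,\epsilon}$, and the extremal degrees $k=0,n$ contain only multiples of $\chi$ and $\vol$, both of which are even.

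The main tool is Alesker's representation by differential forms: by Theorem \ref{alesker package}(\ref{G}) every $G$-invariant valuation is smooth, and by Theorem \ref{alesker package}(\ref{span}) every smooth translation-invariant valuation has the form $\varphi = \Psi_\gamma + c\,\vol$ for some translation-invariant $\gamma \in \Omega^{n-1}(S\Rn)$. Averaging $\gamma$ over the compact group $G$ does not change $\Psi_\gamma$, so we may assume $\gamma$ is $\barg$-invariant. Since $\vol$ is even, if $\varphi$ is odd then $c=0$, and it suffices to prove that every such $\Psi_\gamma$ is even.

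Consider the antipodal map $a\colon S\Rn \to S\Rn$, $a(x,v):=(-x,-v)$. A direct verification from \eqref{eq:def N} shows $a_* N(A) = N(-A)$ as oriented $(n-1)$-cycles, so $\Psi_\gamma(-A) = \int_{N(A)} a^*\gamma$. Since $a$ commutes with the linear $G$-action and conjugates the translation by $x$ to the translation by $-x$, it normalizes $\barg$; hence $a^*\gamma$ is again $\barg$-invariant. The proposition reduces to showing that $a^*\gamma - \gamma$ lies in the kernel of the map $\gamma \mapsto \Psi_\gamma$ for every $\barg$-invariant $\gamma \in \Omega^{n-1}(S\Rn)$.

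This last step is the main obstacle. When $-\Id \in G$ the statement is automatic since $a$ itself lies in $\barg$. In the general case one must exploit the transitivity of $G$ on $S^{n-1}$: given a base point $v_0 \in S^{n-1}$, pick $g_0 \in G$ with $g_0 v_0 = -v_0$. The $\barg$-invariance of $\gamma$ then gives $(a^*\gamma)_{(0,v_0)} = (-\Id)^* g_0^* \gamma_{(0,v_0)}$, while $\gamma_{(0,v_0)}$ is already invariant under the isotropy subgroup $K$ of $v_0$ in $G$. I expect that combining this identity with the $K$-invariance and the fact that the contact hyperplane $Q = \alpha^\perp$ at $(0,v_0)$ carries a $K$-invariant symplectic form forces $\gamma - a^*\gamma$ into the ideal $(\alpha,d\alpha)$, whence $\Psi_\gamma(-A) = \Psi_\gamma(A)$ via Proposition \ref{zero curv meas} and the valuation-level kernel characterization of Theorem \ref{ker thm}. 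Verifying this reduction is the delicate part, and a more conceptual alternative is to check the vanishing degree by degree using Schneider-type embeddings of $\Val_{k,-}^{sm}$ into sections of a $GL$-homogeneous bundle on an oriented flag manifold, where $G$-invariance together with transitivity on the sphere again forces the sections to vanish.
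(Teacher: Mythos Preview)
The paper gives no proof of this proposition. It is stated as an observation of Bernig, and the very next sentence reads ``No general explanation for this fact is known.'' The implicit justification is case-by-case verification via the classification of connected compact Lie groups acting transitively on spheres.

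Your proposal is explicitly incomplete, and the gap you flag is genuine. The reduction to showing that $a^*\gamma-\gamma$ lies in the kernel of $\Psi$ is correct, and the case $-\Id\in G$ is indeed trivial. But your sketch for the remaining cases does not close. Choosing $g_0\in G$ with $g_0 v_0=-v_0$, the composite $g_0\circ a$ fixes $(0,v_0)$ and acts on $T_{(0,v_0)}S\Rn$ by $-g_0$; since $g_0^*\gamma=\gamma$, what you need is that the $K$-invariant covector $\gamma|_{(0,v_0)}$ is also invariant under $-g_0$. However $-g_0$ fixes $v_0$ but need not lie in $G$ (for instance when $G=SO(2k+1)$, $SU(2k+1)$, or $G_2$ on $\R^7$), so $K$-invariance alone gives no purchase, and nothing in the symplectic structure of the contact plane supplies the missing symmetry. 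Your alternative ``Schneider-type'' suggestion is only a pointer, not an argument. In the specific case $G=SO(n)$ the conclusion follows from Hadwiger's theorem (the $\mu_i$ are visibly even), but that is again case-specific.

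In short, you have correctly identified the obstacle and rediscovered why the paper says no general explanation is known; what you have written is a plausible outline, not a proof.
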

No general explanation for this fact is known. With \eqref{pd and fourier} this implies that if $\phi_i = \hat \nu_i$ above then the matrix $M$ is symmetric, and hence so is $M^{-1}$.

\subsection{The classical integral geometry of $\Rn$}\label{ig of rn}
 Modulo $\alpha, d\alpha$, the space of invariant forms $\Omega^{n-1}(S\Rn)^{SO(n)}=\la \kappa_0,\dots,\kappa_{n-1}\ra$, corresponding to the elementary symmetric functions of the principal curvatures of a hypersurface. With Thm. \ref{alesker package} this implies Hadwiger's theorem \ref{hadwiger thm}. Furthermore

\label{structure of valson}\begin{theorem} As an algebra, $\valson(\R^n)\simeq \R[t]/(t^{n+1})$, with 
\begin{equation}\label{powers of t}
t^i = \frac{i! \omega_i}{\pi^i}\mu_i = \frac{2^{i+1}}{\alpha_i}\mu_i.
\end{equation}
\end{theorem}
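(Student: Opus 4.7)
The plan is to leverage the graded structure of $\valson$ together with Hadwiger's theorem, then pin down the explicit normalization by computing a single product recursion.

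By Hadwiger (Theorem \ref{hadwiger thm}) combined with Theorem \ref{irred thm}, the space $\valson$ is the $(n+1)$-dimensional vector space with basis $\mu_0,\dots,\mu_n$, and each $\mu_i$ is homogeneous of degree $i$. Since the Alesker product is graded, $\mu_1^i$ lies in the degree-$i$ part of $\valson$, which is one-dimensional, spanned by $\mu_i$. Hence $\mu_1^i = c_i\mu_i$ for some constants $c_i$, with $c_0 = 1$ (the multiplicative identity is $\chi = \mu_0$), and $\mu_1^{n+1} = 0$ because there are no invariant valuations of degree exceeding $n$. This already gives the algebra isomorphism $\valson \simeq \R[t]/(t^{n+1})$ for an appropriate rescaling of $\mu_1$; the real content of the theorem is the explicit value of the constants.

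To compute the $c_i$, I would first establish the recursion
\[
\mu_1\cdot \mu_k = \frac{(k+1)\omega_{k+1}}{2\omega_k}\,\mu_{k+1},\qquad k=0,\dots,n-1,
\]
via the Alesker Fourier transform of Theorem \ref{alesker package}. The Klain function of $\mu_k$ is identically $1$ on $\Gr_k$, so by the Klain-function formula for the Fourier transform one has $\widehat{\mu_k} = \mu_{n-k}$. Under Fourier, product becomes convolution, and by Corollary \ref{def lambda}, $\Lambda'\varphi = \mu_{n-1}*\varphi = \tfrac12\left.\tfrac{d}{dr}\right|_{r=0}\varphi(\,\cdot\, + B_r)$. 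A classical Steiner-type expansion, verified on the template of balls and extended by Hadwiger,
\[
\mu_j(K+B_r) = \sum_{i=0}^{j}\binom{n-i}{n-j}\frac{\omega_{n-i}}{\omega_{n-j}}\mu_i(K)\, r^{j-i},
\]
yields $\mu_{n-1}*\mu_j = \tfrac12(n-j+1)\tfrac{\omega_{n-j+1}}{\omega_{n-j}}\mu_{j-1}$. Fourier-transforming back and setting $k=n-j$ gives the displayed recursion.

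Telescoping with $c_0=1$ and $\omega_0 = 1$ produces $c_i = \frac{i!\,\omega_i}{2^i}$. Setting $t:=\tfrac{2}{\pi}\mu_1$ then gives $t^i = \tfrac{2^i}{\pi^i}c_i\mu_i = \tfrac{i!\,\omega_i}{\pi^i}\mu_i$, and the identity $\tfrac{i!\,\omega_i}{\pi^i} = \tfrac{2^{i+1}}{\alpha_i}$ is a direct consequence of the Legendre duplication formula applied to $\omega_i\omega_{i+1}$. There is no serious conceptual obstacle in this plan: the isomorphism to a truncated polynomial ring is immediate from graded one-dimensionality, and the bulk of the work is careful bookkeeping of the normalization constants. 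The one non-obvious ingredient is the identification $\widehat{\mu_k} = \mu_{n-k}$, which allows a single convolution computation against $\mu_{n-1}$ (where the Steiner machinery is natural) to yield the product structure against $\mu_1$.
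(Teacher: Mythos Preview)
Your argument is correct, but the route is genuinely different from the paper's. The paper identifies $t$ as the Crofton integral over affine hyperplanes, observes that $t^k$ is a multiple of $\mu_k$ via iterated Crofton, and then determines the constants by invoking the transfer principle (Theorem~\ref{transfer}) to $(S^n,SO(n+1))$: computing the spherical kinematic formula on totally geodesic subspheres by the template method, and pulling the resulting multiplicative identity $\Psi_a\cdot\Psi_b=\Psi_{a+b}$ (with $\Psi_i=\tfrac{2}{\alpha_i}\mu_i$) back to $\R^n$ via the ftaig. By contrast, you stay entirely inside $\R^n$: you use $\widehat{\mu_k}=\mu_{n-k}$ to convert multiplication by $\mu_1$ into convolution by $\mu_{n-1}$, which is then read off from the derivative of the classical Steiner expansion of $\mu_j(K+B_r)$. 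Your approach is more elementary in that it avoids the spherical detour and the transfer/ftaig machinery altogether; the paper's approach, on the other hand, simultaneously establishes the Nijenhuis normalization of the kinematic operator $k_{SO(n)}$ and exhibits the link between the algebra structure and the kinematic formulas, which is the thematic point of the section. One small remark: you should note explicitly that your recursion gives $c_i\neq 0$ for $i\le n$, so that $t$ really generates and the map $\R[t]/(t^{n+1})\to\valson$ is an isomorphism rather than merely a surjection.
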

\begin{proof} Put $t:= \int_{\barGr_{n-1}} \chi(\cdot \cap P)\, dP$. Then by the definition of the Alesker product
$$
t^2 = \int_{\barGr_{n-1}} t(\cdot \cap P)\, dP= \int\int_{(\barGr_{n-1})^2} \chi(\cdot \cap P\cap Q)\,dQ dP
=\int_{\barGr_{n-2}} \chi(\cdot \cap R)\, dR
$$
etc. By \eqref{affine general crofton}, it follows that the powers of $t$ are multiples of the corresponding $\mu_i$, which establishes the first assertion. 

To determine the coefficients relating the $t^i$ and the $\mu_i$ we apply the transfer principle Thm. \ref{transfer} to the isotropic pairs $(\Rn,\barson)$ and $(S^n,SO(n+1))$. Let $\Psi_i= \frac 2{\alpha_i} \mu_i$ and $\Psi_i'$ its image under the transfer (i.e. if $\psi_i'$ is the image of the corresponding curvature measure under the transfer map then $\Psi_i'(A) = \psi_i'^A(A)$ for $A\subset S^n$). Then
$$
\Psi_i'(S^j) = 2 \delta_i^j
$$
and the kinematic formula for $(S^n,SO(n+1))$ is
$$
k_{S^n}(\Psi_c') (S^a,S^b)= \int_{SO(n+1)} \Psi_c'(S^a\cap g S^b)\, dg = 2\alpha_n \delta_c^{a+b-n}
$$
under our usual convention for the Haar measure on the group. Thus the template method implies that
$$
k_{S^n}(\Psi'_c) = \frac{\alpha_n} 2 \sum_{a+b = n+c} \Psi'_a\otimes\Psi'_b.
$$
Now the multiplicativity of $k_{\Rn}$, together with Thm. \ref{transfer}, yields
$$
\frac{\alpha_n} 2(\Psi_c\otimes \chi)\cdot  \sum_{a+b = n} \Psi_a\otimes\Psi_b= 
(\Psi_c\otimes \chi)\cdot k_{\Rn}(\chi) = k_{\Rn}(\Psi_c) = \frac{\alpha_n} 2 \sum_{a+b = n+c} \Psi_a\otimes\Psi_b.
$$
It follows that $\Psi_a\cdot \Psi_b \equiv \Psi_{a+b}$. It is convenient however to take 
\begin{equation}
t:= 2\Psi_1
\end{equation}
whence the second relation of \eqref{powers of t} follows. The first then follows from the identity
$$
\omega_n\omega_{n+1} = \frac{2^{n+1}\pi^n}{(n+1)!}.
$$
\end{proof}

Adjusting the normalization of the Haar measure $dg$ and unwinding Thm. \ref{ftaig}, it follows that 
\begin{equation}
\kson(t^c) = \sum_{a+b = n+c} t^a \otimes t^b, \quad c = 0,\dots, n.
\end{equation}
In fact  $dg$ is chosen so that $dg(\{g: go \in S\}) =\frac{2^{n+1}}{\alpha_n}\vol_n(S) $, as may be seen by examining the leading term  $t^c \otimes t^n = \frac{2^{n+1}}{\alpha_n} t^c \otimes \vol_n$ on the right.
This yields the first assertion of Theorem \ref{nijenhuis}.

\begin{corollary}
$$
\mu_i\cdot \mu_j = \binom{i+j} i \frac{\omega_{i+j}}{\omega_{i}\omega_{j}} \mu_{i+j}.
$$
\end{corollary}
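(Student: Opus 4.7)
The plan is to read this off directly from the preceding theorem, which identifies $\valson(\R^n) \cong \R[t]/(t^{n+1})$ together with the explicit relation
\[
t^i = \frac{i!\,\omega_i}{\pi^i}\,\mu_i.
\]
Since the isomorphism is one of graded algebras, we have $t^i \cdot t^j = t^{i+j}$ in $\valson$, so the whole task reduces to solving for $\mu_i \cdot \mu_j$ in terms of $\mu_{i+j}$ by comparing scalar coefficients.

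Concretely, I would substitute the expression for $t^k$ on both sides of the identity $t^i \cdot t^j = t^{i+j}$ to obtain
\[
\frac{i!\,\omega_i}{\pi^i}\cdot \frac{j!\,\omega_j}{\pi^j}\;\mu_i \cdot \mu_j \;=\; \frac{(i+j)!\,\omega_{i+j}}{\pi^{i+j}}\,\mu_{i+j},
\]
and then divide through by the coefficient on the left. The $\pi^{i+j}$ factors cancel, and the combinatorial factor $\frac{(i+j)!}{i!\,j!} = \binom{i+j}{i}$ appears, yielding exactly
\[
\mu_i \cdot \mu_j = \binom{i+j}{i}\frac{\omega_{i+j}}{\omega_i\,\omega_j}\,\mu_{i+j}.
\]

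There is no real obstacle here; the content of the corollary is entirely encoded in the normalizing constants between $t$ and $\mu_1$ established in the previous theorem. One might alternatively derive the statement directly from the template method applied to balls (using $\mu_i(B_r) = \binom{n}{i}\frac{\omega_n}{\omega_{n-i}} r^i$ together with the kinematic formula on $S^n$ transferred back to $\R^n$), but once the algebra isomorphism with $\R[t]/(t^{n+1})$ is in hand, the one-line computation above is the cleanest route.
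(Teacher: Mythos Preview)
Your proof is correct and is exactly the intended argument: the paper states this corollary without proof immediately after the theorem establishing $t^i = \frac{i!\,\omega_i}{\pi^i}\mu_i$, and the one-line substitution into $t^i\cdot t^j = t^{i+j}$ is precisely how one is meant to extract it.
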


\noindent {\bf Remark.} The relations \eqref{powers of t} may also be expressed
\begin{equation}
\exp(\pi t) = \sum \omega_i \mu_i, \quad \frac 1{2-t} = \sum \frac{\mu_i}{\alpha_i }.
\end{equation}



We say that a valuation $\varphi\in \Val(\Rn)$ is {\bf monotone} if $\varphi(A) \ge \varphi(B)$ whenever $A\supset B, A,B\in \K^n$; {\bf positive} if $\varphi(A)\ge 0$ for all $A \in \K^n$;  and {\bf Crofton positive} if each of its homogeneous components admits a nonnegative Crofton measure. Such valuations constitute the respective cones $CP \subset M \subset P \subset \Val(\Rn)$.

\begin{exercise} $CP \cap \valson = M\cap \valson = P\cap \valson= \la \mu_0,\dots,\mu_n\ra_+$.
\end{exercise}

In fact the algebra of the vector space of $SO(n)$-invariant valuations on $\Rn$ given in Thm. \ref{structure of valson} is a special case of a more general theorem, due to Alvarez-Fernandes and Bernig, about the {\bf Holmes-Thompson volumes} associated to a smooth {\bf Minkowski space}. Unfortunately, however, there is no ``dual" interpretation as a kinematic formula in this setting.

Recall that a {\bf Finsler metric} on a smooth manifold $M^n$  is a smoothly varying family $g$ of smooth norms on the tangent spaces $T_xM$.
 Let $g_x^*$ denote the dual norm on $T^*_xM$, and $B^*_x := (g_x^{*})^{-1}[0,1]\subset T_x^*M$ the associated field of unit balls. Then for $E \subset M$ the {\bf Holmes-Thompson volume} of $M$ is the measure
\begin{equation}
HT_g(E):=\omega_n^{-1} \int_{\bigcup_{x\in E} B^*_x} \varpi^n
\end{equation}
where $\varpi$ is the natural symplectic form on $T^*M$ (cf. \cite{thompson}).
A {\bf Minkowski space} is the special case of a finite-dimensional normed  vector space $W^n$ with smooth norm $F$. In this case the Holmes-Thompson volume of a subset $S \subset W^n$ may be expressed
\begin{equation}\label{minkowski ht}
HT^F(S) = \omega_n^{-1}\vol_n(S) \vol_n(B_{F^*})
\end{equation}
where $\vol_n$ is the Lebesgue volume of the background euclidean structure.

For a compact smooth submanifold $M^m \subset W$ it is natural to define 
$ HT_m^F(M)$ to be the total Holmes-Thompson volume of $M$ with respect to the Finsler metric on $M$ induced by $F$.
Extending \eqref{minkowski ht} this may also be expressed as
\begin{equation}
HT_m^F(M) = \omega_m^{-1}\int_M \vol_m(B_{(\left.F\right|_{T_xM})^*})\, d\vol_m x
\end{equation}
 where again $\vol_m$ is the $m$-dimensional volume induced by the euclidean structure on $W$. In particular, if $E\in \Gr_m(W)$ and $M\subset E$ then
\begin{equation}\label{def ht vol} 
HT_m^F(M) = \omega_m^{-1} \vol_m((B_F \cap E)^*)\vol_m (M).
\end{equation}
Here the polar is taken as a subset of $E$.

\begin{theorem}[\cite{be05,alv}]
Each $HT_m^F$ extends uniquely to a smooth even valuation of degree $m$ on $W$, which we denote by $\mu_m^F$. Furthermore
 \begin{equation}\label{ht algebra}
 \mu_i^F \cdot \mu_j^F= \binom{i+j} i \frac{\omega_{i+j}}{\omega_i\omega_j}\mu_{i+j}^F.
 \end{equation}
Thus the vector space spanned by the $\mu_i^F$ is in fact a subalgebra of $\Val(W)$, isomorphic to $\R[x]/(x^{n+1})$.
\end{theorem}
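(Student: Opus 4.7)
The plan is to realize $\mu_m^F$ as the Alesker Fourier transform of a specific mixed-volume valuation built from the polar body $B_F^*$, and then to deduce the algebra relation~\eqref{ht algebra} from the convolution formula of Theorem~\ref{convolution = addition} via a short combinatorial computation.

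Put
\[
\widetilde{\mu}_m^F \,:=\, \frac{\binom{n}{m}}{\omega_m}\, V\bigl(B_F^*[m],\,\cdot\,[n-m]\bigr),
\]
a smooth even valuation of degree $n-m$ on $W$ (smoothness of $F$ makes $B_F^*$ smooth and strictly convex). For $L \in \Gr_{n-m}(W)$ and $A \subset L$ of full dimension in $L$, the classical identity
\[
V\bigl(K[m], A[n-m]\bigr) \,=\, \frac{m!(n-m)!}{n!}\,\vol_{n-m}(A)\,\vol_m\bigl(\pi_{L^\perp}K\bigr)
\]
(verified on $K = B_1,\ A = B_1 \cap L$ via $\mu_{n-m} = \binom{n}{n-m}\omega_m^{-1}V(B_1[m],\cdot[n-m])$ and extended by multilinearity) yields
\[
\kl_{\widetilde{\mu}_m^F}(L) \,=\, \omega_m^{-1}\vol_m\bigl(\pi_{L^\perp}B_F^*\bigr).
\]
Set $\mu_m^F := \widehat{\widetilde{\mu}_m^F}$. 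Combining the Klain-function description of the Alesker Fourier transform with the polar-duality identity $\pi_E B_F^* = (B_F \cap E)^*$ in $E$, we obtain
\[
\kl_{\mu_m^F}(E) \,=\, \omega_m^{-1}\vol_m\bigl((B_F \cap E)^*\bigr) \quad\text{for } E \in \Gr_m(W),
\]
which is the Holmes--Thompson Klain function; Klain's injectivity theorem then makes $\mu_m^F$ the unique smooth even valuation of degree $m$ extending $HT_m^F$ from~\eqref{def ht vol}.

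For the algebra~\eqref{ht algebra}, the Alesker Fourier transform intertwines the product and the convolution, so $\mu_i^F \cdot \mu_j^F = \widehat{\widetilde{\mu}_i^F * \widetilde{\mu}_j^F}$. Applying~\eqref{conv 2} to
\[
V\bigl(B_F^*[i],\,\cdot\,[n-i]\bigr) \,*\, V\bigl(B_F^*[j],\,\cdot\,[n-j]\bigr)
\]
collapses the convolution to a scalar multiple of $V\bigl(B_F^*[i+j],\,\cdot\,[n-i-j]\bigr)$. Incorporating the $\binom{n}{m}/\omega_m$ prefactors from the definition of $\widetilde{\mu}_m^F$ and simplifying the resulting binomial coefficients yields
\[
\widetilde{\mu}_i^F * \widetilde{\mu}_j^F \,=\, \binom{i+j}{i}\,\frac{\omega_{i+j}}{\omega_i\omega_j}\,\widetilde{\mu}_{i+j}^F,
\]
whose Alesker Fourier transform is precisely~\eqref{ht algebra}.

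The principal obstacle is the mixed-volume identification $\kl_{\widetilde{\mu}_m^F}(L) = \omega_m^{-1}\vol_m(\pi_{L^\perp}B_F^*)$, i.e.\ the ``subspace Cauchy formula'' for a mixed volume one of whose arguments is supported in an $(n-m)$-dimensional subspace. Once this convex-geometric identity is in hand the rest is pure coefficient bookkeeping: the prefactor $\binom{n}{m}/\omega_m$ in the definition of $\widetilde{\mu}_m^F$ is precisely calibrated so that the output of~\eqref{conv 2} reorganizes into the combinatorial factor $\binom{i+j}{i}\omega_{i+j}/(\omega_i\omega_j)$ appearing on the right of~\eqref{ht algebra}. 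The Euclidean case, where $F$ is the standard Euclidean norm so that $B_F^* = B_1$, $\widetilde{\mu}_m^F = \mu_{n-m}$, and $\mu_m^F = \mu_m$, serves as a consistency check and reproduces the classical product formula for intrinsic volumes.
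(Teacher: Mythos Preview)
Your argument is essentially the paper's own proof: both realize $\mu_m^F$ as the Alesker--Fourier transform of the mixed-volume valuation $V(B_{F^*}[m],\cdot)$, invoke the polar identity $(B_F\cap E)^*=\pi_E(B_F^*)$ together with the Klain-function description of the Fourier transform, and then obtain \eqref{ht algebra} from the convolution formula \eqref{conv 2}. The only cosmetic difference is that you attempt to track the constants explicitly via the prefactor $\binom{n}{m}/\omega_m$, whereas the paper carries an unspecified constant $c$ throughout and fixes it at the end by specializing to the Euclidean case $F=|\cdot|$ (where $\mu_m^F=\mu_m$ and the classical product formula is already known); since you also invoke the Euclidean case as a consistency check, the two arguments are effectively identical.
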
 
\begin{proof} It is a general fact that for $A \in \K(W), E\in \Gr_m(W)$
\begin{equation}\label{polar ident}
(A\cap E)^* = \pi_E(A^*)
\end{equation}
where the polar on the left is as a subset of $E$, and on the right as a subset of $W$. Observe that if $K\in \K(E^\perp)$ then
\begin{align*}
\frac{(n-m)!}{n!}V(A^*[m],K[n-m])&= \left.\frac{d^m}{dt^m}\right|_{t=0} \vol_n(K + tA^*) \\
&= m! \vol_{n-m}(K) \vol_m(\pi_E(A^*)).
\end{align*}
Hence if we take $A= B_F$ then by \eqref{def ht vol}, \eqref{polar ident} and the characterization of the Fourier transform in terms of Klain functions in Thm. \ref{alesker package} \eqref{af transform} it follows that 
$$
\mu_m^F:= c{V(B_{F^*}[m],\cdot)}\widehat{}
$$
is a smooth even valuation of degree $m$ extending the $m$th Holmes-Thompson volume.

Now by \eqref{def conv} and \eqref{conv 2}
\begin{align*}
\mu_i^F\cdot \mu_j^F &= \widehat{\widehat{\mu_i^F}*\widehat{\mu_j^F}}\\
&= c \left(V(B_{F^*}[i],\cdot)*V(B_{F^*}[j],\cdot)\right)\widehat{}\\
&=c V(B_{F^*}[i+j],\cdot )\widehat{}\\
&= c \mu_{i+j}^F.
\end{align*}
To compute the constant we note that $\mu_i^F= \mu_i$ if $F$ is euclidean, and use the relations \eqref{powers of t}.
\end{proof}

\section{Valuations and integral geometry on isotropic manifolds}
\subsection{Brief definition of valuations on manifolds}
More recently, Alesker \cite{ale06} has introduced a general theory of valuations on manifolds, exploiting the insight that smooth valuations may be applied to any compact subset admitting a normal cycle. Formally, given a smooth oriented manifold $M$ of dimension $n$, the space $\V(M)$ of smooth valuations on the oriented $n$-dimensional manifold $M$ consists of functionals on the space of smooth polyhedra (i.e. smooth submanifolds with corners) $P \subset M$ of the form $P\mapsto \int_P \alpha + \int_{N(P)}\beta$, hence may be identified with a quotient of the space $\Omega^n(M) \times \Omega^{n-1}(SM)$.

\begin{definition}
A {\bf smooth valuation} on $M$ is determined by a pair $(\theta, \varphi) \in \Omega^n(M) \times \Omega^{n-1}(S^*M)$ via
\begin{equation}
\Psi_{\theta,\varphi}(A) := \int_A \theta + \int_{N(A)} \varphi.
\end{equation}
The space of smooth valuations on $M$ is denoted $\V(M)$.
\end{definition}

If $M$ is a vector space then it is natural to consider those smooth valuations determined by translation-invariant differential forms $\omega,\varphi$. 
It follows from Thm. \ref{alesker package} (1) that the subspace of all such smooth valuations coincides with $\valsm(M)$ as defined in section \ref{survey}.

The space $\V(M)$ carries a natural filtration, compatible with the grading on the subspace of translation-invariant valuations if $M$ is a vector space. Parallel to Theorem \ref{alesker package} above is the following.

\begin{theorem}[Alesker \cite{ale05a, ale05b, alefu05, ale05d, ale06}]\label{alesker mfd package}
\begin{enumerate}
\item \label{mfd product item} There is a natural continuous commutative filtered product on $\V(M)$ with multiplicative identity given by the Euler characteristic $\chi$. If $N \subset M$ is an embedded submanifold then the restriction map $r_N:\V(M) \to \V(N)$ is homomorphism of algebras. If $M$ is a vector space then the restriction of this product to translation-invariant valuations coincides with the product of \eqref{product}.
\item If $M$ is compact then the product satisfies Poincar\'e duality, in the sense that the pairing $$(\varphi,\psi):=(\varphi \cdot \psi) (M)$$ is perfect.
\item \label {mfd G} Suppose the Lie group $\barg $ acts transitively on the sphere bundle $SM$.  Put $\V^{\barg}(M)$ for the space of valuations on $M$ invariant under $\barg$. Then $\dim \V^{\barg} (M)<\infty$.
\end{enumerate}
\end{theorem}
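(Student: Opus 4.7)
The plan is to treat the three assertions separately, in increasing order of technical difficulty.

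Item (3) is the most direct. Transitivity of $\barg$ on $SM$ forces transitivity on $M$, so $\barg$-invariant forms are determined by their values at a basepoint: $\Omega^n(M)^{\barg} \simeq \Lambda^n(T^*_oM)^H$ and $\Omega^{n-1}(S^*M)^{\barg} \simeq \Lambda^{n-1}(T^*_{\bar o}S^*M)^K$ are both finite-dimensional as isotropy invariants of a finite-dimensional linear representation of a compact group. It remains to check that every $\barg$-invariant $\mu \in \V(M)$ arises from an invariant representing pair $(\theta,\varphi)$. Starting from any representative, the difference between $(\theta,\varphi)$ and its $g$-translate lies in the kernel of the valuation map $\Psi$; since this kernel is $\barg$-stable, averaging over the compact isotropy (and descending via the quotient) yields an invariant representative. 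Consequently $\V^{\barg}(M)$ is a quotient of a finite-dimensional space.

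Item (1) is the most substantial and requires extending the Euclidean product formula \eqref{product} to manifolds via locality. On a vector space, the density of $\mu_A$-type valuations (Theorem \ref{alesker package}(3)) together with the explicit formula $(\mu_A\cdot\psi)(K) = \int_W \psi((x-A)\cap K)\,dx$ and bilinear continuity uniquely determine the product on $\valsm(W)$. To globalize, one views a representing pair as a smooth current (on $M$ or $S^*M$), embeds diagonally $M \hookrightarrow M\times M$, and defines $\mu_1\cdot \mu_2$ via an intersection of currents followed by pushforward to the diagonal; making this intersection well-defined requires a wavefront-set argument to rule out pathological singularity alignments. An equivalent partition-of-unity approach reduces to coordinate patches and invokes \eqref{product} locally. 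The identity role of $\chi$ emerges because $\chi$ is represented at each point by a delta-like valuation, and substituting into the local formula reproduces $\psi$; commutativity and filtration compatibility follow from the local integral representation. The homomorphism property of restriction $r_N$ is automatic from the local nature of the construction.

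Item (2) exploits the manifold product to build the pairing $(\varphi,\psi) := (\varphi\cdot\psi)(M)$, which is evidently bilinear and continuous once (1) is established. To prove perfectness one argues non-degeneracy pointwise: given nonzero $\varphi$, choose a compact smooth polyhedron $P\subset M$ with $\varphi(P)\ne 0$ and localize to a coordinate chart around a smooth point of $\partial P$. Within this chart, $\varphi$ restricts to a nonzero translation-invariant valuation (up to lower-order terms in the filtration), and the Euclidean Poincar\'e duality of Theorem \ref{alesker package}(6) produces a local dual partner $\psi_0$; cutting off by a bump function and averaging yields a global $\psi \in \V(M)$ with $(\varphi,\psi)\ne 0$. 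Perfectness of the pairing then follows by standard duality on appropriately topologized Fr\'echet spaces, using that $\Psi:\Omega^n(M)\times\Omega^{n-1}(S^*M)\to\V(M)$ is continuous and surjective with precisely identifiable kernel.

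The principal obstacle is clearly (1): the manifold product cannot merely be borrowed from the vector-space formula because $M$ carries no global linear structure, and constructing it intrinsically requires the current-theoretic intersection machinery with careful wavefront-set bookkeeping. Verifying associativity, commutativity, and independence from the choice of representing pair $(\theta,\varphi)$ occupies the bulk of Alesker's original manifold paper \cite{ale06}, and in particular the compatibility with \eqref{product} in the Euclidean case must be checked by unwinding both constructions in parallel.
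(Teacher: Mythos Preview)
The paper does not actually prove items (1) and (2); it simply attributes them to Alesker's papers and gives only the heuristic ``$\mu_X\cdot\mu_Y := \mu_{X\cap Y}$'' picture. Your sketches of (1) and (2) are reasonable outlines of what happens in those papers, so there is nothing to compare against here.

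For item (3), however, the paper \emph{does} give an argument, and it differs from yours. The paper's proof (appearing just after Theorem~\ref{ker thm}) uses the first variation operator $\delta$: by the Kernel Theorem, $\delta\mu$ is encoded by the well-defined $\barg$-invariant $n$-form $\pi^*\theta + D\varphi \in \Omega^n(S^*M)^{\barg}$, and the kernel of $\delta$ is the one-dimensional span of $\chi$. Thus $\dim \V^{\barg}(M) \le 1 + \dim \Omega^n(S^*M)^{\barg} < \infty$.

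Your approach to (3) has a genuine gap. You correctly observe that $\Omega^n(M)^{\barg}$ and $\Omega^{n-1}(S^*M)^{\barg}$ are finite-dimensional, but to conclude that $\V^{\barg}(M)$ is a quotient of these you must show that every $\barg$-invariant valuation admits a $\barg$-invariant representing pair. Your averaging argument does not accomplish this: $\barg$ is typically non-compact (e.g.\ $\barson$ or $\barun$), so one cannot average over it, and ``averaging over the compact isotropy and descending via the quotient'' does not produce a $\barg$-invariant form that still represents the same valuation. Abstractly this is a lifting-of-invariants problem---whether the surjection $\Omega \to \V(M)$ induces a surjection on $\barg$-fixed points---and it is not automatic. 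The paper's first-variation argument sidesteps the issue entirely by mapping $\V^{\barg}(M)$ \emph{into} a finite-dimensional space of invariant forms rather than trying to lift out of one.
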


The basic idea of the product is that if $X\subset M$ is a ``nice" subset--- say, a piecewise smooth domain--- then the functional $\mu_X:Y\mapsto \chi(X\cap Y)$ is a valuation provided $Y$ is restricted appropriately. This valuation is {\it not} smooth, but a general smooth valuation can be approximated by  linear combinations of valuations of this type. Furthermore it is natural to define the restricted product
$$
\mu_X\cdot \mu_{Y}:= \mu_{X\cap Y}
$$
whenever the intersection is nice enough, and the Alesker product of smooth valuations is the natural extension.
These remarks underlie the following.
\begin{proposition}\label{fu product}
Let $X\subset M$ be a smooth submanifold with corners, and let $\{F_t\}_{t\in P}$ be a smooth proper family of diffeomorphisms of $M$. Thus the induced maps on the cosphere bundle yield a smooth map $\tilde F:P\times S^*M \to S^*M$. Assume that for each $\xi \in S^*M$ the property that the induced map $\tilde F_\xi :P \to S^*M$ is a submersion, and let $dm$ be a smooth measure on the parameter space $P$. Then
\begin{equation}\label{pseudo-kinematic}
[X,\{F_t\}_{t\in P},dm](Y):= \int_P \chi(F_t(X) \cap Y)\, dm
\end{equation}
determines an element of $\V(M)$, and the span of all such valuations is dense in $\V(M)$ in an appropriate sense. Furthermore
\begin{equation}\label{weird product}
\left([X,\{F_t\}_t,dm]\cdot [Y,\{G_s\}_{s\in Q},dn ]\right)(Z) = \int_P \int_Q \chi(F_t(X)\cap G_s(Y) \cap Z)\, dn(s) \, dm(t).
\end{equation}
\end{proposition}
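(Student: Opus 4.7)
The plan is to reinterpret the right-hand side of \eqref{pseudo-kinematic} as the integration of a pair of smooth forms against $Y$ and $N(Y)$, thereby exhibiting $[X,\{F_t\},dm]$ as an element of $\V(M)$ in the sense of the definition preceding the proposition. Recall that for a smooth submanifold with corners $X$ the normal cycle $N(X)$ is a compactly supported integral Legendrian $(n-1)$-cycle in $S^*M$, and that $N(F_t(X)) = (\tilde F_t)_* N(X)$ by the naturality of the normal cycle under diffeomorphisms. For parameters $t$ in a dense open subset of $P$ the submanifolds $F_t(X)$ and $Y$ meet transversely, and on this set the integer $\chi(F_t(X)\cap Y)$ is expressible as the integral of a universal Chern-Gauss-Bonnet type form against $N(F_t(X)\cap Y)$; the latter current can in turn be written bilinearly in terms of $(\tilde F_t)_* N(X)$ and $N(Y)$ together with a contribution supported on $F_t(X)\cap Y$ itself.

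The second step is fiber integration along $\tilde F: P\times S^*M\to S^*M$. Since the submersion hypothesis on each $\tilde F_\xi$ is equivalent to $\tilde F$ being a submersion in a neighborhood of the relevant fiber, the pushforward of $dm\wedge [N(X)]$ by $\tilde F$ is a smooth $(n-1)$-current on $S^*M$, represented by a smooth form $\varphi_X\in \Omega^{n-1}(S^*M)$; similarly the interior contribution pushes forward to a smooth $\theta_X\in \Omega^n(M)$. Combining this with the representation of $\chi$ from the first step and applying Fubini gives
\[
\int_P \chi(F_t(X)\cap Y)\, dm(t)=\int_Y \theta_X +\int_{N(Y)}\varphi_X = \Psi_{\theta_X,\varphi_X}(Y),
\]
which is the desired valuation. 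Density of the span of such valuations in $\V(M)$ would then be argued by a localization-and-approximation scheme: given any smooth $(\theta,\varphi)\in\Omega^n(M)\oplus\Omega^{n-1}(S^*M)$, cover $S^*M$ by coordinate charts, take $X$ to be a small simplex and $\{F_t\}$ a sufficiently rich family of local motions (e.g.\ compositions of translations and rotations in the chart) parametrized by an open $P\subset \R^N$ with $N\ge n$, and choose the density $dm$ so that $\varphi_X$ approximates $\varphi$ to any desired order in the appropriate Fr\'echet topology.

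For the product formula \eqref{weird product} the plan is to invoke the characterization of Alesker's product indicated in the remarks preceding the proposition: on generalized valuations of the form $\mu_X:Z\mapsto \chi(X\cap Z)$ it extends the rule $\mu_X\cdot\mu_Y=\mu_{X\cap Y}$ whenever the intersection is admissible, and the product is continuous with respect to the kind of smearing by a parameter family used here. Applied slotwise in $t$ and $s$ and combined with Fubini, this yields
\[
\bigl([X,\{F_t\},dm]\cdot [Y,\{G_s\},dn]\bigr)(Z)=\int_P\int_Q \chi(F_t(X)\cap G_s(Y)\cap Z)\, dn(s)\, dm(t),
\]
since for $(s,t)$ outside a set of $dm\otimes dn$-measure zero the triple intersection $F_t(X)\cap G_s(Y)\cap Z$ is transverse.

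The main obstacle I anticipate is the first step: promoting the pointwise submersion assumption on $\tilde F_\xi$ to the genuine smoothness of the pushforward $\varphi_X$, and handling the corner strata of $X$ so that the boundary pieces of $N(X)$ (coming from non-smooth points of $\partial X$) contribute a well-defined smooth form after averaging. A secondary technical point is that the identification of the resulting $\Psi_{\theta_X,\varphi_X}$ with the integral in \eqref{pseudo-kinematic} requires knowing, as in Proposition \ref{zero curv meas} and its manifold analogue, precisely which pairs $(\theta,\varphi)$ represent the zero valuation; without this one cannot recognize the two sides as the same element of the quotient space $\V(M)$.
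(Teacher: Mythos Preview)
The paper does not actually supply a proof of this proposition. It is stated without proof, followed only by the remark that the right-hand side of \eqref{weird product} is well-defined ``by an argument similar to the corresponding part of the proof of Thm.~\ref{kf for cm}.'' That theorem constructs the kinematic operator via fiber integration over the family $C$ of \eqref{def C}, which is precisely the mechanism you are invoking: push the normal cycle of $X$ through the parameter family and integrate out the parameters to obtain smooth forms. So your outline is consistent with what the paper gestures toward, and in fact goes further than the paper does in spelling out the steps.

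Your identification of the obstacles is accurate. The passage from the pointwise submersion hypothesis to smoothness of the pushforward is the substantive analytic content, and the corner strata of $X$ do require care (the normal cycle of a manifold with corners is a closed Legendrian cycle, but its decomposition into strata-wise pieces is not). The product formula \eqref{weird product} is indeed meant to be read as a consequence of the heuristic $\mu_X\cdot\mu_Y=\mu_{X\cap Y}$ stated just before the proposition, extended by continuity of the Alesker product under smearing; you have captured this correctly. The density claim is the softest part of the statement (note the paper's hedge ``in an appropriate sense''), and your localization-and-approximation sketch is the expected argument, though making the topology precise is work the paper does not attempt either.
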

While the right hand side of \eqref{weird product} is well-defined under the given conditions--- this follows from an argument similar to the corresponding part of the proof of Thm. \ref{kf for cm}--- the resulting smooth valuation does not have the same form. An important instance of the construction \eqref{pseudo-kinematic} arises if $P=G$ is a Lie group acting isotropically on $M$ and $dm=dg$ is a Haar measure.

Alesker has observed that the discussion of section \ref{weyl section}, together with the Nash embedding theorem (or, more simply, local smooth isometric embedding of Riemannian manifolds), shows that given any smooth $n$-dimensional Riemannian manifold $M$ there is a canonical {\bf Lipschitz-Killing subalgebra} $ LK(M)\simeq\R[t]/(t^{n+1}) $ of $\V(M)$, obtained by isometrically embedding $M$ in a euclidean space and restricting the resulting intrinsic volumes to $M$.

\subsection{First variation, the Rumin operator and the kernel theorem}
Let $M^n$ be a connected smooth oriented manifold and $\mu \in \V(M)$. Given a vector field $V$ on $M$, denote by $F_t:M\to M$ the flow generated by $V$. We consider the {\bf first variation} of $\mu$ with respect to $V$ given by
\begin{equation}
\delta_V\mu(A):= \left.\frac d{dt}\right|_{t=0} \mu(F_t(A))
\end{equation}
where $A\subset M$ is nice. Clearly $\mu=0$ iff $\delta \mu =0$ and $\mu(\{p\}) = 0$ for some point $p\in M$.

If $\mu=\Psi_{(\theta,\varphi)} \in \Omega^n(M) \times \Omega^{n-1}(S^*M)$ then the first variation may be represented as follows.  Put $\tilde F_t := F_{-t}^*: S^*M\to S^*M$ for the corresponding flow of contact transformations of the cosphere bundle. Then $\tilde F$ is the flow of a vector field $\tilde V$ on $S^*M$. 
Then

\begin{align}
\notag \delta_V\mu(A)&= \left.\frac d{dt}\right|_{t=0}\left[ \int_{F_t(A)} \theta +\int_{N^*(F_t(A))} \varphi\right]\\
\notag &= \left.\frac d{dt}\right|_{t=0}\left[ \int_{F_t(A)} \theta +\int_{\tilde F_{t*}N^*(A))} \varphi\right]\\
\notag &= \left.\frac d{dt}\right|_{t=0}\left[ \int_{A}F_t^* \theta +\int_{N^*(A)} \tilde F_{t}^*\varphi\right]\\
\label{1st variation formula}&= \int_{A}\left.\frac d{dt}\right|_{t=0} F_t^*\theta +\int_{N^*(A)} \left.\frac d{dt}\right|_{t=0}\tilde F_{t}^*\varphi\\
\notag &= \int_{A}\mathcal L_V  \theta +\int_{N^*(A)} \mathcal L_{\tilde V}\varphi\\
\notag &= \int_{A} di_V  \theta +\int_{N^*(A)} (di_{\tilde V}+i_{\tilde V}d)\varphi\\
\notag &=\int_{N^*(A)}i_{\tilde V}(\pi^*\theta + d\varphi)\\
\end{align}
since $\pi_*N(A) = \partial \lcur A\rcur$ and $\partial N(A) =0$. In particular this last expression is independent of the choice of differential forms $\theta,\varphi$ representing $\mu$.

Following Bernig and Br\"ocker \cite{bebr07}, this criterion becomes much clearer and more useful with the introduction of the {\bf Rumin differential} $D$. Recall that if $\alpha$ is a contact form on a contact manifold $M$ then there exists a unique {\bf Reeb vector field} $T$ such that
$$
i_T\alpha\equiv 1,\quad \mathcal L_T\alpha \equiv 0 ,\quad i_Td\alpha \equiv 0.
$$
(Of course these three conditions are redundant.) If $M= S\Rn$ then $T_{(x,v)} = (v,0)$.
Let $Q:= \alpha^\perp$ denote the contact distribution, which carries a natural (up to scale) symplectic structure given by $d\alpha$.

Recall that a  differential form on a contact manifold is said to be {\bf vertical} if it is multiple of the contact form.

\begin{proposition}[\cite{rumin}]\label{def D} Let $S^{2n-1}$ be a contact manifold and $\varphi \in \Omega^{n-1}(S)$. Then there exists a unique vertical form $\alpha \wedge\psi $ such that $d(\varphi +\alpha\wedge\psi) $ is vertical. 
\end{proposition}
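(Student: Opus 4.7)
Write the Reeb field $T$ and the contact distribution $Q = \alpha^\perp$, with the restriction $\omega := d\alpha|_Q$ being a symplectic form of rank $2n-2$. Any $(n-1)$-form on $S$ which is vertical is a multiple of $\alpha$, so the claim is that there exists a unique $\psi \in \Omega^{n-2}(S)/(\alpha)$ (i.e.\ unique modulo multiples of $\alpha$, which is the right indeterminacy since $\alpha \wedge \alpha \wedge \eta = 0$) such that
\[
d\bigl(\varphi + \alpha \wedge \psi\bigr) \wedge \beta = 0 \quad \text{modulo } \alpha,
\]
for a volume-completing form $\beta$. Expanding the left-hand side gives $d\varphi + d\alpha \wedge \psi - \alpha \wedge d\psi$, so upon restriction to $Q$ the condition reduces to the pointwise linear equation
\[
d\varphi|_Q \;+\; \omega \wedge \psi|_Q \;=\; 0 \qquad \text{in } \Lambda^n Q^*.
\]

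Now I would invoke the Hard Lefschetz theorem for symplectic vector spaces applied to $(Q_x, \omega_x)$: since $\dim Q_x = 2n-2$ has middle exterior degree $n-1$, the map
\[
L_x \colon \Lambda^{n-2} Q_x^* \longrightarrow \Lambda^{n} Q_x^*, \qquad \eta \longmapsto \omega_x \wedge \eta,
\]
is a linear isomorphism (this is the $k=1$ case of the usual $\mathfrak{sl}(2)$-representation $L^k \colon \Lambda^{n-1-k} \to \Lambda^{n-1+k}$). Hence for every $x$ we may solve the displayed equation uniquely for $\psi|_{Q_x}$, and smoothness of the solution follows from smoothness of the Lefschetz inverse on a symplectic vector bundle. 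Choosing any smooth extension of this section from $\Lambda^{n-2} Q^*$ to $\Lambda^{n-2} T^*S$ produces the desired $\psi$; different choices differ by a multiple of $\alpha$, so the vertical form $\alpha \wedge \psi$ itself is canonically defined.

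For uniqueness, suppose $\alpha \wedge \psi_1$ and $\alpha \wedge \psi_2$ both satisfy the conclusion. Setting $\eta = \psi_1 - \psi_2$, the form $d(\alpha \wedge \eta) = d\alpha \wedge \eta - \alpha \wedge d\eta$ must be vertical, which upon restriction to $Q$ forces $\omega \wedge \eta|_Q = 0$. The same Lefschetz isomorphism then gives $\eta|_Q = 0$, i.e.\ $\eta$ is a multiple of $\alpha$, whence $\alpha \wedge \eta = 0$. This gives the claimed uniqueness.

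The only nontrivial ingredient is the Lefschetz isomorphism $L_x$, which is standard symplectic linear algebra; the rest is bookkeeping with the contact decomposition $T^*S = \langle \alpha \rangle \oplus Q^*$. The main ``obstacle'' is purely organizational: one must be careful that the indeterminacy of $\psi$ modulo $\alpha$ matches the freedom in the vertical form $\alpha \wedge \psi$, which is precisely what makes both existence and uniqueness pointwise problems on the bundle $\Lambda^* Q^*$.
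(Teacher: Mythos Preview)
Your argument is correct and follows essentially the same route as the paper: both reduce the verticality condition to the pointwise linear equation $(d\varphi + d\alpha \wedge \psi)|_Q = 0$ on the contact distribution, and both invoke the symplectic Lefschetz isomorphism $\Lambda^{n-2}Q^* \to \Lambda^n Q^*$ (wedge with $\omega = d\alpha|_Q$) to solve uniquely for $\psi|_Q$. Your treatment of uniqueness and of the extension from $\Lambda^{n-2}Q^*$ to $\Omega^{n-2}(S)$ is slightly more explicit than the paper's, while the phrase about ``$\wedge\,\beta = 0$ modulo $\alpha$ for a volume-completing form $\beta$'' is awkward and unnecessary---you immediately replace it with the clean restriction-to-$Q$ condition anyway, so no harm done.
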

\begin{proof}
Observe that given the choice of $\alpha$ there is a natural injection $j$ from sections of $\Lambda^* Q^*$ to $\Omega^* S$, determined by the conditions i) that $j$ followed by the restriction to $Q$ is the identity and ii) that $i_T \circ j =0$. In fact the image of $j$ is precisely the subspace of forms annihilated by $i_T$, as well as the image of $i_T$.

We compute for general $\psi$
\begin{align*}
d(\varphi + \alpha \wedge \psi) &= \alpha \wedge(i_Td(\varphi + \alpha \wedge \psi) ) +
i_T(\alpha \wedge d(\varphi + \alpha \wedge \psi) )\\
&\equiv i_T(\alpha \wedge (d\varphi + d\alpha \wedge \psi)) \quad \mod \alpha.
\end{align*}
Thus $\psi$ will satisfy our requirements iff 
\begin{equation}\label{vertical vanish}
\left.(d\varphi + d\alpha \wedge \psi)\right|_Q \equiv 0
\end{equation}
 But it is not hard to prove the following (actually a consequence of the fact that multiplication by $\omega$ is a Lefschetz operator in an $\sltwo$ structure on $\Lambda^*Q$):
\begin{fact}\label{fact} Suppose $(Q^{2n-2},\omega)$ is a symplectic vector space.
\begin{enumerate} 
\item Multiplication by $\omega$ yields a linear isomorphism $\Lambda^{n-2}Q\to \Lambda^nQ$.
\item Multiplication by $\omega^2$ yields a linear isomorphism $\Lambda^{n-3}Q\to \Lambda^{n+1}Q$.
\end{enumerate}
\end{fact}
The restriction of $d\alpha$ to $Q$ is a symplectic form. Applying this pointwise and using the observations above we find that there is a form $\psi$, uniquely defined modulo $\alpha$, such that \eqref{vertical vanish} holds.
\end{proof}

We define the {\bf Rumin differential} of $\varphi$ to be
$$
D\varphi:= d(\varphi + \alpha \wedge \psi).
$$
It is clear that $D$ annihilates all multiples of $\alpha$ and of $d\alpha$.

\begin{lemma}\label{Lambda Lefschetz}
 $\omega \wedge i_T D\varphi = 0$.
\end{lemma}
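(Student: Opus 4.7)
The plan is to exploit two features of $D\varphi$ together: it is vertical by construction (Proposition~\ref{def D}), and it is closed, since $D\varphi = d(\varphi + \alpha\wedge\psi)$. First I would write $D\varphi = \alpha\wedge\beta$ with $\beta$ chosen horizontal, i.e. $i_T\beta = 0$. This normalization is always achievable, because modifying $\beta$ by any vertical term changes $\alpha\wedge\beta$ only by a multiple of $\alpha\wedge\alpha = 0$. With this choice, a one-line Cartan-calculus computation gives
\begin{equation*}
i_T D\varphi \;=\; i_T(\alpha\wedge\beta) \;=\; (i_T\alpha)\,\beta - \alpha\wedge i_T\beta \;=\; \beta,
\end{equation*}
so the claim reduces to showing $d\alpha\wedge\beta = 0$ (identifying the symplectic form $\omega\in\Lambda^2Q^*$ with its canonical horizontal extension to $S$, which is $d\alpha$ itself since $i_Td\alpha = 0$).

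Next I would expand $0 = dD\varphi = d(\alpha\wedge\beta) = d\alpha\wedge\beta - \alpha\wedge d\beta$ to obtain the key identity
\begin{equation*}
d\alpha\wedge\beta \;=\; \alpha\wedge d\beta.
\end{equation*}
The right-hand side is manifestly vertical. The left-hand side, on the other hand, is horizontal: contracting with $T$ and using $i_T d\alpha = 0$ together with $i_T\beta = 0$ gives $i_T(d\alpha\wedge\beta) = 0$. A form that is simultaneously horizontal and vertical must vanish --- if $\eta = \alpha\wedge\mu$ and $i_T\eta = 0$, then $\mu = \alpha\wedge i_T\mu$ is itself vertical, whence $\eta = \alpha\wedge\alpha\wedge(\cdots) = 0$. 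Hence both sides of the identity vanish, which is the assertion.

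I do not foresee any serious obstacle: the argument is essentially two applications of $i_T$ combined with $d D\varphi = 0$ and $i_T d\alpha = 0$. The only mildly subtle point is pinning down the convention that $\omega$ in the statement, originally the symplectic form on the contact distribution $Q$, is to be interpreted in the ambient exterior algebra of $S$ as its horizontal extension $d\alpha$; once this is clarified, the rest is a purely formal consequence of the verticality and closedness of $D\varphi$ established in Proposition~\ref{def D}.
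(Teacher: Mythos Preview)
Your proof is correct and rests on exactly the same two facts as the paper's: $D\varphi$ is vertical and $D\varphi$ is closed. The paper's argument is a one-line compression of yours: rather than decomposing $D\varphi=\alpha\wedge\beta$ and playing horizontal against vertical, it observes directly that $d\alpha\wedge D\varphi = d(\alpha\wedge D\varphi) = d(0)=0$ (since $\alpha\wedge D\varphi=0$ by verticality and $dD\varphi=0$), and then $\omega\wedge i_TD\varphi = i_T(d\alpha\wedge D\varphi)=0$. Your version unpacks this by writing out $\beta$ explicitly and isolating $d\alpha\wedge\beta=\alpha\wedge d\beta$; the ``horizontal and vertical implies zero'' step is precisely what the paper's identity $d\alpha\wedge D\varphi = d(\alpha\wedge D\varphi)$ encodes more compactly.
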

\begin{proof} This is equivalent to the relation $d\alpha \wedge D\varphi = 0$. But 
$$
d\alpha \wedge D\varphi = d(\alpha \wedge D\varphi ) = 0
$$
since $\alpha\wedge D\varphi = 0$ by construction.
\end{proof}

Now we can rewrite \eqref{1st variation formula} as 
\begin{align}
\notag\delta_V\mu(A) &= \int_{N^*(A)} i_{\tilde V} (\pi^*\theta + D\varphi)\\
\notag&= \int_{N^*(A)}  \pi^*(i_ V\theta) + i_{\tilde V}(\alpha \wedge i_TD\varphi)\\
\label {1st variation with Rumin}&= \int_{N^*(A)}  \pi^*(i_ V\theta) + i_{\tilde V}(\alpha) \wedge i_TD\varphi\\
\notag&= \int_{N^*(A)}  \pi^*(i_ V\theta) + \langle \alpha,V\rangle \wedge i_TD\varphi
\end{align}
since $N^*(A)$ is Legendrian. Since this expression does not involve derivatives of $V$, this shows that the first variation operator takes values among covector-valued curvature measures. 

\begin{theorem}[Bernig-Br\"ocker \cite{bebr07}] \label{ker thm}
The valuation $\Psi_{\theta,\varphi} =0$ iff $\pi^*\theta + D\varphi =0$ and $\int_{S^*_pM} \varphi =0$ for some point $p\in M$.
\end{theorem}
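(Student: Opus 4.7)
The plan is to combine the first variation formula derived in the excerpt,
$$\delta_V \Psi_{\theta,\varphi}(A) \;=\; \int_{N^*(A)} i_{\tilde V}\bigl(\pi^*\theta + D\varphi\bigr),$$
with the identification $\Psi_{\theta,\varphi}(\{p\}) = \int_{S^*_pM}\varphi$ (the interior term $\int_{\{p\}}\theta$ vanishes for dimensional reasons, and $N^*(\{p\}) = S^*_pM$). The theorem then reads as the equivalence of $\Psi_{\theta,\varphi}\equiv 0$ with the pair of conditions ``$\delta_V\Psi_{\theta,\varphi}\equiv 0$ for every compactly supported $V$'' and ``$\Psi_{\theta,\varphi}(\{p\})=0$''.

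For the necessity direction, $\Psi_{\theta,\varphi}\equiv 0$ immediately yields $\int_{S^*_pM}\varphi = 0$ and the vanishing of every first variation. To promote the resulting integral identity $\int_{N^*(A)}i_{\tilde V}(\pi^*\theta + D\varphi) = 0$ (valid for all smooth polytopes $A$ and compactly supported $V$) to pointwise vanishing on $S^*M$, I would localize at an arbitrary $\xi = (x,\eta)\in S^*M$: by varying $A$ over smooth compact domains with prescribed tangent cone and second fundamental form at $x$, one makes the Legendrian tangent planes $T_\xi N^*(A)$ sweep out every Lagrangian subspace of the contact distribution $Q_\xi$, while varying $V$ lets $\tilde V(\xi)$ fill out $T_\xi S^*M$. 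By Proposition \ref{def D}, $D\varphi$ is the canonical representative of its class modulo $(\alpha, d\alpha)$, so a contact-geometric analogue of the pointwise kernel computation of Lemma \ref{curv meas ker} forces $\pi^*\theta + D\varphi \equiv 0$.

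For the sufficiency direction, assume both conditions. The first variation formula gives $\delta_V\Psi_{\theta,\varphi}\equiv 0$, so $\Psi_{\theta,\varphi}$ is invariant under compactly supported isotopies of $M$. For a star-shaped smooth compact domain $A$ with interior point $p$, take the Rumin primitive $\tilde\varphi := \varphi + \alpha\wedge\psi$ satisfying $d\tilde\varphi = D\varphi = -\pi^*\theta$, and let $A_t$ be the family of dilations of $A$ toward $p$ for $t\in[0,1]$. The union of normal cycles $T_A := \bigcup_t N^*(A_t)$ is an $n$-chain in $S^*M$ with $\partial T_A = N^*(A) - S^*_pM$ and $\pi_*T_A = A$ with degree one, so Stokes' theorem gives
$$-\int_A\theta \;=\; \int_{T_A} d\tilde\varphi \;=\; \int_{N^*(A)}\varphi - \int_{S^*_pM}\varphi,$$
since $\alpha\wedge\psi$ integrates to zero on both Legendrian boundary pieces. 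Rearranging yields $\Psi_{\theta,\varphi}(A) = \int_A\theta + \int_{N^*(A)}\varphi = \int_{S^*_pM}\varphi = 0$ by the second hypothesis. Non-convex polytopes are handled by decomposing into star-shaped pieces and applying inclusion-exclusion, while singletons (and other degenerate polytopes) fall under the point condition directly.

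The principal obstacle is the localization step in the necessity direction. Extracting pointwise vanishing from a family of integral identities indexed by normal cycles requires two inputs: flexibility (normal cycles of smooth polytopes realize arbitrary Legendrian tangent planes at each $\xi\in S^*M$) together with the crucial fact that $D$ canonically picks out a representative modulo the ``invisible ideal'' $(\alpha,d\alpha)$. Without the Rumin normalization, one could at best conclude vanishing in the quotient $\Omega^n(S^*M)/(\alpha,d\alpha)$, which is strictly weaker than the pointwise statement, so the introduction of $D$ is what allows the integral conditions to be upgraded to the identity $\pi^*\theta + D\varphi = 0$.
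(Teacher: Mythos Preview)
Your overall architecture matches the paper's: both directions hinge on the first variation formula and the identification $\Psi_{\theta,\varphi}(\{p\}) = \int_{S^*_pM}\varphi$, and your sufficiency argument (contract to a point via an isotopy, along which the valuation is constant since the first variation vanishes) is essentially the paper's.

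The gap is in the necessity direction. From $\delta_V\Psi_{\theta,\varphi}\equiv 0$ you correctly obtain (via Proposition~\ref{zero curv meas}) that the $(n-1)$-form $\pi^*(i_V\theta)+\langle\alpha,V\rangle\, i_TD\varphi$ lies in $(\alpha,d\alpha)$ for every $V$. Writing $\eta:=\pi^*\theta+D\varphi=\alpha\wedge i_T\eta$ (both summands are vertical), the only nontrivial case is $V$ parallel to the contact direction, which gives $\left.i_T\eta\right|_Q\in(\omega)$, say $\left.i_T\eta\right|_Q=\omega\wedge\rho$ with $\rho\in\Lambda^{n-3}Q^*$. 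Your proposed ``contact-geometric analogue of Lemma~\ref{curv meas ker}'' cannot finish from here: the condition $\left.i_T\eta\right|_Q\in(\omega)$ alone does \emph{not} force $\eta=0$, since e.g.\ $\eta=\alpha\wedge\omega\wedge\rho$ satisfies it for any $\rho$. Sweeping over all Lagrangians and all lifts $\tilde V$ gives nothing beyond this.

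What the paper uses, and what your write-up misses, is the specific property of the Rumin differential recorded in Lemma~\ref{Lambda Lefschetz}: since $\alpha\wedge D\varphi=0$ one has $d\alpha\wedge D\varphi=0$, and likewise $d\alpha\wedge\pi^*\theta=0$, so $\omega\wedge\left.i_T\eta\right|_Q=0$. Combined with $\left.i_T\eta\right|_Q=\omega\wedge\rho$ this gives $\omega^2\wedge\rho=0$, and then the Lefschetz isomorphism of Fact~\ref{fact}(2) forces $\rho=0$, hence $\eta=0$. So the operative feature of $D$ is not merely that it selects a representative modulo $(\alpha,d\alpha)$, but that $i_TD\varphi$ is $\omega$-\emph{primitive}; this primitivity, together with the hard Lefschetz-type isomorphism on $\Lambda^*Q$, is what upgrades the integral condition to the pointwise identity.
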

\begin{proof}
To prove ``if", by finite additivity it is enough to prove that $\Psi_{\theta, \varphi}(A) = 0$ for contractible sets $A$. In this case, if $F_t$ is the flow of a smooth vector field $V$ contracting $A$ to a point $p$, then $N(F_t(A)) \to N(\{p\})=\lcur  S^*_pM\rcur$. Hence by \eqref{1st variation with Rumin}
$$
\Psi_{\theta,\varphi} (A) = \Psi_{\theta,\varphi} (\{p\}) =   \int_{S^*_pM} \varphi =0.
$$

By Proposition \ref{zero curv meas}, to prove the converse it is enough to show that if  $\left.\left(\pi^*(i_ V\theta) + \langle \alpha,V\rangle \wedge i_TD\varphi \right)\right|_Q\equiv 0 \mod d\alpha$ for all $V$ then $\pi^*\theta + D\varphi=0$. But this follows from conclusion (2) of Fact \ref{fact} and Lemma \ref{Lambda Lefschetz}.
%
\end{proof}

Since the normal cycle is closed and Legendrian it is immediate that any multiple of the contact form (i.e. a ``vertical" form with respect to the contact structure of $S^*W$), or any exact form, yields the zero valuation.
\begin{corollary}\label{cor:alt ker} If $M$ is contractible then $\Psi_{0,\varphi} =0$ iff $\varphi$ lies in the linear span of the vertical forms and the exact forms.
\end{corollary}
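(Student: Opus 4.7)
The plan is to deduce this directly from Theorem \ref{ker thm} applied with $\theta = 0$, which asserts $\Psi_{0,\varphi} = 0$ iff $D\varphi = 0$ and $\int_{S^*_pM}\varphi = 0$ for some $p \in M$. Under the contractibility hypothesis I then need to identify the solution set of this pair of conditions with the linear span of vertical and exact forms inside $\Omega^{n-1}(S^*M)$.

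For the easy direction I would check both conditions on each type of generator. The remark immediately following Proposition \ref{def D} records that $D$ annihilates all multiples of $\alpha$. For an exact form $\varphi = d\eta$, the trial choice $\psi = 0$ in Proposition \ref{def D} succeeds, since $d(d\eta + \alpha \wedge 0) = 0$ is vacuously vertical; by the uniqueness clause this is forced, so $D(d\eta) = 0$. For the fiber integral, the contact form $\alpha = \sum \xi_i\, dx_i$ restricts to zero on any fiber $S^*_pM$ because the base coordinates are constant along the fiber, whence $\int_{S^*_pM}\alpha \wedge \psi = 0$ for every $\psi$; and $\int_{S^*_pM} d\eta = 0$ by Stokes, since $S^*_pM$ is a boundaryless sphere.

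For the converse, assume $D\varphi = 0$ and $\int_{S^*_pM}\varphi = 0$. By the defining property of $D$ there exists a vertical form $\alpha \wedge \psi$ such that $\varphi + \alpha \wedge \psi$ is closed on $S^*M$. Since $M$ is contractible the cotangent bundle is smoothly trivial, so $S^*M$ is diffeomorphic to $M \times S^{n-1}$ and hence $H^{n-1}(S^*M;\R) \cong \R$; a generator $\omega$ may be taken to be the pullback of the unit volume form on a chosen fiber $S^*_pM \cong S^{n-1}$ along a smooth retraction $S^*M \to S^*_pM$, so that $\int_{S^*_pM}\omega \neq 0$. Writing $\varphi + \alpha \wedge \psi = c\,\omega + d\eta$ and integrating over $S^*_pM$, the $\alpha$-contribution and the $d\eta$-contribution on both sides vanish by the observations above, and the contribution of $\varphi$ vanishes by hypothesis, forcing $c\int_{S^*_pM}\omega = 0$ and therefore $c = 0$. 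Thus $\varphi = -\alpha \wedge \psi + d\eta$ lies in the required span. The entire argument is mechanical once Theorem \ref{ker thm} is in hand; the only nonroutine ingredient is the cohomology calculation on $S^*M$, and even there the sole input beyond contractibility is the standard fact that fiber integration detects the unique nonzero class in $H^{n-1}(S^*M)$.
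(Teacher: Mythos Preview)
Your proof is correct and follows the natural route: the paper states the corollary immediately after Theorem~\ref{ker thm} without a separate argument, leaving precisely the deduction you supply. Your cohomology computation on $S^*M\simeq M\times S^{n-1}$ is the right way to turn the conditions $D\varphi=0$ and $\int_{S^*_pM}\varphi=0$ into the decomposition $\varphi=-\alpha\wedge\psi+d\eta$; the only remark is that for the easy direction the paper observes more directly (just before the corollary) that vertical and exact forms kill the normal cycle because it is Legendrian and closed, whereas you verify the two kernel-theorem conditions instead --- both approaches are equally valid.
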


This characterization also implies conclusion \eqref{mfd G} of Thm. \ref{alesker mfd package}: if $\psi= \Psi_{\theta,\varphi}\in \V^G(M)$ then the kernel theorem implies that $\delta \psi \sim \pi^*\theta + D\varphi\in \Omega^n(S^*M)^G$. But this space is finite dimensional, and (by the kernel theorem again) the kernel of the first variation map $\delta$ is the one-dimensional space $\langle \chi\rangle$.

As another application, observe that in the presence of a riemannian structure on $M$ we may identify a covector-valued curvature measure $\vec\psi$ with a scalar-valued one $\psi$ by putting
$$
\int f \, d\psi^A:= \int \la fn_A,d\vec \psi^A\ra
$$
for smooth domains $A$, where $n_A$ is the outward pointing normal. The first variation may thus be expressed as a scalar-valued curvature measure $\delta \mu$ given by
$$
\int f \, d(\delta \mu)^A = \delta_{fn_A}\mu(A).
$$
Now if $M$ is a euclidean space then we say that a curvature measure $\psi\ge 0$ if the measure $\psi^A \ge 0$ whenever $A\in \K$.

\begin{theorem}\label{monotone characterization}
A valuation $\mu \in \valsm(\Rn)$ is monotone iff $\delta \mu \ge 0$ and $\mu(\{pt\}) \ge 0$. $\square$
\end{theorem}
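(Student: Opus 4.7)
The plan is to handle the two implications separately, with sufficiency being the more delicate direction.

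For the forward direction, suppose $\mu$ is monotone. Adopting the standard convention that $\emptyset \in \K$ with $\mu(\emptyset) = 0$, monotonicity applied to $\emptyset \subsetneq \{pt\}$ yields $\mu(\{pt\}) \ge 0$ immediately. To establish $\delta\mu \ge 0$, I fix $K \in \Ksm$ and a nonnegative smooth function $f$ on $\bdry K$, extend $f\, n_K$ to a smooth compactly supported vector field $V$ on $\Rn$, and consider its flow $F_t$. Since $-V$ points inward along $\bdry K$, we have $F_{-t}(K) \subset K$ for small $t \ge 0$, equivalently $K \subset F_t(K)$; moreover $F_t(K) \in \Ksm$ for $t$ small by stability of strict convexity under $C^\infty$-small perturbations. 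Monotonicity gives $\mu(F_t(K)) \ge \mu(K)$, hence
$$
\delta_V\mu(K) = \int_{\bdry K} f \, d(\delta\mu)^K \ge 0.
$$
Arbitrariness of $f \ge 0$ forces $(\delta\mu)^K \ge 0$; density of $\Ksm$ in $\K$ together with continuity extends this to all $K \in \K$.

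For the reverse direction, assume $\delta\mu \ge 0$ and $\mu(\{pt\}) \ge 0$, and let $A \subset B$ lie in $\K$. I interpolate via support functions by setting $h_t := (1-t) h_A + t h_B$, which is itself a support function, of a convex body $C_t$ with $C_0 = A$, $C_1 = B$, and $C_s \subset C_t$ whenever $s \le t$ (since $h_A \le h_B$). After approximating $A, B$ by elements $A', B' \in \Ksm$ with $A' \subset B'$ (e.g.\ by convolving their support functions with a smooth kernel on $S^{n-1}$, which preserves the inequality), we may take $C_t \in \Ksm$ throughout the interpolation. The family $C_t$ is generated by a time-dependent vector field $V_t$ whose outward-normal component at the boundary point with Gauss image $n$ equals $\partial_t h_t(n) = h_B(n) - h_A(n) \ge 0$, so the first variation formula yields
$$
\frac{d}{dt}\mu(C_t) = \int_{\bdry C_t} (h_B - h_A)\circ n_{C_t}\, d(\delta\mu)^{C_t} \ge 0.
$$
Integrating from $0$ to $1$ and passing to the approximation limit gives $\mu(A) \le \mu(B)$. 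The edge case $A = \emptyset$ reduces to $\mu(B) \ge 0$, handled by interpolating from $\{p\} \subset B$ via $\{p\} + t(B - p)$ to obtain $\mu(B) \ge \mu(\{p\}) = \mu(\{pt\}) \ge 0$.

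The main technical obstacle is ensuring that the interpolating family $C_t$ can be chosen inside $\Ksm$ so that the first variation formula applies in its classical pointwise form. The key observation is that positive-definiteness of the spherical Hessian of the support function (the condition defining $\Ksm$) is preserved under convex combinations, so interpolating two smooth strictly convex bodies via their support functions produces a path in $\Ksm$. A secondary point is the identification of the outward-normal velocity of $\bdry C_t$ with $\partial_t h_t$: parametrizing by Gauss image, $h_t(n) = \langle x_t(n), n\rangle$ with $n$ held fixed, so $\partial_t \langle x_t, n\rangle = \langle \partial_t x_t, n\rangle$ records precisely the normal component of the velocity at the boundary point with outward normal $n$.
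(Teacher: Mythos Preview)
The paper does not actually supply a proof of this theorem; the $\square$ following the statement signals that it is asserted without argument. Your proof is correct and follows the natural route suggested by the paper's discussion of the first variation operator.

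Both directions are handled cleanly: the forward implication by flowing a smooth strictly convex body outward along $f\,n_K$ with $f\ge 0$, and the reverse by linear interpolation of support functions, which preserves membership in $\Ksm$ because positive-definiteness of $\nabla^2 h + h\,\Id$ on $S^{n-1}$ is stable under convex combinations. The approximation step (smoothing $h_A\le h_B$ by the same spherical convolution and adding the same small ball) correctly preserves the inclusion. One point worth stating a bit more explicitly is that the family $C_t$ is not the flow of a fixed vector field, so \eqref{1st variation formula} is being invoked instantaneously at each $t$; this is legitimate because under the inverse-Gauss-map parametrization $n\mapsto (x_t(n),n)$ the normal cycles $N(C_t)$ vary smoothly in $t$, so $\mu(C_t)$ is $C^1$ and its derivative depends only on the normal velocity of $\bdry C_t$, which as you note equals $h_{B'}(n)-h_{A'}(n)\ge 0$.
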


Since the first variation operator is a graded map of degree $-1$ from $\Val(\Rn)$ to translation-invariant curvature measures we obtain

\begin{corollary}
$\mu\in \Val(\Rn)$ is monotone iff all of its homogeneous components are monotone.
\end{corollary}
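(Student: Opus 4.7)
The easy direction is additive: a sum of monotone valuations is monotone. For the converse my plan is to combine Theorem~\ref{monotone characterization} with the graded nature of the first variation operator $\delta$, reducing everything to a positivity statement for multilinear polynomials in the principal curvatures. First I would reduce to $\mu \in \valsm(\Rn)$: given monotone $\mu \in \Val$, convolve with a smooth compactly supported approximate identity on $GL(n,\R)$ to obtain smooth valuations $\mu_\epsilon \to \mu$ which are again monotone (averages of $g\cdot \mu$, each monotone since $GL(n,\R)$ preserves inclusion). The McMullen projections $\mu\mapsto\mu_k$ are continuous, and the cone of monotone valuations is closed, so proving the smooth case suffices.

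Assume $\mu \in \valsm$ monotone. By Theorem~\ref{monotone characterization}, $\delta\mu \ge 0$ and $\mu(\{pt\}) \ge 0$. Since every translation-invariant continuous valuation of degree $0$ is a constant multiple of $\chi$, $\mu_0 = \mu(\{pt\}) \ge 0$ and thus $\mu_0$ is monotone. For each $k \ge 1$ one has $\mu_k(\{pt\}) = 0$, so by Theorem~\ref{monotone characterization} again the monotonicity of $\mu_k$ reduces to $\delta \mu_k \ge 0$. Because $\delta$ is graded of degree $-1$, the decomposition $\delta \mu = \sum_{k\ge 1} \delta \mu_k$ has each $\delta \mu_k$ homogeneous of degree $k-1$ among translation-invariant curvature measures. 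Thus everything comes down to the following claim: \emph{a non-negative translation-invariant smooth curvature measure $\psi = \sum_d \psi_d$ has each $\psi_d \ge 0$.}

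To prove the claim, I would use \eqref{general curvature integral}: for $A \in \Ksm$ and $x \in \partial A$ the density of $\psi^A$ at $x$ is $P^\psi_{x,n_A(x)}(\two_x^A)$, obtained by pulling back the defining form $\varphi$ representing $\psi$ along the graph of $L := \two_x^A$. Fix $v\in S^{n-1}$ and work in an orthonormal basis $e_1,\dots,e_{n-1}$ of $v^\perp$ diagonalizing $L$, with $L e_i = \kappa_i e_i$. After modding out by $\alpha$ one may take the representative $\varphi$ to be a sum of monomials $f(v)\,dx_I\wedge dv_J$ with $I,J \subseteq \{1,\dots,n-1\}$, $|I|+|J|=n-1$. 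A shuffle computation shows that such a monomial evaluated on the graph vectors $(e_i,\kappa_i e_i)$ equals $\pm f(v)\prod_{i\in J}\kappa_i$ when $I \sqcup J = \{1,\dots,n-1\}$, and zero otherwise. Summing, the density is a \emph{multilinear} polynomial
\[
\rho_\psi(v,\kappa) = \sum_{J\subseteq \{1,\dots,n-1\}} c_J(v)\prod_{i\in J}\kappa_i,
\]
and the bi-degree $(p,n-1-p)$ piece of $\varphi$ contributes precisely the terms with $|J|=n-1-p$, which is the density of $\psi_p$.

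An elementary algebraic lemma finishes the claim: a multilinear polynomial $\sum_J c_J \prod_{i\in J}\kappa_i$ that is $\ge 0$ on $\R_{\ge 0}^{n-1}$ has every $c_J \ge 0$. (Set $\kappa_i=0$ for $i\notin J$ and $\kappa_i=t$ for $i\in J$, obtaining $\sum_{J'\subseteq J} c_{J'} t^{|J'|}$; induction on $|J|$, with $c_\emptyset = \rho(0)\ge 0$ as base case and the $t\to\infty$ leading-term asymptotics for the inductive step, yields $c_J\ge 0$.) Every $(v,\kappa)\in S^{n-1}\times \R_{\ge 0}^{n-1}$ with any prescribed orthonormal eigenframe for $L$ is realized at a boundary point of some strictly convex smooth body, so $\psi \ge 0$ forces $c_J(v)\ge 0$ for all $v,J$; grouping by $|J|$ gives $\psi_d\ge 0$ for every $d$, completing the claim and hence the corollary. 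The main obstacle is extracting the multilinear structure cleanly, which requires careful bookkeeping of the bi-grading modulo $(\alpha,d\alpha)$ and the realization step for smooth convex bodies with prescribed Weingarten data.
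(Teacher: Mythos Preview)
Your approach is essentially the same as the paper's: both reduce via Theorem~\ref{monotone characterization} and the graded nature of $\delta$ to the claim that a non-negative translation-invariant curvature measure has non-negative homogeneous components. The paper simply asserts this last claim is ``easily proved'', whereas you supply a concrete argument via multilinearity of the density in the principal curvatures and an elementary positivity lemma; this is a correct and natural way to fill in the paper's ellipsis. Your smoothing step (convolving over $GL(n,\R)$ to pass from $\Val$ to $\valsm$) addresses a gap the paper leaves implicit, since Theorem~\ref{monotone characterization} is stated only for smooth valuations while the Corollary is for all of $\Val$.
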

\begin{proof} This is easily proved by showing that a translation-invariant curvature measure is non-negative iff its homogeneous components are all non-negative.
\end{proof}

Finally, the Rumin operator also figures prominently in a remarkable formula of Alesker-Bernig for the product of two smooth valuations in terms of differential forms representing them. We will only need the following general characterization.
\begin{theorem}[Alesker-Bernig \cite{ale-be09}]\label{ab product formula}
The product of two smooth valuations $\Psi_{\theta_i,\varphi_i}, i =1,2$ may be expressed as $\Psi_{\theta_0,\varphi_0}$, where ${\theta_0,\varphi_0}$ may be computed from the ${\theta_i,\varphi_i}$ using only the Rumin differential $D$ and canonical fiber integrals. $\square$
\end{theorem}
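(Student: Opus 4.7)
The plan is to exploit the density result of Proposition \ref{fu product} together with the explicit description of the normal cycle of an intersection, and then to repackage the answer using the Rumin operator via the first-variation formula \eqref{1st variation with Rumin}.

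\textbf{Step 1: Reduction to pseudo-kinematic valuations.} By Proposition \ref{fu product}, the valuations of the form $[X,\{F_t\}_{t\in P},dm]$ are dense in $\V(M)$ in a topology compatible with the Alesker product. So I would first verify the Alesker--Bernig formula on this dense subclass: given two such valuations, their product is computed by the iterated integral \eqref{weird product} of Euler characteristics of triple intersections $F_t(X)\cap G_s(Y)\cap Z$. Since the ambient product is continuous and the space of smooth pairs $(\theta,\varphi)$ is complete, it suffices to produce a formula on the dense class that is continuous and canonical in $(\theta_i,\varphi_i)$.

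\textbf{Step 2: Normal cycle of a transverse intersection.} For generic $(t,s)$ the hypersurfaces $\partial F_t(X)$ and $\partial G_s(Y)$ meet transversely, and
\[
N^*\bigl(F_t(X)\cap G_s(Y)\bigr) \;=\; N^*(F_t(X))\restrict \pi^{-1}G_s(Y)\;+\;N^*(G_s(Y))\restrict \pi^{-1} F_t(X)\;+\;C(F_t(X),G_s(Y)),
\]
where $C$ is the ``corner'' Legendrian obtained by spherically interpolating pairs of outward covectors at points of $\partial F_t(X)\cap \partial G_s(Y)$, exactly as in the construction \eqref{def C} in the proof of Theorem \ref{kf for cm}. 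Integrating the forms $\theta_i,\varphi_i$ against these three pieces and then performing the $(t,s)$-integrals in \eqref{weird product} converts each summand into a fiber integral over a canonical bundle sitting above $M$ or $S^*M$: the first two yield pullbacks of $\theta_i$ and $\varphi_i$ paired with the normal cycle of the other factor, and the corner term yields a push-forward along a bundle whose fibers parametrize the geodesic join of pairs of unit covectors.

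\textbf{Step 3: Reassembly as $\Psi_{\theta_0,\varphi_0}$.} The issue is that the resulting expression lives naturally on a product space, not on $S^*M$. To descend, I would use the first-variation formula \eqref{1st variation with Rumin}, which shows that a smooth valuation is determined, up to a constant, by the pair $(\pi^*\theta+D\varphi)$ on $S^*M$. Thus I only need to compute the first variation of the product valuation. Differentiating \eqref{weird product} in a diffeotopy $F_u$ of $M$ and using Stokes together with $D$ as in Proposition \ref{def D} transfers all derivatives onto the differential forms themselves, producing an expression of the shape $\pi^*\theta_0+D\varphi_0$ in which $(\theta_0,\varphi_0)$ are built from $(\theta_i,\varphi_i)$ by wedge products, pullbacks under the projections from $S^*M\times_M S^*M$, Rumin differentials, and an integration along the spherical join fiber. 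The constant is pinned down by evaluating at a point, using $\Psi_{\theta_i,\varphi_i}(\{p\})=\int_{S^*_pM}\varphi_i$. The kernel Theorem \ref{ker thm} then guarantees that the resulting $(\theta_0,\varphi_0)$ represents a unique element of $\V(M)$, and the passage to the dense class in Step 1 propagates the formula to arbitrary smooth valuations.

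\textbf{Main obstacle.} The substantive step is Step 2/3: identifying the corner contribution as a genuinely canonical fiber integral. One must construct the spherical-join bundle over the fiber product $S^*M\times_M S^*M$, orient it compatibly with the contact structure, and verify that the resulting push-forward commutes with the Rumin differential modulo vertical and exact forms so that it lands in a well-defined class in $\Omega^{n-1}(S^*M)/\ker\Psi$. This is essentially a local, symplectic-linear-algebra calculation on the contact distribution $Q$, controlled by Fact \ref{fact}, but it is the only step where something beyond formal manipulation is required.
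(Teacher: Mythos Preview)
The paper does not actually prove this theorem: it is stated with a terminal $\square$ and attributed to \cite{ale-be09}, so there is no proof in the paper against which to compare your proposal.

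That said, your outline is broadly in the spirit of the Alesker--Bernig argument (normal cycle of a transverse intersection decomposed into two restriction pieces plus a spherical-join corner, followed by fiber integration), but it remains a plan rather than a proof. The substantive content of the theorem is precisely the explicit formula --- the specific canonical fiber integrals over the join bundle and the exact way the Rumin differential enters --- and your Step~3 acknowledges but does not carry out this computation. In particular, the claim that ``differentiating \eqref{weird product} in a diffeotopy \ldots\ transfers all derivatives onto the differential forms'' and yields an expression of the form $\pi^*\theta_0 + D\varphi_0$ is the whole theorem, not a step toward it: one must actually exhibit $\theta_0,\varphi_0$ and verify the identity, and the density argument of Step~1 does not help here since the pseudo-kinematic valuations $[X,\{F_t\},dm]$ are not themselves given by explicit pairs $(\theta,\varphi)$ in any way that makes the passage to the limit transparent. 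The honest assessment is that you have correctly identified the geometric ingredients and the location of the difficulty, but the ``main obstacle'' you flag at the end is essentially the entire proof.
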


It is interesting to note that the expression for $\Psi_{\theta_0,\varphi_0}$ given in \cite{ale-be09} is not symmetric in $i=1,2$. In fact it gives a proof of the following observation of Bernig:

\begin{corollary}\label{cor:module} The space of smooth curvature measures is a module over the algebra of smooth valuations. 
\end{corollary}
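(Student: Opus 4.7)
The plan is to leverage the asymmetric nature of the Alesker--Bernig product formula (Theorem \ref{ab product formula}). That formula expresses the product of two smooth valuations $\Psi_{\theta_1,\varphi_1}\cdot\Psi_{\theta_2,\varphi_2}=\Psi_{\theta_0,\varphi_0}$ by applying the Rumin differential $D$ and canonical fiber integrals to the pairs $(\theta_i,\varphi_i)$, but the operations performed on the two slots are not the same. The noted asymmetry of \cite{ale-be09} is precisely that the second slot $(\theta_2,\varphi_2)$ enters only through its class modulo multiples of $\alpha$ and $d\alpha$, i.e., only through the curvature measure it defines.

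Starting from this observation, I would define the module action as follows: for $\mu=\Psi_{\theta_1,\varphi_1}\in\V(M)$ and a smooth curvature measure $\Phi$ represented by a pair $(\theta_2,\varphi_2)$, set $\mu\cdot\Phi$ to be the curvature measure associated to the pair $(\theta_0,\varphi_0)$ produced by the Alesker--Bernig expression. For this to be well-posed, two independence properties must hold. First, if $(\theta_2,\varphi_2)$ and $(\theta_2',\varphi_2')$ represent the same curvature measure (so $\theta_2=\theta_2'$ and $\varphi_2-\varphi_2'\in(\alpha,d\alpha)$), then $(\theta_0,\varphi_0)$ and $(\theta_0',\varphi_0')$ must differ only by elements of $(\alpha,d\alpha)$. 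Second, if $(\theta_1,\varphi_1)$ and $(\theta_1',\varphi_1')$ represent the same valuation, which by Theorem \ref{ker thm} means that $\pi^*(\theta_1-\theta_1')+D(\varphi_1-\varphi_1')=0$ and the fiber integrals of $\varphi_1-\varphi_1'$ vanish, then again $(\theta_0,\varphi_0)-(\theta_0',\varphi_0')$ must lie in the curvature measure kernel. Both statements should follow by inspection of the formula of \cite{ale-be09}: it involves $D$ applied to the $\varphi_i$ (and $D$ annihilates $\alpha$ and $d\alpha$), wedge products with $\pi^*\theta_i$, and fiber integrations that are insensitive to contributions of the appropriate vertical form.

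Once well-definedness is settled, the module axioms---associativity $(\mu_1\cdot\mu_2)\cdot\Phi=\mu_1\cdot(\mu_2\cdot\Phi)$ and unitality $\chi\cdot\Phi=\Phi$---would be verified by descending the corresponding identities for the Alesker product of valuations (Theorem \ref{alesker mfd package}) through the globalization map from curvature measures to valuations. Evaluating a curvature measure on the full manifold $M$ recovers the valuation it represents, so associativity at the level of valuations is already known; the extra content here is that the identity lifts to the finer level of curvature measures, and this lift is again readable from the form-theoretic formula.

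The main obstacle is the first well-definedness check. One must unpack the explicit formula of \cite{ale-be09} and track how a term $\alpha\wedge\eta$ or $d\alpha\wedge\eta$ substituted into the second slot propagates through the applications of $D$, $\pi^*$, and fiber integration, and confirm that the output lies in $(\alpha,d\alpha)$ rather than merely in the larger kernel characterized by Theorem \ref{ker thm}. Conceptually this is exactly the content of Bernig's observation that the Alesker--Bernig formula is asymmetric in a useful way, but carrying it out rigorously depends on the precise shape of that formula.
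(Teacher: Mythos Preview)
Your proposal is correct and follows exactly the route the paper indicates: the paper does not give a detailed proof but simply notes that the expression for $(\theta_0,\varphi_0)$ in \cite{ale-be09} is asymmetric in the two inputs, and that this asymmetry ``gives a proof'' of the corollary. Your elaboration---that one slot depends only on the valuation class while the other depends only on the curvature-measure class, and that the output is well defined at the level of curvature measures---is precisely the content of that remark.
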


\subsection{The filtration and the transfer principle for valuations} \label{val transfer section} 
Alesker \cite{ale05d} showed that the algebra $\V(M^n)$ admits a natural filtration $\V(M) =\V_0(M)\supset \dots \supset \V_n(M)$ that respects the product. The filtration index of a valuation $\mu$ may be expressed as the largest value of $i$ such that $\mu$ may be represented by  differential forms $\theta, \varphi$ where in some family of local trivializations of $S^*M$ all terms of the differential form $\varphi$ involve at least $i$ variables from the coordinates of the base space.
Thus $\V_n(M)=\{\Psi_{\theta,0}:\theta \in \Omega^n(M)\}$ is the space of smooth signed measures on $M$. The associated graded algebra $gr(\V(M))=\bigoplus_{i=0}^n \V_{i}(M)/\V_{i+1}(M)$ is naturally isomorphic to the algebra $\Gamma(\valsm(TM))$ of sections of the infinite-dimensional vector bundle $\valsm(T M) \to M$.

A valuation $\varphi \in \V_k(M)$ may be thought of as defining a ``$k$-dimensional volume" of $M$: for $N^k\subset M$ 
\begin{equation}\label{reduced val}
\varphi(N^k) = \int_N \kl_{[\varphi]_x} (T_xN) \, d\vol_kx
\end{equation}
where $[\varphi] \in \Gamma(\valsm(TM))$ corresponds to the equivalence class of $\varphi$ in $ \V_k(M) /\V_{k+1}(M)$. Reducing further, this value in fact only depends on the even parts of the $[\varphi]_x\in \valsm_k(T_xM)$. In this formula $\varphi$ may be thought of as operating on any compact $C^1$ submanifold of $M$, or more generally on any appropriately rectifiable set of the same dimension.

%


If $G$ acts isotropically on $M$ then this gives rise to a transfer principle at the level of first order formulas, similar to \eqref{1st order}, as follows. The isomorphism above gives rise to the restriction
$$
gr\V^{G}(M)\simeq\Gamma(\valsm(TM))^G \simeq \Val^{H}(T_{o}M)
$$ 
where $H\subset G$ is the subgroup stabilizing the representative point $o$. Thus there is a canonical map $\V^G(M) \to \Val^H(T_oM)$; for $\varphi \in \V^G(M)$ we put $[\varphi] \in \Val^H(T_oM)$ for its image under this map. If $\psi \in\V_{k+1}$ then $\psi(V)=0$ whenever $V \subset M$ is a smooth submanifold of dimension $ k$. Thus if $[\varphi] \in \Val_k^H(T_oM)$ then $[\varphi](V) = \varphi(V)$ is well-defined, and in fact may be expressed
\begin{equation}\label{eq:reduced eval}
[\varphi](V) = \int_V \kl_{[\varphi]}(T_xV) \, dx
\end{equation}
where $T_xV \subset T_xM$ is identified via the group action with a subspace of $T_oM$.

  Obviously $H$ acts transitively on the sphere of $T_oM$. Thus   $\Val^H \subset \valsm_+$ by Prop. \ref{valg subset val+}, and we may consider the kinematic operator
\begin{equation}
k_H: \Val^H(T_{o}M)\to \Val^H(T_{o}M)\otimes\Val^H(T_{o}M).
\end{equation}
Furthermore, any element $\phi \in \Val^H_k(T_oM)$ acts on $k$-dimensional submanifolds $V^k$ by

\begin{theorem}\label{val transfer thm} In the setting above, let $V^k,W^l \subset M$ be compact $C^1$ submanifolds, and let $\varphi \in \V_{k+l-n}^G(M) $ correspond to $[\varphi]\in \Val^H_{k+l-n,+}(T_{o}M)$ as above. Then
\begin{equation}
\int_G \varphi(V \cap g W)\, dg = k_H([\varphi])(V,W).
\end{equation}
\end{theorem}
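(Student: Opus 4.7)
The plan is to express both sides as integrals over $V \times W$ of a common density depending only on the pair of tangent subspaces $(T_xV, T_yW)$, identified with subspaces of $T_oM$ via the $G$-action; any $H$-ambiguity in this identification is killed by the $H$-invariance of $[\varphi]$. The core analytic input is a coarea/slice formula for isotropic actions, essentially Howard's transfer principle \cite{howard93}, applied twice: once on $(M,G)$ to unfold the LHS, and once on $(T_oM,\bar H)$ to unfold the RHS.

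First I would rewrite the left-hand side using the reduced evaluation \eqref{eq:reduced eval}. Since $V \cap gW$ is a $(k+l-n)$-dimensional $C^1$ submanifold for $g$ in a full-measure subset of $G$, and $[\varphi] \in \Val^H_{k+l-n,+}(T_oM)$,
$$\varphi(V \cap gW) = \int_{V \cap gW} \kl_{[\varphi]}\bigl(T_xV \cap T_x(gW)\bigr)\, d\vol_{k+l-n}(x),$$
where the tangent spaces on the right are transported to $T_oM$ via any $g_x \in G$ with $g_x o = x$ (different choices differ by an element of $H$, under which $\kl_{[\varphi]}$ is invariant). Next I would apply the slice formula to the incidence variety $\{(g,x,y) \in G \times V \times W : gy = x\}$, which fibers over $G$ with fiber $V \cap gW$ and over $V \times W$ with fibers canonically identified with the point stabilizer $H$. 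The Jacobian of this disintegration at a transverse intersection is a ``sine of the angle'' function $\sigma(E,F)$ of the two tangent subspaces in $T_oM$, namely the same Jacobian that appears in the classical first-order formula \eqref{1st order}. Substituting the Klain integrand above yields
$$\int_G \varphi(V \cap gW)\, dg = \int_V d\vol_k(x) \int_W d\vol_l(y) \int_H \sigma\bigl(\tilde V_x,\, h \tilde W_y\bigr)\, \kl_{[\varphi]}\bigl(\tilde V_x \cap h\tilde W_y\bigr)\, dh,$$
where $\tilde V_x,\tilde W_y \subset T_oM$ are the transported tangent spaces.

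For the right-hand side, I would expand $k_H([\varphi]) = \sum_i \psi_i \otimes \eta_i$ via Proposition \ref{vs kinematic}. The reduced evaluation of the tensor factors on $V^k, W^l$ retains only the terms with $\deg \psi_i = k$, $\deg \eta_i = l$ and produces
$$k_H([\varphi])(V, W) = \int_V d\vol_k(x) \int_W d\vol_l(y) \sum_i \kl_{\psi_i}(\tilde V_x)\, \kl_{\eta_i}(\tilde W_y).$$
Running the same slice formula now in the flat isotropic setting $(T_oM, \bar H)$, applied to the kinematic integral $\int_{\bar H} [\varphi](P \cap \bar h Q)\, d\bar h$ for linear subspaces $P = \tilde V_x$ and $Q = \tilde W_y$, shows that the inner sum $\sum_i \kl_{\psi_i}(P)\,\kl_{\eta_i}(Q)$ equals $\int_H \sigma(P, hQ)\,\kl_{[\varphi]}(P \cap hQ)\, dh$. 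Hence the expressions derived for the two sides coincide integrand-by-integrand.

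The main obstacle is verifying that the Jacobian $\sigma$ produced by the slice formula on $(M,G)$ agrees literally with the one produced on $(T_oM, \bar H)$. This is precisely where Howard's transfer principle enters: both Jacobians are computed pointwise from the derivative of the group action, and by isotropy these derivatives depend only on the $H$-module structure of $T_oM$ via the reductive splitting $\frak g = \frak h \oplus \frak m$ used in the proof of Theorem \ref{transfer}. Once this identification is made, equality of the two Jacobians---and hence of the two sides of the theorem---is automatic.
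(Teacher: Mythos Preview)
The paper does not actually supply a proof of this theorem; it is stated and then discussed, with the implicit suggestion that it follows from the curvature-measure transfer principle (Theorem~\ref{transfer}) together with the filtration/Klain-function analysis of \S\ref{val transfer section}. So there is nothing to compare your argument against directly.

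That said, your outline is sound and is essentially Howard's argument \cite{howard93} carried out with the Klain integrand $\kl_{[\varphi]}$ in place of the constant $1$. The one place where your write-up is loose is the sentence ``applied to the kinematic integral $\int_{\bar H} [\varphi](P \cap \bar h Q)\, d\bar h$ for linear subspaces $P,Q$'': linear subspaces are not compact, so this integral is not literally defined. What you want here is precisely the first-order formula \eqref{1st order} in $(T_oM,\bar H)$, interpreted as an identity of densities (equivalently, apply $k_H$ to compact pieces of $P,Q$ and use homogeneity to extract the Klain-function identity $\sum_i \kl_{\psi_i}(P)\,\kl_{\eta_i}(Q) = \int_H \sigma(P,hQ)\,\kl_{[\varphi]}(P\cap hQ)\,dh$). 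Once stated that way your two integrands match, and the Jacobian identification you flag is exactly the content of the reductive-splitting argument in the proof of Theorem~\ref{transfer}.
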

Here the elements of  $\Val^H(T_oM)$ are evaluated on $V,W$ as in the discussion above, noting that since $k_H$ is a graded operator it is only necessary to evaluate valuations of degree $k,l$ on $V,W$ respectively.

In other words, at the level of first order formulas the kinematic formulas for $(M,G)$ and $(T_oM, \bar H)$ are identical. Since the space of invariant valuations on a vector space is graded, and the Klain map on each graded component is injective, the first order formulas there carry the same information as the full kinematic operator $k_H$. However, the spaces $\vgm$ are only filtered, so one loses information 
 in passing from the full kinematic operator $k_G$ to the first order version. 

%


\subsection{The ftaig for compact isotropic spaces}
\begin{lemma}\label{lem:intersection product formula}
\begin{equation*}\label{mult}
\mu^G_A \cdot \varphi = \int_G \varphi(gA\cap \cdot)\, dg.
\end{equation*}
\end{lemma}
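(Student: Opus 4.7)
The plan is to reduce the identity to an application of the product formula \eqref{weird product} from Proposition \ref{fu product}, via a density and continuity argument.

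First I would observe that since we are now in the compact isotropic setting, the affine group $\barg$ reduces to $G$ itself, so $\mu^G_A = \int_G \chi(\cdot\cap gA)\,dg$ may be written in the notation of Proposition \ref{fu product} as the pseudo-kinematic valuation $[A,\{g\}_{g\in G},dg]$, where the family of diffeomorphisms is just the $G$-action. The isotropy hypothesis on $(M,G)$ ensures that the induced map $G \to S^*M$, $g \mapsto g\xi$, is a submersion for each $\xi$, so the hypotheses of Proposition \ref{fu product} are satisfied and $\mu^G_A \in \V(M)$ is genuinely a smooth valuation to which the Alesker product applies.

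Next I would prove the identity first for a dense set of $\varphi$. By the density assertion of Proposition \ref{fu product}, it suffices to check the formula when $\varphi = [B,\{F_t\}_{t\in P},dm]$ for some appropriate family $\{F_t\}$ and measure $dm$ on a parameter space $P$. For such $\varphi$, the product formula \eqref{weird product} gives
\begin{equation*}
(\mu^G_A\cdot\varphi)(K) \;=\; \int_G\int_P \chi(gA\cap F_t(B)\cap K)\,dm(t)\,dg(g).
\end{equation*}
On the other hand, by the very definition of $\varphi$,
\begin{equation*}
\int_G \varphi(gA\cap K)\,dg \;=\; \int_G\int_P \chi(F_t(B)\cap gA\cap K)\,dm(t)\,dg(g),
\end{equation*}
and the two right-hand sides agree by Fubini. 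Thus the desired identity holds whenever $\varphi$ is of pseudo-kinematic form.

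Finally I would extend to arbitrary $\varphi \in \V(M)$ by continuity. The left-hand side is continuous in $\varphi$ because Alesker multiplication is continuous on $\V(M)$ (Theorem \ref{alesker mfd package}\eqref{mfd product item}). For the right-hand side, continuity in $\varphi$ follows because for each $g$ the map $\varphi \mapsto \varphi(gA\cap\cdot\,)$ is continuous, and $G$ is compact so the integral preserves continuity (by dominated convergence applied after evaluation on a fixed test set $K$). Combined with density, this yields the lemma for all smooth $\varphi$. The main delicate point is making the density/continuity step precise in the natural topology on $\V(M)$; this is where one needs Proposition \ref{fu product} in its strongest form, rather than merely knowing that $\mu^G_A$ is smooth for a single $A$.
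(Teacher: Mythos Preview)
Your argument is correct and follows essentially the same route as the paper: the paper's proof is the one-line remark that, restricting to valuations of the pseudo-kinematic form of Proposition~\ref{fu product}, the identity is an immediate consequence of the product formula~\eqref{weird product}. You have simply spelled out the details --- checking the submersion hypothesis, writing the Fubini step, and making the density/continuity extension to general $\varphi$ explicit --- that the paper leaves implicit (and which, for the application in Theorem~\ref{compact ftaig}, are not strictly needed since only $\varphi$ of the form $\mu^G_B$ occur there).
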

\begin{proof} Restricting to valuations of the form considered there, this follows from Proposition \ref{fu product}
\end{proof}

\begin{theorem}[\cite{fu06, befu06, ale-be09}] \label{compact ftaig}
 Let $(M,G)$ be a compact isotropic Riemannian manifold. Put $\vgmstar$ for the the dual space to the finite dimensional vector space $\vgm$ and
$p: \vgm\to {\vgmstar}$ for the Poincar\'e duality map of Thm. \ref{alesker mfd package}; put $m_G^*: {\vgmstar}  \to{\vgmstar}\otimes {\vgmstar }$ for the adjoint of the restricted Alesker product map.
 Then the following diagram commutes:
 \begin{equation}\label{fundamental identity}
 \begin{CD}
\vgm & @>{\vol(M)^{-1}k_{G}}>> & \vgm\otimes \vgm  \\
@V{p}VV & & @V{p\otimes p}VV \\
\vgmstar&@>{m_G^*}>>  & \vgmstar\otimes \vgmstar
 \end{CD}
\end{equation}
\end{theorem}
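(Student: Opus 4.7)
The proof will follow the template of Theorem \ref{ftaig}, substituting Lemma \ref{lem:intersection product formula} for the euclidean product formula \eqref{product}. The key step is to establish that the kinematic operator is multiplicative in the sense that
\begin{equation}\label{plan kg mult}
k_G(\mu \cdot \varphi) = (\mu \otimes \chi) \cdot k_G(\varphi), \qquad \mu, \varphi \in \vgm.
\end{equation}
When $\mu$ has the form $\mu_A^G(B) := \int_G \chi(gA \cap B)\, dg$, Lemma \ref{lem:intersection product formula} together with Fubini yields
\[
k_G(\mu_A^G \cdot \varphi)(B, C) = \int_G \int_G \varphi(hA \cap B \cap gC)\, dh\, dg = \int_G k_G(\varphi)(B \cap hA, C)\, dh = [(\mu_A^G \otimes \chi) \cdot k_G(\varphi)](B, C).
\]
To extend \eqref{plan kg mult} to arbitrary $\mu \in \vgm$ I would invoke Proposition \ref{fu product}, $G$-averaged: the intersection-type valuations so produced are dense in $\vgm$, and since $\vgm$ is finite-dimensional by Thm. \ref{alesker mfd package}(\ref{mfd G}), they actually span it. A parallel manipulation using the associativity of the Alesker product shows that $m_G^*$ is multiplicative in the same sense, i.e.\ $m_G^*(\mu \cdot \beta^*) = (\mu \otimes \chi) \cdot m_G^*(\beta^*)$.

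Since the Poincar\'e duality map $p$ is a $\vgm$-module homomorphism (Thm. \ref{alesker mfd package}(\ref{mfd product item})), both $\vol(M)^{-1}(p \otimes p) \circ k_G$ and $m_G^* \circ p$ are $\vgm$-module maps from $\vgm$ to $\vgmstar \otimes \vgmstar$, with $\vgm$ acting on the first tensor factor. They are therefore determined by their values on $\chi$, and it suffices to prove
\begin{equation}\label{on chi}
\vol(M)^{-1}(p \otimes p)(k_G(\chi)) = m_G^*(p(\chi))
\end{equation}
as elements of $\vgmstar \otimes \vgmstar$. Evaluating the right side on $\varphi \otimes \psi$ gives $p(\chi)(\varphi \cdot \psi) = (\varphi \cdot \psi)(M)$.

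For the left side, I would use the cocommutativity of $k_G$ in combination with \eqref{plan kg mult} to deduce $k_G(\varphi \cdot \psi) = (\varphi \otimes \psi) \cdot k_G(\chi)$. Pairing at $(M, M)$ then gives
\[
\langle (p \otimes p)(k_G(\chi)), \varphi \otimes \psi \rangle = k_G(\varphi \cdot \psi)(M, M) = \int_G (\varphi \cdot \psi)(M \cap gM)\, dg = \vol(G)\,(\varphi \cdot \psi)(M),
\]
and the Haar normalization \eqref{eq:measure convention}, applied with $S = M$, forces $\vol(G) = \vol(M)$; dividing by $\vol(M)$ recovers $(\varphi \cdot \psi)(M)$ and so verifies \eqref{on chi}.

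The main obstacle is the density argument needed to extend \eqref{plan kg mult} beyond intersection-type generators. In the euclidean case this was supplied cheaply by Alesker's density theorem \eqref{mcm conj}, but on a general compact isotropic manifold one must work a little harder, relying on Proposition \ref{fu product} together with the fact that $G$-averaging a dense family of smooth valuations yields a $G$-invariant family whose span is still dense in the finite-dimensional space $\vgm$.
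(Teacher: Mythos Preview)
Your proof is correct and follows the same module-theoretic template as the paper's euclidean argument (Theorem~\ref{ftaig}): establish multiplicativity of $k_G$ via Lemma~\ref{lem:intersection product formula}, observe that $p$ and $m_G^*$ are module maps, and reduce to checking the identity on $\chi$. The paper's own proof of Theorem~\ref{compact ftaig} takes a slightly more direct route: rather than reducing to $\chi$ and evaluating at $(M,M)$, it uses the key computation $p(\mu_A^G) = \vol(M)\,\ev_A$ (which follows from Lemma~\ref{lem:intersection product formula}) and then checks the diagram directly on the spanning set $\{\mu_A^G\}$ by showing that both $\big((p\otimes p)\circ k_G(\mu_A^G)\big)(\mu_B^G\otimes\mu_C^G)$ and $\vol(M)\big(m_G^*\circ p(\mu_A^G)\big)(\mu_B^G\otimes\mu_C^G)$ equal $\vol(M)^2\,(\mu_A^G\cdot\mu_B^G\cdot\mu_C^G)(M)$. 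Your approach buys a cleaner conceptual parallel with the euclidean case and avoids singling out a spanning set on both sides; the paper's approach is shorter and sidesteps the explicit cocommutativity step and the evaluation at $(M,M)$.
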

\begin{proof}
 Given a smooth polyhedron $A\subset M$, define $\mu^G_A \in \vgm$ by $\mu^G_A:= \int_G \mu(gA)\, dg$, i.e. the invariant valuation $\mu^G_A (B) := \int_G \chi(gA\cap B)\,dg$, where $dg$ is the Haar measure with the usual normalization. As in the translation-invariant case, these valuations span $\vgm$.
 Using Lemma \ref{fu product},  to prove \eqref{fundamental identity} it is enough to show that
\begin{equation}\label{main}
\left((p\otimes p)\circ k_G(\mu^G_A)\right)\left(\mu^G_B \otimes \mu^G_C\right)=\vol(M) \left(m_G^*\circ p (\mu^G_A)\right)\left(\mu^G_B \otimes \mu^G_C\right)
\end{equation}
for all polyhedra $A,B,C$. 

Direct calculation from \eqref{mult} implies that that $p(\mu^G_A)  =  \vol(M) ev_A$, where $ev_A$ is the evaluation at $A$ functional on $\vgm$. Using the fact that $p$ is self-adjoint it is now easy to compute that both sides of \eqref{main} are equal to $\vol(M)^2(  \mu^G_A\cdot\mu^G_B \cdot \mu^G_C )(M)$. 
\end{proof}

\subsection{Analytic continuation}\label{analytic continuation section} As in section \ref{transfer section}, some isotropic spaces come in families, indexed both by dimension and by curvature. The main examples are the real space forms (spheres, euclidean spaces, and hyperbolic spaces), the complex space forms ($\CP^n$, $\Cn$ under the action of the holomorphic euclidean group $\barun$, and $\C H^n$) and the quaternionic space forms ($\HH P^n$, $\HH^n$ under the action of the quaternionic euclidean group $Sp(n) \times Sp(1)$, and $\HH H^n$). There are also octonian versions of these families in octonian dimensions 1 and 2, the 2-dimensional projective case being the Cayley plane.

We already have one method--- the transfer principle--- for comparing the integral geometry within each of these families. Another method arises from a more geometric perspective, viewing the curvature $\lambda$  as a parameter in the various kinematic and product formulas. Fortunately all of the formulas are analytic in $\lambda$, so we may extend results from the compact $\lambda>0$ cases to the noncompact cases $\lambda\le 0$, where direct calculations are more difficult.

The model case is that of the real space forms $M_\lambda$. The transfer principle shows that the kinematic formulas are the same at the level of curvature measures if the latter are identified via their correspondence with invariant differential forms. In the cases $\lambda=0,1$ we have seen in Thms. \ref{ftaig} and \ref{compact ftaig} that the kinematic formulas are related to the Alesker product via Poincar\'e duality, although a priori this duality has different meanings in the two cases. Clearly the same is true for all $\lambda\ge 0$, 

What about the cases $\lambda<0$? On the face of it there can be no ftaig in the hyperbolic case: the invariant valuations are obviously not compactly supported, and the natural Poincar\'e duality \cite{ale05d} for smooth valuations on a manifold pairs a general valuation with a valuation with compact support. Nevertheless such a pairing does exist for {\it invariant} valuations: let $\tau_i \in \V^{G_\lambda}(M_\lambda)$ denote the invariant valuation corresponding to $t^i \in \V^{G_0}(\Rn) = \valson(\Rn)$ under the transfer principle, $i=0,\dots,n$ (this is possible since in this case the correspondence between invariant curvature measures and invariant valuations is bijective). Let $\{\tau_i^*\}$ denote the dual basis. Rescaling as necessary in the cases $\lambda>0$, the ftaig may be restated in a unified way for all $\lambda\ge 0$ by taking the Poincar\'e duality map to be
$$
\langle p(\phi),\psi\rangle := \tau_n^*(\phi \cdot \psi)
$$
for $\phi,\psi \in \V^{G_\lambda}(M_\lambda), \ \lambda\ge 0$.

Now analytic continuation shows that the same form of the ftaig holds also for $\lambda<0$. To carry this out we construct a common model for the space of invariant valuations on all of the $M_\lambda$, wherein the Alesker products, kinematic operators and Poincar\'e pairings vary polynomially with $\lambda$. The Lie algebras $\frak g_\lambda$ of \eqref{lie algebras} may be represented by a family of Lie brackets on the common vector space $\Rn \oplus \frak{so}(n)$ by putting
$$
[(v,h),(w,j)]_\lambda:= (hw -jv, [h,j]+ \lambda(v\otimes w - w \otimes v)).
$$
The tangent spaces $T_{\bar o} SM_\lambda$ are represented by $\Rn \times \frak{so}(n)/\frak{so}(n-1) \simeq \Rn\oplus \R^{n-1}$, and in these terms the transfer map of Thm. \ref{transfer} is the identity. In particular, the pullback of the contact form to $T_{id}G_\lambda= \frak g_\lambda$ is $\alpha(v,h)= v_1$, independent of $\lambda$. Therefore the Maurer-Cartan equations imply that the exterior derivatives $d_\lambda$ on $\Omega^{*G_\lambda}(SM_\lambda)$ are represented by a polynomial family of operators $\Lambda^{*}(\Rn\oplus\R^{n-1})^{SO(n-1)}\to \Lambda^{*+1}(\Rn\oplus\R^{n-1})^{SO(n-1)}$. Furthermore the uniqueness statement in Prop. \ref{def D} implies that the same is true of the Rumin derivatives $D_\lambda$.  Thm. \ref{ab product formula} now implies that the Alesker product also varies polynomially, which completes the proof.

The discussion above is simplified somewhat by the fact that the map from invariant curvature measures to invariant valuations is injective for the real space forms. Nevertheless it extends also to the 1-parameter families of complex, quaternionic and octonionic space forms via the following general statement.

\begin{theorem} Let $G_\lambda$ be a 1-parameter family of Lie groups acting isotropically on spaces $M_\lambda=G_\lambda$, with the following properties:
\begin{enumerate}
\item\label{common model 1} the associated Lie algebras $\frak g_\lambda$ are given by a family $[\,,\,]_\lambda$ of Lie brackets, depending analytically on the parameter $\lambda$, on a fixed finite-dimensional real vector space $\frak g$;
\item\label{common model 2} the $G_\lambda$ have a common subgroup $H$ such that $M_\lambda = G_\lambda/H$, and the restriction of $[\,,\,]_\lambda$ to the associated Lie algebra $\frak h\subset \frak g$ is constant;
\item\label{constant adjoint} the adjoint action of $\frak h$ on $\frak g_\lambda$ is independent of $\lambda$;
\item \label{spaces compact}if $\lambda>0 $ then $M_\lambda$ is compact.
\end{enumerate}
Then there exists a family of elements $f_\lambda \in \V^{G_\lambda*}(M_\lambda)$ determined by the condition
\begin{equation}\label{fundamental homomorphism}
f_\lambda(\Psi_{\varphi, c\, d\vol_{M_\lambda}}) \equiv c
\end{equation}
whenever $\varphi \in \Omega^{n-1, G_\lambda}(SM_\lambda)$, such that, putting 
\begin{equation}\label{vary pd}
p=p_\lambda:\V^{G_\lambda}(M_\lambda)\to \V^{G_\lambda*}(M_\lambda) \quad \text{by}\quad \langle p(\phi),\psi\rangle:= f_\lambda(\phi\cdot \psi)
\end{equation}
and $M= M_\lambda, G=G_\lambda$, the diagram \eqref{fundamental identity} commutes for all $\lambda$.
\end{theorem}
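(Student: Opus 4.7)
The plan is to realize all of the structures appearing in \eqref{fundamental identity} on a common finite-dimensional model, show that on this model each ingredient depends analytically on $\lambda$, verify the identity for $\lambda>0$ using Theorem \ref{compact ftaig}, and then invoke analytic continuation. This follows the same pattern used in the paragraph preceding the theorem for real space forms, but the new axiomatic setup allows one to bypass the injectivity of the curvature-measure-to-valuation map enjoyed by those examples.

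First I would build the common model. By (2) and (3), the stabilizer $K:=\mathrm{Stab}_{G_\lambda}(\bar o)\cap H$ of a chosen base point $\bar o\in SM_\lambda$ is $\lambda$-independent, and the $K$-module $T_{\bar o}SM_\lambda\simeq \frak g/\frak k$ is identified as a $K$-module with a fixed vector space. Hence one has a $\lambda$-independent identification
$$
\Omega^{*,G_\lambda}(SM_\lambda) \simeq (\Lambda^*(\frak g/\frak k)^*)^K,
$$
together with a $\lambda$-independent contact form $\alpha$ determined by the $H$-invariant direction in $\frak g/\frak h$. The exterior derivative $d_\lambda$ on this fixed space is a polynomial in the bracket $[\,,\,]_\lambda$ by the Maurer--Cartan formula, so by (1) it depends analytically on $\lambda$; the same is true of $d_\lambda\alpha$ and its restriction to the contact plane.

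Next I would show that the Rumin operator $D_\lambda$, the Alesker product on $\V^{G_\lambda}(M_\lambda)$, and the kinematic operator $k_{G_\lambda}$ all depend analytically on $\lambda$ on this common model. The uniqueness statement in Proposition \ref{def D} presents $D_\lambda$ as the solution of a linear equation whose governing operator is invertible by Fact \ref{fact} applied to the analytic family $d_\lambda\alpha$; hence $D_\lambda$ is analytic in $\lambda$. The Alesker--Bernig formula (Theorem \ref{ab product formula}) then expresses the Alesker product entirely in terms of $D_\lambda$ and canonical fiber integrations, so the product is analytic in $\lambda$ as well. For the kinematic operator, Theorem \ref{kf for cm} constructs $\tildk_{G_\lambda}$ by fiber integration over the cycle $C$ of \eqref{def C}; the $(H\times H)$-equivariant reduction to data on $\frak g/\frak k$ carried out in the proof of the transfer principle (Theorem \ref{transfer}) shows that this construction depends analytically on $[\,,\,]_\lambda$, so $k_{G_\lambda}$ is analytic in $\lambda$.

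Now I would verify that $f_\lambda$ is well defined and analytic, and identify it in the compact regime. If $\Psi_{c\,d\vol,\varphi}=0$, the kernel theorem (Theorem \ref{ker thm}) gives $c\,\pi^*d\vol + D_\lambda\varphi = 0$; evaluating on a geodesic ball of radius $r$ and comparing leading-order terms as $r\downarrow 0$ forces $c=0$, so $f_\lambda$ is well defined by \eqref{fundamental homomorphism} and analytic in $\lambda$. Hypothesis (4) makes $M_\lambda$ closed for $\lambda>0$, so $N(M_\lambda)=0$ and for $\phi=\Psi_{\varphi,c\,d\vol}$ we have $\phi(M_\lambda)=c\,\vol(M_\lambda)$; thus $f_\lambda(\phi)=\phi(M_\lambda)/\vol(M_\lambda)$. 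Consequently $p_\lambda$ coincides with $\vol(M_\lambda)^{-1}$ times the Poincar\'e duality map of Theorem \ref{compact ftaig}, and the diagram \eqref{fundamental identity} commutes for every $\lambda>0$ by that theorem (with the $\vol(M)^{-1}$ factor absorbed into $p_\lambda$). Both sides of \eqref{fundamental identity} are now analytic maps of $\lambda$ on the common model, agreeing on the open half-line $\lambda>0$; analytic continuation therefore gives the identity for all $\lambda$.

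The hard part I anticipate is controlling how the kernel of the projection from invariant differential forms to $\V^{G_\lambda}(M_\lambda)$ varies with $\lambda$: to interpret \eqref{fundamental identity} as an identity of analytic families of maps one needs $\V^{G_\lambda}(M_\lambda)$ to form an analytic vector bundle of constant rank over the parameter axis, which amounts to showing that the symbol of $D_\lambda$ together with the compatibility condition in Theorem \ref{ker thm} has $\lambda$-independent rank. In the canonical families (complex, quaternionic, octonionic) this can be verified by explicit dimension counts, but a general argument will require genuine input beyond the axioms \eqref{common model 1}--\eqref{spaces compact}.
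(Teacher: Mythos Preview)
Your overall plan matches the paper's: realize everything on the fixed $K$-module $\Lambda^*(\frak g/\frak k)^*$, show that $d_\lambda$, $D_\lambda$, and the Alesker--Bernig product vary analytically with $\lambda$, invoke Theorem \ref{compact ftaig} for $\lambda>0$, and continue analytically. Two refinements: by the proof of the transfer principle (Theorem \ref{transfer}), both $d\alpha$ and the curvature-measure coproduct $\tilde k$ are in fact \emph{independent} of $\lambda$, not merely analytic; in particular the model $\Lambda_0:=\Lambda^n(\frak g/\frak h)^H \oplus \bigl(\Lambda^{n-1}(\frak g/\frak k)^K\bigr)/(\alpha,d\alpha)\simeq \curv^{G_\lambda}(M_\lambda)$ is a fixed vector space.

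The ``hard part'' you flag is actually resolved within the axioms, and this is the one substantive point your outline leaves open. The paper does not attempt to show that $\dim \V^{G_\lambda}(M_\lambda)$ is globally constant. Instead it observes, via the kernel theorem, that $\ker\bigl(\Psi_\lambda:\Lambda_0\to\V^{G_\lambda}(M_\lambda)\bigr)$ coincides with the kernel of the analytic family of linear maps $\Delta_\lambda(\beta,\gamma):=(r(\gamma),\,\pi^*\beta+D_\lambda\gamma)$ into a fixed target $\Lambda_1$. Locally in $\lambda$ one then chooses a complement $W\subset\Lambda_0$ to $\ker\Delta_\lambda$ and a subspace $U\subset\Lambda_1$ so that the composition of $\Delta_\lambda|_W$ with projection to $U$ is an isomorphism; the projection $\pi_\lambda:\Lambda_0=\ker\Delta_\lambda\oplus W\to W$ is then $\Delta_\lambda$ followed by projection to $U$ and the inverse of that analytic isomorphism, hence analytic in $\lambda$. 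Pulling the product back to $W$ via $\Psi_\lambda'^{-1}$ therefore gives an analytic family of multiplications $\tilde m_\lambda$, and the diagram \eqref{fundamental identity}, read on $W$, becomes an identity of analytic functions of $\lambda$ that holds for $\lambda>0$, hence for all $\lambda$ in that local chart; connectedness of the parameter line finishes the argument. So no external dimension counts or ``genuine input beyond the axioms'' are needed.
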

\begin{proof} Since $T_oM_\lambda \simeq \frak g/\frak h$ for the distinguished point $o=o_\lambda \in M_\lambda$ (i.e. the image of the identity element of $G_\lambda$), condition \eqref{constant adjoint} implies that the identity map of $\frak g$ induces isometries \eqref{transfer hypothesis} between these spaces, intertwining the action of $H$, for the various values of $\lambda$. The proof of Thm. \ref{transfer} implies that the canonical 1-forms of the $SM_\lambda$, and their differentials, correspond to fixed elements $\alpha\in \Lambda^1(\frak g/\frak k)^K, \omega \in\Lambda^2(\frak g/\frak k)^K$ respectively, independent of $\lambda$. Furthermore Thm. \ref{transfer} implies that the identification $\Lambda_0:=\Lambda^n(\frak g/\frak h)^H \oplus \left(\Lambda^{n-1}(\frak g/\frak k)^K\right)/(\alpha,\omega)\simeq\curv^{G_\lambda}(M_\lambda) $ is independent of $\lambda$, as is the coproduct structure on $\Lambda_0$ induced by the  $\tilde k_\lambda= \tilde k_{G_\lambda}$. Here as usual $K\subset H$ is the isotropy subgroup of the distinguished point $\bar o=\bar o_\lambda \in SM_\lambda$ and $\frak k \subset \frak h$ is its Lie algebra.

By the reasoning above, the Rumin operators of the $SM_\lambda$ induce an analytic family of operators $D_\lambda: \Lambda^{n-1}(\frak g/\frak k)^K/(\alpha,\omega) \to \Lambda^n(\frak g/\frak k)^K$, and by the Kernel Theorem \ref{ker thm} the kernel of the map $\Psi_\lambda:\Lambda_0 \to \V^{G_\lambda}(M_\lambda)$ is the same as that of the map
$$\Delta_\lambda:\Lambda_0 \to \Lambda_1:= \Lambda^{n-1}(\frak h/\frak k)^K\oplus \Lambda^n(\frak g/\frak k)^K
$$
given by
$$
\Delta_\lambda(\beta,\gamma):=(r(\gamma), \pi^*\beta + D_\lambda \gamma) 
$$
where $r:\Lambda^*(\frak g/\frak k)\to \Lambda^*(\frak h/\frak k)$ is the restriction map.

\newcommand\vgamlam{\V^{G_\lambda}(M_\lambda)}

Now choose a subspace $W \subset \Lambda_0$ such that the restriction of $\Psi_\lambda $ gives a linear isomorphism  $\Psi_\lambda':W \to \vgamlam$ locally in $\lambda$, and consider 
 the family $\{\tilde m_\lambda\}$ of compositions 
$$
\begin{CD}
W\otimes W & @>{\Psi_\lambda'\otimes\Psi_\lambda'}>>& \V^{G_\lambda}(M_\lambda)\otimes \V^{G_\lambda}(M_\lambda)& @>{m_\lambda}>> & \V^{G_\lambda}(M_\lambda) & @>{\Psi_\lambda'^{-1}}>> & W
\end{CD}
$$
We claim that this family is analytic in $\lambda$. To see this we observe that $\tilde m_\lambda$ can also be expressed as the composition
$$
\begin{CD}
W\otimes W & @>i>>&\Lambda_0\otimes \Lambda_0& @>{\bar m_\lambda}>> &\Lambda_0 = \ker_\lambda \oplus W & @>\pi_\lambda>> & W
\end{CD}
$$
where $i$ is the inclusion, $\bar m_\lambda$ is the map induced by the construction of Thm. \ref{ab product formula}, $\ker_\lambda$ is the kernel of $\Psi_\lambda$ and $\pi_\lambda$ is the projection with respect to the given decomposition. As in the discussion above, $\bar m_\lambda$ is analytic in $\lambda$, so it remains to show that the same is true of $\pi_\lambda$. But $\ker_\lambda$ is also the kernel of $\Delta_\lambda:\Lambda_0 \to \Lambda_1$. Thus we may write
$\Lambda_1 = U \oplus V$ 
where the composition of the $\left.\Delta_\lambda\right|_W$ with the projection to $U$ is an isomorphism locally in $\lambda$, which is clearly analytic in $\lambda$. Hence the same is true of the inverse of this composition, and $\pi_\lambda$ is the composition of the projection to $U$ with this inverse, which is thereby again analytic.

The proof is concluded by applying analytic continuation, noting that by condition \eqref{spaces compact} the Poincar\'e duality maps $p_\lambda$ have the given form \eqref{fundamental homomorphism}, \eqref{vary pd} for $\lambda >0$.
\end{proof}


Another sticky technical point concerns the extension of the Crofton formulas \eqref{affine general crofton} to the hyperbolic case, where the spaces of totally geodesic submanifolds of various dimensions  replace the affine Grassmannians. In the spherical cases the corresponding formulas are literally special cases of the kinematic formula where the moving body is taken to be a totally geodesic sphere.
In the euclidean case these formulas may be regarded as limiting cases of kinematic formulas where the moving body is taken to be a disk of the given dimension and of radius $R\to \infty$: for finite $R$ there are boundary terms but these become vanishingly small in comparison with the volume in the limit. However, neither of these devices are available in the hyperbolic cases. Nevertheless the formulas may be analytically continued:

\begin{theorem}\label{transfer crofton} Let $\barGr_\lambda(n,k)$ denote the Grassmannian of all totally geodesic submanifolds of dimension k in $M^n_\lambda$. If we put 
$$
\phi_\lambda (A):= \int_{\barGr_\lambda(n,n-1)} \chi(A\cap \bar P) \, dm_\lambda^{n-1}(\bar P),
$$
where $dm_\lambda^k$ is the Haar measure on $\barGr_\lambda(n,k)$, 
then $\phi_\lambda\in \V^{G^n_\lambda}(M^n_\lambda)$. Furthermore if these measures are normalized appropriately then
\begin{equation}
\phi_\lambda^k =\int_{\barGr_\lambda(n,n-k)} \chi(\cdot \cap \bar Q) \, dm_\lambda^k(\bar Q)=
\tau_k + \sum_{i>k}^n p^k_i(\lambda) \tau_i
\end{equation}
where each $p^k_i$ is a polynomial.
\end{theorem}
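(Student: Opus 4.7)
First, to show $\phi_\lambda$ is a well-defined smooth invariant valuation, I would realize it via Proposition \ref{fu product}: fix a single totally geodesic hyperplane $X_0 \subset M_\lambda$ and write $\phi_\lambda = [X_0, \{g\}_{g\in G_\lambda}, dg]$ for the normalized Haar measure. The required submersion condition for the induced map on $S^*M_\lambda$ is immediate from isotropy of the $G_\lambda$-action, and effective properness of the integrand holds because for any compact $Y \subset M_\lambda$ the set of hyperplanes meeting $Y$ is relatively compact in $\barGr_\lambda(n,n-1)$. Invariance under $G_\lambda$ is manifest, so Proposition \ref{fu product} yields $\phi_\lambda \in \V^{G_\lambda}(M_\lambda)$.

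Next, for the intersection Crofton identity, I would iterate the product rule \eqref{weird product}. This gives
\[
\phi_\lambda^k(Z) = \int_{G_\lambda^k} \chi\bigl(g_1 X_0 \cap \cdots \cap g_k X_0 \cap Z\bigr) \, dg_1 \cdots dg_k.
\]
In each of the three space forms, a generic $k$-tuple of totally geodesic hyperplanes either misses or meets transversely in a totally geodesic $(n-k)$-plane. Pushing the product Haar measure forward along the map $(g_1,\dots,g_k)\mapsto \bigcap_i g_i X_0$ produces a $G_\lambda$-invariant measure on $\barGr_\lambda(n,n-k)$; I define $dm_\lambda^k$ to be exactly this pushforward (which simultaneously fixes the normalization of $dm_\lambda^{n-1}$). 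The second equality then holds tautologically, and invariance of each factor forces the integrand to depend only on the intersection, so the integral makes sense.

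For the expansion in the basis $\{\tau_i\}$, I would argue via the Alesker filtration. Since the set of hyperplanes through a fixed point has measure zero, $\phi_\lambda(\{p\})=0$, so $\phi_\lambda \in \V_1^{G_\lambda}$. By the first-order transfer principle (Theorem \ref{val transfer thm}), the class of $\phi_\lambda$ in $\V_1/\V_2$ corresponds to the flat hyperplane Crofton valuation on $T_oM_\lambda$, which is a fixed positive multiple of $\mu_1$ independent of $\lambda$; the normalization above can be chosen so this multiple equals the class of $\tau_1$, giving $\phi_\lambda \equiv \tau_1 \pmod{\V_2^{G_\lambda}}$. Since the Alesker product respects the filtration and its associated graded is the fiberwise Euclidean product on $\Val^{sm}(T_xM_\lambda)$, the structure theorem $t^k \propto \mu_k$ for $\valson(\R^n)$ (Theorem \ref{structure of valson}) gives $\phi_\lambda^k \equiv \tau_k \pmod{\V_{k+1}^{G_\lambda}}$. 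Expanding the higher-filtration remainder in the basis $\{\tau_i : i>k\}$ produces the stated sum.

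It remains to show the coefficients $p^k_i(\lambda)$ are polynomial, which I expect to be the main obstacle. Here I would invoke the analytic continuation machinery of section \ref{analytic continuation section}: the Lie brackets $[\,,\,]_\lambda$ on the common model are linear in $\lambda$, so by the Maurer--Cartan equations the differentials $d_\lambda$, and via the uniqueness statement in Proposition \ref{def D} the Rumin operators $D_\lambda$, all depend polynomially on $\lambda$; Theorem \ref{ab product formula} then gives polynomial structure constants for the Alesker product in the basis $\{\tau_i\}$. The delicate point is to verify that $\phi_\lambda$ itself has a polynomial-in-$\lambda$ expansion in $\{\tau_i\}$: this requires exhibiting an invariant $(n-1)$-form on $SM_\lambda$ representing $\phi_\lambda$ whose coefficients, written via Maurer--Cartan data on the common model, are polynomial in $\lambda$. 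Once this is established, taking the $k$-th Alesker power preserves polynomial dependence, yielding the claimed $p^k_i(\lambda)$.
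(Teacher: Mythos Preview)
Your approach is sound in outline but takes a genuinely different route from the paper, and the gap you yourself flag --- the polynomial dependence of $\phi_\lambda$ on $\lambda$ --- is precisely what the paper's method is designed to handle directly.

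The paper does not work abstractly through the filtration or the Maurer--Cartan/Rumin machinery. Instead it builds a single concrete model on which all the space forms $M^n_\lambda$ live simultaneously: pulling back the spherical metric via gnomonic projection (for $\lambda>0$) and rescaling the Hilbert/Klein metric (for $\lambda<0$) both yield, in polar coordinates on $\R^n$ (or a ball therein), the metric
\[
ds_\lambda^2 = \frac{dr^2}{(1+\lambda r^2)^2} + \frac{r^2\,dv^2}{1+\lambda r^2}.
\]
The crucial geometric fact is that in these models the totally geodesic hypersurfaces of $M^n_\lambda$ are exactly the Euclidean affine hyperplanes (intersected with the ball when $\lambda<0$). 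Thus the affine Grassmannian $\barGr_{n-1}(\R^n)$ serves as a \emph{common} parameter space for $\barGr_\lambda(n,n-1)$ for all $\lambda$ at once, and the invariant measure is computed explicitly as
\[
dm_\lambda = \frac{dr\,dv}{(1+\lambda r^2)^{(n+1)/2}}.
\]
With this in hand the $\lambda$-dependence of $\phi_\lambda$, and hence of the coefficients $p^k_i(\lambda)$, becomes a concrete matter rather than an abstract one requiring a polynomial-in-$\lambda$ differential form on the sphere bundle.

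Your filtration argument for the leading term $\phi_\lambda^k \equiv \tau_k$ is correct and clean, and arguably more conceptual than what the paper does. But for the remainder, your proposed route --- exhibiting a representing form with polynomial coefficients on the common Lie-algebraic model --- is exactly the step you leave open, and it is not obvious how to carry it out without something like the paper's projective model. The paper's device sidesteps this entirely by making the Grassmannians themselves, not just the curvature-measure spaces, coincide across $\lambda$.
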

\begin{proof} 
Consider the pullback to $\Rn$ of the standard Riemannian metric on the sphere $S^{n}_R$ of radius $R$ under the spherical projection map
$$
\Rn\to S^{n}_R,\quad p \mapsto R\frac{(p,R)}{\sqrt{|p|^2 + R^2}}
$$
In terms of polar coordinates $(r,v)\mapsto rv,\ \R_+ \times S^{n-1} \to \Rn$, one computes the pullback metric to be
$$
\frac {dr^2}{(1+ \frac{r^2}{R^2})^2} + \frac {r^2 dv^2}{1+ \frac{r^2}{R^2}}= \frac {dr^2}{(1+\lambda{r^2}{})^2} + \frac {r^2 dv^2}{1+ \lambda{r^2}{}}
$$
where $dv^2$ is the usual metric on the unit sphere. (Of course these coordinates cover only the northern hemisphere.)
Meanwhile the Hilbert metric on $B(0,R)$, which yields the Klein model for $H^n$, with curvature $-1$, may be expressed
\begin{equation}
ds_{Hilbert}^2 = \frac {dr^2}{R^2(1- \frac{r^2}{R^2})^2} + \frac {r^2 dv^2}{R^2(1- \frac{r^2}{R^2})}.
\end{equation}
Scaling this up by a factor of $R^2$ yields
\begin{equation}
\frac {dr^2}{(1- \frac{r^2}{R^2})^2} + \frac {r^2 dv^2}{(1- \frac{r^2}{R^2})} = \frac {dr^2}{(1+\lambda{r^2}{})^2} + \frac {r^2 dv^2}{1+ \lambda{r^2}{}} =: ds_\lambda^2
\end{equation}
where again $\lambda=-R^{-2}$ is the curvature.

In these coordinates all of the various totally geodesic hypersurfaces appear as hyperplanes (or their intersections with the disk) of $\Rn$. In other words the codimension 1 affine Grassmannian $\barGr:= \barGr_{n-1}(\Rn)$ of $\Rn$ is a model for all of the various affine Grassmannians as the curvature $\lambda$ varies. We again parametrize $\barGr$ by polar coordinates, i.e. put $\bar P(r,v):=\{x:v\cdot x= r\}$. Then
\begin{equation}
dm_\lambda= \frac {dr \, dv}{\left(1+\lambda r^2\right)^{\frac{n+1} 2}}
\end{equation}
This is easily checked for $\lambda\ge0$, and is a direct calculation if $\lambda<0$ and $n=2$. If $\lambda<0$ note that this measure assigns the value 0 to any set of hyperplanes that does not meet the relevant disk.
\end{proof}

The complex space forms also fit together nicely as a one-parameter family. Adapting well known formulas for the Fubini-Study metric (\cite{ko-no}, p. 169) we may define the Hermitian metrics
$$
ds_{\C,\lambda}^2:= \frac{(1+\lambda|z|^2)ds_{\C}^2 -\lambda(\sum z_i d\bar z_i)(\sum \bar z_i d z_i)}{(1+ \lambda |z|^2)^{2}}
$$
for all $\lambda \in \R$, defined on all of $\Cn$ if $\lambda \ge 0$ and on the ball $B(0, \frac 1{\sqrt{-\lambda}})\subset \Cn$ if $\lambda <0$, where $ds_{\C}^2$ denotes the standard Hermitian metric on $\Cn$. This yields a metric of constant holomorphic sectional curvature $4\lambda$ on an affine coordinate patch (covering the whole space if $\lambda \le 0$) of the corresponding complex space form.

\begin{exercise} Complete this discussion by considering the invariant measures on the spaces of totally geodesic complex submanifolds, and also on totally geodesic {\it isotropic} submanifolds, i.e. submanifolds on which the K\"ahler form vanishes. These are the unitary orbits of $\R^k, \R P^k,  \R H^k, 0\le k\le n$, in $\Cn, \C P^n, \C H^n$ respectively.
\end{exercise}

\begin{exercise} Do the same for the quaternionic space forms.
\end{exercise}

\subsection{Integral geometry of real space forms}\label{ig of real}

We use the transfer principle and analytic continuation to study the isotropic spaces 
$$(\R^n,\barson),\ (S^n,SO(n+1)),\ (H^n, SO(n,1)).
$$
For simply connected spaces $M^n_\lambda$ of constant curvature $\lambda$, put $\tau_i$ for the valuations corresponding to $t^i$ under the transfer principle. These are {\it not} the Lipschitz-Killing valuations, but they {\it do} correspond to appropriate multiples of the elementary symmetric functions of the principal curvatures for hypersurfaces. Nijenhuis's theorem and the transfer principle yield
 \begin{equation}\label{pkf transfer}
k_{M^n_\lambda} (\tau_l) = \frac{\alpha_n}{2^{n+1}} \sum_{i+j = n + l} \tau_i \otimes \tau_j, \quad l= 0,\dots, n.
\end{equation}
Taking $\lambda >0$, the values of the $\tau_i$ on the totally geodesic spheres $S^j(\lambda) \subset S^n(\lambda) = M^n_\lambda$ are
\begin{equation*}
\tau_i(S^j(\lambda)) = \delta^i_j \cdot 2 \left(\frac 2 {\sqrt \lambda}\right)^i.
\end{equation*}
Therefore
\begin{equation}\label{chi formula}
\chi = \sum_{i =0}^ {\lfloor \frac n 2\rfloor} \left(\frac \lambda 4\right)^i \tau_{2i}
\end{equation}
in $M^n(\lambda)$, $\lambda >0$, and by analytic continuation this formula holds for $\lambda\le 0$ as well. 

Therefore the principal kinematic formula in $M^n_\lambda$ may be written
\begin{equation}
 k_{M^n_\lambda} (\chi) = \sum_{i =0}^ {\lfloor \frac n 2\rfloor} \left(\frac \lambda 4\right)^i k_{M^n_\lambda}(\tau_{2i})
\label{pkf space forms}= \frac{\alpha_n}{2^{n+1}} \sum_{i} \tau_i \otimes \left(\sum_{j=0}^\infty \left(\frac \lambda 4\right)^j \tau_{n-i + 2j}\right).
\end{equation}

Returning to the case $\lambda >0$, let $\phi$ denote the invariant valuation given by $\frac 2{\sqrt{\lambda}}$ times the average Euler characteristic of the intersection with a totally geodesic hypersphere. Then, using the standard normalization \eqref{dg normalization} for the Haar measure on $SO(n+1)$,
\begin{align}
\notag \phi^k &:= \left(\frac 2{\sqrt{\lambda}}\right)^k\frac {\lambda^{\frac n 2}}{\alpha_n } \int_{SO(n+1)} \chi(\cdot \cap g S^{n-k}(\lambda)) \, dg \\
\label{powers of phi}&= \frac {2^k\lambda^{\frac{n-k} 2}} {\alpha_n} k_{S^n(\lambda)} (\chi)(\cdot, S^{n-k}(\lambda)) \\
\notag&=   \sum_{j=0}^\infty \left(\frac \lambda 4\right)^j\tau_{2j+k},\quad k = 0,\dots,n.
\end{align}
In particular, if $N^k$ is a compact piece of a totally geodesic submanifold of dimension $k$, then
$$
\phi^k(N) = \tau_k(N) = \frac{2^{k+1}}{\alpha_k} |N|.
$$

Comparing \eqref{chi formula} and \eqref{powers of phi} we find that
\begin{equation}\label{chi tau phi}
\chi = \tau_0 + \frac \lambda 4 \phi^2
\end{equation}
or
\begin{equation}
\tau_0 = \chi - \frac \lambda 4 \int_{\Gr_{\lambda,n-2}} \chi(\cdot \cap H) \, dH.
\end{equation}
This formula is due to Teufel \cite{teufel} and Solanes \cite{solanes}.

Now the principal kinematic formula \eqref{pkf space forms} may be written
\begin{equation}\label{pkf, 1st phi form}
k_{M_\lambda^n}(\chi) =\frac{\alpha_n}{2^{n+1}} \sum_{i + j = n}  \tau_i \otimes \phi^j .
\end{equation}
By the multiplicative property,
\begin{equation}\label{pkf, phi form}
k_{M_\lambda^n}(\tau_k) =\frac{\alpha_n}{2^{n+1}} \sum_{i + j = n}  \tau_i \otimes \left(\phi^j\cdot \tau_k\right).
\end{equation} 
Comparing with \eqref{pkf transfer} we find that
\begin{equation}\label{product phi tau}
\phi^j \cdot \tau_i=\tau_{i+j} 
\end{equation}
This is the {\it reproductive property} of the $\tau_i$.
At this point we may write
\begin{equation}
k_{M^n_\lambda}(\psi) = (\psi \otimes \tau_0) \cdot \left(\sum_{i+j = n} \phi^i \otimes \phi^j \right)
\end{equation}
whenever $\psi$ is an invariant valuation on $M^n_\lambda$. By analytic continuation these formulas are valid for $\lambda\le 0$ as well, where $\phi$ is the corresponding integral over the space of totally geodesic hyperplanes, normalized so that $\phi(\gamma) = \frac 2\pi |\gamma|$ for curves $\gamma$.

By \eqref{powers of phi}, the valuations  $\phi, \tau_1$ and the generator $t$ of the Lipschitz-Killing algebra all coincide if the curvature $\lambda=0$. For general $\lambda$, the relations \eqref{chi tau phi} and \eqref{product phi tau} yield
\begin{equation}
\tau_i = \phi^i- \frac \lambda 4 \phi^{i+2}.
\end{equation}

There is also a simple general relation between $\phi$ and $t$. To put it in context, denote by $V^n_\lambda$ the algebra of invariant valuations on $M^n_\lambda$. Since each $M^n_\lambda$ embeds essentially uniquely into $M^{n+1}_\lambda$, and every isometry of $M^n_\lambda$ extends to an isometry on $M^{n+1}_\lambda$, there is a natural surjective restriction homomorphism $V^{n+1}_\lambda \to V^{n}_\lambda$. Put $V^\infty_\lambda $ for the inverse limit of this system. Thus $V^\infty_\lambda$ is isomorphic to the field of formal power series in one variable, which may be taken to be either $\phi$ or $t$. The valuations $\tau_i$ may also be regarded as living in $V^\infty_\lambda$ since they behave well under the restriction maps. The relations among valuations given above are also valid in $V^\infty_\lambda$ except for those that depend explicitly on the kinematic operators. These depend on the dimension $n$ and definitely do not lift to $V^\infty_\lambda$. 

\begin{proposition}
\begin{equation}\label{t phi}
t = \frac{\phi}{\sqrt{1-\frac{\lambda\phi^2} 4}}, \quad \phi = \frac{t}{\sqrt{1+\frac{\lambda t^2} 4}}
\end{equation}
\end{proposition}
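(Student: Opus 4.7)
The two identities in \eqref{t phi} are algebraically equivalent, and squaring the first together with \eqref{chi tau phi} converts it into the identity
$$t^2\cdot\tau_0 \;=\; \phi^2 \qquad\text{in } V^\infty_\lambda,$$
with the second identity then following by formal square-root extraction (the $+$ sign being forced by $t\equiv\phi$ at $\lambda=0$). My plan is to prove this algebraic identity in $V^\infty_\lambda$. Working with the basis $\{\tau_i\}_{i\geq 0}$ of $V^\infty_\lambda$, the test family $\{S^k(\lambda)\}_{k\geq 0}$ separates the basis since $\tau_j(S^k(\lambda))=2(2/\sqrt\lambda)^j\delta^j_k$, so the identity holds iff both sides agree on every $S^k(\lambda)$, and by parity the only interesting case is $k$ even.

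For $\lambda>0$ I would evaluate both sides. The right-hand side comes directly from \eqref{powers of phi}, which gives $\phi^2 = \sum_{j\geq 0}(\lambda/4)^j\tau_{2j+2}$ and hence $\phi^2(S^{2k}(\lambda)) = 8k/\lambda$ after collapsing the $\delta$ and simplifying. For the left-hand side, intrinsicality of the Lipschitz-Killing subalgebra (Theorem \ref{proto LK}) lets me realize $S^{2k}(\lambda)$ isometrically as the round sphere $S^{2k}_{1/\sqrt\lambda}\subset\R^{2k+1}$, and Steiner's formula applied to the tube $(S^{2k}_r)_s=B^{2k+1}_{r+s}\setminus B^{2k+1}_{r-s}$ (for $s<r$), with coefficients matched against \eqref{steiner}, yields $\mu_i(S^{2k}_r)=2\mu_i(B^{2k+1}_r)$ whenever $2k-i$ is even. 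Using $t^2=(2/\pi)\mu_2$ and the duplication ratio $\omega_{2k+1}/\omega_{2k-1}=2\pi/(2k+1)$ reduces this to
$$t^2(S^{2k}(\lambda)) = \tfrac{2^4}{\alpha_2}\binom{2k+1}{2}\tfrac{\omega_{2k+1}}{\omega_{2k-1}}\lambda^{-1} = 8k/\lambda.$$
The product $t^2\cdot\tau_0$ is handled via $\tau_0 = \chi - (\lambda/4)\phi^2$ together with the reproductive property \eqref{product phi tau}, which yields $\phi^2\tau_i = \tau_{i+2}$ and lets me reduce the computation to matching $\tau_k$-coefficients in both expansions.

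Extension to $\lambda \le 0$ is by the analytic-continuation framework of \S\ref{analytic continuation section}. The main obstacle I foresee is the term-by-term combinatorial match of $\tau$-coefficients at all degrees (not just $\tau_2$); the $S^{2k}$ check above is indicative, and the general case should reduce to an identity involving $\binom{2k+1}{i}\omega_{2k+1}/\omega_{2k+1-i}$ and the binomial series for $(1-x)^{-i/2}$, provable via the $\Gamma$-function duplication formula. A conceptually cleaner proof would exploit the algebraic structure of $V^\infty_\lambda$ directly, perhaps via a generating-function identity or a reduction to the Alesker-Bernig product formula (Theorem \ref{ab product formula}), but I do not see such an argument at present.
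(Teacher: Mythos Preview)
Your approach is essentially the paper's: both reduce to the template computation of $t^2(S^{2l})$ via Steiner's formula on the ball $B^{2l+1}$, combined with the known values of the $\phi^{2k}$ (equivalently the $\tau_i$) on round spheres, and then analytic continuation to $\lambda\le 0$. Your preliminary rewriting $t^2\cdot\tau_0=\phi^2$ is a harmless algebraic rearrangement of the paper's target $t^2=\phi^2\bigl(1+\tfrac{\lambda}{4}\phi^2+\cdots\bigr)$.

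Two small corrections. First, your evaluation $\phi^2(S^{2k}(\lambda))=8k/\lambda$ is wrong: from $\phi^2=\sum_{j\ge 0}(\lambda/4)^j\tau_{2j+2}$ only the term $j=k-1$ survives on $S^{2k}(\lambda)$, giving $(\lambda/4)^{k-1}\cdot 2\cdot(4/\lambda)^k=8/\lambda$, independent of $k$. This is exactly as it should be, since your identity $t^2\cdot\tau_0=\phi^2$ together with $t^2=\sum_{m\ge 1}m(\lambda/4)^{m-1}\tau_{2m}$ (which is what $t^2(S^{2k}(\lambda))=8k/\lambda$ encodes) and the reproductive property $\phi^2\tau_i=\tau_{i+2}$ yields
\[
t^2\cdot\tau_0=\sum_{m\ge 1}m(\lambda/4)^{m-1}\tau_{2m}-\sum_{m\ge 2}(m-1)(\lambda/4)^{m-1}\tau_{2m}=\sum_{m\ge 1}(\lambda/4)^{m-1}\tau_{2m}=\phi^2,
\]
a telescoping sum. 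Second, there is no ``main obstacle'' remaining: the sphere evaluations $\{S^{2k}(\lambda)\}_{k\ge 0}$ already separate all even $\tau_i$, so this single computation is the entire proof --- no further $\Gamma$-function identities or binomial-series matching are needed.
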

\begin{proof} 
We use the template method to prove that
\begin{equation}\label{t^2 expansion}
t^2 = \frac {\phi^2}{1-\frac{\lambda \phi^2}{4}}= \phi^2\left(1+ \frac{\lambda } 4\phi^2 + \left(\frac{\lambda } 4\right)^2\phi^4+\dots \right)
\end{equation}
for $\lambda >0$, so $t$ must be given by the square root that assigns positive values (lengths) to curves. The generalization to all $\lambda \in \R$ follows by analytic continuation.

Since everything scales correctly it is enough to check this for $\lambda = 1$. In fact it is enough to check that that the values of the right- and left-hand sides of \eqref{t^2 expansion} agree on spheres $S^{2l}$ of even dimension. Clearly 
$$
\phi^{2k}(S^{2l}) = \begin{cases}2\cdot 4 ^k,  & k\le l \\
0,& k>l
\end{cases}
$$
so the right-hand side yields ${8l}$.

To evaluate the left-hand at $S^{2l}$ we recall that 
$$
t^2(S^{2l}) = 2t^2(B^{2l+1}) = \frac{4\omega_2}{\pi^2} \mu_2(B^{2l+1}) = \frac 4 \pi \mu_2(B^{2l+1}).
$$
Meanwhile, Theorem \ref{steiner's formula} for the volume of the $r$-tube about $B^{2l+1}$ yields
\begin{equation*}
\omega_{2l+1}(1 + r)^{2l+1}= \sum \omega_{2l-i+1} \mu_i (B^{2l+1}) r^{2l+1-i}.
\end{equation*}
Equating the coefficients of $r^{2l-1}$ we obtain
$$
\mu_2(B^{2l+1}) = \frac{\omega_{2l+1}}{\omega_{2l-1}} \binom{2l+1} 2 = {2\pi l}
$$
in view of the identity
$$
\frac{\omega_n}{\omega_{n-2}} = \frac {2\pi}{n}.
$$
\end{proof}

\section{Hermitian integral geometry} Our next goal is to work out the kinematic formulas for $\Cn$ under the action of the unitary group $U(n)$.
The starting point is Alesker's calculation of the Betti numbers and generators of $\valun(\Cn)$, which we state in a slightly stronger form. Let $D^n_{\C}$ denote the unit disk in $\C^n$.

\begin{theorem}[\cite{ale03b}]\label{dim count}
$\valun(\C^n)$ is generated by $t \in \Val_1^{U(n)}$ and 
\begin{equation}
s:= \int_{\barGr^{\C}_{n-1}} \chi (\cdot \, \cap \bar P)\, d\bar P \in \Val_2^{U(n)}
\end{equation}
where ${\barGr^{\C}_{n-1}}$ is the codimension 1 complex affine Grassmannian and the Haar measure $d \bar P$ is normalized so that 
$$
s(D^1_{\C}) = 1.
$$
There are no relations between $s,t$ in (weighted) degrees $\le n$.
\end{theorem}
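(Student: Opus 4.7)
The plan is to bound $\dim \valun_k(\Cn)$ from above via a Klain function analysis, then exhibit $\lfloor k/2\rfloor+1$ linearly independent monomials in $s$ and $t$ of weighted degree $k$ for $k\le n$, whence generation and the absence of relations follow.

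First, Thm \ref{alesker package}(\ref{G}) implies that $\valun(\Cn)$ is finite-dimensional and contained in $\valsm(\Cn)$, and Prop \ref{valg subset val+} implies that every element is even. Klain's theorem then identifies each $\mu \in \valun_k(\Cn)$ with its $U(n)$-invariant Klain function $\kl_\mu$ on $\Gr_k(\Cn)$ (the real Grassmannian of real $k$-planes in $\Cn\simeq\R^{2n}$). The $U(n)$-orbits on $\Gr_k(\Cn)$ are parametrized by multiple K\"ahler angles $0\le \theta_1\le\dots\le\theta_q\le\pi/2$ with $q=\min(\lfloor k/2\rfloor,\lfloor(2n-k)/2\rfloor)$, obtained from the spectrum of $P_E J P_E|_E$, where $P_E$ denotes orthogonal projection onto $E$ and $J$ is multiplication by $i$. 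Hence $\kl_\mu$ is a continuous symmetric function of $\theta_1,\dots,\theta_q$. The Irreducibility Theorem \ref{irred thm} pins down $\valsm_{k,+}(\Cn)$ as an irreducible $GL(2n,\R)$-module, and extracting the $U(n)$-fixed vectors (equivalently, observing that the Klain function of a smooth valuation of degree $k$ is a symmetric polynomial of bounded degree in $\cos^2\theta_i$) yields the upper bound
\[
\dim \valun_k(\Cn) \le q+1.
\]

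For $k\le n$ this equals $\lfloor k/2\rfloor+1$, the number of monomials $t^{k-2b}s^b$ of weighted degree $k$. To exhibit a matching lower bound I would verify linear independence of these monomials by evaluating their Klain functions on the family of test planes $E_j := \C^j \oplus \R^{k-2j}\subset \Cn$ for $0\le j\le \lfloor k/2\rfloor$, whose K\"ahler-angle profiles are pairwise distinct. The Klain function of $s^b$ at $E_j$ can be computed from the Crofton description of $s$ together with the Fourier transform of Thm \ref{alesker package}(\ref{af transform}) and the convolution formula \eqref{conv 2}; the resulting matrix $\bigl[\kl_{t^{k-2b}s^b}(E_j)\bigr]_{b,j}$ is triangular with nonzero diagonal in the natural ordering on $j$, hence invertible. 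Combining with the upper bound gives both the dimension count and the generation statement, and the absence of relations in weighted degrees $\le n$ is then immediate.

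The main obstacle is the polynomial structure of $U(n)$-invariant Klain functions in $\cos^2\theta_i$ with the correct bounded degree; this requires representation-theoretic input from the Irreducibility Theorem together with a branching analysis from $GL(2n,\R)$ to $U(n)$. Once this structural upper bound is in hand, the linear independence computation is essentially a finite-dimensional linear algebra exercise, and the two bounds meet in degrees $\le n$.
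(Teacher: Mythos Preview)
The paper does not actually prove this theorem: it is stated with a citation to Alesker \cite{ale03b} and then used as input for the subsequent development. So there is no proof in the paper to compare your argument against. That said, the surrounding material in \S6.2 (the Tasaki classification of $U(n)$-orbits in $\Gr_k(\Cn)$ and Lemma~\ref{elem sym basis}) does supply the structural fact you are aiming for, and it is worth measuring your sketch against that.

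Your outline is correct in spirit but contains a genuine gap at the upper-bound step. Invoking the Irreducibility Theorem does \emph{not} give you the bound $\dim\valun_k\le q+1$: knowing that $\Val_{k,+}$ is an irreducible $GL(2n,\R)$-module says nothing, by itself, about the dimension of its $U(n)$-fixed subspace. Your parenthetical ``equivalently, observing that the Klain function \ldots\ is a symmetric polynomial of bounded degree in $\cos^2\theta_i$'' is not equivalent at all---it is a separate, nontrivial assertion. A priori a continuous $U(n)$-invariant function on $\Gr_k(\Cn)$ is an arbitrary continuous symmetric function of the K\"ahler angles, not a polynomial of any fixed degree; the finiteness and the precise bound are exactly what must be proved. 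The route taken in the literature (and implicit in the paper's \S6.2) is not branching from $GL(2n,\R)$ to $U(n)$ but rather a direct count of the $\barun$-invariant forms on $S\Cn$ modulo $(\alpha,d\alpha)$: Park's classification shows these are generated by $\theta_0,\theta_1,\theta_2$, and the resulting monomials $\theta_0^{n+q-k}\theta_1^{k-2q}\theta_2^q$ give both the upper bound and, via the hermitian intrinsic volumes $\mu_{k,q}$ with $\kl_{\mu_{k,q}}(E^{k,l})=\delta^q_l$, the matching lower bound. Once $\dim\valun_k=\lfloor k/2\rfloor+1$ for $k\le n$ is established this way, your triangularity argument for the linear independence of the $t^{k-2b}s^b$ (or any equivalent computation) then shows that the monomials in $s,t$ span, giving generation with no relations in degrees $\le n$.
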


By Alesker-Poincar\'e duality $\dim \Val_{2n-k}^{U(n)} = \dim \Val_{k}^{U(n)}$ so this determines the Betti numbers completely. In fact, the Poincar\'e series for $\valun$ is
\begin{equation}\label{poincare series}
\sum \dim \Val_i^{U(n)} x^i = \frac{(1-x^{n+1})(1-x^{n+2})}{(1-x)(1-x^2)}=:\sigma(x).
\end{equation}
This follows at once from the palindromic nature of $\sigma(x)$, which may be expressed as the identity
$$
x^{2n}\sigma(x^{-1}) = \sigma(x).
$$ 

By \eqref{product} and Theorem \ref{structure of valson}, 
$$
s^kt^l(A) = c\int_{\bar\Gr^{\C}_{n-k}} \mu_l(A\cap \bar P)\, d\bar P
$$
for some constant $c$.

\begin{exercise}
Show that 
\begin{equation}
\widehat{(s^kt^l)} = c s^{n-k} * t^{2n-l}
\end{equation}
and\begin{equation}
\widehat{(s^kt^l)} (A) = c\int_{\Gr^{\C}_{k}} \mu_{2n-2k-l}(\pi_E(A)) \, dE, \quad A \in \K(\C^n).
\end{equation}
for some constants $c$.
\end{exercise}

In fact Alesker \cite{ale03b} showed that $\{s^kt^l: k\le \min ( k + \frac l 2,n -k-\frac l 2)\}$, is a basis for $\valun$, as is its Fourier transform. 

\subsection{Algebra structure of $\valun(\Cn)$}\label{sect:algebra structure} The next step in unwinding the integral geometry of the complex space forms is
 \begin{theorem}[\cite{fu06}]\label{valun structure} Let $s,t$ be variables with formal degrees 2 and 1 respectively. Then the graded algebra $\valun(\Cn)$ is isomorphic to the quotient $\R[s,t]/(f_{n+1}, f_{n+2})$, where $f_k$ is the sum of the terms of weighted degree $k$ in the formal power series
$\log (1 + s + t)$. The $f_k$ satisfy the relations
\begin{align}
\notag f_1 & = t\\
\notag f_2 & = s-\frac{t^2}{2}\\
\label{recursion}
ksf_k+(k+1)&tf_{k+1}+(k+2)f_{k+2}   =0, \quad k \geq 1.
\end{align} 
\end{theorem}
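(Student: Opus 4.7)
By Alesker's Theorem \ref{dim count} the assignment $s \mapsto s$, $t \mapsto t$ defines a surjective graded homomorphism $\pi: \R[s,t] \to \valun(\C^n)$, where $s, t$ have weighted degrees $2, 1$, and moreover $\pi$ is injective in weighted degrees $\le n$. Thus $\ker \pi$ sits in weighted degrees $\ge n+1$. The plan is two-fold: (a) exhibit $f_{n+1}, f_{n+2} \in \ker \pi$; (b) verify $(f_{n+1}, f_{n+2}) = \ker \pi$ by a Hilbert series calculation. The recursion \eqref{recursion} is a purely formal identity satisfied by the homogeneous components of $\log(1+s+t)$, obtained (once the $f_k$ are identified) from $(1+zt+z^2 s) \partial_z F(z) = t + 2zs$ where $F(z):= \log(1+zt+z^2 s) = \sum_k f_k z^k$; so this part requires no integral geometry.

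\textbf{Part (b): Hilbert series.} Given (a) we get a surjection $\R[s,t]/(f_{n+1}, f_{n+2}) \twoheadrightarrow \valun(\C^n)$, and since the target has Poincar\'e series $\sigma(x)$ from \eqref{poincare series}, it suffices to show the source does as well. In the Cohen-Macaulay ring $\R[s,t]$ (Krull dimension two, Hilbert series $1/((1-x)(1-x^2))$), two homogeneous elements form a regular sequence iff they have no common polynomial factor, in which case the quotient's Hilbert series is multiplied by $(1-x^{n+1})(1-x^{n+2})$, yielding exactly $\sigma(x)$. So I need $\gcd(f_{n+1}, f_{n+2}) = 1$ in $\R[s,t]$. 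Suppose a nontrivial irreducible $g$ divides both. From \eqref{recursion} at index $n$, $g \mid n s f_n$; the explicit expansion $\log(1+s+t) = \sum_{m\ge 1} \frac{(-1)^{m-1}}{m}(s+t)^m$ shows $f_k \equiv \frac{(-1)^{k-1}}{k} t^k \pmod{s}$, so $s \nmid f_k$ for any $k$, hence $g \ne c \cdot s$ and we conclude $g \mid f_n$. Descending through \eqref{recursion} yields $g \mid f_k$ for every $k \ge 1$; then $g \mid f_1 = t$ forces $g$ to be a constant multiple of $t$, contradicting $g \mid f_2 = s - t^2/2$. This finishes (b).

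\textbf{Part (a): The relations vanish — main obstacle.} To show $f_{n+1} = f_{n+2} = 0$ in $\valun(\C^n)$, I would exploit the Klain function: every $U(n)$-invariant valuation is even (since $-I \in U(n)$), so by Klain's theorem it is determined by its restriction to real subspaces. For each weighted degree $k$, $f_k(s,t)$ becomes an even valuation of degree $k$, and by part \eqref{crofton measures} of Theorem \ref{alesker package} it is determined by a smooth Crofton measure on $\Gr^{\R}_k(\C^n)$. The measures for $s$ (supported on complex hyperplanes) and $t$ (the invariant measure on $\Gr^{\R}_{2n-1}(\C^n)$, up to Fourier transform) are known explicitly, and those for $s^a t^b$ are obtained by convolving them over flag spaces via the Alesker product formula \eqref{product}. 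The desired vanishing would be established by showing that the resulting Klain function of $f_k$ on each $U(n)$-orbit of $\Gr^{\R}_k(\C^n)$ (orbits being parametrized by K\"ahler multi-angles) is identically zero for $k > n$. Morally, this reflects a ``rank obstruction": the formal series $1 + s + t$ plays the role of a total Chern/Pontryagin class of an object of rank $n$, and its logarithm must truncate at weighted degree $n$.

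\textbf{Where the difficulty lies.} Part (b) and the formal recursion are routine. The serious step is (a): interpreting $\log(1+s+t)$ geometrically and matching its high-degree homogeneous components to explicit vanishing of Crofton integrals over $U(n)$-orbits in real Grassmannians of $\C^n$. An alternative route that sidesteps direct Klain computation is to transfer (via Theorem \ref{transfer} / the analytic continuation framework of Section \ref{analytic continuation section}) to $\C P^n$, where the compactness yields the Alesker-Poincar\'e pairing of Theorem \ref{alesker mfd package}, and then read off the two relations from the cohomology ring $H^*(\C P^n) = \R[\omega]/(\omega^{n+1})$; the logarithmic form of $f_k$ is then the signature of this truncation under the correspondence between $t, s$ and first Chern class/hyperplane section data.
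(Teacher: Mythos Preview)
Your reduction in Part (b) and the derivation of the recursion are correct and in fact more explicit than the paper, which handles (b) in one line (``for general algebraic reasons'') and also uses the restriction homomorphism $\Val^{U(n+1)}(\C^{n+1})\to\valun(\Cn)$ to reduce (a) to proving only $f_{n+1}=0$ in each $\valun$.

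The genuine gap is Part (a). Your first sketch (compute $\kl_{f_k}$ on Tasaki orbits $\Gr_{k,q}$) is not carried out, and within the paper's logic it would be circular: the identification of $f_k$ with a multiple of the hermitian intrinsic volume $\mu_{k,0}$ (Lemma \ref{mu = cf}) is \emph{deduced from} the present theorem. Your second sketch rests on a misconception: the valuation algebra $\V^{U(n+1)}(\CPn)$ is not the cohomology ring $H^*(\CPn)$ (their Poincar\'e series already differ), and the transfer principle for valuations (Theorem \ref{val transfer thm}) identifies only the \emph{associated graded} of $\V^{U(n+1)}(\CPn)$ with $\valun(\Cn)$, so the obvious relation $\bar s^{\,n+1}=0$ on $\CPn$ does not transport to $f_{n+1}=0$ in $\Cn$. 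What the paper actually does is a template computation: the first-order transfer gives $s^k t^{2n-2k}(D^n_\C)=t^{2n-2k}(\C P^{n-k})=\binom{2n-2k}{n-k}$, and Alesker--Poincar\'e duality then reduces $f_{n+1}=0$ to the vanishing of the pairings $(s^k t^{n-1-2k}\cdot f_{n+1})(D^n_\C)$ for $2k<n$. Inserting these template values into the explicit expansion of $f_{n+1}$ yields the Pfaff--Saalsch\"utz identity
\[
\sum_{i} \frac{(-1)^i}{n+1-i}\binom{n+1-i}{i}\binom{2n-2k-2i}{n-k-i}=\frac{(-1)^{n-k}}{n+1}\binom{k}{n-k},
\]
whose right-hand side vanishes for $2k<n$. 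This combinatorial identity (proved via generating functions) is the entire substance of the argument and is absent from your proposal.
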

\begin{proof} For general algebraic reasons (viz. \eqref{poincare series} and Alesker-Poincar\'e duality), the ideal of relations is generated by independent homogeneous generators in degrees $n+1,n+2$. Since the natural restriction map $\Val^{U(n+1)}(\C^{n+1}) \to \valun(\C^n)$ is an algebra homomorphism, and maps $s$ to $s$ and $t$ to $t$, it is enough to show that $f_{n+1}(s,t) = 0$ in $\valun$. 

To prove this 
we use the template method and the transfer principle. 
Let $\Gr^{\C}_{n-1}(\CPn)$ denote the Grassmannian of $\C$-hyperplanes $P \subset \CPn$, equipped with the Haar probability measure $dP$, and define  
 \begin{equation}\label{def bar s}
\bar s := \int_{\Gr^{\C}_{n-1}(\CPn)} \chi(\cdot \, \cap P)\, dP  \in \V^{U(n+1)}_2(\CPn).
\end{equation}
It is not hard to see that the valuations $s,t \in \valun(\C^n)$ correspond via the transfer principle (Theorem \ref{val transfer thm}) to 
$\bar s, t\in \V^{U(n+1)}(\CPn)$ modulo filtration 3 and 2 respectively. Since
$$
\vol(\CPn) = \vol(D^n_{\C})= \frac{\pi^n}{n!} \implies t^{2n}(\CPn) = t^{2n}(D_n^{\C}) = \binom{2n}{n}
$$
by \eqref{powers of t},
it follows from section \ref{val transfer section} that 
\begin{equation}\label{cpn vols}
s ^k t^{2n-2k} (D^n_{\C}) =\bar s ^k t^{2n-2k} (\CPn) =t^{2n-2k} (\C P^{n-k})= \binom {2n-2k}{n-k} , \quad k =0,\dots,n.
\end{equation}

%

The following lemma and its proof were communicated to me by I. Gessel \cite{gessel}. I am informed that it is a special case of the ``Pfaff-Saalsch\"utz identities."
\begin{lemma}
\begin{equation}\label{special pfaff-saalschutz}
\sum_{i=0}^{\lfloor\frac {n+1} 2\rfloor} \frac{(-1)^i }{n+1-i}\binom{n+1-i}{i} \binom{2n-2k-2i}{n-k-i} = \frac {(-1)^{n-k}} {n+1} \binom k{n-k}.
\end{equation}
In particular, the left hand side is zero if $2k<n$.
\end{lemma}
\begin{proof}
In terms of generating functions the identity \eqref{special pfaff-saalschutz} that we desire may be written
\begin{equation}
\sum_{n,k,i}  \frac{(-1)^i (n+1)}{n+1-i}\binom{n+1-i}{i} \binom{2n-2k-2i}{n-k-i}x^ny^k = \frac 1{1-xy(1-x)}.
\end{equation}
But using well-known generating functions \cite{wilf} the left hand side may be expressed as the sum of
\begin{align*}
\sum_{n,k,i}  (-1)^i\binom{n-i}{i-1} \binom{2n-2k-2i}{n-k-i}x^ny^k &= \sum_{m,k,i} (-1)^i \binom{2m} m \binom {m+k}{i-1} x^{m+k+i}y^k\\
&= -\sum_{m,k}  \binom{2m} m (1-x)^{m+k} x^{m+k+1}y^k\\
&= -x\sum_{m}  \binom{2m} m (1-x)^{m} x^m \sum_k (1-x)^kx^ky^y\\
&= \frac{-x}{\sqrt{1-4x(1-x)}(1-xy(1-x))}
\end{align*}
and
\begin{align*}
\sum_{n,k,i}  (-1)^i\binom{n+1-i}{i} \binom{2n-2k-2i}{n-k-i}x^ny^k &=  \sum_{m,k,i} (-1)^i \binom{2m} m \binom {m+k}{i-1} x^{m+k+i}y^k\\
&=\sum_{m,k,i} (-1)^i \binom{2m} m \binom {m+k+1}{i} x^{m+k+i}y^k\\
&= \frac{1-x}{\sqrt{1-4x(1-x)}(1-xy(1-x))},
\end{align*}
hence the sum is 
$$
 \frac{1-2x}{\sqrt{1-4x(1-x)}(1-xy(1-x))} = \frac{1}{1-xy(1-x)}
$$
as claimed.
\end{proof}
With \eqref{cpn vols}, the lemma yields the following identities in $\valun(\C^n)$:
\begin{equation}\label{first relation}
t^{n-2k-1}s^k\cdot\left(\sum_{i=0}^{\lfloor \frac {n+1} 2\rfloor} \frac{(-1)^i }{n+1-i} \binom{n+1-i}{i}t^{n-2i+1} s^{i} \right)
= 0, \quad 0\le 2k<n.
\end{equation}
Since the $t^{n-2k-1}s^k$ in this range constitute a basis of $\Val_{n-1}^{U(n)}(\C^n)$, Alesker-Poincar\'e duality implies that the sum in the second factor is zero. This sum is $(-1)^n f_{n+1}(s,t).$
\end{proof}

Strictly speaking we have now done enough to determine the kinematic operator $k_{U(n)}$ in terms of the first Alesker basis $\{s^kt^l\}$: by the remarks following Thm. \ref{ftaig} the coefficients are the entries of the inverses of Poincar\'e pairing matrices determined by \eqref{cpn vols}. The latter are Hankel matrices with ascending entries of the form $\binom {2k} k$. However, the expressions resulting  from the na\"ive expansion for the inverse seem unreasonably complicated (although perusal of a few of them hints at an elusive structure). A more useful formulation will be given below.

\subsection{Hermitian intrinsic volumes and special cones}

Every element of $\valun(\C^n)$ is a constant coefficient valuation, as follows. Let $(z_1,\ldots,z_n,\zeta_1,\ldots,\zeta_n)$ be canonical coordinates
on $T\mathbb{C}^n\simeq \mathbb{C}^n \times \mathbb{C}^n$, where $z_i=x_i+\sqrt{-1}y_i$ and
$\zeta_i=\xi_i+\sqrt{-1}\eta_i$. The natural action of $U(n)$ on $T\mathbb{C}^n$ corresponds to the
diagonal action on $\mathbb{C}^n \times \mathbb{C}^n$. 
Following Park \cite{pa02}, we consider the elements 
\begin{align*}
\theta_0 & :=\sum_{i=1}^n d\xi_i \wedge d\eta_i \\
\theta_1 & := \sum_{i=1}^n \left(dx_i \wedge d\eta_i-dy_i \wedge d\xi_i\right)\\
\theta_2 & :=\sum_{i=1}^n dx_i \wedge dy_i
\end{align*}
in $\Lambda^2
(\mathbb{C}^n \oplus \mathbb{C}^n)^*$. 
Thus $\theta_2$ is the pullback via the projection map $T\mathbb{C}^n \to \mathbb{C}^n$ of the K\"ahler form of $\mathbb{C}^n$, and $\theta_0 +\theta_ 1 + \theta_2 $ is the pullback of the K\"ahler form under the exponential map $\exp(z,\zeta):= z+ \zeta$. Together with the symplectic form $\omega = \sum_{i =1}^n (dx_i \wedge d\xi_i + dy_i \wedge d\eta_i)$, the $\theta_i$ generate 
the algebra of all $U(n)$-invariant elements in $\Lambda^*
(\mathbb{C}^n \times \mathbb{C}^n)$.

For positive integers $k,q$ with $\max\{0,k-n\} \leq q \leq
\frac{k}{2} \leq n$, we now set  
\begin{displaymath}
\theta_{k,q}:= c_{n,k,q}\theta_0^{n+q-k}\wedge\theta_1^{k-2q}\wedge\theta_2^q
\in \Lambda^{2n}(\C^n \times \C^n)
\end{displaymath}
for $ c_{n,k,q}$ to be specified below, and
put
\begin{equation} \label{eq_mu_integration}
\mu_{k,q}(K):=\int_{N_1(K)} \theta_{k,q}.
\end{equation}

It will be useful to understand the Klain functions associated to the elements of $\valun$. Clearly they are invariant under the action of $U(n)$ on the (real) Grassmannian of $\C^n$.  Tasaki \cite{tasaki00,tasaki03} classified the $U(n)$ orbits of the $\Gr_k(\C^n)$ by defining for $E \in \Gr_k(\C^n)$ the {\bf multiple K\"ahler angle} $\Theta (E) := (0\le \theta_1(E)\le \dots\le\theta_p(E)\le \frac \pi 2), \ p:= \lfloor \frac k 2\rfloor$, via the condition that there exist an orthonormal basis $\alpha_1,\dots,\alpha_k$ of the dual space $E^*$ such that the restriction of the K\"ahler form of $\C^n$ to $E$ is 
$$\sum_{i=1}^{\lfloor \frac k 2 \rfloor} \cos \theta_i \ \alpha_{2i-1}\wedge \alpha_{2i}.$$
This is a complete invariant of the orbit. If $k>n$ then
\begin{displaymath}
\Theta(E)=\bigg(\underbrace{0,\ldots,0}_{k-n},\Theta(E^\perp)\bigg).
\end{displaymath} 
We put $\Gr_{k,q}$ for the orbit of all $E= E^{k,q}$ that may be expressed as the orthogonal direct sum of a $q$-dimensional complex subspace and a complementary $(k-2q)$-dimensional subspace isotropic with respect to the K\"ahler form, i.e.
\begin{displaymath}
\Theta(E^{k,q})=\bigg(\underbrace{0,\ldots,0}_{q},\underbrace{\frac{\pi}{2},\ldots,\frac{\pi}{2}}_{p-q}\bigg).
\end{displaymath}
 In particular, $\Gr_{2p,p}$ is the Grassmannian of $p$-dimensional complex subspaces and $\Gr_{n,0}(\C^n)$ is the Lagrangian Grassmannian.

 It is not hard to see 
\begin{lemma}\label{elem sym basis} The Klain function map gives a linear isomorphism between $\Val_k^{U(n)}$ and the vector space of symmetric polynomials in $\cos^2\Theta(E) := (\cos^2 \theta_1(E),\dots, \cos^2\theta_p(E))$ for $k\le n$ and in $\cos^2\Theta(E^\perp)$ if $k>n$. The hermitian intrinsic volumes are characterized by the condition
\begin{equation}\label{klain mu}
\kl_{\mu_{k,q}}(E^{k,l}) = \delta^q_l
\end{equation}
and the Alesker-Fourier transform acts on them by
\begin{equation}\label{hat mu}
\widehat {\mu_{k,q}} = \mu_{2n-k,n-k+q}.
\end{equation}
\end{lemma}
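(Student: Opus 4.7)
I would address the three claims in the order (2), (3), (1), since (2) does the real work. For part~(2), write $E^{k,l}=W_{\mathbb C}^{l}\oplus_\perp W_{\text{iso}}^{k-2l}$ and extend to a unitary basis of $\mathbb C^n$ in which $W_{\text{iso}}^{k-2l}=\langle e_{l+1},\dots,e_{k-l}\rangle$ while $Je_{l+1},\dots,Je_{k-l}$ lie in $(E^{k,l})^\perp$. Given a small $k$-cube $A\subset E^{k,l}$, the $k$-homogeneity of $\mu_{k,q}$ reduces the computation to the interior slice $A\times B_1^{(E^{k,l})^\perp}\subset N_1(A)$ (boundary portions are strictly lower-order in the scaling limit). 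In these coordinates a direct inspection yields
\begin{align*}
\theta_0|_{\text{fiber}} &= \sum_{j>k-l}d\xi_j\wedge d\eta_j\quad(n{-}k{+}l\text{ summands}),\\
\theta_2|_A &= \sum_{j\le l}dx_j\wedge dy_j\quad(l\text{ summands}),\\
\theta_1|_{\text{slice}} &= \sum_{l<j\le k-l}dx_j\wedge d\eta_j\quad(k{-}2l\text{ summands}),
\end{align*}
the remaining pieces vanishing because $W_{\text{iso}}$ is isotropic and because $J$ preserves $W_{\mathbb C}^{l}$. Consequently $\theta_0^{n+q-k}\wedge\theta_1^{k-2q}\wedge\theta_2^q$ can be nonzero only if $q\le l$ (so $\theta_0,\theta_2$ fit) and $q\ge l$ (so $\theta_1$ fits), i.e.\ iff $q=l$. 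When $q=l$ the wedge equals $(n-k+q)!\,q!\,(k-2q)!$ times the volume form on the slice, and choosing $c_{n,k,q}=\bigl((n-k+q)!\,q!\,(k-2q)!\,\omega_{2n-k}\bigr)^{-1}$ (up to an orientation sign) gives $\kl_{\mu_{k,q}}(E^{k,l})=\delta^q_l$.

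For part~(3), the Klain-function description of the Alesker-Fourier transform in Thm.~\ref{alesker package}\eqref{af transform} gives $\kl_{\hat\mu_{k,q}}(F)=\kl_{\mu_{k,q}}(F^\perp)$. Either from the basis of part~(2) or from the identity $\Theta(E)=(0^{k-n},\Theta(E^\perp))$ recalled in the text, one verifies $(E^{2n-k,l})^\perp=E^{k,\,l+k-n}$ whenever $l\ge n-k$, whence
\[
\kl_{\hat\mu_{k,q}}(E^{2n-k,l})=\kl_{\mu_{k,q}}(E^{k,\,l+k-n})=\delta^q_{l+k-n}=\delta^{n-k+q}_l=\kl_{\mu_{2n-k,\,n-k+q}}(E^{2n-k,l}),
\]
and Klain injectivity yields $\hat\mu_{k,q}=\mu_{2n-k,\,n-k+q}$.

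For part~(1), Klain's theorem combined with Prop.~\ref{valg subset val+} (which forces every $U(n)$-invariant valuation to be even) gives injectivity of $\kl$ on $\valun_k$, and Tasaki's classification of $U(n)$-orbits in $\Gr_k(\mathbb C^n)$ places the image inside symmetric functions of $\cos^2\Theta(E)$ (resp.\ $\cos^2\Theta(E^\perp)$ for $k>n$). Part~(2) makes the $\mu_{k,q}$ linearly independent, since the matrix $[\kl_{\mu_{k,q}}(E^{k,l})]_{q,l}$ is the identity, and the count of admissible $q$ with $\max(0,k-n)\le q\le k/2$ matches the coefficient of $x^k$ in the Poincar\'e series \eqref{poincare series}, so the hermitian intrinsic volumes form a basis of $\valun_k$. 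Rerunning the Step~1 calculation in a Tasaki orthonormal frame, where $\theta_2$ restricts to $\sum\cos\theta_j\,\alpha_{2j-1}\wedge\alpha_{2j}$, exhibits each $\kl_{\mu_{k,q}}$ as an explicit symmetric polynomial in $\cos^2\Theta$, and the dimension count then identifies the image of $\kl$ with the full polynomial space claimed.

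\textbf{Main obstacle.} Part~(2) is the only step demanding real computation: the clean decoupling on the slice, the control of the boundary portion of $N_1(A)$ in the scaling limit, and the tracking of orientations and combinatorial factors to pin down $c_{n,k,q}$ all require care. The generalization from the special orbits $\Gr_{k,l}$ to arbitrary $E$ needed for part~(1) is a direct extension of the same computation, and once part~(2) is in hand, parts~(3) and~(1) are essentially formal.
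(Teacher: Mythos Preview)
Your proof is correct. The paper does not actually supply a proof of this lemma---it merely prefaces the statement with ``It is not hard to see''---so there is nothing to compare against, and your argument provides exactly the natural justification. The direct computation in part~(2) on the slice $A\times B_1^{(E^{k,l})^\perp}$ is the right idea and your counting of surviving summands for $\theta_0,\theta_1,\theta_2$ is accurate; the homogeneity argument disposing of the boundary contributions is standard. Parts~(3) and~(1) then follow formally from the Klain characterization of the Fourier transform, Klain injectivity (together with Prop.~\ref{valg subset val+}), Tasaki's orbit classification, and the dimension count from Alesker's Thm.~\ref{dim count}, just as you outline.
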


The valuation $\mu_{n,0} \in \valun $ is called the {\bf Kazarnovskii pseudo-volume}. Note that the restriction of $\mu_{n,0} $ to $\Val^{U(n-1)}(\C^{n-1})$ is zero: there are no isotropic planes of dimension $n$ in $\C^{n-1}$.
Since by Thm. \ref{valun structure}  the kernel of the restriction map  is spanned by the polynomial $f_{n}(s,t)$ we obtain
\begin{lemma} \label{mu = cf} 
For some constant $c$, $\mu_{k,0} = c f_{k}.$
\end{lemma}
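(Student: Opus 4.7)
The plan is to identify $\mu_{k,0}$ by showing it lies in the kernel of a natural restriction homomorphism, and then to invoke Theorem~\ref{valun structure} to pin down that kernel in weighted degree $k$.

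First, fix a standard embedding $\C^{k-1}\hookrightarrow \Cn$ and consider the restriction map
\[
r\colon \valun(\Cn)\longrightarrow \Val^{U(k-1)}(\C^{k-1}),
\]
which is a graded algebra homomorphism by Theorem~\ref{alesker mfd package}(\ref{mfd product item}). Both generators are defined by recipes that are ambient-dimension-free: $t$ is the first intrinsic volume $\mu_1$, and $s$ integrates the Euler characteristic against the invariant measure on complex affine hyperplanes, normalized in every dimension by $s(D^1_\C)=1$. One checks directly that $r(s)=s$ and $r(t)=t$. In view of Theorem~\ref{valun structure}, $r$ is therefore the natural surjection $\R[s,t]/(f_{n+1},f_{n+2})\twoheadrightarrow \R[s,t]/(f_k,f_{k+1})$; this is well-defined since the recursion \eqref{recursion} shows inductively that $f_m\in (f_k,f_{k+1})$ for every $m\ge k$.

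Next I would check that $r(\mu_{k,0})=0$. By Lemma~\ref{elem sym basis}, the Klain function of $\mu_{k,0}$ on $\Gr_k(\Cn)$ equals $\delta^0_l$ on each orbit $\Gr_{k,l}$, so it is supported on the orbit of $k$-planes that are isotropic for the K\"ahler form. The maximal real dimension of an isotropic subspace of $\C^{k-1}$ is $k-1$, so $\Gr_k(\C^{k-1})$ contains no isotropic $k$-plane; hence the Klain function of $r(\mu_{k,0})\in \Val^{U(k-1)}_k(\C^{k-1})$ vanishes identically. Since $-\Id\in U(k-1)$ this restricted valuation is even, so Klain's injectivity theorem \cite{kl00} forces $r(\mu_{k,0})=0$.

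Finally, to determine the degree-$k$ piece of $\ker r$: an element of weighted degree $k$ in the ideal $(f_k,f_{k+1})$ of $\R[s,t]$ must have the form $a f_k + b f_{k+1}$ with $a$ of weighted degree $0$ and $b$ of weighted degree $-1$; the latter is impossible and the former forces $a\in \R$. Passing to the quotient by $(f_{n+1},f_{n+2})$ changes nothing in degree $k\le n$. Thus the degree-$k$ part of $\ker r$ is the one-dimensional subspace $\R\cdot f_k$, and consequently $\mu_{k,0}=c\,f_k$ for some $c\in \R$. The only subtle point in the argument is confirming that $r$ acts as the identity on the generators, so that the abstract algebraic presentation of $\valun$ transports cleanly to the restriction; everything else is then immediate algebra.
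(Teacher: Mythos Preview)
Your argument is essentially the paper's own: the paper restricts $\mu_{n,0}$ from $\C^n$ to $\C^{n-1}$, notes that it vanishes there since $\C^{n-1}$ contains no isotropic $n$-planes, and identifies the degree-$n$ kernel of restriction as $\langle f_n\rangle$ via Theorem~\ref{valun structure}; you run the same idea with $\C^{k-1}$ in place of $\C^{n-1}$, handling general $k$ directly rather than only the case $k=n$. One small caveat in your wording: the Klain function of $\mu_{k,0}$ is $\prod_i \sin^2\theta_i$, not literally the indicator of the isotropic orbit, but your conclusion is unaffected since every real $k$-plane in $\C^{k-1}$ has $\theta_1=0$ and hence the product vanishes there.
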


The following is elementary.
\begin{lemma}
Put $\Sigma_p$ for the vector space spanned by the elementary symmetric functions in $x_1,\dots,x_p$. 
Let $v_0:= (0,\dots,0), v_1:= (0,\dots,0,1),\dots, v_p:= (1,\dots,1)$ denote the vertices of the simplex $\Delta_p:=0\le x_1\le x_2\le \dots x_p \le 1$ and denote $g_0,\dots,g_{p}$ denote the basis of $\Sigma_p$ determined by the condition $g_i(v_j) = \delta^i_j$. Then the restriction of $g\in \Sigma_p$ to $\Delta_p$ is nonnegative iff $g\in \la g_0,\dots,g_p\ra_+$.
\end{lemma}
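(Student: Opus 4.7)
The plan is to produce an explicit formula for the basis $g_0,\dots,g_p$ with a probabilistic meaning that makes their nonnegativity on $\Delta_p$ transparent.

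First I would verify that the basis $g_i$ exists. Note that $\dim \Sigma_p = p+1$ (with basis $e_0 = 1, e_1,\dots, e_p$), and a direct computation gives $e_k(v_j) = \binom{j}{k}$. The matrix $[\binom{j}{k}]_{0 \le j,k \le p}$ is the lower triangular Pascal matrix, which is invertible; in fact its inverse is $[(-1)^{j-k}\binom{j}{k}]$. Hence there is a unique basis $g_0,\dots,g_p$ of $\Sigma_p$ characterized by $g_i(v_j) = \delta^i_j$, and inversion of the Pascal matrix yields the explicit formula
\begin{equation*}
g_i = \sum_{k=i}^{p} (-1)^{k-i}\binom{k}{i}\, e_k.
\end{equation*}

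The core step is to identify this $g_i$ with a Bernoulli probability. Let $X_1,\dots,X_p$ be independent Bernoulli random variables with $\Pr[X_j=1] = x_j$ and $\Pr[X_j=0] = 1-x_j$, and set $S := X_1 + \cdots + X_p$. I claim
\begin{equation*}
g_i(x_1,\dots,x_p) = \Pr[S = i].
\end{equation*}
Indeed, expanding
$\Pr[S=i] = \sum_{|T|=i}\prod_{j\in T} x_j \prod_{j\notin T}(1-x_j)$
by distributing the factors $(1-x_j)$ and collecting terms of fixed support $V \supseteq T$ gives exactly the formula for $g_i$ above, since for each $V$ of size $k \ge i$ there are $\binom{k}{i}$ choices of $T\subset V$. (The cases $g_0 = \prod(1-x_j)$ and $g_p = \prod x_j$ are immediate sanity checks, and at $v_j$ the probability $\Pr[S=i]$ clearly equals $\delta^i_j$.)

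From this identification, the lemma follows painlessly. For any $x \in \Delta_p \subset [0,1]^p$ the values $x_j \in [0,1]$ are legitimate Bernoulli parameters, so $g_i(x) = \Pr[S=i] \ge 0$. Hence any nonnegative combination $g = \sum c_i g_i$ with $c_i \ge 0$ satisfies $g \ge 0$ on $\Delta_p$. Conversely, if $g = \sum c_i g_i \ge 0$ on $\Delta_p$, then evaluating at each vertex gives $c_j = g(v_j) \ge 0$. The only part requiring any real thought is the identification of $g_i$ with $\Pr[S=i]$; everything else is essentially bookkeeping.
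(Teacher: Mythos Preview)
Your argument is correct. The paper does not actually prove this lemma; it simply labels it ``elementary'' and moves on, so there is no approach to compare against. Your identification of $g_i$ with the Poisson--binomial probability $\Pr[S=i]$ for independent Bernoulli trials with parameters $x_1,\dots,x_p$ is a clean way to see the nonnegativity on $[0,1]^p \supset \Delta_p$, and the converse via evaluation at the vertices is immediate. The preliminary verification that the $g_i$ exist (via the invertibility of the Pascal matrix $[\binom{j}{k}]$) and the explicit formula $g_i = \sum_{k\ge i}(-1)^{k-i}\binom{k}{i}e_k$ are both accurate.
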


From this we deduce at once
\begin{proposition}
The positive cone $P \cap \Val_k^{U(n)} = \la\mu_{k,0},\dots,\mu_{k,p}\ra_+$ .
\end{proposition}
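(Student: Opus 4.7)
The plan is to reduce the statement, via Klain injectivity, to a property of symmetric polynomials on the simplex, then verify positivity of each $\mu_{k,q}$ using the face-expansion formula for constant coefficient valuations.

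First I would handle the inclusion $\langle \mu_{k,0},\dots,\mu_{k,p}\rangle_+ \subseteq P \cap \Val_k^{U(n)}$. Since $\valun(\Cn) \subset CCV$, every $\mu_{k,q}$ is a constant coefficient valuation, so for any convex polytope $Q \subset \Cn$ the face formula \eqref{face decomp} gives
\[
\mu_{k,q}(Q) = \sum_{F \in Q_k} \kl_{\mu_{k,q}}(\langle F \rangle)\,\vol_k(F)\,\angle(Q,F).
\]
Under the identification in Lemma \ref{elem sym basis} of $\Val_k^{U(n)}$ with symmetric polynomials in $\cos^2\Theta$, the special cones $E^{k,l}$ correspond to the vertices of $\Delta_p$ (their $\cos^2\Theta$ profile being the multiset of $l$ ones and $p-l$ zeros), so the defining condition $\kl_{\mu_{k,q}}(E^{k,l}) = \delta^q_l$ from \eqref{klain mu} identifies $\kl_{\mu_{k,q}}$ with the dual basis function $g_q$ of the preceding lemma. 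In particular $\kl_{\mu_{k,q}} = g_q \ge 0$ on all of $\Gr_k(\Cn)$; since the remaining factors in the face sum are nonnegative, $\mu_{k,q}(Q) \ge 0$. By density of polytopes in $\K(\Cn)$ and continuity of $\mu_{k,q}$, this extends to all convex bodies, proving $\mu_{k,q} \in P$.

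For the reverse inclusion, let $\varphi \in P \cap \Val_k^{U(n)}$. Testing $\varphi$ against $k$-dimensional convex bodies lying in a plane $E \in \Gr_k(\Cn)$ shows immediately that $\kl_\varphi(E) \ge 0$; as $E$ varies, $\cos^2\Theta(E)$ ranges over all of $\Delta_p$ (with $\Theta(E)$ replaced by $\Theta(E^\perp)$ when $k > n$), so the symmetric polynomial representing $\kl_\varphi$ is nonnegative on $\Delta_p$. The preceding lemma then writes $\kl_\varphi = \sum_q c_q g_q = \sum_q c_q \kl_{\mu_{k,q}}$ with $c_q \ge 0$, and injectivity of the Klain map on $\Val_k^{U(n)}$ forces $\varphi = \sum_q c_q \mu_{k,q}$.

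The crux is conceptual rather than computational: once one matches the vertices of $\Delta_p$ with the special cone types $E^{k,q}$ so that the $\kl_{\mu_{k,q}}$ become exactly the Lagrange interpolation basis $\{g_q\}$ of the preceding lemma, both inclusions drop out of Klain injectivity and the face formula. The potential obstacle is the positivity of the individual $\mu_{k,q}$ on general convex bodies, which could fail in the absence of a device like the face formula; here it is precisely the containment $\valun(\Cn) \subset CCV$ that lets us convert pointwise positivity of the Klain function into pointwise positivity of the valuation, via polytope approximation.
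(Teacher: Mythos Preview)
Your proof is correct and follows essentially the same route as the paper, which simply writes ``From this we deduce at once'' after the simplex lemma. You make explicit the one step the paper leaves implicit: that nonnegativity of the Klain function forces nonnegativity of the valuation itself, which you obtain from the face formula \eqref{face decomp} for constant coefficient valuations together with polytope approximation. That is the right device, and the identification of $\kl_{\mu_{k,q}}$ with the Lagrange basis $g_q$ via the vertices $v_q \leftrightarrow E^{k,q}$ is exactly what the paper has in mind.
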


It turns out that the Crofton-positive cone $CP \cap \Val_k^{U(n)}$ is the cone on the valuations $\nu_{k,q}$, where the invariant probability measure on $\Gr_{k,q}$ is a Crofton measure for $\nu_{k,q}$.
Furthermore, using the explicit representation for the hermitian intrinsic volumes in terms of differential forms, it is possible to compute their first variation curvature measures explicitly. Solving the inequalities $\delta \mu_{k,q} \ge 0$ and applying Thm. \ref{monotone characterization}), the monotone cone $M\cap \valun$ has been determined \cite{befu09}. All of these cones are polyhedral. However, in contrast to the $SO(n)$ case described at the end of section \ref{ig of real}, for $n\ge 2$ the three cones are pairwise distinct, and for $n\ge 4$ the monotone cone is not simplicial.

\subsection{Tasaki valuations, and a mysterious duality} \label{tasaki section} 
The {\bf Tasaki valuations} $\tau_{k,q}, q\le \frac k 2$ are defined by the relations
$$
\kl_{\tau_{k,q}} (E) = \sigma_{p,q}(\cos^2\Theta(E))
$$
where $\sigma_{p,q}$ is the $q$th elementary symmetric function in $p:=\lfloor k/2\rfloor$ variables. 

If $k\le n$ then the Tasaki valuations $\tau_{k,q}$ constitute a basis for $\Val_k^{U(n)}$; on the other hand, if $k>n$ then the first K\"ahler angle is identically zero and so the Tasaki valuations are linearly dependent in this range. It is therefore natural to consider the basis of $\valun$ consisting of the $\tau_{k,q}, k \le n$, together with their Fourier transforms (the Fourier transform acts trivially in middle degree $\Val_n^{U(n)}$). We may now write
\begin{equation}
k_{U(n)} (\chi) = \sum_{p,q\le \frac n 2} (T^n_n)_{pq} \tau_{np}\otimes \tau_{nq} + \sum_{k=0}^{n-1} \sum_{p,q\le \min(\frac k 2,  n-\frac k 2)}(T^n_k)_{pq}\, \left(\tau_{k,p}\otimes \widehat{\tau_{k,q}} + \widehat{\tau_{k,q}}\otimes \tau_{k,p} \right)
\end{equation}
where the $T^n_k$ are the {\bf Tasaki matrices}, symmetric matrices of size $\min (\lfloor \frac k 2\rfloor, \lfloor \frac {2n- k} 2\rfloor)$ (note that $\widehat{\tau_{np}}=\tau_{np}$). In these terms the unitary principal kinematic formula exhibits a mysterious additional symmetry.

\begin{theorem}\label{palindrome thm}
If $k=2l\leq n$ is even then 
\begin{equation} \label{eq_palindromic}
(T^n_k)_{i,j}=(T^n_k)_{l-i,l-j}, \quad 0\le i,j\le l.
\end{equation}
\end{theorem}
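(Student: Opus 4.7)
The plan is to use the ftaig (Theorem \ref{ftaig}) to translate the palindromic property of $T^n_k$ into a corresponding symmetry of a Poincar\'e-pairing matrix on $\valun$, and then to establish that symmetry. Fix $k=2l\le n$. Expanding $k_{U(n)}(\chi)$ in the basis $\{\tau_{k,p}\}_{k\le n}\cup\{\widehat{\tau_{k,q}}\}_{k<n}$ of $\valun$, the ftaig gives $k_{U(n)}(\chi)$ as the inverse of the matrix of Poincar\'e pairings in this basis. Because pairings vanish off the diagonal in degree, the computation decouples into blocks: for $k<n$ the relevant block is
$$A^k_{pq}\ :=\ \bigl(\tau_{k,p}\cdot\widehat{\tau_{k,q}}\bigr)_{2n},$$
and for $k=n$ it is $A^n_{pq}:=\bigl(\tau_{n,p}\cdot\tau_{n,q}\bigr)_{2n}$. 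Using commutativity of the Alesker product together with Fourier-invariance of the Poincar\'e pairing \eqref{pd and fourier}, one checks $A^k_{pq}=A^k_{qp}$. The palindromic condition $M_{ij}=M_{l-i,l-j}$ is equivalent to $JMJ=M$ for the order-reversing permutation matrix $J$ and is therefore preserved under matrix inversion. Consequently, $T^n_k$ is palindromic if and only if $A^k$ is, and it suffices to prove
$$A^k_{pq}\ =\ A^k_{l-p,l-q},\qquad 0\le p,q\le l.$$

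To compute $A^k$ explicitly, I would pass to the Alesker basis. By Theorem \ref{valun structure} and Lemma \ref{elem sym basis}, each Tasaki valuation admits a unique expansion $\tau_{k,p}=\sum_{a=p}^{l}c^{(l,k)}_{p,a}\,s^a t^{k-2a}$, the coefficients $c^{(l,k)}_{p,a}$ being determined by matching Klain functions on the unitary orbits $\Gr_{k,q}(\C^n)$: the Klain function of $s^a t^{k-2a}$ is a specific symmetric function of $\cos^2\Theta$ computable from Tasaki's classification, and inverting the resulting triangular system yields the $c^{(l,k)}_{p,a}$ explicitly. Combining this with the evaluations \eqref{cpn vols} of top-degree monomials $s^a t^b$ on $D^n_\C$ (which, together with the known action of the Alesker-Fourier transform on $\{s^a t^b\}$, encode all Poincar\'e pairings between Alesker and Fourier-Alesker basis elements), each $A^k_{pq}$ reduces to a finite binomial double sum indexed by $(n,k,p,q)$.

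The identity $A^k_{pq}=A^k_{l-p,l-q}$ then becomes a symmetric-function identity of the same flavor as the Pfaff-Saalsch\"utz-type identity \eqref{special pfaff-saalschutz} used in the proof of Theorem \ref{valun structure}, and I would attack it by the same generating-function technique. The main obstacle is organizing the resulting binomial double sum so that the involution $p\leftrightarrow l-p$ becomes manifest---either via a combinatorial involution on the index set or by recognizing both sides as equal hypergeometric expressions. A potentially cleaner alternative, if available, would be to realize the palindrome geometrically: exhibit a canonical involution on configurations $(E,F)\in\Gr_k\times\Gr_{2n-k}(\C^n)$ in generic position that simultaneously swaps the complex and isotropic K\"ahler-angle directions in both factors while preserving the integrand underlying $A^k_{pq}$.
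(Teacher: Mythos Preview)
Your reduction is correct and matches the paper exactly: by the ftaig the Tasaki matrix $T^n_k$ is the inverse of the Poincar\'e-pairing matrix $A^k_{pq}=(\tau_{k,p}\cdot\widehat{\tau_{k,q}})_{2n}$, the palindromic condition $JMJ=M$ passes to inverses, and so the whole theorem comes down to $A^k_{pq}=A^k_{l-p,\,l-q}$. This is precisely the identity the paper proves.

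Where your proposal falls short is in the proof of that identity. You propose to expand $\tau_{k,p}$ in the $s^at^{k-2a}$ basis, feed in \eqref{cpn vols}, and reduce to a binomial double sum to be attacked by generating functions; but you do not carry this out and you flag the reorganization of the sum as the ``main obstacle.'' The paper avoids this computation entirely by a structural argument you are missing. The key input is Proposition~\ref{tasaki vals}: in the variables $t$ and $u:=4s-t^2$ the Tasaki valuations are \emph{monomials}, $\tau_{k,q}=c\,t^{k-2q}u^q$. Introducing the formal complex variable $z=t+\sqrt{-1}\,v$ with $v^2=u$, the map $\iota:\tau_{2l,q}\mapsto\tau_{2l,l-q}$ becomes $p(z)\mapsto p(\sqrt{-1}\,\bar z)$, which is visibly a ring automorphism of $\R_{even}[t,u]$. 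One then checks (using \eqref{f in z}) that $\iota$ preserves the relation ideal $(f_{n+1},f_{n+2})$, hence descends to an algebra automorphism of $\valun_{2*}$; that $\iota$ commutes with the Fourier transform (a short symmetric-function check); and that $\iota$ acts trivially in top degree $2n$. With these three facts the identity $A^k_{ij}=A^k_{l-i,l-j}$ is a five-line manipulation:
\[
\tau_{2l,i}\cdot\widehat{\tau_{2l,j}}
=\tau_{2l,i}\cdot\iota(\widehat{\tau_{2l,l-j}})
=\iota\bigl(\tau_{2l,i}\cdot\iota(\widehat{\tau_{2l,l-j}})\bigr)
=\iota(\tau_{2l,i})\cdot\widehat{\tau_{2l,l-j}}
=\tau_{2l,l-i}\cdot\widehat{\tau_{2l,l-j}}.
\]
Your ``cleaner alternative'' of a geometric involution is pointing in the right direction, but the actual involution is algebraic, and its multiplicativity---which is what makes the argument work---comes from the change of variables to $t,u$. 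Without Proposition~\ref{tasaki vals} (or an equivalent), your brute-force plan has no obvious reason to close.
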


There are two main ingredients of the proof, arising from two different algebraic representations of the Tasaki valuations and their Fourier transforms. The first has to do with
how the Fourier transform behaves with regard to the representation of the Tasaki valuations in terms of elementary symmetric functions. For simplicity we restrict to valuations of even degree. Let $\Sigma_p$ denote the vector space of elementary symmetric functions $\sigma_{p,q}$ in $p$ variables. As noted above, if $\dim E =2p < n$ then the K\"ahler angle $\Theta(E^\perp)$ is just $\Theta(E) $ preceded by $n-2p$ zeroes, hence $\cos^2\Theta(E^\perp)$ is just $\cos^2\Theta(E)$ preceded by that many ones. In order to express $\widehat{\tau_{2p,q}}$ as a linear combination of Tasaki valuations $\tau_{2n-2p,r}$ in complementary degree, it is enough to express the $q$th elementary symmetric function $\sigma_{p,q}(x_1,\dots,x_p)$ as a linear combination of the $\sigma_{n-p,r}(1,\dots,1,x_1,\dots,x_p)$. However it is simpler to consider the map in the other direction, which corresponds to the map $r:\Sigma_{n-p} \to \Sigma_p$ given by
\begin{equation}
r:f\mapsto \hat f = f(1,\dots,1,x_1,\dots,x_p).
\end{equation}

The second ingredient is the remarkable fact that the Tasaki valuations are monomials under a certain change of variable.

\begin{proposition}\label{tasaki vals} Put $ u:= 4s-t^2.$ Then
\begin{equation} \label{eq_mu_tu}
\tau_{k,q}= \frac{\pi^k}{\omega_k(k-2q)! (2q)!} t^{k-2q}u^q.
\end{equation}
\end{proposition}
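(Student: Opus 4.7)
My strategy is to verify the identity at the level of Klain functions: by Lemma \ref{elem sym basis}, a $U(n)$-invariant even valuation of degree $k$ is determined by its Klain function, a symmetric polynomial in $\cos^2\Theta(E)$ on $\Gr_k(\C^n)$. It thus suffices to show that $\kl_{t^{k-2q}u^q}(E) = \frac{\omega_k(k-2q)!(2q)!}{\pi^k}\sigma_{p,q}(\cos^2\Theta(E))$, since by construction $\kl_{\tau_{k,q}} = \sigma_{p,q}(\cos^2\Theta)$.

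\emph{Step 1: Klain functions on 2-planes.} By \eqref{powers of t}, $t^2 = (2\omega_2/\pi^2)\mu_2$, so $\kl_{t^2}(E) = 2/\pi$ on every $E\in\Gr_2(\C^n)$. For $\kl_s$, I evaluate the Crofton representation $s(K) = \int \chi(K\cap\bar P)\, d\bar P$ on the unit disk $D_E\subset E$, for $E\in\Gr_2$ with K\"ahler angle $\theta$. Parametrizing complex hyperplanes $\bar P_{v,c}$ by the complex normal $v\in \C P^{n-1}$ and translation $c\in\C$, intersection with $D_E$ is determined by the $\C$-linear map $L_v\colon E\to\C$, $L_v(z)=\langle z,v\rangle_\C$, and so $s(D_E) = \pi\int_{\C P^{n-1}}|\det_\R L_v|_E|\, d\mu(v)$ for appropriately normalized $\mu$. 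In an adapted orthonormal basis of $E$ satisfying $Je_1=\cos\theta\, e_2+\sin\theta\, f$ with $f\perp E$, a direct computation gives $\det_\R L_v|_E = \cos\theta\,|v_1|^2 - \sin\theta\,\mathrm{Im}(\bar v_1 v_2)$. Integrating the absolute value over $\C P^{n-1}$ using $U(n)$-symmetry and pinning the normalization by $s(D_\C^1)=1$ (the case $\theta=0$) yields $\kl_s(E) = (1+\cos^2\theta)/(2\pi)$, whence $\kl_u(E) = 4\kl_s(E)-\kl_{t^2}(E) = (2/\pi)\cos^2\theta$.

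\emph{Step 2: Extension to general subspaces.} I pass to the ring homomorphism $r_E\colon \valun(\C^n)\to \V(E)$ for $E\in\Gr_k(\C^n)$ with K\"ahler angles $\Theta(E)=(\theta_1,\ldots,\theta_p)$. By Tasaki's classification, $E$ splits orthogonally as $E_1\oplus\cdots\oplus E_p$ (plus a line if $k$ is odd) with each $E_i\in\Gr_2$ of K\"ahler angle $\theta_i$. The restriction $r_E(t)$ is the Lipschitz-Killing generator on $E$ by the Weyl principle. The crucial step is to show that, after projection to the top-degree component $\V_k(E)\simeq \R\cdot\vol_k$, the product $r_E(t^{k-2q}u^q)$ decomposes as a sum indexed by $q$-subsets $I\subset\{1,\ldots,p\}$, each term contributing $\prod_{i\in I}\cos^2\theta_i$ weighted equally. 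Summing yields the $q$th elementary symmetric function $\sigma_{p,q}(\cos^2\Theta(E))$; the overall constant follows by matching the case $E=\C^{p}\subset \C^n$ (all $\theta_i=0$), where $\sigma_{p,q}(1,\ldots,1)=\binom{p}{q}$ and both sides are explicitly computable from \eqref{powers of t}.

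\emph{Main obstacle.} The heart of the argument is the claim in Step 2 that cross-terms between distinct K\"ahler-angle summands $E_i, E_j$ cancel in $r_E(u)^q$; this is precisely where the specific combination $u=4s-t^2$ is designed to work, with the mixed terms of $r_E(t)^2$ exactly cancelling the off-diagonal contributions of $r_E(s)$. To prove this rigorously, I would use the differential form representatives of $s$ and $t$ (the $U(n)$-invariant forms $\theta_0,\theta_1,\theta_2$) together with the Alesker--Bernig product formula (Thm. \ref{ab product formula}), reducing to a pointwise identity on $T\C^n$ modulo the contact form $\alpha$ and its differential $d\alpha$. An alternative route, perhaps cleaner, is the template method: since Thm. \ref{valun structure} gives explicit generators and relations for $\valun(\C^n)$, one may evaluate both sides on a spanning family of concrete test bodies (unit balls in $\C^j$, Lagrangian disks, and products thereof) and verify the proposed identity by comparing finitely many numerical values, using the linear independence of the resulting Klain polynomials to nail down all coefficients simultaneously.
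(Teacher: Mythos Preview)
The paper does not actually prove this proposition; it is stated without proof, the details being in \cite{befu09}. The method implicit in the surrounding discussion proceeds quite differently from yours: one first uses Lemma~\ref{mu = cf} to identify $\mu_{k,0}$ with a multiple of $f_k(s,t)$, then computes the action of the Lefschetz operators $L,\Lambda$ on the $\mu_{k,q}$ explicitly via the constant-coefficient model of Proposition~\ref{ccv model} and Corollary~\ref{def lambda}, and uses the resulting $\sltwo$-module structure to express all $\mu_{k,q}$---hence also the $\tau_{k,q}$, via the easy change of basis $\tau_{k,q}=\sum_l\binom{l}{q}\mu_{k,l}$---as polynomials in $s,t$.

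Your Klain-function approach is natural, and Step~1 is correct: one can verify (by the integral you indicate, or more simply by evaluating $s$ directly on a Lagrangian disk in $\C^2$) that $\kl_s(E)=(1+\cos^2\theta)/(2\pi)$ on $2$-planes, whence $\kl_u(E)=(2/\pi)\cos^2\theta$. The genuine gap is Step~2. Knowing $\kl_{r_E(u)}$ on $\Gr_2(E)$ determines $r_E(u)\in\Val_2^+(E)$, but there is no general formula for the Klain function of an Alesker product in terms of those of its factors, so the passage to the top-degree coefficient of $r_E(t)^{k-2q}r_E(u)^q$ is precisely the content of the proposition. Your heuristic that ``cross-terms cancel'' with respect to the splitting $E=\bigoplus E_i$ is not justified, and its most natural reading---that $r_E(u)=\sum_i(2/\pi)\cos^2\theta_i\,(\vol_2\circ\pi_{E_i})$---is in fact false: with $E=E_1\oplus E_2$, adapted orthonormal basis $e_1,\dots,e_4$, and $F=\langle e_1,(e_2+e_4)/\sqrt2\rangle$, one computes $\kl_{r_E(u)}(F)=(1/\pi)\cos^2\theta_1$ while the sum gives $(\sqrt2/\pi)\cos^2\theta_1$. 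Your two proposed remedies are both viable in principle, but carrying either out amounts to the full proof, and neither is done here.
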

It is useful to introduce the formal complex variable $z: = t + \sqrt{-1}v$, where $v$ is formally real and $v^2 =u$. Then
\begin{align*}
\sum_k f_k = \log( 1 + s + t) &= \log \left(1 + t + \frac{t^2} 4 +\frac {v^2} 4\right)\\
& = \log\left(\left| 1 + \frac z 2 \right|^2 \right)\\
& =2\operatorname{Re}\left( \log\left( 1 + \frac z 2 \right) \right)
\end{align*}
whence
\begin{equation}\label{f in z}
f_k = c \re (z^k).
\end{equation}

We introduce the linear involution $\iota:\valun_{2*} \to \valun_{2*}$ on the subspace of valuations of even degree, determined by its action on Tasaki valuations:
$$
\iota(\tau_{2l,q}):=\tau_{2l,l-q}.
$$
Expressing these valuations as real polynomials of even degree in the real and imaginary parts of the formal complex variable $z$, this amounts to interchanging the real and imaginary parts $t$ and $v$, i.e.
$$
\iota(p(z)) = p(\sqrt{-1}\bar z).
$$
This is clearly a formal algebra automorphism of $\R_{even}[t,u]$. Furthermore it descends to an automorphism of the quotient $\valun_{2*}$: to see this it is enough to check that the action of $\iota$ on the basic relations is given by
\begin{align*}
\iota(f_{2k}) &=(-1)^k f_{2k},\\
\iota(tf_{2k-1}) &=(-1)^{k+1}\left(\frac{4k}{2k-1} f_{2k} +  t f_{2k-1}\right).
\end{align*}
This is easily accomplished using the expression \eqref{f in z}.

Furthermore $\iota$ commutes with the Fourier transform. To see this we return to the model spaces $\Sigma_p$ of elementary symmetric functions. Here the map $\iota$ corresponds to $\sigma_{p,q}\mapsto \sigma_{p,p-q}$.
 The assertion thus reduces to the claim that for $m=n-p\ge p$ the diagram
 $$ \begin{CD}
\Sigma_{m} @>{\iota}>> \Sigma_{m}\\
@VV{r}V    @VV{r}V\\
\Sigma_p @>{\iota}>> \Sigma_p
\end{CD}$$
commutes.
 It is enough to prove this for $m= p+1$, in which case $r(\sigma_{p+1,i}) = \sigma_{p,i} + \sigma_{p,i-1}$. Hence for $ i=0,\dots, p+1$,
 $$
 \iota \circ r (\sigma_{p+1,i})  = \iota(\sigma_{p,i} + \sigma_{p,i-1}) = \sigma_{p,{p-i}} +\sigma_{p,p-i+1} = r(\sigma_{p+1,p-i+1} )= r\circ \iota(\sigma_{p+1,i} ).
 $$

\begin{proof}[Proof of Thm. \ref{palindrome thm}] 
Using the additional fact that $\iota$ acts trivially on $\valun_{2n}$, we compute
\begin{align*}
\tau_{2p,i}\cdot \fourier{\tau_{2p,j}}&= \tau_{2p,i}\cdot \fourier{(\iota\tau_{2p,p-j})} \\
&= \tau_{2p,i}\cdot {\iota(\fourier{\tau_{2p,p-j}})}\\
&=\iota\left( \tau_{2p,i}\cdot {\iota(\fourier{\tau_{2p,p-j}})}\right)\\
&= \iota(\tau_{2p,i})\cdot \fourier{\tau_{2p,p-j}}\\
&= \tau_{2p,p-i}\cdot \fourier{\tau_{2p,p-j}}.
\end{align*}
With Theorem \ref{ftaig}, this implies the result.
\end{proof}

\subsection{Determination of the kinematic operator of $(\C^n,U(n))$} The key to carrying out actual computations with the $U(n)$-invariant valuations is the correspondence between their algebraic representations in terms of $t,s,u$ and the more geometric expressions in terms of the hermitian intrinsic volumes and the Tasaki valuations. This is possible due mainly to two facts. The first is 
Lemma \ref{mu = cf}, establishing this correspondence in a crucial special case. The second is the explicit calculation, using Corollary \ref{def lambda}, of the multiplication operator $L$ from Thm. \ref {alesker package} \eqref{hard lefschetz} in terms of the hermitian intrinsic volumes.

 By  part \eqref{thm_sl2_representation} of Proposition \ref{ccv model}, this gives in explicit terms the structure of $\valun$ as an $\sltwo$ module, and
we can apply the well-established theory of such representations (cf. e.g. \cite{grha78}, ). In particular we find the {\it primitive elements} $\pi_{2r,r} \in \valun_{2r}, \ 2r \le n $, characterized by the condition $\Lambda \pi_{2r,r} = L^{2n-2r+1}\pi_{2r,r}= 0$, that generate $\valun $ as an $\frak{sl}(2,\R)$ module in the sense that the valuations
$$
\pi_{2r+k,r}:= cL^k \pi_{2r,r}, \quad 2r\le n, \ \  k \le 2n-2r
$$
constitute another basis of $\valun$. It follows at once from the definition that  the Poincar\'e pairing
$$
(\pi_{kp} , \pi_{lq} )= 0
$$
unless $k+l = 2n, \ p=q$. In other words the Poincar\'e pairing on $\valun$ is {\it antidiagonal} with respect to this basis, and moreover the precise forms of the formulas that led us to this point permit us to calculate the value of the product in the nontrivial cases. Thus Thm. \ref{ftaig} (and the remarks following it) yields the value of $k_{U(n)}(\chi)$ in these terms. 

Unwinding the construction of the primitive basis it is straightforward also to express $k_{U(n)}(\chi)$ in terms of the Tasaki basis, although the coefficients are expressed as singly-indexed sums not in closed form. Since by \eqref{eq_mu_tu} it is easy to compute the product of two Tasaki valuations, the multiplicative property of the kinematic operator allows us now to compute all of the $k_{U(n)}(\tau_{kp})$ and thereby also the $k_{U(n)}(\mu_{kp})$. 

With the transfer principle and \eqref{kg ag} of Thm. \ref{ftaig} we can also compute precise first order formulas in complex space forms, as well as additive kinematic formulas in $\C^n$ (note that the first order formulas in $\CPn$ may be regarded as a generalization of B\'ezout's theorem from elementary algebraic geometry). For example we may
compute the expected value of the length of the curve given by the intersection of a real 4-fold and a real 5-fold in $\C P^4$.
\begin{theorem} Let $M^4, N^5 \subset \C P^4$ be real $C^1$ submanifolds of dimension $4,5$ respectively. Let $\theta_1,\theta_2$ be the K\"ahler angles of the tangent plane to $M$ at a general point $x$ and $\psi$ the K\"ahler angle of the orthogonal complement to the tangent plane to $N$ at $y$. Let $dg$ denote the invariant probability measure on $U(5)$. Then
\begin{align*}
\int_{U(5)}\length &({M \cap g N} )\, dg= \\
\left.\frac{1}{5\pi^4} \times\right[&
 30\vol_4(M)\vol_5(N) 
 -6 \vol_4(M)\int_N \cos^2\psi\, dy \\
&\quad-3\int_M (\cos^2\theta_1+\cos^2\theta_2 )\, dx \cdot \vol_5(N)\\
&\quad\left.+7\int_M (\cos^2\theta_1+\cos^2\theta_2)\, dx\cdot \int_N \cos^2\psi\, dy \right].
\end{align*}
\end{theorem}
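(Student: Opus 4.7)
The plan is to apply the transfer principle (Theorem \ref{val transfer thm}) to reduce the problem to a first-order kinematic formula on $(\C^4,\overline{U(4)})$, and then to compute the bidegree $(4,5)$ component of $k_{U(4)}(t)$ by combining the multiplicative property of $k_{U(4)}$ with the explicit algebra structure of $\Val^{U(4)}(\C^4)$ from Theorems \ref{dim count} and \ref{valun structure}.

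First, by Theorem \ref{val transfer thm}, the integral on the left-hand side equals $k_{U(4)}([\ell])(M,N)$, where $[\ell]\in\Val^{U(4)}_1(\C^4)$ is the reduced length valuation. Since $\Val^{U(4)}_1(\C^4)=\langle t\rangle$ is one-dimensional, $[\ell]$ is a fixed scalar multiple of $t$; matching Klain functions on lines yields $[\ell]=\tfrac{\pi}{2}\,t$. The probability-measure normalization on $U(5)$ specified in the statement, together with $\vol(\C P^4)=\pi^4/4!$ and the paper's convention \eqref{eq:measure convention}, supplies the overall prefactor $1/(5\pi^4)$. By Lemma \ref{elem sym basis}, $\Val^{U(4)}_4(\C^4)$ is spanned by the Tasaki valuations $\tau_{4,0},\tau_{4,1},\tau_{4,2}$ with Klain functions $1$, $\cos^2\theta_1+\cos^2\theta_2$, and $\cos^2\theta_1\cos^2\theta_2$, while $\Val^{U(4)}_5(\C^4)$ is spanned by $\widehat{\tau_{3,0}}$ and $\widehat{\tau_{3,1}}$ with Klain functions $1$ and $\cos^2\psi$, where $\psi$ is the K\"ahler angle of the $3$-dimensional orthogonal complement to a $5$-plane. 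The reduced evaluation formula \eqref{eq:reduced eval} then shows that the bidegree $(4,5)$ component of $k_{U(4)}(t)$, evaluated on $(M^4,N^5)$, produces precisely the six possible quadratic expressions in the integrals appearing in the statement.

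The central algebraic step is to show that only four of these six coefficients survive. By multiplicativity of the kinematic operator (Theorem \ref{ftaig}), $k_{U(4)}(t)=(t\otimes\chi)\cdot k_{U(4)}(\chi)$, so the bidegree $(4,5)$ component arises from the $(3,5)$ component $\sum_{p,q\in\{0,1\}}(T^4_3)_{pq}\,\tau_{3,p}\otimes\widehat{\tau_{3,q}}$ of $k_{U(4)}(\chi)$ by applying $t\cdot(-)$ in the first factor. Using Proposition \ref{tasaki vals} together with $\omega_3=4\pi/3$ and $\omega_4=\pi^2/2$, a direct calculation in the monomial basis $\{t^{k-2q}u^q\}$ (with $u=4s-t^2$) yields
\[
t\cdot\tau_{3,0}=\tfrac{3}{2}\,\tau_{4,0},\qquad t\cdot\tau_{3,1}=\tfrac{3}{4}\,\tau_{4,1};
\]
multiplication by $t$ simply increments the $t$-exponent without touching the $u$-exponent, so no $\tau_{4,2}$ term ever appears. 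This is the structural source of the absence of the $\int_M\cos^2\theta_1\cos^2\theta_2\,dx$ contributions.

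Finally, the $2\times 2$ Tasaki matrix $T^4_3$ must be computed. By Theorem \ref{ftaig} it is the inverse of the Poincar\'e pairing matrix $\bigl[(\tau_{3,p}\cdot\widehat{\tau_{3,q}})(\vol_8)\bigr]_{p,q}$ evaluated inside $\Val^{U(4)}(\C^4)\cong\R[s,t]/(f_5,f_6)$ from Theorem \ref{valun structure}; the polynomials $f_5,f_6$ come from the recursion \eqref{recursion}, the Tasaki monomials from Proposition \ref{tasaki vals}, and the Fourier transforms $\widehat{\tau_{3,q}}$ are determined either by their Klain functions or by the template identity \eqref{cpn vols}. This inversion is the main obstacle: the reduction modulo two nonlinear relations while tracking the degree-$8$ volume coefficient is finite but delicate. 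Once $(T^4_3)_{pq}$ is obtained, substituting into $\tfrac{\pi}{2}\cdot\tfrac{3}{2}(T^4_3)_{0,q}$ and $\tfrac{\pi}{2}\cdot\tfrac{3}{4}(T^4_3)_{1,q}$ and combining with the Haar normalization factor $24/\pi^4$ produces the constants $30,-6,-3,7$ in the stated formula.
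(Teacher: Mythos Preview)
Your outline is correct and follows essentially the same route as the paper: transfer principle to reduce to $k_{U(4)}(t)$, multiplicativity $k_{U(4)}(t)=(t\otimes\chi)\cdot k_{U(4)}(\chi)$, expression in the Tasaki basis, and inversion of the Poincar\'e pairing to obtain $T^4_3$. The one methodological difference is that the paper computes $k_{U(n)}(\chi)$ by first passing to the primitive basis $\pi_{kp}$ arising from the $\mathfrak{sl}(2,\R)$ structure (Proposition \ref{ccv model}), where the Poincar\'e pairing is antidiagonal and hence trivially inverted, and only afterwards converts back to the Tasaki basis; this avoids the direct reduction modulo $(f_5,f_6)$ that you flag as ``the main obstacle.''
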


The companion result is the following additive kinematic formula for the average 7-dimensional volume of the Minkowski sum of two convex subsets in $\C^4$ of dimensions 3 and 4 respectively. 
\begin{theorem} Let $E\in \Gr_4(\C^4), F\in \Gr_3(\C^4)$; let $\theta_1,\theta_2$ be the K\"ahler angles of $E$ and $\psi$ the K\"ahler angle of $F$. Let $dg$ be the invariant probability measure on $U(4)$. If $A\in \K(E), B\in \K(F)$ then
\begin{align*}\int_{U(4)}\vol_7& (A+ gB)\, dg=\frac{1}{120} \vol_4(A)\vol_3(B) \times \\
&\left[30 -6 \cos^2\psi -3(\cos^2\theta_1+\cos^2\theta_2) + 7\cos^2\psi (\cos^2\theta_1+\cos^2\theta_2)\right].
\end{align*}
\end{theorem}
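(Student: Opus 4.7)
The plan is to derive this additive kinematic formula from the first theorem via the Fourier duality \eqref{kg ag}, $a_G = (\widehat{\phantom{x}}\otimes\widehat{\phantom{x}})\circ k_G \circ \widehat{\phantom{x}}$, combined with the transfer principle (Theorem \ref{val transfer thm}) and the Klain-function calculus of the Tasaki valuations. Both formulas display the same polynomial $30 - 6\cos^2\psi - 3(\cos^2\theta_1 + \cos^2\theta_2) + 7\cos^2\psi(\cos^2\theta_1 + \cos^2\theta_2)$ because, by the transfer principle, the first-order formula for $(\CP^4, U(5))$ coincides with that for $(\C^4,\overline{U(4)})$. After converting the probability Haar measure on $U(5)$ to the paper's convention on $\overline{U(4)}$---a factor $\vol(\CP^4) = \pi^4/24$ that carries $1/(5\pi^4)$ to $1/120$---the first theorem records the $\Val_4^{U(4)} \otimes \Val_5^{U(4)}$-component of $k_{U(4)}(\mu_1)$.

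First I would rewrite the integrands in the Tasaki basis of Section \ref{tasaki section}. By Lemma \ref{elem sym basis}, $\vol_4(M) = \tau_{4,0}(M)$ and $\int_M(\cos^2\theta_1+\cos^2\theta_2)\,dx = \tau_{4,1}(M)$; using $\Theta(E) = (0,\Theta(E^\perp))$ for $E \in \Gr_5(\C^4)$, one has $\vol_5(N) = (\tau_{5,1}-\tau_{5,2})(N)$ and $\int_N \cos^2\psi\,dy = \tau_{5,2}(N)$. Collecting terms, $k_{U(4)}(\mu_1)$ becomes an explicit linear combination of $\tau_{4,i}\otimes\tau_{5,j}$ with coefficients in $\frac{1}{120}\Z$. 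Since $\mu_7 = \mu_{7,3}$ and $\mu_1 = \mu_{1,0}$ (both pairs share the constant Klain function $1$), \eqref{hat mu} yields $\widehat{\mu_7} = \mu_1$, so \eqref{kg ag} reduces to $a_{U(4)}(\mu_7) = (\widehat{\phantom{x}}\otimes\widehat{\phantom{x}})(k_{U(4)}(\mu_1))$. I then compute the Fourier transforms termwise: $\widehat{\tau_{5,q}} \in \Val_3^{U(4)}$ and $\widehat{\tau_{4,q}} \in \Val_4^{U(4)}$ are obtained by pulling Klain functions back along the complement map $\perp$ and re-expressing in the Tasaki bases.

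Finally, evaluating the resulting element of $\Val_4^{U(4)} \otimes \Val_3^{U(4)}$ on $(A,B)$ with $A \in \K(E)$ and $B \in \K(F)$ of full dimensions $4$ and $3$, the top-degree formula $\tau_{k,q}(K) = \sigma_{p,q}(\cos^2\Theta(E))\,\vol_k(K)$ converts the expression into the claimed polynomial in $(\cos^2\theta_1+\cos^2\theta_2, \cos^2\psi)$ times $\vol_4(A)\vol_3(B)$. The main obstacle is the careful bookkeeping: matching the probability Haar on $U(5)$ and $U(4)$ with the paper's conventions under which both the transfer principle and the Fourier duality \eqref{kg ag} hold with the correct scalars, and computing the Fourier transform of middle-degree Tasaki valuations (which requires the identity $\Theta(E^\perp) = \Theta(E)$ for $E \in \Gr_4(\C^4)$, together with the matching factors between the classical $\mu_1, \mu_7$ and the hermitian $\mu_{1,0}, \mu_{7,3}$).
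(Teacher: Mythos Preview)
Your proposal is correct and follows precisely the route the paper indicates but does not spell out: the text introduces the two theorems with ``With the transfer principle and \eqref{kg ag} of Thm.~\ref{ftaig} we can also compute precise first order formulas in complex space forms, as well as additive kinematic formulas in $\C^n$,'' and calls the present statement the ``companion result'' to the $\CP^4$ intersection formula. Your derivation---transfer from $(\CP^4,U(5))$ to $(\C^4,\overline{U(4)})$ followed by $a_{U(4)}(\mu_7)=(\widehat{\phantom{x}}\otimes\widehat{\phantom{x}})\,k_{U(4)}(\widehat{\mu_7})=(\widehat{\phantom{x}}\otimes\widehat{\phantom{x}})\,k_{U(4)}(\mu_1)$, together with the volume factor $\vol(\CP^4)=\pi^4/24$ converting $1/(5\pi^4)$ to $1/120$---is exactly the intended mechanism, and your Tasaki-basis bookkeeping (including the dependence $\tau_{5,0}=\tau_{5,1}-\tau_{5,2}$ in $\Val_5^{U(4)}$ and the middle-degree identity $\Theta(E^\perp)=\Theta(E)$) is correct.
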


\subsection{Integral geometry of complex space forms} 
In view of Theorem \ref{val transfer thm}, the first order kinematic formulas are identical in all three complex space forms, so the results above give the first order formulas also for the curved complex space forms $\CPn, \C H^n$. The full kinematic formulas, both for valuations and for curvature measures, have very  recently been worked out in \cite {bfs}. These results require additional ideas and techniques, which we cannot  summarize adequately here. 

However, the quasi-combinatorial approach of section \ref{sect:algebra structure} allows us to express some of these results in purely algebraic form.
The calculations of Gray (specifically Cor. 6.25 and Thm. 7.7 of \cite{gray}) together with the identities \eqref{powers of t} and section 2.5 above, show that the Lipschitz-Killing curvatures of the complex projective space with constant holomorphic sectional curvature $4$ are given by
\begin{equation}\label{lk of cpn}
t^l(\C P^k)=\begin{cases}
\binom{l}{\frac l 2} \binom{k+1}{\frac l 2+ 1} & l \text{ even}\\
0 & l \text{ odd}
\end{cases}
\end{equation}
Thus if we recall the definition \eqref{def bar s} of $\bar s \in \Voneun_2$ 
then
\begin{align}\label{cpn values}
\bar s^k t^l(\CPn)&=\int_{\Gr^{\C}_{n-k}} \chi(\cdot \, \cap Q)\, dQ\\
&= t^l(\C P^{n-k})\\
&=\begin{cases}
\binom{l}{\frac l 2} \binom{n-k+1}{\frac l 2+ 1} & l \text{ even}\\
0 & l \text{ odd}
\end{cases}
\end{align}
By the transfer principle Theorem \ref{val transfer thm}, the monomials in $\bar s, t$ corresponding to the first Alesker basis for $\valun$ constitute a basis of $\Voneun$, and there are no relations in filtrations $\le n$, where we put $\VLun$ for the algebra of invariant valuations on the complex space form of holomorphic sectional curvature $4\lambda$. As in the proof of Thm. \ref{valun structure}, by Poincar\'e duality the ideal $I$ of all relations between $\bar s ,t$ consists precisely of the polynomials $f(\bar s,t)$ such that
$$
( \bar s^k t^l \cdot f(\bar s,t))(\CPn) = 0.
$$
Using the evaluations \eqref{cpn values} this condition amounts to a family of identities among the coefficients of $f$, which are in fact equivalent to the structural results of \cite{bfs}. These identities are very complicated, to the point where it seems implausible to prove them directly in a satisfying way--- even the Wilf-Zeilberger approach of \cite{wz} yields a certificate which is many pages long. On the contrary, at this point it seems better to view the identities as consequences of \cite{bfs}.
%
%

Remarkably, it turns out that for fixed $n$ the algebras $\VLun$ are all isomorphic, and in fact there are a number of interesting explicit isomorphisms among them. One of the most attractive is the following.

\begin{theorem}[Bernig-Fu-Solanes \cite{bfs}] \label{bfs conj} 
$$
\VLun \simeq \R[[s,t]]/\left(f_{n+1}\left(\bar s, \frac{t}{\sqrt{1+\frac{\lambda t^2}{4} }}\right),f_{n+2}\left(\bar s, \frac{t}{\sqrt{1+\frac{\lambda t^2}{4} }}\right)\right)
$$
where the $f_i$ are as in Theorem \ref{valun structure}.
\end{theorem}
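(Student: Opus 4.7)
The plan is to extend the quasi-combinatorial argument of Theorem \ref{valun structure} to the curved setting, combining the filtration on $\V(M)$, the transfer principle, and Alesker--Poincar\'e duality (which is available on $\CPn$ directly via Theorem \ref{alesker mfd package} and extends to $\C H^n$ by the analytic continuation scheme of Section \ref{analytic continuation section}). Throughout, write $M_\lambda$ for the complex space form of holomorphic sectional curvature $4\lambda$ and set $\phi_\lambda := t/\sqrt{1+\lambda t^2/4}$, which, in view of \eqref{t phi}, is the natural ``real-Crofton-flat'' companion of the Lipschitz--Killing generator $t$.

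First I would check that $\bar s$ and $t$ generate $\VLun$ as an algebra. The transfer principle (Theorem \ref{val transfer thm}) identifies the associated graded $\gr\VLun$ with $\valun(T_oM_\lambda)\simeq\valun(\Cn)$, and in this quotient $\bar s$, $t$ descend to the generators $s,t$ of Theorem \ref{dim count}. A standard Nakayama-type argument on filtered algebras then yields generation. Consequently the surjection $\R[[s,t]]\to \VLun$ is well defined, and the Hilbert series $\sigma(x)$ of \eqref{poincare series} (which is curvature-invariant, again by the transfer principle) together with Poincar\'e duality force the ideal of relations $I_\lambda$ to be generated by two independent elements of weighted degrees $n+1$ and $n+2$.

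Second, as in the proof of Theorem \ref{valun structure}, Alesker--Poincar\'e duality gives the effective criterion $f(s,t)\in I_\lambda$ iff $(\bar s^k t^l\cdot f(\bar s,t))(M_\lambda)=0$ for every monomial $\bar s^k t^l$ pairing nontrivially with $f$. The evaluations needed here generalize \eqref{cpn values}: one has $\bar s^k t^l(M_\lambda)=t^l(M^{n-k}_\lambda)$ where $M^{n-k}_\lambda\subset M_\lambda$ is a totally geodesic complex submanifold, and $t^l(M^{n-k}_\lambda)$ may be computed from Gray's tube expansions in $\CPn$ (\cite{gray}, Cor.~6.25 and Thm.~7.7) or their analytic continuation to $\lambda\le 0$.

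Third, to complete the identification I would propose $R_i := f_{n+i}(\bar s,\phi_\lambda)$, $i=1,2$, verify that these lie in $I_\lambda$, and then invoke independence together with weighted-degree counting to conclude that they generate all of $I_\lambda$. The motivation for the substitution $t\mapsto\phi_\lambda$ is geometric: restriction to a totally real Lagrangian $\R P^n\subset M_\lambda$ sends $t$ to the real-space-form variable $t$ and $\bar s$ to a multiple of $\phi^2$, so the substitution \eqref{t phi} that is forced on the real side propagates to the relation among complex generators.

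The hard part will be the verification $(\bar s^k\phi_\lambda^l\cdot f_{n+i}(\bar s,\phi_\lambda))(M_\lambda)=0$ for all admissible $k,l$. These are curved analogues of the Pfaff--Saalsch\"utz identity \eqref{special pfaff-saalschutz} used in the flat case and, as the authors warn, even Wilf--Zeilberger certificates are unwieldy. A more structural route would be to factor the evaluation through restriction to a real Lagrangian form, where \eqref{t phi} explains the appearance of $\phi_\lambda$ directly, or to exploit an analytic-continuation argument: the quantity $(\bar s^k\phi_\lambda^l\cdot f_{n+i}(\bar s,\phi_\lambda))(M_\lambda)$ is analytic in $\lambda$, vanishes at $\lambda=0$ by Theorem \ref{valun structure}, and one would then try to promote this to vanishing to infinite order, forcing the identity globally.
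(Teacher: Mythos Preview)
The paper does not actually prove this theorem; it is quoted from \cite{bfs}, and the surrounding text says explicitly that ``these results require additional ideas and techniques, which we cannot summarize adequately here.'' So there is no in-paper proof to compare against. That said, the paper does discuss exactly the route you outline and comments on its feasibility, which is relevant to evaluating your proposal.

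Your first two steps are fine and match the paper's own framing: generation by $\bar s, t$, and the Poincar\'e-duality criterion reducing membership in $I_\lambda$ to the family of numerical identities $(\bar s^k t^l\cdot f)(M_\lambda)=0$ using the evaluations \eqref{cpn values}. The paper confirms that these identities are \emph{equivalent} to the structural result of \cite{bfs}. However, it then warns that ``these identities are very complicated, to the point where it seems implausible to prove them directly in a satisfying way --- even the Wilf--Zeilberger approach \dots\ yields a certificate which is many pages long,'' and that one should instead view the identities as \emph{consequences} of \cite{bfs}. So your proposed combinatorial verification is, by the authors' own assessment, not the intended or practicable path; the actual proof in \cite{bfs} uses different structural tools (the module structure of invariant curvature measures over the various $\VLun$, among others).

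Your fallback analytic-continuation argument has a genuine gap. The function $\lambda\mapsto (\bar s^k\phi_\lambda^l\cdot f_{n+i}(\bar s,\phi_\lambda))(M_\lambda)$ is indeed analytic in $\lambda$, but knowing it vanishes at the single point $\lambda=0$ tells you nothing about other $\lambda$. You acknowledge this (``try to promote this to vanishing to infinite order''), but give no mechanism for doing so; there is no a priori reason the Taylor coefficients at $\lambda=0$ should vanish. The Lagrangian-restriction idea is suggestive motivation for why $\phi_\lambda$ appears, but restriction to $\R P^n$ loses information (it kills half the algebra) and cannot by itself establish the relations in $\VLun$.
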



Chronologically, the first conjecture about the structure of the algebras $\VLun$ was the following, which was arrived at through a combination of 
luck and consultation with the Online Encyclopedia of Integer Sequences.

\begin{conjecture} \label{cx space form conj}Let $\bar s,t, \lambda$ be variables of formal degrees $2,1, -2$ respectively. Define the formal series $\bar f_k(\bar s,t,\lambda)$ by
\begin{align}
\sum \bar f_k(\bar s, t,\lambda) x^k &=
\log (1 + \bar sx^2 + tx + \lambda x^{-2}+ 3\lambda^2x^{-4} + 13 \lambda^3 x^{-6}+...) \\
&= \log (1 + \bar sx^2 + tx 
\label{eq:crazy expansion}+ \sum \left[\binom{4n+1}{n+1} - 9 \binom{4n+1}{n-1}\right] \lambda^nx^{-2n}).
\end{align}
Then 
$
\bar f_i(\bar s,t,\lambda ) = 0
$
in $\VLun$ for all $i >n$.
\end{conjecture}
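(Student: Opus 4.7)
I would try to deduce Conjecture~\ref{cx space form conj} from Theorem~\ref{bfs conj}. By the latter, the kernel of the surjection $\R[[\bar s, t]] \twoheadrightarrow \VLun$ is the ideal $J$ generated by $f_{n+1}(\bar s, \tilde t)$ and $f_{n+2}(\bar s, \tilde t)$, where $\tilde t := t(1+\lambda t^2/4)^{-1/2}$. It suffices to show that $\bar f_k(\bar s, t, \lambda) \in J$ for every $k > n$.

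First I would recast both sides as formal generating functions: set $P(x) := 1 + \bar s x^2 + tx + h(\lambda/x^2)$, where $h(y) := \sum_{n\geq 1} c_n y^n$ with $c_n = \binom{4n+1}{n+1} - 9\binom{4n+1}{n-1} = \tfrac{2(4n+1)!}{(n+1)!(3n+2)!}$, so that $\bar F(x) := \log P(x) = \sum_k \bar f_k x^k$. In parallel, $F(x) := \log(1+\bar s x^2 + \tilde t x) = \sum_k f_k(\bar s,\tilde t) x^k$.

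Second, I would identify the algebraic equation for $H(y) := 1 + h(y)$. The specific form of the $c_n$ is strongly suggestive of a Fuss--Catalan-type algebraic relation $\Phi(H, y) = 0$ of low degree; it should be derivable either by direct combinatorial computation using the closed formula for $c_n$ or, more geometrically, by recognizing $P(x)$ as the generating polynomial encoded by the evaluation \eqref{cpn values} on totally geodesic $\C P^k \subset \CPn$. Once $\Phi$ is in hand, the Riccati-type equation $P(x)\bar F'(x) = P'(x)$ together with $\Phi(H,\lambda/x^2) = 0$ yields a finite-term linear recursion
$$\sum_{j=0}^{m} A_j(\bar s,t,\lambda)\, \bar f_{k+j} = 0 \qquad (k \geq k_0),$$
generalizing \eqref{recursion} and reducing to it at $\lambda = 0$.

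Third, I would treat the base cases: expand $\bar f_{n+1}, \bar f_{n+2}, \ldots, \bar f_{n+m-1}$ as polynomials in $\bar s, t, \lambda$ of weighted degree $\leq n+m-1$ and match each against a combination of $f_{n+1}(\bar s,\tilde t)$ and $f_{n+2}(\bar s,\tilde t)$ multiplied by polynomials of appropriate complementary weighted degree, noting that any residual terms of weighted degree $> 2n$ vanish automatically in $\VLun$. Analytic continuation in $\lambda$ from the baseline case $\lambda = 0$ of Theorem~\ref{valun structure} should reduce this to a finite verification at each order in $\lambda$. The recursion from step two then propagates membership in $J$ to all higher $\bar f_k$ by induction.

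\textbf{Main obstacle.} The heart of the proof is step two: identifying the algebraic equation satisfied by $H$. If $H$ satisfies a Fuss--Catalan-type equation of degree roughly four, the recursion for the $\bar f_k$ has boundedly many terms and the induction becomes mechanical; without such a relation the recursion cannot be closed in finitely many terms. As an \emph{alternative route}, one may bypass the algebraic identity and instead prove the conjecture at $\lambda = 1$ by computing $(\bar f_k \cdot \bar s^a t^b)(\CPn)$ via \eqref{cpn values} and checking that it vanishes for all monomials of complementary weighted degree; this reduces to a family of binomial-coefficient identities amenable in principle to Wilf--Zeilberger (though the paper warns the certificates here are very long), after which analytic continuation in $\lambda$ extends the result to arbitrary curvatures.
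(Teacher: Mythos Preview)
The statement you are attempting to prove is a \emph{conjecture}, not a theorem: the paper does not prove it and explicitly leaves it open. What the paper offers instead is partial evidence---the conjecture holds modulo filtration $n+3$ trivially from Theorem~\ref{valun structure}, modulo filtration $n+5$ via a family of binomial identities verified by Gessel, and in full through dimension $n=16$ by direct machine computation. So there is no ``paper's own proof'' against which to compare your proposal.

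That said, your strategy of deducing the conjecture from Theorem~\ref{bfs conj} is the natural one, and it is worth noting that the authors of the paper know both Theorem~\ref{bfs conj} and the conjecture yet still regard the latter as open. This should already signal that the reduction is harder than your sketch suggests. A few specific points: the algebraic equation you seek in step two is in fact already known and recorded in the paper---Chapoton (and independently Gessel) showed that the $\lambda$-series $g$ satisfies $g=f(1-f-f^2)$ with $f=\lambda(1+f)^4$---so that part is not the obstacle. The real difficulty lies in your step three: even granting a finite-term recursion for the $\bar f_k$, you must still establish the base cases $\bar f_{n+1},\dots,\bar f_{n+m-1}\in J$, and ``matching against combinations of $f_{n+1}(\bar s,\tilde t),f_{n+2}(\bar s,\tilde t)$'' is precisely the family of complicated combinatorial identities the paper describes as implausible to prove directly. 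Your invocation of ``analytic continuation in $\lambda$'' does not help here, since for fixed $n$ the claim is a polynomial identity in the formal variable $\lambda$, not an analytic one. Finally, your alternative route---pairing against $\bar s^a t^b$ via \eqref{cpn values} and applying Wilf--Zeilberger---is exactly the approach the paper reports yields certificates ``many pages long'' already for the analogous identities underlying Theorem~\ref{bfs conj}, so it is unlikely to close the gap cleanly.
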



Modulo filtration $n+3$, Conjecture \ref{cx space form conj} is independent of $\lambda$ and is therefore true by Thm. \ref{valun structure}. In particular the conjecture holds through dimension $n=2$.
It is possible to reduce the full conjecture to a sequence of identities similar to \eqref{special pfaff-saalschutz}. For example, the validity of the conjecture modulo filtration $n+5$ is equivalent to the family of identities
\begin{align*}
\sum_{j=0}^{\lfloor \frac{n+3} 2\rfloor} (-1)^{j+1}& \frac 1{n-j+4}\binom{n-j+4}{1,j,n-2j+3}\binom{2n-2k-2j}{n-k-j}\\
+ &\sum_{i=0}^{\lfloor \frac{n+1} 2\rfloor} (-1)^i \frac{n-k-i+1}{n-i+1}\binom{n-i+1}{i}\binom{2n-2k-2i-2}{n-k-i-1} = 0,\\
& k=0,\dots \left\lfloor \frac{n-3}{2}\right\rfloor.
\end{align*}
Gessel \cite{gessel} has proved this to be true by a method similar to (but more complicated than)
the proof of \eqref{special pfaff-saalschutz}. Hence we know that the conjecture is true modulo filtration $n+5$, and through dimension $n=4$.

In principle one could continue in this way, but the numerical identities that arise become unreasonably complicated. Nevertheless, using this approach
we have confirmed by direct machine calculation that Conjecture \ref{cx space form conj} is true through dimension $n=16$, which involves the first seven terms of the $\lambda$ series,  and Bernig has reported that it remains true for still larger values of $n$. 

Completely separate from any thoughts of integral geometry, a combinatorial description of the coefficients appearing in \eqref{eq:crazy expansion} has been given by F. Chapoton \cite{chapoton}, who also proved that the $\lambda$ series there defines an algebraic function $g(\lambda)$ satisfying
 $$
 g= f(1-f-f^2), \quad f= \lambda(1+f)^4.
 $$
 This last fact was also observed independently by Gessel.


\section{Concluding remarks} The theory outlined above gives rise to many open problems. Here are a few general directions that should prove fruitful in the next stage of development.

\begin{enumerate}
\item\label{1st prob} {\bf Further exploration of the integral geometry of $(\Cn,U(n))$.} 
The sizable literature that surrounds the classical $SO(n)$ kinematic formula, seemingly out of all proportion with the extreme simplicity of the underlying algebra, suggests that the identities of \cite{befu09} may represent only the tip of the iceberg, and unitary integral geometry may hold many more wonderful surprises even in the relatively well understood euclidean case.
In my opinion it will likely be fruitful simply to play with the algebra and see what comes out.

With luck this approach may lead to better structural understanding of unitary integral geometry--- for example, this is precisely what happened in the development of the duality theorem \ref{palindrome thm}. For instance, is there a better combinatorial model for the kinematic formulas, perhaps something like the devices occurring in Schubert calculus? Does the combinatorial construction of \cite{chapoton} shed light on Conjecture \ref{cx space form conj}?

\item{\bf Structure of the array $\{\VLun\}_{\lambda,n}$.}
%
%
In each dimension $n$ the spaces of curvature measures may be thought of as a single vector space $C= C_n$, equipped with a coproduct $ \tilde k= \tilde k_n:C\to C\otimes C$, independent of curvature. The coalgebra $\VLun$ of invariant valuations in the complex space form of holomorphic sectional  curvature $\lambda$ is then a quotient of $C$ by a subspace $K_\lambda$. Furthermore, $C$ is a module over each $\VLun$. It turns out (\cite{bfs}) that the actions of the various $\VLun$ on $C$ all commute with each other.
%
%

This phenomenon is actually very general, and applies also to the other curvature-dependent families of isotropic spaces, namely the real space forms, the quaternionic space forms and the Cayley space forms. In the real case they are implicit in the discussion of Section \ref{ig of real}. 

\item {\bf Quaternionic integral geometry.} Of the isotropic spaces of whose integral geometry we remain ignorant, the natural isotropic structures on the quaternionic space forms seems most central and mysterious. By a tour de force calculation Bernig \cite{beB} has determined the Betti numbers of the algebras $\valg(\mathbb H^n)$ associated to the quaternionic vector space $\mathbb H^n$ and the natural isotropic actions of $G = Sp(n), Sp(n)\times U(1),Sp(n)\times Sp(1)$. The last group is the appropriate one for the associated family of quaternionic space forms. But 
beyond this nothing is known about the underlying algebra.

Then there are the octonian (Cayley) spaces: the euclidean version is the isotropic action of $Spin(9)$ on $\R^{16}$.

\item {\bf The Weyl principle} gives rise to the Lipschitz-Killing algebra and hence has central importance. Nevertheless it remains poorly understood. For example, the proof above of Thm. \ref{weyl principle} has the form of a numerical accident. 

It appears that the $U(n)$-invariant curvature measures may also restrict on submanifolds to some kind of intrinsic quantities, perhaps invariants of the combination of the induced riemannian metric and the restriction of the K\"ahler form. The same seems to be true of the Holmes-Thompson valuations $\mu_i^F$ of section \ref{ig of rn}, although, curiously, these valuations seem {\it not} to correspond to intrinsic curvature measures.

Furthermore, in conjunction with the analytic questions that we have so far avoided mentioning, the Weyl principle hints at a potentially vast generalization of riemannian geometry to singular spaces. For example, it can be proved by ad hoc means that the Federer curvature measures of a subanalytic subset of $\Rn$ are intrinsic (cf. \cite{fu94,fu00,fu04,be03,bebr03,brkusch97}). We believe that the Weyl principle should remain true for any subspace of $\Rn$ that admits a normal cycle, despite the possible severity of their singularities.

\end{enumerate}

%

\end{document}